\newtheorem{thm}{Theorem}[section]
\newtheorem{cor}[thm]{Corollary}
\newtheorem{lem}[thm]{Lemma}
\newtheorem{prop}[thm]{Proposition}
\theoremstyle{definition}
\newtheorem{defn}[thm]{Definition}
\theoremstyle{remark}
\newtheorem{rem}[thm]{Remark}
\numberwithin{equation}{section}
\newcommand{\R}{\mathbb R}
\newcommand{\eps}{\epsilon}
\newcommand{\p}{\partial}
\newcommand{\comment}[1]{}
\begin{document}

\title{An energy model for harmonic functions with junctions}
\author{D. De Silva}
\address{Department of Mathematics, Barnard College, Columbia University, New York, NY 10027}
\email{\tt  desilva@math.columbia.edu}
\author{O. Savin}
\address{Department of Mathematics, Columbia University, New York, NY 10027}\email{\tt  savin@math.columbia.edu}
\begin{abstract}We consider an energy model for harmonic graphs with junctions and study the regularity properties of minimizers and their free boundaries. \end{abstract}

\maketitle
\section{Introduction}

In this paper we propose an energy model for $N$ harmonic graphs with junctions and study the regularity properties of the minimizers and their free boundaries. 

The physical motivation is the following. Let $\Omega$ be a bounded domain in $\R^n$ and consider $N \ge 2$ elastic membranes which are represented by the graphs of $N$ real-valued functions $$\{(x,u_i(x))| \quad x \in \Omega\}  \quad \subset \R^{n+1},  \quad \quad i=1,\ldots,N,$$
which are in contact with one-another. In the most simplified form, the elastic membranes are modeled by harmonic graphs. The non-penetration condition implies the functions are ordered in the vertical direction, so we assume that
$$ u_1 \ge u_2 \ge \ldots \ge u_N.$$
Suppose all membranes coincide initially, say $u_1=u_2=\ldots =u_N=0$, and then we move them continuously by pulling apart their boundary data $t \varphi_i$ on $\p \Omega$ with $t \ge 0$, and $ \varphi_1>\varphi_2> \ldots > \varphi_n$. We consider the physical situation when the membranes do not separate strictly in the whole cylinder $\Omega \times \R$ instantaneously for all small $t>0$, but rather in a continuous way. In other words, for small $t$, the strict separation happens only near the boundary while the functions still coincide well in the interior of $\Omega$. This means the membranes {\it stick} to each other and we need to spend energy to physically separate them (say for example if the membranes are wet, or the elastic material has some adhesive properties).

\subsection{The model}   
In view of the discussion above, a natural energy functional associated to a system of adhesive elastic membranes is given by
\begin{equation}\label{JI} 
J_N(U, \Omega): = \int_\Omega (|\nabla U|^2 + W(U)) dx, \quad U:=(u_1,\ldots,u_N), \quad N\geq 2,
\end{equation}
with \begin{equation}\label{WI}|\nabla U|^2:= \sum_{i=1}^N |\nabla u_i|^2, \quad W(U)= \# \{ u_1,\ldots,u_N\},
\end{equation} and $J_N$ is defined over the class of admissible vector-valued functions
\begin{equation}\label{orderI} \mathcal A:= \left \{ U \in H^1(\Omega) | \quad u_1 \geq u_2 \geq \ldots \geq u_{N}  \right \}.\end{equation} 
Here $\Omega$ is a bounded domain in $\R^n$ with Lipschitz boundary, and the potential term $W(U)$ represents the cardinality of the set $\{ u_1,\ldots,u_N\}$ (that is the number of distinct elements in the set). Clearly $W(U)$ is minimized when all $u_i$'s coincide. On the other hand, the Dirichlet integral is minimized when each $u_i$ is harmonic, and by the maximum principle they belong to the admissible cone $\mathcal A$ provided the boundary data is in $\mathcal A$ as well. The presence of the potential term has the effect of collapsing some of these graphs that are close to each other in a certain region, and we expect an optimal configuration consisting of piecewise harmonic graphs with junctions.

The functional $J_N$ is lower semicontinuous and the existence of minimizers with given boundary data $\Phi \in \mathcal A$ on $\p \Omega$ follows easily from the direct method of the calculus of variations. We remark that $J_N$ is not convex, and the uniqueness of minimizers can fail. 

One of the interesting questions about minimizers $U$ of \eqref{J} regards the geometry of the graphs near junctions where the membranes separate. Precisely, there are possibly $N-1$ free boundaries originating from this minimization problem which are denoted by
$$\Gamma_i:= \partial \{u_i>u_{i+1}\} \cap \Omega, \quad i=1,\ldots, N-1.$$
The sets $\{u_i>u_{i+1}\}$ have no apriori constraints with respect to one-another, which means the $\Gamma_i$'s can intersect and cross each other and possibly have complicated geometries.
The functions $u_i$ are piecewise harmonic, that is they are harmonic in the interior of the regions carved out by the collections of the free boundaries. The problem is interesting even in dimension $n=1$ in which the $\Gamma_i$'s consist of points and the $u_i$'s are piecewise linear, and a certain balancing condition needs to hold at the junction points. The physical situation can be described as the equilibrium configurations of $N$-tapes that stick to one another, see Figure \ref{fig0}.

\begin{figure}[h] 
\includegraphics[width=0.4 \textwidth]{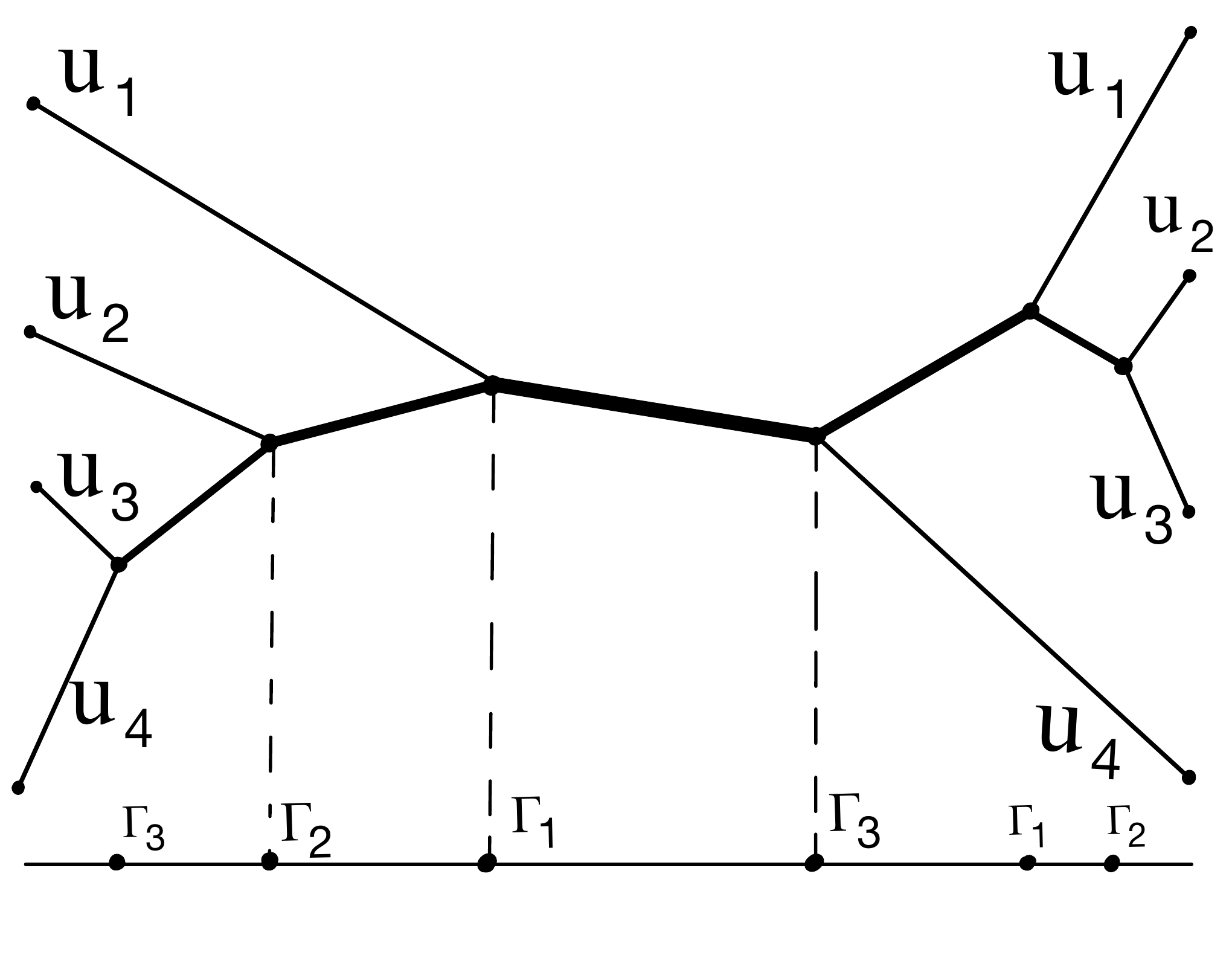}
\caption{A minimizer for $N=4$, $n=1$.} 
   \label{fig0}
\end{figure}  

A first observation about minimizers $U$ of \eqref{JI} is that the average of the $u_i$'s is harmonic in $\Omega$, see Lemma \ref{harmonic}. Moreover, minimizers remain invariant under the operation of adding the same harmonic function to each component of $U$. In view of this, one can reduce the problem to the $0$ average situation
$$ \sum u_i=0 \quad \mbox{in $\Omega$},$$ 
which means that we deal with a system involving only $N-1$ unknowns. Then, the case $N=2$ corresponds to the scalar minimization problem 
$$ \min_{u_1 \ge 0} \int_{\Omega} \left(2 |\nabla u_1|^2 + \chi_{\{u_1>0\}} \right)dx,$$
which is the classical one-phase free boundary problem introduced by Alt and Caffarelli in \cite{AC}. Here $\chi_E$ denotes the characteristic function of a set $E$. 

The one-phase problem appears in cavitation flows in fluid dynamics, flame propagation etc., and it has been extensively studied over the past four decades. We refer to the books of Caffarelli and Salsa \cite{CS} and Velichkov \cite{V} for the mathematical treatment of this problem. Concerning the free boundary regularity of $\Gamma_1$, it is an analytic hypersurface outside a closed singular set of dimension at most $n-5$ (see \cite{JS}), and there are examples of free boundaries with point singularities in dimension $n=7$, (see \cite{DJ}) . 

For $N\ge 3$ we subtract 1 from $W$, which does not affect the problem, and rewrite the potential term in the form
$$ W(U)= \sum_{i=1}^{N-1} \chi_{\{u_i>u_{i+1} \}}.$$
In this way, the model we propose can be viewed as a system of $N-1$ coupled one-phase free boundary problems that interact in the vertical direction.

\subsection{Some related works} There are several works in the literature that involve free boundaries and energy functionals for vector-valued functions similar to the one we propose in \eqref{JI}. For example Mazzoleni, Terracini, Velichkov \cite{MTV1,MTV2}, Caffarelli, Shahgholian, Yeressian \cite{CSY}, and De Silva, Tortone \cite{DT} considered the vectorial one-phase problem with $W(U)=\chi_{\{|U|>0\}}$ and unconstrained $U$, which is relevant in the study of cooperative systems of species or in optimization problems for spectral functions, see also \cite{KL1,KL2}. On the other hand, in \cite{ASUW} the authors studied a vectorial version of the obstacle problem by taking $W(U)=|U|$. In other cases motivated by strongly competitive systems or optimal partition problems, the components $u_i \ge 0$ have disjoint supports which means that the vector $U$ is restricted to the union of the nonnegative coordinate axes. For this situation we refer to \cite{CL} where Caffarelli and Lin investigated harmonic maps onto such singular spaces, see also \cite{TT}.

A minimization problem closely related to our model, which involves the constraint $U \in \mathcal A$ as in \eqref{orderI} and with the potential $W(U)=F \cdot U$, was introduced by Chipot and Vergara-Caffarelli in \cite{CV}. It describes the equilibrium configuration of $N$ elastic membranes interacting with each-other under the action of external forces $F$. More recently, the second author and H. Yu established the optimal regularity for this problem and studied the free boundary regularity in a series of papers \cite{SY1, SY2,SY3}.  While these results motivated the current work, the two models have quite different qualitative properties that can be seen from simple one-dimensional examples as well as from the theorems in the next section. 

Some of our results that involve the junction of $N=3$ membranes resemble the soap-films like configurations for minimal surfaces in two and three dimensions that were first studied by J. Taylor in \cite{T}. 
A crucial difference with respect to soap-films is that our energy model counts a common surface where more than two graphs coincide according to its multiplicity, and the collapsing phenomena is only due to the presence of the potential energy $W$.


\subsection{Main results}

In this section we state our main results. We recall that 
\begin{equation}\label{J} 
J_N(U, \Omega): = \int_\Omega (|\nabla U|^2 + W(U)) dx, \quad U=(u_1,\ldots,u_N), \quad N\geq 2,
\end{equation}
with \begin{equation}\label{W}|\nabla U|^2 = \sum_{i=1}^N |\nabla u_i|^2, \quad W(U):= \sum_{i=1}^{N-1} \chi_{\{u_i>u_{i+1} \}},
\end{equation} is defined over the class of admissible vector-valued functions
\begin{equation}\label{order} \mathcal A:= \left \{ U \in H^1(\Omega) | \quad u_1 \geq u_2 \geq \ldots \geq u_{N}  \right \}.\end{equation} 

Positive constants depending only on $N$ and the ambient dimension $n$ are called universal.
The first result gives the optimal regularity of minimizers.
\begin{thm}[Optimal regularity]\label{nonCI} Let $U$ minimize $J_N$ in $B_1$. Then, $U \in C^{0,1}$ and 
\begin{equation*}\|U\|_{C^{0,1}(B_{1/2})} \le C(1+ \|U\|_{L^2(B_1)}), \end{equation*} for $C>0$ universal.
\end{thm}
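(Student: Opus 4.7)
The plan is to adapt the Alt--Caffarelli Lipschitz argument to this vectorial ordered setting. I would first use Lemma \ref{harmonic} together with the invariance of $J_N$ under adding a common harmonic function to all components to reduce to the case $\sum_i u_i = 0$, which bounds each $u_i$ in terms of $\|U\|_{L^2(B_1)}$. For any ball $B_r(x_0) \subset B_1$, let $H = (h_1, \ldots, h_N)$ denote the componentwise harmonic replacement of $U$. The maximum principle preserves the ordering, so $H$ is an admissible competitor. Since $W(H) \le N-1$, minimality together with the orthogonality $\int_{B_r} \nabla(U-H) \cdot \nabla H = 0$ (from $U = H$ on $\p B_r$ and harmonicity of $H$) yields the energy-deviation estimate
\begin{equation*}
\int_{B_r(x_0)} |\nabla(U-H)|^2 \le C\,r^n.
\end{equation*}

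From this estimate I would derive an $L^\infty$ bound on $B_{3/4}$ by a De Giorgi--type iteration: the truncations $(u_i - k)^+$ (taken coherently across components so as to preserve the ordering for large $k$) can be paired against the analogous truncations of $H$, and the deviation bound above plays the role of the standard Caccioppoli inequality. This propagates the $L^2$ control of $U$ into an $L^\infty$ bound depending only on $\|U\|_{L^2(B_1)}$ and universal constants.

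Next, on the open set $B_1 \setminus \bigcup_k \Gamma_k$ the multiplicity structure of $U$ is locally constant, so one can perturb each $u_i$ (or the common value of coincident components) by a compactly supported test function without changing $W$; hence every $u_i$ is harmonic off $\bigcup_k \Gamma_k$. Setting $d(x) := \mathrm{dist}\bigl(x, \bigcup_k \Gamma_k\bigr)$, interior gradient estimates for harmonic functions give $|\nabla u_i(x)| \le C\, d(x)^{-1}\, \mathrm{osc}_{B_{d(x)}(x)}\, u_i$, so the Lipschitz bound reduces to the linear growth estimate $\mathrm{osc}_{B_r(y)} u_i \le C r$ at free boundary points $y \in \bigcup_k \Gamma_k$, for $r$ small.

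The linear growth is the main obstacle. My approach would be an Alt--Caffarelli-style dichotomy built on the energy-deviation estimate: if $\mathrm{osc}_{B_r(y)} u_i \gg r$ for some $y \in \Gamma_k$, then by the energy deviation the harmonic replacement $h_i$ remains close to $u_i$ on a definite portion of $B_r(y)$ and is itself large there, forcing $h_i > h_{i+1}$ throughout a concentric sub-ball. Substituting $U$ by a suitable admissible modification built from $H$ on this sub-ball would gain Dirichlet energy of order $r^n\,(\mathrm{osc}/r)^2$, which exceeds the worst-case potential cost $(N-1)\,|B_r| = O(r^n)$ once the oscillation is too large, contradicting minimality. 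The technical difficulty is to design the admissible perturbation so that it respects the ordering for every possible local multiplicity pattern at $y$ --- where several of the $\Gamma_k$ may cross or coincide --- and I expect this will require either a careful case analysis or an induction on $N$ that reduces the problem to cases where the relevant free boundary involves only the topmost or bottommost component of the stack.
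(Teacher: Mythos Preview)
Your reduction steps up through isolating the problem to linear growth $\mathrm{osc}_{B_r(y)}\,u_i \le Cr$ at free-boundary points are sound, but the mechanism you propose for that last step does not work as written. Replacing $U$ by its componentwise harmonic extension $H$ gains exactly $\int_{B_r}|\nabla(U-H)|^2$ in Dirichlet energy, and you already established in your second paragraph that this quantity is $O(r^n)$ \emph{independently of the oscillation}; it does not scale like $r^n(\mathrm{osc}/r)^2$. To produce a competitor with a larger gain you would have to collapse some components or modify $H$ further, and then---as you note yourself---matching the boundary data while preserving the ordering across all possible coincidence patterns is the real difficulty. Your proposal identifies this obstacle but does not resolve it; in particular the intermediate components $u_2,\dots,u_{N-1}$ are neither sub- nor superharmonic, so the scalar Alt--Caffarelli machinery does not apply to them directly.

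The paper organizes the induction on $N$ through a different device. It first proves a preliminary $C^\alpha$ estimate by iterating your energy-deviation bound (Corollary~\ref{holder}), and introduces the notion that $U$ is \emph{connected} in $B_\rho$ if no index $k$ has $u_k>u_{k+1}$ throughout $B_\rho$. In the connected, zero-average case the Lipschitz bound (Lemma~\ref{uniLip}) goes as follows: let $V$ be the harmonic replacement and $\mu=v_1(0)$; Harnack for the nonnegative harmonic functions $v_i-v_{i+1}$ together with $\sum v_i=0$ forces some gap $v_k-v_{k+1}\ge c\mu$ on $B_{7/8}$, and the $C^\alpha$ estimate upgrades $\|U-V\|_{L^2}\le C$ to $|U-V|\le C\mu^{1-\delta}$ in $L^\infty$. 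If $\mu$ were large this would give $u_k>u_{k+1}$ everywhere in $B_{1/2}$, contradicting connectedness; hence $\mu\le C$ and a scale-invariant $H^1$ bound follows. In the disconnected case one splits the stack at the separated index (Lemma~\ref{induction}) and applies the inductive hypothesis for fewer membranes. Iterating this connected/disconnected dichotomy over dyadic scales yields the Lipschitz estimate. Thus the role you envisioned for a large energy-gain competitor is played instead by Harnack on the harmonic replacement combined with the preliminary H\"older bound, and the induction on $N$ enters cleanly through the disconnected case.
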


We study the regularity of the $N-1$ free boundaries
$$\Gamma_i:= \p \{u_i > u_{i+1} \}, \quad \quad i\in \{1,\ldots,N-1\},$$
by performing a blow-up analysis at junction points. After rescaling, we may assume that we are in the $0$-average situation, and the origin is a junction point where all membranes coincide
\begin{equation}\label{Res}
\sum u_i =0, \quad \quad 0 \in \p \{|U|>0\}.
\end{equation}
By employing Weiss monotonicity formula Proposition \ref{WMF}, and a non-degeneracy property of minimizers, we can deduce that blow-ups are one-homogenous.
\begin{thm} \label{Bup} Let $U$ minimize $J_N$ in $B_1$, and assume \eqref{Res} holds. Then
$$ \max_{B_r} |U| \ge c r,$$
and there exists a sequence of $r_j \to 0$ such that
$$ U_{r_j}(x):= \frac{U(r_j x)}{r_j} $$
converges uniformly on compact sets of $\R^n$ to a cone $\bar U$, i.e. a homogenous of degree one minimizer with $0 \in \p \{|\bar U| >0\}$.
\end{thm}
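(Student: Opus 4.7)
The plan is to combine the Lipschitz estimate of Theorem~\ref{nonCI}, a non-degeneracy estimate at junction points, and the Weiss monotonicity formula (Proposition~\ref{WMF}) to extract a one-homogeneous blow-up limit. For the non-degeneracy bound $\max_{B_r}|U|\ge cr$, I would argue by contradiction: suppose $\max_{B_r}|U|<\eps r$ for some small $\eps$; since we are in the $0$-average setting \eqref{Res}, each component $u_i$ is itself of size at most $\eps r$ on $B_r$. Replacing $U$ inside $B_r$ by a competitor $\tilde U$ that interpolates linearly in $|x|$ from $U$ on $\p B_r$ to $0$ on $\p B_{r/2}$ (and is $0$ on $B_{r/2}$, arranged to stay in $\mathcal A$) gives an admissible function whose Dirichlet energy on $B_r$ is at most $C\eps^2 r^n$ but whose potential term drops by at least $c r^n$. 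For $\eps$ universal and small, this contradicts minimality.

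By Theorem~\ref{nonCI} the rescalings $U_r(x):=U(rx)/r$ are uniformly Lipschitz on any fixed compact set, and $U_r(0)=0$ since $0\in\p\{|U|>0\}$ forces $U(0)=0$. Arzel\`a--Ascoli produces a subsequence $U_{r_j}\to\bar U$ uniformly on compacts of $\R^n$, with $\bar U$ Lipschitz. A standard gluing argument using harmonic interpolation on thin annuli lets one build, from any compact perturbation $V$ of $\bar U$, admissible competitors for $U_{r_j}$ that respect the ordering constraint; combined with lower semicontinuity of $W$, this shows that $\bar U$ is itself a minimizer of $J_N$ on every ball. The non-degeneracy passes to the limit as $\max_{B_\rho}|\bar U|\ge c\rho$ for every $\rho>0$, and together with $\bar U(0)=0$ this places $0\in\p\{|\bar U|>0\}$.

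To see that $\bar U$ is one-homogeneous, Proposition~\ref{WMF} asserts that the Weiss energy $\Phi(r,U)$ is monotone nondecreasing in $r$, so $\Phi(0^+,U):=\lim_{r\to 0^+}\Phi(r,U)$ exists and is finite. Using its scale-invariance $\Phi(\rho,U_r)=\Phi(\rho r,U)$ and the continuity of $\Phi$ under the convergence $U_{r_j}\to\bar U$, one obtains, for every $0<\rho<R$,
\[
\Phi(\rho,\bar U)=\lim_j\Phi(\rho r_j,U)=\Phi(0^+,U)=\lim_j\Phi(R r_j,U)=\Phi(R,\bar U),
\]
so $\Phi(\cdot,\bar U)$ is constant on $(0,\infty)$. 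The equality case of the Weiss formula then forces $\bar U$ to be homogeneous of degree one.

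The main obstacle I anticipate is establishing the minimality of $\bar U$ and the continuity of $\Phi$ along the rescaled sequence. Both difficulties arise because the potential $W$ is only lower semicontinuous under uniform convergence, being a sum of characteristic functions; matching boundary values on $\p B_\rho$ while preserving the ordering constraint in $\mathcal A$, and without losing mass in $W$, requires a careful harmonic interpolation on a thin annulus that exploits the Lipschitz bound and the uniform convergence $U_{r_j}\to\bar U$. Once this technical step is in place, the passage to homogeneity via Weiss is essentially automatic.
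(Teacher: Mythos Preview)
Your overall strategy---Lipschitz bound, compactness of minimizers, and Weiss monotonicity---matches the paper's, and the blow-up/homogeneity part of your outline is correct. The gap is in your non-degeneracy argument. Your competitor $\tilde U$ (interpolate to $0$ on $B_{r/2}$) yields, after comparing energies, only
\[
\int_{B_{r/2}} W(U)\,dx \;\le\; C\eps^2 r^n
\]
when $\max_{B_r}|U|\le\eps r$; this is precisely the Caccioppoli estimate of Lemma~\ref{CE}. Your assertion that the potential term ``drops by at least $cr^n$'' is unjustified: at a point $0\in\partial\{|U|>0\}$ there is no a priori lower bound on $|\{W(U)\ge 1\}\cap B_{r/2}|$, so the energy comparison need not produce a contradiction. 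Your competitor argument thus only proves non-degeneracy at \emph{interior} points of $\{|U|>0\}$, where $W(U)\ge 1$ on the full ball---this is Corollary~\ref{CorC1} in the paper, not the statement you need.

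The paper upgrades this weak form to non-degeneracy at free boundary points (Lemma~\ref{NDeg}) by a separate Harnack-chain/ball-sequence construction: under the zero-average assumption one has $c_0|U|\le u_1\le|U|$ with $u_1\ge 0$ subharmonic (Lemma~\ref{sub-super}); starting from a nearby point in $\{|U|>0\}$ one builds a sequence $x_k$ with $u_1(x_k)$ growing geometrically while $|x_{k+1}-x_k|\lesssim r_k:=\mathrm{dist}(x_k,\{|U|=0\})$, invoking Corollary~\ref{CorC1} at each step to guarantee $u_1(x_k)\sim r_k$. This is the missing ingredient, and it is not a routine refinement of your competitor idea---it relies on the structural fact that the top membrane is subharmonic. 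Once Lemma~\ref{NDeg} is in place, the rest of your outline (including the annular interpolation for compactness and the passage to homogeneity via constancy of $\Phi$) is essentially the paper's argument.
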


The minimizing cones in dimension $n=1$ must coincide on one side of $0$ and have two branches on the other side. Precisely for each $k \le N-1,$ we define $U_{0,k}$ to be the vector whose components are given by
$$ u_i=\left(\frac 1k - \frac 1N \right) x^+ \quad \quad \mbox{for $i \le k$}, \quad u_i=-\left(\frac {1}{N-k} - \frac 1N \right) x^+ \quad \quad \mbox{for $i > k$} .$$
\begin{prop}[1d cones]\label{P1dI}
The only minimizing cones in dimension $n=1$ that satisfy \eqref{Res} are given by $U_{0,k}$ (up to a reflection with respect to 0).
\end{prop}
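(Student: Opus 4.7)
The plan. In dimension one a one-homogeneous $U$ satisfying \eqref{Res} has the form $u_i(x) = \alpha_i x^+ + \beta_i x^-$ for ordered vectors $\alpha, \beta \in \R^N$ (nonincreasing) with $\sum_i \alpha_i = \sum_i \beta_i = 0$. Writing $m^\pm$ for the numbers of distinct entries of $\alpha$ and $\beta$, a direct computation gives
$$J_N(U, (-1,1)) = |\alpha|^2 + |\beta|^2 + (m^+ - 1) + (m^- - 1).$$
The strategy is to test $U$ against three families of admissible competitors sharing its boundary values at $\pm 1$ and force $\min(m^+,m^-) = 1$, $\max(m^+,m^-) = 2$.

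First, to rule out the case $m^\pm \ge 2$ on \emph{both} sides, I would use a \emph{partial-junction} competitor: pick a proper subset of indices that is a union of complete positive-side groups but not a union of complete negative-side groups, shift its common value at $x = 0$ off zero to some $c$ (adjusting the complementary indices by the zero-average constraint), and interpolate linearly on $(-1,0)$ and $(0,1)$. An explicit energy computation, optimized in $c$, shows it strictly lowers $J_N$. Such a subset exists whenever the positive and negative group partitions are distinct; in the remaining case where these partitions coincide, the \emph{no-junction} linear competitor across $x = 0$ yields a condition forcing $\alpha + \beta = 0$, which together with the ordering and zero-average constraints trivializes the cone.

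Having established that $\min(m^+, m^-) = 1$, WLOG $m^- = 1$, so $\beta \equiv 0$. The balance identity $|\alpha|^2 = m^+ - 1$ is then extracted from two complementary competitors: the \emph{collapsing} competitor ($V \equiv 0$ on $(0, s)$, linear on $(s, 1)$) gives $|\alpha|^2 \ge m^+ - 1$, while the \emph{extension} competitor (extending the positive-side structure to an interval $(-r, 1)$, zero on $(-1,-r)$) gives the matching upper bound.

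Finally, to rule out $m^+ \ge 3$, I would test against \emph{adjacent-merger} competitors: for each pair $(k_j, a_j), (k_{j+1}, a_{j+1})$ of adjacent positive-side groups, fuse them at their weighted-average slope $\bar a_j$ on $(0, 1-\delta)$ and split them back on $(1-\delta, 1)$. Optimizing in $\delta$ produces the gap inequality $\frac{k_j k_{j+1}}{k_j+k_{j+1}}(a_j - a_{j+1})^2 \ge 1$, and combining it with the identity $\frac{k_j k_{j+1}}{k_j+k_{j+1}}(a_j - a_{j+1})^2 = k_j a_j^2 + k_{j+1} a_{j+1}^2 - (k_j+k_{j+1})\bar a_j^2$, the balance $\sum_j k_j a_j^2 = m^+ - 1$, and the zero-average $\sum_j k_j a_j = 0$ yields a short algebraic contradiction when $m^+ \ge 3$ (enforcing the gap bounds makes $\sum_j k_j a_j^2$ strictly exceed $m^+ - 1$). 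Hence $m^+ = 2$; the two equations $ka + (N-k)b = 0$ and $ka^2 + (N-k)b^2 = 1$ determine the cone as $U_{0,k}$, with reflection handling the case $m^+ = 1, m^- = 2$. The main technical obstacle is the partial-junction step, which requires a careful combinatorial choice of perturbation to ensure that the jumps in the potential $W$ are accounted for uniformly across the possible group configurations.
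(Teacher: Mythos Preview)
Your overall architecture differs from the paper's, and Steps 3 and 4 are essentially sound (though the ``short algebraic contradiction'' in Step 4 actually requires an inductive merging argument: merge the two leftmost groups, observe the merged configuration still satisfies all gap bounds \emph{strictly}, and iterate down to $m^+=2$). The real problem is Step 2.

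Your partial-junction competitor at a breakpoint $k$ of one partition but not the other yields, after optimizing in $c$, only the bound
\[
\Big(\sum_{i\le k}(\alpha_i+\beta_i)\Big)^2 \le \frac{2k(N-k)}{N},
\]
and this, together with the no-junction competitor, the gap inequalities, and the domain-variation balance, is \emph{not} enough to force $\min(m^+,m^-)=1$. Concretely, take $N=4$ with
\[
\alpha=(3a,-a,-a,-a),\qquad \beta=(a,a,a,-3a),\qquad a^2=\tfrac{1}{12}.
\]
Here $m^+=m^-=2$, the partitions differ, the gap inequalities on both sides read $12a^2\ge1$ (equality), the balance $|\alpha|^2-(m^+-1)=|\beta|^2-(m^--1)$ holds trivially, the no-junction competitor gives $a^2\le\tfrac18$, and your partial-junction bound at $k=1$ (or $k=3$) gives $(4a)^2\le\tfrac32$, i.e.\ $a^2\le\tfrac{3}{32}$. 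All of these are satisfied at $a^2=\tfrac{1}{12}$, so your scheme does not eliminate this cone.

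What does eliminate it is the paper's Lemma~\ref{L1db}: slide \emph{only} the top $k_1^+$ coinciding membranes outward by a dilation, leaving the others untouched (so zero-average is not preserved, which is fine for a competitor). This yields $a_1^+ + a_1^- \le 1/\sqrt{k_1^+}$; in the example above this is $4a\le 1$, i.e.\ $a^2\le\tfrac{1}{16}$, which contradicts $a^2\ge\tfrac{1}{12}$. The paper then feeds this bound and the gap inequality from Lemma~\ref{L1da} into a short ordering argument that simultaneously forces $l=2$ and $m=1$ without ever isolating the step ``$\min(m^+,m^-)=1$''. Your vertex-shift competitor is strictly weaker than this sliding competitor because it pays Dirichlet energy on the complementary membranes as well; to close the gap you would need to import an L1db-type perturbation.
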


The classification of 1d cones combined with an $\eps$-regularity theorem and a dimension reduction argument gives the following general partial regularity result. 
 \begin{thm}[Partial regularity]\label{prI}
Let $U$ be a minimizer in $B_1$. The free boundaries $\Gamma_i$ are analytic and disjoint from one another outside a closed set $\Sigma$ of singular points of Hausdorff dimension $n-2$, and
$$\mathcal H^{n-1}(\Gamma_i \cap B_{1/2}) \le C,$$
with $C$ universal.
\end{thm}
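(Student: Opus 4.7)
The plan is to combine the Weiss monotonicity formula (Proposition \ref{WMF}), the classification of 1d cones (Proposition \ref{P1dI}), an $\eps$-regularity theorem at flat cones, and a Federer-type dimension reduction.

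First I would call a point $x_0\in \Gamma_i$ regular if some blow-up of $U$ at $x_0$ is a 1d cone. Such a blow-up exists and is one-homogeneous by Theorem \ref{Bup}; by Proposition \ref{P1dI} it must be a rotation of some $U_{0,k}$, and since only the $k$-th pair separates in $U_{0,k}$, the condition $x_0\in \Gamma_i$ forces $k=i$. The $\eps$-regularity theorem then upgrades this flatness at one scale to analyticity of $\Gamma_i$ in a full neighborhood of $x_0$; moreover, the strict separation of the $j$-th and $(j+1)$-th branches of $U_{0,i}$ for $j\neq i$, combined with the Lipschitz bound of Theorem \ref{nonCI}, forces $\Gamma_j\cap B_\rho(x_0)=\emptyset$ for $j\neq i$ and some $\rho>0$. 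Hence the regular set is open in $\Gamma_i$, and there $\Gamma_i$ is analytic and disjoint from the other free boundaries.

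Let $\Sigma$ be the complement in $\bigcup_i \Gamma_i$ of the union of the regular sets. Upper semicontinuity of the Weiss density together with the characterization of regular points by the discrete set of values $\Theta(U_{0,i})$ makes $\Sigma$ closed. To prove $\dim_H\Sigma\le n-2$, I would run the standard Federer reduction: tangent cones at points of $\Sigma$ are themselves minimizing cones, and if one of them were translation-invariant along $n-1$ independent directions, it would reduce to a 1d minimizing cone and, by Proposition \ref{P1dI}, would coincide with some $U_{0,k}$, contradicting the singularity of $x_0$. Federer's lemma then gives $\mathcal H^{n-2+\alpha}(\Sigma)=0$ for every $\alpha>0$.

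For the quantitative bound, monotonicity of the Weiss energy combined with the non-degeneracy statement of Theorem \ref{Bup} yields uniform two-sided density estimates at every free-boundary point, from which a Vitali covering argument as in Alt--Caffarelli \cite{AC} produces $\mathcal H^{n-1}(\Gamma_i\cap B_{1/2})\le C$. I expect the main obstacle to be the $\eps$-regularity theorem itself, to be established earlier in the paper: the coupling between the components turns the linearization into a system rather than a scalar equation, and one must simultaneously ensure that small perturbations do not create spurious free boundaries among the inactive pairs near $x_0$.
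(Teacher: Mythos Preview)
Your overall strategy---1d cone classification, $\eps$-regularity at flat points, Federer dimension reduction---matches the paper's. One correction: for $j\ne i$ the $j$-th and $(j+1)$-th branches of $U_{0,i}$ are not strictly separated, they \emph{coincide}; and Lipschitz closeness to $U_{0,i}$ does not by itself force $u_j\equiv u_{j+1}$ nearby (the difference could be a small positive function), so the sentence invoking Theorem~\ref{nonCI} does not do what you want. The paper establishes $\Gamma_j\cap B_{3/4}=\emptyset$ for $j\ne k$ as a separate lemma (Lemma~\ref{L42}), via compactness plus non-degeneracy iterated at all scales---this is exactly the ``spurious free boundaries'' issue you correctly flag at the end, so you have the right instinct but the wrong mechanism in the first paragraph. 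The paper also frames its $\eps$-regularity (Proposition~\ref{GK}) energetically, through the characterization of 1d cones as the \emph{least-energy} nonzero cones (Lemma~\ref{L41}: $\Phi(U)\ge\frac12|B_1|$ with equality iff $U=U_{0,k}(x\cdot\nu)$); this streamlines both the compactness step and the Federer reduction, and it naturally incorporates the restriction (via Lemma~\ref{induction}) to the subcollection of membranes that actually coincide at $x_0$, a reduction your write-up omits.
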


The remaining results focus on the intersection points of two distinct free boundaries which, by Theorem \ref{prI}, has dimension at most $n-2$. For this part we restrict to the simplest case that involves $N=3$ membranes. 

In dimension $n=2$ we define $V_0$ as the cone whose components are given by 
$$u_1(x):= \frac{1}{\sqrt 6}\max \left \{ x \cdot e_{ \frac \pi 6}, \, 2 x \cdot e_{- \frac \pi 6}, \, 0 \right \},\quad \quad e_\theta:=(\cos \theta, \sin \theta),$$
$$ u_3(x):=-u_1(x_1,-x_2), \quad \quad u_2 :=- u_1-u_3.$$
\begin{thm}\label{m2d} Let $N=3$. Then, up to rotations and reflections, $V_0$ is the unique minimizing cone in dimension $n=2$ which satisfies $0 \in \Gamma_1 \cap \Gamma_2$ and \eqref{Res}.

\end{thm}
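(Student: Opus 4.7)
The plan is to combine the $1$-homogeneity of $U$ with the rigidity of two dimensions to reduce the classification to a finite combinatorial problem whose unique solution, up to rotations and reflections, is $V_0$. Since $U$ is a minimizing cone in $\R^2$, both free boundaries $\Gamma_1$ and $\Gamma_2$ are cones, hence finite unions of rays from the origin; these rays partition $\R^2$ into finitely many open sectors on each of which the coincidence pattern of $(u_1,u_2,u_3)$ is fixed. Inside any sector, the components that coincide share a common value which is a $1$-homogeneous harmonic function on $\R^2$, hence linear; combined with the zero-sum constraint $u_1+u_2+u_3=0$, every $u_i$ is a linear function in that sector. Thus $U$ is piecewise linear on a finite fan of sectors, each of one of four types: I with $u_1>u_2>u_3$, II with $u_1=u_2>u_3$, III with $u_1>u_2=u_3$, or $0$ with $U\equiv 0$.

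Across each ray separating two adjacent sectors, continuity of $U$ matches all components, while the Euler-Lagrange equations for $J_3$ impose an Alt-Caffarelli-type jump condition on the normal derivatives of the components that have just separated. By Theorem \ref{prI}, for $n=2$ one has $\Gamma_1\cap\Gamma_2\subset\{0\}$ for minimizing cones, so each individual ray supports either a $\Gamma_1$-transition or a $\Gamma_2$-transition but not both; the only allowed adjacencies are then I--II and $0$--III (across $\Gamma_1$ rays) and I--III and II--$0$ (across $\Gamma_2$ rays). Combined with the hypothesis $0\in\Gamma_1\cap\Gamma_2$, which forces $\Gamma_1$ and $\Gamma_2$ to consist of at least two rays each, this shows the sectors are at least four in number and, in the minimal case, must be precisely the four types I, II, $0$, III arranged in this cyclic order (up to reflection). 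The zero-average three-membrane problem also carries the natural involution $(u_1,u_2,u_3)\mapsto(-u_3,-u_2,-u_1)$, which preserves $J_3$, swaps types II and III, and is realized in $V_0$ as reflection across the $x_1$-axis.

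In the four-sector case, the angles of the separating rays and the slopes of the linear pieces satisfy a finite system of equations coming from continuity at each ray, the Alt-Caffarelli jump relations, and the zero-sum condition. The $\mathbb{Z}_2$ involution above halves the effective unknowns, and the reduced system admits a unique solution (up to global rotation), which by direct computation coincides with the explicit formulas defining $V_0$. The main obstacle will be ruling out minimizing cones with strictly more than four sectors: for any such configuration I plan to construct an admissible competitor by erasing a redundant ray --- locally rotating it into a neighboring ray while updating the linear data to preserve continuity and the zero-sum constraint --- and to show that this competitor strictly decreases $J_3$, contradicting minimality. Once this reduction to exactly four sectors is secured, the explicit computation identifies $V_0$ as the unique minimizing cone.
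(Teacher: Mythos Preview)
Your framework---piecewise linear on a fan of rays, four sector types, adjacency constraints coming from Theorem~\ref{prI}---is sound and matches the paper's starting point.  But the proposal has two genuine gaps.

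\textbf{You never prove that $V_0$ is minimizing.}  The theorem asserts both existence and uniqueness: $V_0$ \emph{is} a minimizing cone with $0\in\Gamma_1\cap\Gamma_2$, and it is the only one.  Your argument is purely a classification of putative minimizers; it never shows the class is nonempty.  The paper handles this by a separate, nontrivial construction (Proposition~\ref{PV0}): it builds a minimizer on a long rectangle with one-dimensional boundary data $U_{0,1}$ on one end and $U_{0,2}$ on the other, and uses a topological argument to force an interior point in $\Gamma_1\cap\Gamma_2$.  The blow-up at that point must be $V_0$, which proves $V_0$ minimizes.  Without this step your statement is vacuous.

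\textbf{Your reduction to four sectors misses the cone $V_s$.}  The paper's classification of critical cones yields \emph{two} candidates, $V_0$ and $V_s$.  The cone $V_s$ has eight sectors in the cyclic pattern II--I--III--I--II--I--III--I; it satisfies every local Euler--Lagrange condition and every adjacency rule you wrote down, so your combinatorial analysis cannot exclude it.  Your plan to ``erase a redundant ray'' by rotating it into a neighbor and updating the linear data will not work: $V_s$ is critical, so any such one-parameter deformation has zero first variation, and producing a strict decrease requires a second-order argument.  The paper instead uses a specific two-dimensional trick---a logarithmic-cutoff domain translation followed by even reflection across the axis of symmetry of $V_s$---to build a competitor whose energy drops by $c\eps^2$ while the translation costs only $o(\eps^2)$.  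This exploits the reflection symmetry of $V_s$ in an essential way and is not a generic ``remove a sector'' move.

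A smaller point: even among four-sector closed walks on your adjacency $4$-cycle, I--II--I--III is combinatorially allowed, and you do not explain why it is excluded (it is, but only after the full angle/slope computation the paper carries out in its Case~1 analysis).
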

The proof of Theorem \ref{m2d} is indirect, through an elimination process. It turns out that there are two possible cone candidates, $V_0$ and $V_s$, which are critical for the energy $J_N$. We show however that $V_s$ is not minimizing and also that the set of cones satisfying $0 \in \Gamma_1 \cap \Gamma_2$ and \eqref{Res} is nonempty, see Section \ref{Sec7}.

The next result gives the regularity of the free boundaries in two dimensions near an intersection point, see Figure \ref{fig1}.

\begin{thm}\label{T2dI}
Let $N=3$ and let $U$ be a minimizer of $J_N$ in $\Omega \subset \R^2$. Then $\Gamma_1$ and $\Gamma_2$ are piecewise $C^{1,\alpha}$ curves in a neighborhood of any intersection point $x_0 \in \Gamma_1 \cap \Gamma_2$, with $\alpha \sim 0.36$.
\end{thm}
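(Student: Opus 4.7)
The plan is to follow the standard free boundary regularity strategy: classify the blow-up at the intersection point, upgrade this to a unique tangent cone with a quantitative rate of convergence, and transfer this rate to $C^{1,\alpha}$ regularity of the free boundary arcs. At an intersection point $x_0 \in \Gamma_1 \cap \Gamma_2$, I would translate, rescale, and subtract the harmonic average to reduce to the situation \eqref{Res} with $x_0 = 0$. Theorem \ref{prI} in dimension $n=2$ already forces the singular set to be discrete, so $x_0$ is an isolated intersection point. By Theorem \ref{Bup} every subsequential blow-up $U_{r_j}$ converges uniformly on compact sets to a homogeneous minimizing cone, and the condition $0 \in \Gamma_1 \cap \Gamma_2$ is preserved under rescaling; Theorem \ref{m2d} identifies this cone as a rotation or reflection of $V_0$.

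The main analytic step is to improve the subsequential convergence to uniqueness of the tangent with a quantitative polynomial rate: one should produce $\theta_0$ and $\alpha > 0$ such that
$$ \|U_r - V_{\theta_0}\|_{L^\infty(B_1)} \le C\, r^{\alpha}, \qquad 0 < r \le r_0, $$
where $V_\theta$ denotes the rotation of $V_0$ by angle $\theta$. The natural tool is the Weiss monotonicity formula (Proposition \ref{WMF}): combined with compactness and the uniqueness of the minimizing cone given by Theorem \ref{m2d}, one obtains $W(r) - W(0^+) \ge c\, d(U_r,\mathcal V_0)^2$, where $\mathcal V_0$ is the rotation orbit of $V_0$ and $d$ is an $L^2(\p B_1)$ distance. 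A \L ojasiewicz-type inequality at $V_0$, derived from a spectral analysis of the second variation of $J_3$ on admissible perturbations, then upgrades this to $W(r)-W(0^+) \le C\,(W'(r)\cdot r)^{1+\beta}$ and yields the polynomial decay in the standard way. The exponent $\alpha$ is governed by the smallest eigenvalue strictly larger than $1$ of the linearized operator on $\p B_1$, modulo the rotational zero mode; the three-phase geometry of $V_0$, with free-boundary opening angles prescribed by the $\pm\pi/6$ directions, is what produces the explicit numerical value $\alpha \approx 0.36$.

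Once the decay is available, the regularity of the arcs follows by scaling and the $\eps$-regularity theory already used for the smooth part of $\Gamma_i$ in Theorem \ref{prI}. On each dyadic annulus $B_{2r}\setminus B_{r}$ the minimizer is $C r^{\alpha}$-close to $V_{\theta_0}$, which on this annulus has three disjoint analytic arcs (two in $\Gamma_1$ and two in $\Gamma_2$). Hence the one-phase $\eps$-regularity theorem applies separately to each arc and gives a $C^{1,\alpha}$ graph representation with flatness constant $O(r^{\alpha})$. Summing these dyadic estimates and using the fixed tangent $V_{\theta_0}$ fixes the tangent direction of each arc at $x_0$, so in suitable local coordinates each of $\Gamma_1$ and $\Gamma_2$ is a finite union of $C^{1,\alpha}$ arcs meeting at $x_0$, with the prescribed half-angles read off from $V_0$.

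The hard part is the quantitative decay in the second paragraph. Because $V_0$ is only Lipschitz and its three phases all meet at the origin, the linearized problem is not a single elliptic PDE but a transmission system on the sectors cut out by the arcs of $\Gamma_1(V_0)$ and $\Gamma_2(V_0)$, coupled across them by Neumann-type conditions arising from first-variation optimality. One must (i) set up the right function space of perturbations compatible with the ordering $u_1\ge u_2\ge u_3$ and the zero-average constraint, (ii) verify that the only neutral eigenmode is the one-parameter rotation of $V_0$, and (iii) compute the first admissible eigenvalue strictly above $1$ and check no smaller admissible mode exists. The value $\alpha \approx 0.36$ is exactly the output of this spectral computation, which reduces to an explicit (but delicate) eigenvalue problem for harmonic functions on the six sectors of $\p B_1$ determined by the $\pm\pi/6$ angles of $V_0$.
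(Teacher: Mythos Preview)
Your overall outline is sound, but it diverges from the paper's route in the ``quantitative decay'' step. The paper does \emph{not} go through a \L ojasiewicz--Simon or epiperimetric inequality at all. Instead it runs a De~Silva--style improvement of flatness by compactness: assuming $\|U-V_0(O_r\,\cdot)\|_{L^\infty(B_r)}\le \eps r$ for all $r$, one sets $w_1=\eps^{-1}(u_1-\sqrt{2/3}\,x\cdot e_{-\pi/6})$ on $\{u_1>u_2\}$ and $w_3=-\eps^{-1}(\sqrt{2/3}\,x\cdot e_{\pi/6}+u_3)$ on $\{u_2>u_3\}$, and lets $\eps\to0$. The limit pair $(\bar w_1,\bar w_3)$ lives on two \emph{overlapping} sectors $\mathcal S_1,\mathcal S_3$ and solves a transmission system with a Neumann condition on the outer edge and the coupling $\partial_\nu \bar w_1=\tfrac12\partial_\nu \bar w_3$ on the inner edge (this overlap-region transmission is the novelty; it is not the six-sector picture you describe). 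That system is the Euler--Lagrange system of a quadratic form $Q$, and separation of variables gives eigenfunctions $\cos(\sqrt\lambda(\theta\pm b))$ subject to $|\sin 3t|=2|\sin 5t|$ with $t=\sqrt\lambda\,\pi/6$; the first eigenvalue above the rotational mode $\lambda=1$ is $\lambda_4$ with $4\cos(\tfrac\pi3\sqrt{\lambda_4})=\tfrac{\sqrt{17}-3}{2}$, yielding $\alpha_0=\sqrt{\lambda_4}-1\simeq0.36$. The $C^{1,\alpha_0}$ estimate for $Q$-minimizers then feeds back into one step of flatness improvement, which is iterated.

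Two smaller points. First, the inequality you wrote, $W(r)-W(0^+)\le C\,(rW'(r))^{1+\beta}$, has the exponent on the wrong side: as written it integrates to a contradiction for small $r$; the usable form is $(W(r)-W(0^+))^{1-\theta}\le C\,(rW'(r))^{1/2}$ (or an epiperimetric inequality), and to get the \emph{sharp} $\alpha_0$ you would in any case need exactly the spectral gap computed above. Second, your linearization is described as a problem on the six sectors carved by $\Gamma_1(V_0)\cup\Gamma_2(V_0)$; the paper's reduction to two scalar unknowns on overlapping $5\pi/6$-sectors is what makes the eigenvalue equation tractable and is worth adopting. With these adjustments your strategy would go through, but the paper's direct compactness-and-linearization route is shorter and gives the exponent transparently.
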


\begin{figure}[h] 
\includegraphics[width=0.4 \textwidth]{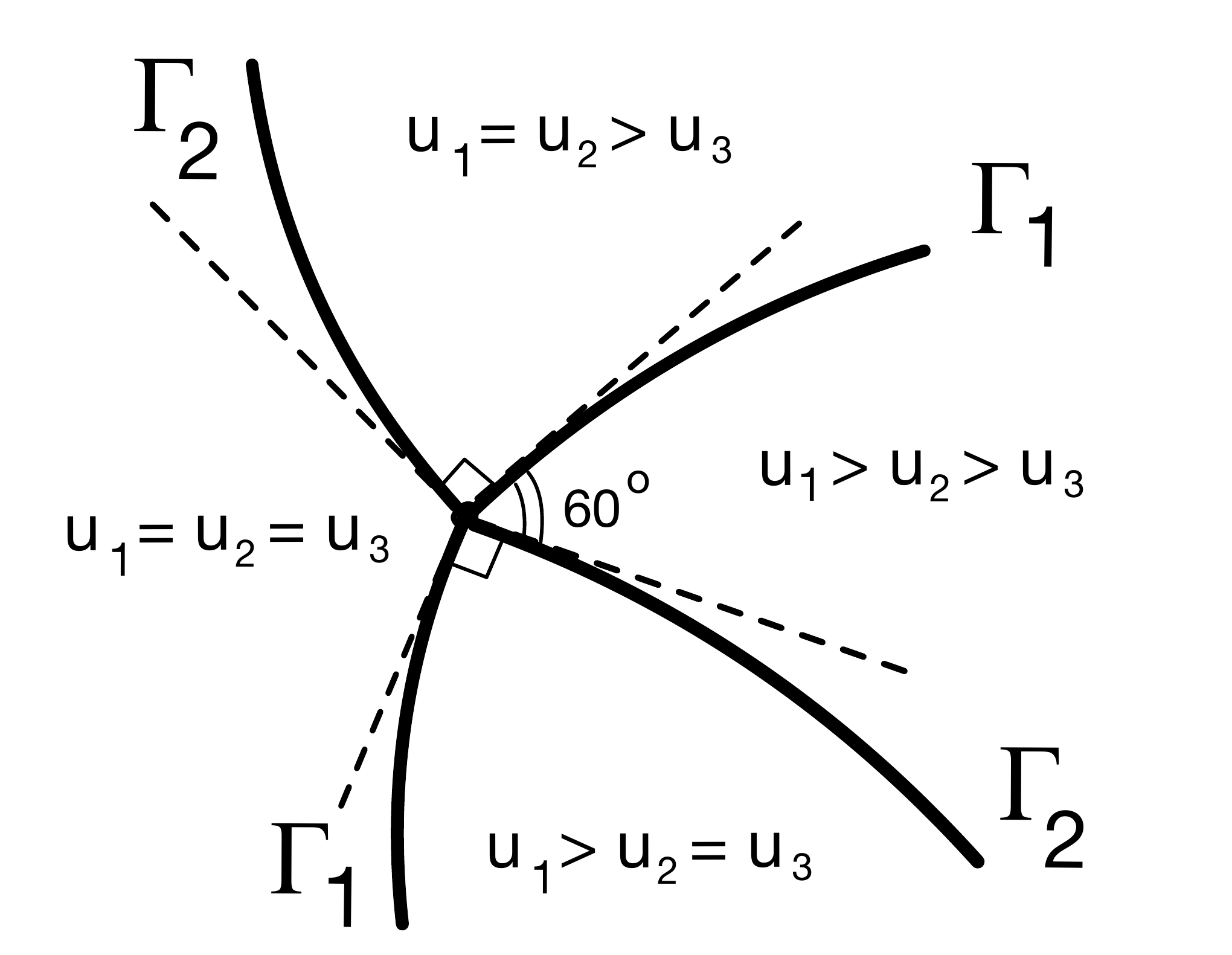}
\caption{ Free boundary geometry at an intersection point in $\mathbb R^2$} 
   \label{fig1}
\end{figure}

Theorem \ref{T2dI} can be extended to arbitrary dimensions. For this we define regular intersection points $$x_0 \in Reg(\Gamma_1 \cap \Gamma_2),$$ as those points for which there exists a blow-up cone at $x_0$ which is a rotation of a two-dimensional cone extended trivially in the remaining variables.

\begin{thm}\label{TndI}
$Reg(\Gamma_1 \cap \Gamma_2)$ is locally a $C^{1,\alpha}$-smooth submanifold of codimension two. Near such an intersection point, each of the free boundaries $\Gamma_1$ and $\Gamma_2$ consists of two piecewise $C^{1,\alpha}$ hypersurfaces which intersect on $Reg(\Gamma_1 \cap \Gamma_2)$ at a $120^ \circ$ angle.
\end{thm}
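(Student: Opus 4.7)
The plan is to deduce Theorem \ref{TndI} from an improvement-of-flatness statement built around the trivial extensions to $\R^n$ of the planar cone $V_0$ from Theorem \ref{m2d}. By the definition of $\mathrm{Reg}(\Gamma_1\cap\Gamma_2)$, combined with Theorem \ref{m2d} and a blow-up argument based on the Weiss monotonicity formula (Proposition \ref{WMF}) together with Theorem \ref{Bup}, for every $x_0\in\mathrm{Reg}(\Gamma_1\cap\Gamma_2)$ there exists a small scale $r>0$ and a rigid motion $V_{0,T}$ of $V_0\times\R^{n-2}$ with
$$\|U-V_{0,T}\|_{L^\infty(B_r(x_0))}\le \epsilon_0 r.$$
The theorem then follows from iterating an improvement-of-flatness lemma that I would take as the core step: there exist universal $\epsilon_0>0$, $\rho\in(0,\tfrac12)$, and $\alpha\sim 0.36$ such that whenever $U$ minimizes $J_3$ in $B_1$ with $\|U-V_{0,T}\|_{L^\infty(B_1)}\le\epsilon\le\epsilon_0$, there exists $V'_{0,T}$ differing from $V_{0,T}$ by a rotation and translation of size at most $C\epsilon$ with
$$\|U-V'_{0,T}\|_{L^\infty(B_\rho)}\le\rho^{1+\alpha}\epsilon.$$

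Once the lemma is in hand, the standard iteration produces a unique tangent cone at every regular point and a $C^\alpha$ modulus for the map $x_0\mapsto$ (tangent plane to $\mathrm{Reg}(\Gamma_1\cap\Gamma_2)$), giving the $C^{1,\alpha}$ smoothness of the codimension-two singular submanifold. Near any regular point, each $\Gamma_i$ then decomposes into two graphs of $C^{1,\alpha}$ functions over the hyperplanes corresponding to the two rays of $V_0$ that lie in $\Gamma_i$. The $120^\circ$ angle between the two pieces of each $\Gamma_i$ is read off directly from the blow-up $V_0$, which is the unique minimizing planar cone by Theorem \ref{m2d}.

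The improvement-of-flatness lemma itself is proved by the classical compactness and linearization scheme. Assuming it fails for some sequence, one normalizes the error $w_k:=(U_k-V_{0,T_k})/\epsilon_k$ and extracts a limit $w$ consisting of four bounded harmonic vector fields on the four open regions carved by $V_0\times\R^{n-2}$, subject to linear transmission conditions on each of the four half-hyperplane free boundaries (obtained by linearizing the one-phase conditions of $J_3$) and a matching condition at the codimension-two edge. A separation of variables in the $\R^{n-2}$ transversal directions reduces the problem to a two-dimensional spectral problem on the unit circle divided into four sectors with the appropriate transmission; the first admissible exponent of this eigenvalue problem produces the decay rate $r^{1+\alpha}$ with $\alpha\sim 0.36$, which is exactly the exponent appearing in Theorem \ref{T2dI}. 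Transversal Fourier modes contribute decay of order $r^2$, strictly smaller than $r^{1+\alpha}$, so they do not affect $\alpha$.

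The main obstacle is the derivation and analysis of the linearized transmission problem at the singular edge. One must carefully identify the linearized free boundary conditions — combining harmonicity of each $u_i$ in its region with the geometric matching at the $120^\circ$ edge — and then perform the sharp spectral computation that yields the decay exponent, which is essentially the same computation that drives Theorem \ref{T2dI}. A secondary but more technical point is to parametrize the admissible approximating cones by $SO(n)\times\R^n$ modulo the $SO(n-2)$ symmetry stabilizing $V_0\times\R^{n-2}$, so that the rotation selected at each step of the iteration is controlled uniformly; once this bookkeeping is set up, the extension from $n=2$ to general $n$ is essentially formal.
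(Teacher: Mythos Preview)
Your outline is correct and follows the same linearization/spectral strategy as the paper: one reduces to an improvement-of-flatness statement around rotations of $V_0\times\R^{n-2}$, passes to the limit to obtain a bounded solution of the transmission system \eqref{a1}--\eqref{a4}, and reads off the exponent $\alpha$ from the two-dimensional spectrum computed in Proposition \ref{C1a}.

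There are two genuine differences in packaging worth flagging. First, you allow translations of the vertex in your model family and run a single improvement-of-flatness iteration. The paper instead fixes the center and assumes closeness to a rotation of $V_0$ at \emph{all} scales (Proposition \ref{C1alphand}); it then needs additional machinery --- the energy gap of Lemma \ref{L6p5}, the dimension bound on the intermediate-energy set in Lemma \ref{L6p6}, and the topological argument in Proposition \ref{PLa} --- to upgrade one-scale closeness to all-scales closeness. Your route avoids that detour, at the cost of having to absorb the two constant modes $(\lambda_1=\lambda_2=0)$ and the $2(n-2)$ transversal linear modes by translations and tilts; your parameter count $SO(n)/SO(n-2)\times\R^2$ does match, so this is consistent. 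Second, for the $x''$ directions you invoke Fourier/separation of variables, while the paper instead differentiates in $x''$ via difference quotients (Lemma \ref{L6.5}) and reduces, for each fixed fiber, to an \emph{inhomogeneous} two-dimensional problem handled by a Campanato iteration (Lemma \ref{C1a2}). Your description ``transversal Fourier modes contribute $r^2$'' is correct only after the linear transversal modes have been absorbed by the rotation family; the paper's approach makes this step explicit. Both routes arrive at the same $C^{1,\alpha}$ expansion (Proposition \ref{C1an}) and hence the same conclusion.
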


As a consequence we obtain a general partial regularity result for the free boundary.
We define $Reg(\Gamma_i)$ as the collection of points $x_0 \in \Gamma_i$ that have a blow-up profile which is a rotation of a one-dimensional cone extended trivially in the remaining variables.

\begin{thm}[Partial regularity]\label{TPRI} Let $N=3$ and $U$ be a minimizer of $J_N$ in $B_1$. Then
$$ \Gamma_1 \cup \Gamma_2 = Reg(\Gamma_1) \cup Reg(\Gamma_2)\cup Reg (\Gamma_1 \cap \Gamma_2) \cup \Sigma',$$
with $Reg(\Gamma_i)$ locally analytic hypersurface, $Reg (\Gamma_1 \cap \Gamma_2)$ locally a $C^{1,\alpha}$ submanifold of codimension two, and $\Sigma'$ a closed singular set of Hausdorff dimension $n-3$. 
\end{thm}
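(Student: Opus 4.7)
The plan is to refine the partial regularity of Theorem \ref{prI} by stratifying the singular set according to the spine dimension of blow-up cones, and then to apply a Federer-style dimension reduction to the top singular stratum. Theorem \ref{prI} already gives $\Gamma_i = Reg(\Gamma_i) \cup \Sigma_i$ with $\Sigma_i$ closed of Hausdorff dimension at most $n-2$, so it suffices to analyze the residual set $\Sigma := \Sigma_1 \cup \Sigma_2$.

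For $x_0 \in \Sigma$, Theorem \ref{Bup} yields a blow-up sequence converging uniformly on compact sets to a one-homogeneous minimizing cone $\bar U$ with $0 \in \p\{|\bar U|>0\}$. Let $L(\bar U)$ denote the maximal linear subspace of translation invariance of $\bar U$. Since $x_0 \notin Reg(\Gamma_1) \cup Reg(\Gamma_2)$, Proposition \ref{P1dI} excludes $\dim L(\bar U) = n-1$. If some blow-up at $x_0$ has $\dim L(\bar U) = n-2$, then in the orthogonal two-plane $L(\bar U)^\perp$, $\bar U$ restricts to a two-dimensional minimizing cone with the origin as an intersection point of both free boundaries; Theorem \ref{m2d} identifies this restriction as a rotation or reflection of $V_0$, so by definition $x_0 \in Reg(\Gamma_1 \cap \Gamma_2)$, and Theorem \ref{TndI} provides the $C^{1,\alpha}$ codimension-two structure in a neighborhood. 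The remaining points, at which every blow-up has $\dim L(\bar U) \le n-3$, are placed into $\Sigma'$. Closedness of $\Sigma'$ is a consequence of the upper semicontinuity of the Weiss density (Proposition \ref{WMF}) and the compactness of blow-up limits ensured by the uniform Lipschitz bound of Theorem \ref{nonCI}: any sequence $x_k \in \Sigma'$ converging to $x_\infty$ forces, after passing to a subsequence, a blow-up of $U$ at $x_\infty$ whose spine dimension is bounded above by those at nearby $x_k$.

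The remaining task is to prove $\dim_{\mathcal H} \Sigma' \le n-3$ via the classical Federer dimension reduction. The key ingredient is the spine-enlargement principle: for any minimizing cone $\bar U$ and any singular point $y \neq 0$ of $\bar U$ with $y \notin L(\bar U)$, every blow-up of $\bar U$ at $y$ is a minimizing cone whose spine contains $L(\bar U) + \R y$. This follows componentwise from the identity
$$\bar U(y(1+rt) + rx) = (1+rt)\,\bar U\!\left(y + \tfrac{rx}{1+rt}\right),$$
a direct consequence of the one-homogeneity of $\bar U$, which shows that any subsequential limit $\tilde U(x) = \lim_{r\to 0} \bar U(y+rx)/r$ satisfies $\tilde U(x+ty) = \tilde U(x)$ for every $t \in \R$. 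Iterating this principle within a standard covering argument, the assumption $\dim L(\bar U) \le n-3$ for all blow-ups at points of $\Sigma'$ forces $\mathcal H^s(\Sigma') = 0$ for every $s > n-3$. The main technical point is to ensure that the spine-enlargement step respects the coupled vectorial structure of the system; this ultimately reduces to the invariance of $J_N$ under translations and dilations, together with the Lipschitz compactness of Theorem \ref{nonCI} that guarantees existence of the iterated blow-up limits as genuine minimizers.
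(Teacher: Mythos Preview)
Your approach is correct and runs parallel to the paper's, both ultimately resting on Federer dimension reduction. The paper phrases the stratification through Weiss energy levels (see Lemma~\ref{L6p6}, which bounds the dimension of points with density strictly between $\Phi(U_{0,k})$ and $\Phi(V_0)$) and uses Proposition~\ref{PLa} to deduce openness of $Reg(\Gamma_1\cap\Gamma_2)$, whereas you stratify directly by the spine dimension of blow-up cones. These are equivalent here because for $N=3$ the spine dimension of a nontrivial cone is determined by its Weiss energy (one-dimensional cones have the least energy, $V_0$ the next), so either bookkeeping device drives the same Federer iteration. Your version is the textbook formulation and is slightly more portable; the paper's version has the advantage of tying directly into the quantitative $\eps$-regularity statements already established.

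Two places in your write-up are loose and should be tightened. First, when you assert that a blow-up with $\dim L(\bar U)=n-2$ yields a two-dimensional cone ``with the origin as an intersection point of both free boundaries,'' you are implicitly using that a genuinely two-dimensional minimizing cone with $0\in\partial\{|\bar U|>0\}$ cannot have $\bar u_1\equiv\bar u_2$ or $\bar u_2\equiv\bar u_3$ globally (otherwise it collapses to a one-phase cone, hence one-dimensional); this is easy but should be stated before invoking Theorem~\ref{m2d}. Second, your closedness argument for $\Sigma'$ is not correct as phrased: upper semicontinuity of the Weiss density does not by itself give the claimed upper bound on the spine dimension of a blow-up at $x_\infty$ in terms of those at nearby $x_k$. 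The clean argument---and the one the paper uses---is simply that $Reg(\Gamma_i)$ and $Reg(\Gamma_1\cap\Gamma_2)$ are relatively open in $\Gamma_1\cup\Gamma_2$ by the $\eps$-regularity results (Proposition~\ref{GK} and Theorem~\ref{TndI} respectively), so their complement $\Sigma'$ is closed.
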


The study of regular intersection points leads to a transmission-type problem which appears in the linearization, which is new and interesting in its own, see \eqref{a1}-\eqref{a4} in Section \ref{Sec8}. The value of $\alpha$ in the theorems above is dictated by the spectrum of the linearized problem. The novelty is that the transmission condition does not occur along a hypersurface but it involves a region of full dimension where two functions interact. We refer the reader to Sections \ref{Sec8} and \ref{Sec9} for more details.

The paper is organized as follows. In Section \ref{Sec3} we collect some general facts about minimizers and prove the optimal regularity result Theorem \ref{nonCI}. In Section \ref{Sec4} we obtain the Weiss monotonicity formula and prove Theorem \ref{Bup}. We classify one-dimensional cones in Section \ref{Sec5} and then establish Theorem \ref{prI} in Section \ref{Sec6}. The last three sections are devoted to the study of two dimensional cones and regular intersection points for $N=3$ membranes.


\section{Lipschitz continuity and non-degeneracy of minimizers} \label{Sec3}

\subsection{Preliminaries.} Recall that, throughout this note, constants depending only on possibly $n,N$ are called universal. Also, whenever this does not create confusion, the dependence of $J_N$ on the domain is omitted.

We start by proving some basic facts about minimizers.

\begin{lem}[Lower semicontinuity]
If $U_m \to U$ in $L^2(\Omega)$, then
$$\liminf J_N(U_m,\Omega) \ge J_N(U,\Omega).$$
\end{lem}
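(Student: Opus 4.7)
The plan is to split $J_N = D + P$ into its Dirichlet part $D(U) := \int_\Omega |\nabla U|^2\, dx$ and its potential part $P(U) := \int_\Omega W(U)\, dx$, and to establish lower semicontinuity of each piece separately along the converging sequence $U_m \to U$ in $L^2(\Omega)$. There is no loss in assuming $\liminf J_N(U_m,\Omega) < \infty$; by passing to a subsequence I may also assume the $\liminf$ is a genuine limit.

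For the Dirichlet part, boundedness of $\int_\Omega |\nabla U_m|^2\,dx$ together with $L^2$ boundedness of $U_m$ (automatic from $L^2$ convergence) implies that $U_m$ is bounded in $H^1(\Omega)$. Along a further subsequence, $U_m \rightharpoonup U$ weakly in $H^1(\Omega)$, which gives
\[
\int_\Omega |\nabla U|^2\, dx \le \liminf_{m\to\infty} \int_\Omega |\nabla U_m|^2\, dx
\]
by weak lower semicontinuity of the $L^2$ norm of the gradient. Passing to a subsequence once more, I may also assume $U_m \to U$ pointwise a.e.\ in $\Omega$.

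For the potential part, the key pointwise observation is that if $u_i(x) > u_{i+1}(x)$ at a point where $U_m(x) \to U(x)$, then $u_{m,i}(x) > u_{m,i+1}(x)$ for all sufficiently large $m$. Consequently
\[
\chi_{\{u_i > u_{i+1}\}}(x) \le \liminf_{m\to\infty} \chi_{\{u_{m,i} > u_{m,i+1}\}}(x) \qquad \text{a.e.\ in } \Omega,
\]
and summing over $i=1,\ldots,N-1$ gives $W(U) \le \liminf_m W(U_m)$ a.e. Fatou's lemma then yields $\int_\Omega W(U)\, dx \le \liminf_m \int_\Omega W(U_m)\, dx$. Adding this to the Dirichlet bound gives $J_N(U,\Omega) \le \liminf_m J_N(U_m,\Omega)$ along the chosen subsequence, and since the original $\liminf$ is achieved by some subsequence, the conclusion follows for the full sequence.

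There is no serious obstacle here: the argument is essentially the standard decomposition into a weakly lower semicontinuous quadratic form plus a nonnegative integrand treated via Fatou. The only mild subtlety is that $W$ is defined via characteristic functions of strict inequalities, but strictness is precisely what makes the pointwise inequality above hold, so lower semicontinuity (rather than continuity) is exactly what one gets.
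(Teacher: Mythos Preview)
Your proof is correct and follows essentially the same approach as the paper: split $J_N$ into the Dirichlet part (weakly lower semicontinuous) and the potential part, and use that $W:\R^N\to\R$ is pointwise lower semicontinuous together with Fatou. The paper's proof is a two-line sketch of exactly this strategy; you have simply filled in the standard details (subsequence extraction, weak $H^1$ compactness, a.e.\ convergence).
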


\begin{proof}The Dirichlet energy is lower semicontinuous. Since $W: \R^N \to \R$ is lower semicontinuous, it follows the potential term in $J_N$ is lower semicontinuous as well. 

\end{proof}
As a consequence we obtain the existence of minimizers with boundary data in $H^1(\Omega)$. 

\begin{prop} Given $\Phi \in \mathcal A$, there exists a minimizer $U$ of $J_N$ in $\Omega$ with boundary data $\Phi$ on $\p \Omega$.

\end{prop}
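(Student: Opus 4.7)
The plan is to apply the direct method of the calculus of variations, using the lower semicontinuity just established. First, I would let $U_m \in \mathcal A$ with $U_m|_{\p\Omega} = \Phi$ be a minimizing sequence, and show it is bounded in $H^1(\Omega)$. The Dirichlet part of $J_N(U_m)$ controls $\|\nabla U_m\|_{L^2}$ directly, while the potential $W(U_m)$ is trivially bounded by $N-1$. To obtain $L^2$ control, I would write $U_m = \Phi + V_m$ with $V_m \in H^1_0(\Omega)$ and apply the Poincaré inequality to each component of $V_m$, then combine with $\|\Phi\|_{L^2}$.

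Next, along a subsequence I would extract a weak $H^1$ limit $U_m \rightharpoonup U$, which is also a strong $L^2$ limit by Rellich's theorem. The trace operator is continuous under weak $H^1$ convergence, so $U$ satisfies the boundary condition $U|_{\p\Omega} = \Phi$. The admissibility condition is preserved in the limit: indeed, $\mathcal A$ is defined by pointwise (a.e.) inequalities $u_{m,i} \ge u_{m,i+1}$, which pass to a.e.\ limits along a further subsequence converging a.e., so $U \in \mathcal A$.

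Finally, the lower semicontinuity lemma yields
\begin{equation*}
J_N(U,\Omega) \le \liminf_{m\to\infty} J_N(U_m,\Omega) = \inf_{V \in \mathcal A,\, V|_{\p\Omega} = \Phi} J_N(V,\Omega),
\end{equation*}
so $U$ is a minimizer. The only step requiring minor care is the $L^2$ bound for the minimizing sequence, which is straightforward once one subtracts the boundary data $\Phi$ and applies Poincaré; the remaining ingredients are standard. No step presents a real obstacle, since $\mathcal A$ is a closed convex cone in $H^1$ and $J_N$ is lower semicontinuous and coercive.
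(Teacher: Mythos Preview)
Your proposal is correct and follows exactly the approach the paper indicates: the paper does not give a detailed proof but simply states that existence ``follows easily from the direct method of the calculus of variations'' as a consequence of the lower semicontinuity lemma, and your outline is precisely that argument spelled out.
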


Next we show that the average of the $u_i$'s is harmonic.

\begin{lem}\label{harmonic} If $U$ is a minimizer of $J_N$ in $\Omega$, then $\sum_{i=1}^N u_i$ is harmonic in $\Omega.$ \end{lem}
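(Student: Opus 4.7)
The plan is to exploit the gauge invariance of the problem: adding the same function $t\eta$ to every component of $U$ preserves both the admissibility constraint (since it does not change the ordering) and the potential $W(U)$ (since the multiset structure of the values $u_1(x),\ldots,u_N(x)$ at each point is unchanged — equivalently $\chi_{\{u_i+t\eta > u_{i+1}+t\eta\}} = \chi_{\{u_i > u_{i+1}\}}$). Hence competitors of this form are automatically admissible and only the Dirichlet term contributes to the first variation.

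Concretely, pick $\eta \in C_c^\infty(\Omega)$ and $t \in \R$, and set
\[
V_t := (u_1 + t\eta,\, u_2 + t\eta,\, \ldots,\, u_N + t\eta).
\]
By the remarks above $V_t \in \mathcal A$, $V_t = U$ on $\p \Omega$, and $W(V_t) = W(U)$ pointwise. Therefore
\[
J_N(V_t,\Omega) - J_N(U,\Omega) = \int_\Omega \sum_{i=1}^N \bigl( |\nabla u_i + t \nabla \eta|^2 - |\nabla u_i|^2 \bigr)\,dx = 2t \int_\Omega \nabla\Bigl(\sum_{i=1}^N u_i\Bigr) \cdot \nabla \eta \, dx + N t^2 \int_\Omega |\nabla \eta|^2 dx.
\]
By minimality the left-hand side is nonnegative for every $t \in \R$; letting $t \to 0^{\pm}$ forces the first-order coefficient to vanish, giving
\[
\int_\Omega \nabla\Bigl(\sum_{i=1}^N u_i\Bigr) \cdot \nabla \eta \, dx = 0 \qquad \text{for all } \eta \in C_c^\infty(\Omega).
\]
This is precisely the statement that $\sum_{i=1}^N u_i$ is weakly (hence, by Weyl's lemma, classically) harmonic in $\Omega$.

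There is no real obstacle here; the only thing to verify carefully is that the perturbation $V_t$ lies in $\mathcal A$ and keeps $W$ unchanged, which is immediate because adding the same scalar to all coordinates preserves the partial order on $\R^N$ defining $\mathcal A$ and preserves pairwise equalities/strict inequalities, which is all $W$ sees. The flexibility in both signs of $t$ is what promotes the inequality to the Euler--Lagrange equation for $\sum u_i$.
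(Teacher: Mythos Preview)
Your proof is correct and follows essentially the same approach as the paper: perturb all components by the same test function, observe that both the ordering constraint and the potential $W$ are preserved, and read off the weak harmonicity of $\sum u_i$ from the first variation of the Dirichlet term. The paper presents this slightly more tersely (computing a difference quotient rather than expanding the quadratic exactly), but the idea is identical.
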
 \begin{proof} Let $\psi \in C_0^1(\Omega)$, and $\Psi=(\psi, \ldots, \psi)$. Then 
$$W(U)= W(U+\eps \Psi)$$ and 
$$\frac{J_N(U+\eps \Psi) - J_N(U)}{\eps} = \frac{1}{\eps}\int_{\Omega} ( |\nabla (U +\eps \Psi)|^2 - |\nabla U|^2) dx$$ $$= 2 \int_{\Omega} \nabla (\sum_{i=1}^n u_i) \cdot \nabla \psi dx + o(\eps),$$ from which our claim follows.\end{proof}

Similarly, the problem remains invariant with respect to addition of a harmonic function $\psi$ to each component, that is $$J_N(U + \psi(1,\ldots,1))= J_N(U) + C(\Psi,\Phi), \quad \Psi=\psi(1,\ldots, \psi)$$ for some constant $$C(\Psi,\Phi)=\int_\Omega |\nabla \Psi|^2 dx + \int_{\p \Omega}  \Phi \cdot \Psi_\nu d \sigma,$$ that depends only on $\psi$ and the boundary data $\Phi$ of $U$. Therefore, at times we may assume that $U$ satisfies,
\begin{equation}\label{zero}
\sum_{i=1}^N u_i=0.
\end{equation}

\begin{lem}\label{sub-super}Let $U$ minimize $J_N$ in $\Omega$. Then $u_1$ is subharmonic and $u_N$ is superharmonic in $\Omega$.
\end{lem}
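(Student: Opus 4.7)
The plan is to prove both assertions via a pair of vectorial competitors that respect the ordering constraint. I will sketch the argument for $u_1$; the one for $u_N$ follows by reflection, either by a mirror construction or by applying the first argument to the minimizer $-(u_N,\ldots,u_1)$.

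Fix a ball $B$ compactly contained in $\Omega$ and let $h$ be the harmonic function in $B$ with $h=u_1$ on $\p B$. As a competitor I would cap every component from above by $h$:
$$
\tilde u_i := \min(u_i,h) \ \text{in}\ B, \qquad \tilde u_i := u_i \ \text{in}\ \Omega\setminus B.
$$
Since $\min(\cdot,h)$ preserves order and $u_i\le u_1=h$ on $\p B$, one checks immediately that $\tilde U\in\mathcal A$ and $\tilde U=U$ outside $B$. Setting $v_i:=(u_i-h)^+$, each $v_i$ lies in $H^1_0(B)$ and $\tilde u_i=u_i-v_i$; because $\Delta h=0$, the standard integration by parts gives
$$
\int_B \bigl( |\nabla \tilde u_i|^2 - |\nabla u_i|^2 \bigr) dx = -\int_B |\nabla v_i|^2\, dx \le 0
$$
for every $i$. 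At the same time, monotonicity of $\min$ forces $\{\tilde u_i>\tilde u_{i+1}\}\subseteq\{u_i>u_{i+1}\}$ (if the two capped values are strictly ordered the uncapped ones must be), so $W(\tilde U)\le W(U)$ pointwise. Both contributions to $J_N(\tilde U)-J_N(U)$ are therefore non-positive, and the minimality of $U$ forces each $\int_B|\nabla v_i|^2\,dx$ to vanish. Hence $v_i\equiv 0$ in $B$ for every $i$, and in particular $u_1\le h$ in $B$; since $B$ was arbitrary this is precisely the subharmonicity of $u_1$.

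For $u_N$ I would use the dual competitor $\tilde u_i:=\max(u_i,k)$ in $B$, with $k$ harmonic in $B$ and $k=u_N$ on $\p B$. The boundary relation $u_i\ge u_N=k$, the order-preserving character of $\max(\cdot,k)$, and the fact that $w_i:=(k-u_i)^+\in H^1_0(B)$ together yield the analogous identities, with Dirichlet change equal to $-\sum_i\int_B|\nabla w_i|^2\,dx$ and potential change $\le 0$ by the same monotonicity observation. Minimality then gives $u_i\ge k$ in $B$ for every $i$, and taking $i=N$ yields the superharmonicity of $u_N$.

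The one conceptual subtlety, and the main thing to notice, is that one must perturb the entire vector $U$ simultaneously rather than just $u_1$ (or $u_N$): a single-component cap would generically violate the ordering in $\mathcal A$, whereas simultaneous capping preserves the order automatically and, as a bonus, pushes $W$ in the favorable direction. Once both changes in energy carry the same non-positive sign, minimality is rigid and forces them both to vanish, which is exactly what delivers the pointwise comparison with the harmonic extension.
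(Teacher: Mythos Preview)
Your proof is correct and follows essentially the same approach as the paper: the paper also uses the competitor $\tilde u_k=\min(u_k,v_1)$ with $v_1$ the harmonic replacement of $u_1$ on a ball, observes $W(\tilde U)\le W(U)$, and computes the Dirichlet defect componentwise via $w_k=(u_k-v_1)^+\in H_0^1$ to force $u_1\le v_1$. The only cosmetic difference is the order of the two observations---the paper first uses minimality to get $\int|\nabla U|^2\le\int|\nabla\tilde U|^2$ and then derives a contradiction, whereas you show both energy contributions are non-positive directly---but the content is identical.
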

\begin{proof}
We prove the first claim, the second one can be proved similarly. Let $v_1$ be the harmonic replacement of $u_1$ in $B_r \subset \Omega.$ We wish to prove that $u_1 \leq v_1$ in $B_r$. 
Indeed, let $$\tilde u_k = \min\{u_k, v_1\}, \quad \tilde U = (\tilde u_1,\ldots, \tilde u_N).$$
Clearly, 
$$W(U) \geq W(\tilde U),$$ hence by minimality
\begin{equation}\label{smaller}\int_{B_r} |\nabla U|^2 dx \leq \int_{B_r} |\nabla \tilde U|^2 dx.\end{equation}
On the other hand, since $$v_1 = u_1 \geq u_k, \quad \forall k=1, \ldots N, \quad \text{on $\p B_r$},$$ we have $$w_k:=(u_k-v_1)^+ \in H_0^1(B_r).$$ After integration by parts and using that $v_1$ is harmonic, we get
$$\int_{B_r} (|\nabla u_k|^2 - |\nabla \tilde u_k|^2) dx  = \int_{B_r}( |\nabla (v_1+w_k)|^2 - |\nabla v_1|^2)dx \geq 0,$$ with strict inequality unless $w_k \equiv 0$. Thus, in view of \eqref{smaller}, $w_k \equiv 0$ in $B_r$ for all $k$'s, which gives the desired claim that $u_1 \leq v_1$ in $B_r.$

Since this argument can be repeated for any ball included in $\Omega$, we conclude that $u_1$ is subharmonic.
\end{proof}

Next, we prove the following obvious yet useful fact.

\begin{lem}\label{induction} Let $U$ minimize $J_N$ in $\Omega$ and assume that $$u_{k} > u_{k+1} \quad \text{for some $k \in \{1, \ldots, N-1\}$}.$$ Then, 
$\underbar U := (u_1,\ldots, u_{k})$ minimizes $J_k$ in $\Omega$ and $\bar U := (u_{k+1}, \ldots, u_{N})$ minimizes $J_{N-k}$.
\end{lem}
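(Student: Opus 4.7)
The plan is to show that $\underbar U$ minimizes $J_k$ in $\Omega$ (the claim for $\bar U$ is symmetric) by comparing $U$ against a cleverly constructed competitor in the $J_N$ problem built out of an arbitrary $\underbar V$ and the fixed tail $\bar U$. Because $u_k > u_{k+1}$ throughout $\Omega$ the indicator $\chi_{\{u_k>u_{k+1}\}}$ is identically $1$, so the potential decouples pointwise,
\[
W(U)=W(\underbar U)+1+W(\bar U),\qquad J_N(U,\Omega)=J_k(\underbar U,\Omega)+J_{N-k}(\bar U,\Omega)+|\Omega|.
\]
This identity is the starting point: it tells us that the $J_N$-energy of $U$ is rigidly split into the two subsystem energies, and any attempt to lower $J_k(\underbar U)$ using a competitor $\underbar V$ ought to translate into an attempt to lower $J_N(U)$.

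Given $\underbar V=(v_1,\dots,v_k)$ admissible for $J_k$ with the same boundary data as $\underbar U$, I would concatenate $\underbar V$ with $\bar U$ and then \emph{sort pointwise in non-increasing order} to obtain $W^*=(w_1,\dots,w_N)\in\mathcal A$. On $\p\Omega$ this sorting returns $U$ itself (since the unsorted vector there is already ordered), so $W^*$ is an admissible competitor in the $J_N$ problem. The key two accounting identities I would establish are: first, the pairwise rearrangement rule $|\nabla\max(f,g)|^2+|\nabla\min(f,g)|^2=|\nabla f|^2+|\nabla g|^2$ applied via bubble-sort yields
\[
\sum_{i=1}^N |\nabla w_i|^2 \;=\; \sum_{j\le k}|\nabla v_j|^2+\sum_{\ell>k}|\nabla u_\ell|^2 \quad\text{a.e.};
\]
second, since $W$ on a sorted $N$-tuple equals the cardinality of its values minus $1$, inclusion-exclusion gives
\[
W(W^*) \;=\; W(\underbar V)+W(\bar U)+1-\#\bigl(\{v_j(x)\}\cap\{u_\ell(x)\}\bigr)
\]
pointwise. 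Plugging these into $J_N(U)\le J_N(W^*)$, the $J_{N-k}(\bar U)$ and $|\Omega|$ terms cancel and one obtains
\[
J_k(\underbar U,\Omega)+\int_{\Omega}\#\bigl(\{v_j\}\cap\{u_\ell\}\bigr)\,dx \;\le\; J_k(\underbar V,\Omega),
\]
and since the intersection count is nonnegative, $J_k(\underbar U)\le J_k(\underbar V)$ as desired. The symmetric argument for $\bar U$ uses the sorted rearrangement of $(\underbar U,\bar V)$.

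The only non-routine point is the bookkeeping: one must confirm that the pointwise sorting of an $H^1$ tuple lands back in $H^1$ with the claimed Dirichlet-energy identity (this follows from iterating the pairwise $\max/\min$ identity, each step preserving both $H^1$-regularity and the energy), and that the subsystem potentials recombine correctly with the right $\pm 1$ bookkeeping. Once the sorting step is set up, the proof is essentially a single line of algebra from the minimality of $U$.
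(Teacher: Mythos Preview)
Your argument is correct and takes a genuinely different route from the paper. The paper argues by contradiction and handles the ordering constraint differently: instead of sorting, it replaces \emph{both} halves by the true minimizers $\underline V$ and $\bar V$ of the two sub-problems on a compactly contained $\Omega'\subset\subset\Omega$, and then invokes Lemma~\ref{sub-super} (the last component of a minimizer is superharmonic, the first is subharmonic) together with the strict separation $u_k>u_{k+1}$ on $\partial\Omega'$ to conclude $\underline v_k\ge \bar v_{k+1}$ by the maximum principle, so that the concatenation $(\underline V,\bar V)$ is already admissible with no rearrangement needed. Your approach instead keeps $\bar U$ fixed, treats an \emph{arbitrary} competitor $\underbar V$, and enforces admissibility by pointwise sorting; the bubble-sort identity makes the Dirichlet energy exact, and your inclusion--exclusion count shows the sorted potential can only drop. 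The payoff of your route is that it is self-contained (no appeal to Lemma~\ref{sub-super} or to the existence of sub-minimizers) and yields the direct inequality $J_k(\underbar U)\le J_k(\underbar V)$ for every competitor in $\Omega$; the paper's route is shorter once the sub/superharmonicity lemma is available and highlights the structural fact that extremal membranes of a minimizer have a sign on their Laplacian.
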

\begin{proof}The proof is straightforward. Assume by contradiction that  $\underline{V}$ minimizes $J_k$ in $\Omega' \subset \subset \Omega$ with boundary data $\underline U$ and \begin{equation}\label{bar}J_k(\underline V) < J_k(\underline U).\end{equation} Let $\bar V$ minimize $J_{N-k}$ in $\Omega'$ with boundary data $\bar U$, and set
$$V= (\underline V, \bar V).$$ 
Since, by Lemma \ref{sub-super} $\underline{v}_k$  is superharmonic while  $\bar v_{k+1}$ is subharmonic and one in strictly on top of the other one on the boundary, we conclude that $\underline v_k \geq \bar v_{k+1}$ in $\Omega'$, hence $V$ is an admissible competitor. 
Furthermore,  in view of \eqref{bar} and our assumption,
$$J_N(V)  \leq J_k(\underline V)+ J_{N-k}(\bar V)+ |\Omega'| < J_k(\underline U)+ J_{N-k}(\bar U)+ |\Omega'|= J_N(U)$$ 
and we reach a contradiction.
\end{proof}

\subsection{Lipschitz Continuity.} We now turn to the proof of Lipschitz continuity. We start first by establishing that minimizers are H\"older continuous, by treating the potential term $W(U)$ as a perturbation. The proof is standard but we provide the details, for completeness.

After multiplying $U$ by $\delta^{1/2}$, we may assume that $U$ minimizes 
$$J_N^\delta(U):=\int_\Omega (|\nabla U|^2 + \delta W(U)) dx,$$
with $\delta>0$ small.

Without loss of generality, we assume that we deal with minimizers in $B_1$.
H\"older continuity follows with standard arguments from the next proposition. 

From now on, in the body of the proofs $c,C>0$ are universal constants possibly changing from step to step.

\begin{prop}\label{prop1} Let $U$ be a minimizer of $J_N^\delta$ in $B_1$. For every $\alpha \in (0,1),$ there exist $\delta_0, \rho>0$ depending on $\alpha$, such that, if $\delta \leq \delta_0$ and 
\begin{equation}\label{ass1}
\fint_{B_1} |\nabla U|^2 dx \leq 1,
\end{equation}
then
\begin{equation}
\rho^{2-2\alpha}\fint_{B_{\rho}} |\nabla U|^2 dx \leq 1.
\end{equation}
\end{prop}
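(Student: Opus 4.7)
The plan is to use harmonic replacement as a competitor in order to exploit the smallness of $\delta$. Given $U$ as in the statement, I would define $V = (v_1,\ldots,v_N)$ on $B_{1/2}$ by taking $v_i$ to be the harmonic function in $B_{1/2}$ with boundary data $u_i$ on $\p B_{1/2}$, extended by $u_i$ outside. The first observation is that $V \in \mathcal A$, which I would verify by applying the maximum principle to each harmonic difference $v_i - v_{i+1}$, whose boundary trace $u_i - u_{i+1}$ is nonnegative by the ordering of $U$.

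Next, I would use minimality to compare the energies. Since $0 \le W \le N-1$ pointwise, the inequality $J_N^\delta(U,B_{1/2}) \le J_N^\delta(V,B_{1/2})$ reduces to
$$\int_{B_{1/2}} (|\nabla U|^2 - |\nabla V|^2)\,dx \le \delta (N-1)|B_{1/2}| \le C\delta.$$
Because $V$ is componentwise harmonic and $U-V \in H^1_0(B_{1/2})$ componentwise, the standard orthogonality identity turns the left-hand side into $\int_{B_{1/2}} |\nabla(U-V)|^2\,dx$, so
$$\int_{B_{1/2}} |\nabla(U-V)|^2\,dx \le C\delta.$$
Separately, the minimality of the harmonic replacement for each component together with \eqref{ass1} yields $\int_{B_{1/2}} |\nabla V|^2 \le \int_{B_1} |\nabla U|^2 \le |B_1|$, and since each $|\nabla v_i|^2$ is subharmonic the interior gradient estimate gives $\sup_{B_{1/4}} |\nabla V|^2 \le C$ universal.

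Finally, for $\rho \le 1/4$ I would split
$$\fint_{B_\rho} |\nabla U|^2\,dx \le 2\fint_{B_\rho} |\nabla V|^2\,dx + 2\fint_{B_\rho} |\nabla(U-V)|^2\,dx \le C_1 + \frac{C_2 \delta}{\rho^n},$$
so that
$$\rho^{2-2\alpha} \fint_{B_\rho} |\nabla U|^2\,dx \le C_1 \rho^{2-2\alpha} + C_2 \delta\, \rho^{2-2\alpha-n}.$$
Since $\alpha < 1$, the first term tends to $0$ as $\rho \to 0$, so I would pick $\rho = \rho(\alpha) > 0$ small enough to make it at most $1/2$, and then choose $\delta_0 = \delta_0(\alpha)$ small enough that $C_2 \delta_0\, \rho^{2-2\alpha-n} \le 1/2$. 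This yields the desired bound. The only mildly delicate step is checking admissibility of the harmonic replacement, but the ordering is preserved pointwise by the maximum principle; everything else is a routine application of the orthogonality of harmonic replacements and interior regularity for harmonic functions.
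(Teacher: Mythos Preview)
Your proof is correct and follows essentially the same approach as the paper: harmonic replacement as a competitor, the orthogonality identity to bound $\|\nabla(U-V)\|_{L^2}^2$ by $C\delta$, the subharmonicity of $|\nabla V|^2$ for the interior gradient bound, and then the split of $\fint_{B_\rho}|\nabla U|^2$ with $\rho$ chosen first and $\delta_0$ second. The only cosmetic difference is that the paper performs the replacement in $B_1$ (yielding the gradient bound in $B_{1/2}$) while you replace in $B_{1/2}$ (yielding the bound in $B_{1/4}$); this is immaterial.
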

\begin{proof} Let $V$ be the harmonic replacement of $U$ in $B_1$. Then, by the maximum principle $v_i \geq v_{i+1}, i=1,\ldots, N$, and $V$ is an admissible competitor for the minimization of $J_N^\delta$. Thus, 
\begin{align}\label{new}
\nonumber \int_{B_1} |\nabla (U-V)|^2 dx & \leq J_N^\delta(U) + \int_{B_1} (|\nabla V|^2 - 2 \nabla U \cdot \nabla V) dx\\ 
\nonumber & = J_N^\delta(U) - \int_{B_1} |\nabla V|^2 dx \\
&  \leq J_N^\delta(V) - \int_{B_1} |\nabla V|^2 dx \\
\nonumber  &=\int \delta W(V) dx \leq \delta N.
 \end{align} 
 Hence,
 \begin{equation}\label{1}
  \fint_{B_\rho} |\nabla (U-V)|^2 dx \leq \delta N \rho^{-n}.
 \end{equation}
 On the other hand, since $|\nabla V|^2$ is subharmonic in $B_1$ and by \eqref{ass1} 
$$\int_{B_1} |\nabla V|^2 dx \leq \int_{B_1} |\nabla U|^2 dx\leq C,$$ we conclude that
$$|\nabla V| \leq C, \quad \text{in $B_{1/2}$}.$$
In particular,
$$\fint_{B_\rho} |\nabla V|^2  dx\leq C_0,$$ for $C_0$ universal.
Combining this last inequality with \eqref{1} we conclude that,
$$\fint_{B_\rho} |\nabla U|^2 dx \leq 2 \delta N \rho^{-n} + 2 C_0 \leq  4 C_0,$$
as long as
$$\delta \leq \frac {C_0}{N} \rho^n.$$
Thus,
$$\rho^{2-2\alpha}\fint_{B_\rho} |\nabla U|^2 dx \leq 4C_0 \rho^{2-2\alpha} \leq 1,$$ by choosing $\rho$ such that,
$$\rho^{2-2\alpha} \leq (4C_0)^{-1}.$$
\end{proof}

The next corollary now follows via standard arguments.

\begin{cor}\label{holder} Let $U$ be a minimizer to $J_N$ in $B_1$. Then $U \in C_{loc}^{0,\alpha} (B_1)$, for any $\alpha \in (0,1)$ and $$[U]_{C^\alpha(B_{1/2})} \le C(1+\|\nabla U \|_{L^2(B_1)}),$$
with $C(\alpha,n,N)$.\end{cor}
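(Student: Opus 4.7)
The corollary follows from Proposition \ref{prop1} by a standard Campanato-type iteration. First I would record the scaling behavior of minimality: if $U$ minimizes $J_N$ on $B_r(x_0)$, then a direct change of variables shows that $V(x) := U(x_0+rx)/\lambda$ is a minimizer of $J_N^\delta$ on $B_1$ with $\delta = r^2/\lambda^2$, and
$$\fint_{B_1}|\nabla V|^2\, dx = (r/\lambda)^2 \fint_{B_r(x_0)}|\nabla U|^2\, dy.$$
The key point here is that the potential $W$ is invariant under the amplitude rescaling $U\mapsto U/\lambda$, so only the Dirichlet part contributes to the reweighting of the two terms.

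Second, I would iterate Proposition \ref{prop1} to obtain a Morrey-type decay. Fix $\alpha \in (0,1)$ and let $\delta_0, \rho$ be the associated constants. Given $x_0 \in B_{1/2}$ and $r\le 1/2$, I would choose the amplitude
$$\lambda^2 := r^2 \max\bigl(\delta_0^{-1},\, \fint_{B_r(x_0)}|\nabla U|^2\, dy\bigr),$$
which makes both hypotheses of Proposition \ref{prop1} hold simultaneously for the associated $V$. Pulling the conclusion of Proposition \ref{prop1} back to $U$ yields
$$\Phi(\rho r) \le \max\bigl(\delta_0^{-1},\, \Phi(r)\bigr), \quad \text{where } \Phi(r) := r^{2-2\alpha}\fint_{B_r(x_0)}|\nabla U|^2\, dx.$$
Iterating this inequality along the geometric sequence $r_k = \rho^k/2$ and interpolating between adjacent scales produces, uniformly in $x_0 \in B_{1/2}$ and $r \in (0,1/2]$,
$$\int_{B_r(x_0)} |\nabla U|^2\, dx \le C\, r^{n-2+2\alpha}\bigl(1 + \|\nabla U\|_{L^2(B_1)}^2\bigr).$$

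Finally, Campanato's embedding theorem applied to this Morrey-type decay gives $U \in C^{0,\alpha}(B_{1/2})$ together with the claimed bound $[U]_{C^\alpha(B_{1/2})} \le C(\alpha,n,N)\bigl(1 + \|\nabla U\|_{L^2(B_1)}\bigr)$.

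The argument is essentially routine once Proposition \ref{prop1} is in hand; the only point requiring care is the simultaneous tuning of $\lambda$ at every scale, which has to enforce both $\delta \le \delta_0$ and $\fint_{B_1}|\nabla V|^2 \le 1$. The $\max$ in the choice of $\lambda$ is precisely what produces the harmless additive ``$1$'' in the final estimate.
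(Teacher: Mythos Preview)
Your proof is correct and is precisely the ``standard argument'' the paper alludes to (the paper itself omits the proof entirely, writing only that the corollary ``follows via standard arguments''). Your scaling computation, choice of $\lambda$, iteration of Proposition~\ref{prop1}, and appeal to Morrey/Campanato are all in order; in particular the paper's remark before Proposition~\ref{prop1} that one may multiply $U$ by $\delta^{1/2}$ to pass to $J_N^\delta$ is exactly the amplitude-rescaling mechanism you exploit.
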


Prior to proving Lipschitz continuity, we introduce the following definition.

\begin{defn}\label{conn}
We say that $U$ is disconnected in $B_\rho$ if there exists $k \in \{2,\ldots, N\}$ such that $u_k>u_{k-1}$ in $B_\rho$. Otherwise we say that $U$ is connected in $B_\rho$.
\end{defn}

Our main Lipschitz regularity result reads as follow.
\begin{lem}[Uniform Lipschitz estimate]\label{uniLip}Let $U$ minimize $J_N$ in $B_1$ and
assume that $\sum_{i=1}^N u_i=0$ and $U$ is connected in $B_{1/2}$. Then \begin{equation}\label{con1}\|U\|_{C^{0,1}(B_{1/2})} \le C,\end{equation} for $C>0$ universal.
\end{lem}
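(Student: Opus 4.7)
The plan is to split the estimate into two pieces: a conditional Lipschitz bound
$\|U\|_{C^{0,1}(B_{1/2})} \le C(1+\|U\|_{L^\infty(B_{3/4})})$, and a universal $L^\infty$ bound $\|U\|_{L^\infty(B_{3/4})} \le C$ that uses the connectedness and zero-mean hypotheses crucially. The conditional bound does not really need either assumption, so all the new content sits in the $L^\infty$ step.

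For the conditional Lipschitz bound I would follow the Alt--Caffarelli scheme adapted to the vectorial system. Comparison with the component-wise harmonic replacement $V$ (cf.\ Equation \eqref{new} in the proof of Proposition \ref{prop1}) gives an $H^1$-closeness estimate $\int |\nabla (U-V)|^2 \le N$, and standard harmonic interior estimates control $\|\nabla V\|_{L^\infty}$ in terms of $\|U\|_{L^\infty(B_{3/4})}$ (using Lemma \ref{sub-super} to bound $V$ in sup norm). Near a free boundary point $x_0 \in \Gamma_{k-1}$ and away from the other $\Gamma_j$'s, Lemma \ref{induction} locally reduces the pair $(u_{k-1},u_k)$ to a two-component minimizer; the difference $u_{k-1}-u_k$ then satisfies a scalar one-phase problem to which the classical Alt--Caffarelli linear growth and non-degeneracy apply with a universal constant, giving the pointwise Lipschitz bound at $x_0$.

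For the universal $L^\infty$ bound, connectedness in $B_{1/2}$ produces, for each $k \in \{2,\ldots,N\}$, a point $x_k \in B_{1/2}$ with $u_{k-1}(x_k)=u_k(x_k)$, i.e.\ a junction point on $\Gamma_{k-1}$. The universal Alt--Caffarelli linear growth yields $\sup_{B_r(x_k)}(u_{k-1}-u_k) \le Cr$ for $r$ up to a fixed universal scale, and one propagates this to a uniform bound $u_{k-1}-u_k \le C$ on $B_{3/4}$ by iterating the growth estimate and using the mean-value/maximum-principle structure of the non-negative difference together with the sub/super-harmonicity of $u_1$ and $u_N$ from Lemma \ref{sub-super}. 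Summing over $k$ gives
\[
u_1 - u_N = \sum_{k=2}^{N}(u_{k-1}-u_k) \le C(N-1) \quad \text{on } B_{3/4}.
\]
Combined with the zero-mean assumption, which together with the ordering forces $u_1 \ge 0 \ge u_N$, the sandwich $u_N \le u_i \le u_1$ yields $|U| \le C(N-1)$ on $B_{3/4}$, and plugging this into the conditional estimate completes the proof.

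The main obstacle I expect is the universal propagation step: upgrading the linear growth at a single junction $x_k\in B_{1/2}$ to a uniform bound on the entire $B_{3/4}$. Although individual components $u_i$ can a priori be large, the non-negative differences $u_{k-1}-u_k$ must be controlled globally; this requires a bootstrapping argument combining the scalar one-phase structure (via Lemma \ref{induction}) in regions where the other free boundaries are inactive with the sub/super-harmonic behaviour of $u_1$ and $u_N$ elsewhere, and is the technical heart of the lemma.
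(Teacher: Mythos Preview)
Your plan has a real gap in the $L^\infty$ step, and a related circularity in the conditional Lipschitz step. For the $L^\infty$ bound you want to take a single contact point $x_k\in B_{1/2}$ with $u_{k-1}(x_k)=u_k(x_k)$ and deduce $u_{k-1}-u_k\le C$ on all of $B_{3/4}$. The first obstacle is that $u_{k-1}-u_k$ does \emph{not} solve a scalar one-phase problem near $x_k$ unless the neighbouring gaps $u_{k-2}-u_{k-1}$ and $u_k-u_{k+1}$ are already strictly positive there; Lemma~\ref{induction} only applies after you know that separation, and nothing prevents $x_k$ from lying on several $\Gamma_j$ simultaneously. So the ``universal Alt--Caffarelli linear growth'' you invoke is not available at this stage. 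The second obstacle is the propagation itself: for intermediate $k$ the difference $u_{k-1}-u_k\ge 0$ has no sub- or superharmonicity (Lemma~\ref{sub-super} only concerns $u_1$ and $u_N$), so Harnack/mean-value arguments do not apply, and the ``bootstrapping'' you allude to is precisely the content of the lemma. You correctly flag this as the main obstacle, but you do not provide an argument for it.

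The paper circumvents both issues by working not with the differences $u_{k-1}-u_k$ but with the harmonic replacement $V$ of $U$ in $B_1$. Harnack inequality is available for the genuinely harmonic $v_i$'s and shows that if $\mu:=v_1(0)$ is large then some gap $v_k-v_{k+1}\ge c\mu$ on $B_{7/8}$ and $|V|\le C\mu$. One then transfers this back to $U$ using the H\"older estimate of Corollary~\ref{holder} together with the $L^2$ closeness $\|U-V\|_{L^2}\le C$ from \eqref{UV}, obtaining $|U-V|\le C\mu^{1-\delta}$ in $B_{1/2}$; hence $u_k-u_{k+1}>0$ there, contradicting connectedness. This gives a universal bound on $\mu$ and hence on $\|U\|_{H^1(B_{1/2})}$. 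The upgrade from $H^1$ to $C^{0,1}$ is then done by iterating this estimate at dyadic scales and invoking induction on $N$ (via Lemma~\ref{induction}) the first time the system disconnects; this induction is what replaces your unproven ``conditional Lipschitz bound''.
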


Then, the next theorem is an immediate corollary.
\begin{thm}\label{nonC} Let $U$ minimize $J_N$ in $B_1$. Then, $U \in C^{0,1}(B_{1/2})$ and 
\begin{equation}\label{in1}\|U\|_{C^{0,1}(B_{1/2})} \le C(1+ \|U\|_{L^2(B_1)}), \end{equation} for $C>0$ universal.
\end{thm}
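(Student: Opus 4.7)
The proof plan is to reduce to the zero-average case, then combine Lemma \ref{uniLip} with an induction on $N$, branching on whether the minimizer is locally connected or disconnected.

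First I would reduce to the zero-average setting. By Lemma \ref{harmonic}, $\bar u := \frac{1}{N}\sum u_i$ is harmonic in $B_1$, so interior harmonic estimates give $\|\bar u\|_{C^{0,1}(B_{1/2})} \le C\|U\|_{L^2(B_1)}$. Since $J_N$ changes only by a boundary-dependent constant when a harmonic multiple of $(1,\ldots,1)$ is added to $U$, the shifted vector $\tilde U := U - \bar u(1,\ldots,1)$ is again a minimizer of $J_N$, has zero average, and satisfies $\|\tilde U\|_{L^2(B_1)} \le C\|U\|_{L^2(B_1)}$. It therefore suffices to prove the Lipschitz bound \eqref{in1} for $\tilde U$.

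Next I would induct on $N$, the base case $N=1$ being classical since $J_1$-minimizers are harmonic. For the inductive step, fix $x_0 \in B_{1/2}$ and a universal radius $\rho \le 1/4$, and rescale by $U_\rho(y) := \tilde U(x_0 + \rho y)/\rho$, which is a zero-average minimizer of $J_N$ on $B_1$ (the counting potential $W$ is scale-invariant, so $J_N$ rescales cleanly up to the factor $\rho^n$). Dichotomize according to Definition \ref{conn}:
\emph{Case A.} If $U_\rho$ is connected in $B_{1/2}$, Lemma \ref{uniLip} yields a universal bound $\|U_\rho\|_{C^{0,1}(B_{1/2})} \le C$, which unscales to $\|\tilde U\|_{C^{0,1}(B_{\rho/2}(x_0))} \le C$.
\emph{Case B.} If $U_\rho$ is disconnected in $B_{1/2}$, some index $k \in \{1,\ldots,N-1\}$ satisfies $(U_\rho)_k > (U_\rho)_{k+1}$ strictly throughout $B_{1/2}$. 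By Lemma \ref{induction} the sub-vectors $((U_\rho)_1,\ldots,(U_\rho)_k)$ and $((U_\rho)_{k+1},\ldots,(U_\rho)_N)$ minimize $J_k$ and $J_{N-k}$ on $B_{1/2}$, respectively, and since $k, N-k < N$ the inductive hypothesis delivers a Lipschitz bound $C(1+\|U_\rho\|_{L^2(B_{1/2})}) \le C(1+\|\tilde U\|_{L^2(B_1)})$ on a smaller ball, which unscales analogously.

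A standard finite covering of $\overline{B_{1/2}}$ by balls $B_{\rho/2}(x_0)$ combines these pointwise local estimates into the desired global bound, after which adding back the Lipschitz norm of $\bar u$ finishes the proof. I expect the main subtlety to lie entirely in Lemma \ref{uniLip} (proved separately): the fact that its constant is \emph{universal}, independent of $\|\tilde U\|_{L^2}$, is precisely what allows the inductive case split to close without an accumulating factor — disconnected configurations reduce cleanly to smaller systems via Lemma \ref{induction}, and no further nondegeneracy or blow-up input is needed at this stage.
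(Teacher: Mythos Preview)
Your proposal is correct and follows essentially the same approach as the paper: reduce to the zero-average case via Lemma~\ref{harmonic}, induct on $N$, and dichotomize according to Definition~\ref{conn}, invoking Lemma~\ref{uniLip} in the connected case and Lemma~\ref{induction} plus the inductive hypothesis in the disconnected case. The only difference is cosmetic: the paper dichotomizes once globally on whether $\tilde U$ is connected in $B_{1/2}$, whereas you localize at each $x_0$ via the rescaling $U_\rho$ before dichotomizing and then cover --- this extra layer is unnecessary (since ``disconnected in $B_{1/2}$'' already means $u_k > u_{k+1}$ throughout $B_{1/2}$, so Lemma~\ref{induction} applies on the full half-ball) but not incorrect.
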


We use Lemma \ref{uniLip} to prove Theorem \ref{nonC} by induction on $N$.

The case $N=1$ corresponds to harmonic functions and it is obvious. 

Next assume the claim holds for $j$ membranes, for any $j \leq N-1$. It suffices to prove \eqref{in1} only in the case of $N$ membranes with $\sum u_i=0$. 

Indeed, in the general case we let $h:=\sum_i u_i$ which is harmonic, and then $U= H + V$ with $H:= (h,\ldots, h)$ and $\sum_i v_i=0$. Since $H$ satisfies \eqref{in1} and
$$\|H+V\|_{L^2} = \|H\|_{L^2} + \|V\|_{L^2},$$  it suffices to show that $V$ satisfies \eqref{in1} as well. 

 If the $N$ membranes are connected in $B_{1/2}$ then the claim follows from Lemma \ref{uniLip}. If they are disconnected in $B_{1/2}$, then in view of Lemma \ref{induction}, by the induction hypothesis we have 
\begin{equation}\label{in2}\|U\|_{C^{0,1}(B_{1/4}(x_0))} \le C(1+ \|U\|_{L^2(B_{1/2})}), \quad \forall x_0 \in B_{1/4},\end{equation} which gives the desired claim.

\medskip

\textit{Proof of Lemma $\ref{uniLip}$}. 
We argue by induction. In the case $N=1$, $u_1 \equiv 0$ and the statement is trivial. Let us assume that \eqref{con1} holds when $U$ minimizes $J_k$ and $k<N.$ Then, by the proof above, also the estimate \eqref{in1} holds for minimizers of  $J_k$ and $k<N.$

 Let $V$ be the harmonic replacement of $U$ in $B_1$, and by the maximum principle $v_i \ge v_{i+1}$, and moreover  \begin{equation}\label{vi}\sum_{i=1}^N v_i=0.\end{equation} Then, arguing as in \eqref{new} in Proposition \ref{prop1}, and by Poincare's inequality we obtain
\begin{equation}\label{UV}\|U-V\|_{L^2(B_{1})},\|\nabla (U-V)\|_{L^2(B_1)} \le C.\end{equation}
Set
$$\mu:= \max_i \, \, \{v_i(0)\} = v_1(0).$$ 
By Harnack inequality,
$$ v_i -v_{i+1} \geq c (v_i -v_{i+1})(0) \quad \text{in $B_{7/8}$}, \quad \forall i=1,\ldots, N-1,$$ and in view of \eqref{vi} simple arithmetic gives that for some $k \in \{1,\ldots, N-1\}$,
\begin{equation}\label{k}v_k -v_{k+1} \geq c (v_k -v_{k+1})(0) \geq c \mu \quad   \text{in $B_{7/8}$}.\end{equation}
Furthermore, Harnack inequality for $v_1 \geq 0$ and $-v_N \geq 0$ also guarantees that
\begin{equation}\label{|V|}|V|= \max_i\{|v_i|\} = \max\{v_1, -v_N\} \leq C \mu \quad \text{in $B_{7/8}$}.\end{equation}

We now wish to show that $\mu$ is bounded by a universal constant. Indeed,
if $\mu \gg 1$ then, by the second inequality in \eqref{UV}, \eqref{|V|}, and the fact the $V$ is harmonic, we get $$ \|\nabla U\|_{L^2(B_{7/8})} \le C (\mu + 1) \le  C \mu.$$
Hence, by Corollary \ref{holder} and the fact that $V$ is harmonic and satisfies \eqref{|V|}, we get in $B_{3/4}$, $$[U]_{C^\alpha} \le C \mu, \quad [U-V]_{C^\alpha} \le C \mu.$$ Thus, if at $x_0\in B_{1/2}$, $|(U-V)(x_0)|=t$, then $|U-V| \ge t/2$ in a small neighborhood of size $r \sim (t/\mu)^{1/\alpha}$. By the first inequality in \eqref{UV} we get that $t \leq C \mu^{n/(2\alpha+n)}.$ 
Hence, 
$$ |U-V| \le C \mu^{1-\delta} \quad \text{in $B_{1/2}$},$$ 
for some $0<\delta<1.$
In view of \eqref{k}
$$u_k -u_{k+1} \geq c\mu -C \mu^{1-\delta}>0 \quad \mbox{in} \quad B_{1/2}, $$ 
and we contradict that $U$ is connected in $B_{1/2}$. Thus $\mu \le C$ as desired, and it follows from the estimates above that
\begin{equation}\label{h1}\|U\|_{H^1(B_{1/2})} \le C.\end{equation}
This proves the statement for the $H^1$ norm instead of the $C^{0,1}$ norm.

 In order to conclude our proof, it suffices to obtain a scale invariant version of this $H^1$ estimate, i.e. 
$$ \fint_{B_{r_k}} |\nabla U|^2 \leq C, \quad r_k=2^{-k}, \quad \forall k\geq 1.$$ In terms of the rescalings $$\tilde U_r(x):=U(rx)/r, \quad x \in B_1,$$ this is equivalent to
$$\fint_{B_{1}} |\nabla \tilde U_{r_k}|^2 \leq C, \quad \forall k \geq 1.$$

Let $m \geq 1$ be the first value for which $U$ is connected in $B_{r_m}$ but it is not connected in $B_{r_{m+1}}.$ If no such $m$ exists, then \eqref{h1} holds for all $\tilde U_{r_k}$ and the desired statement follows. Otherwise, in view of \eqref{h1},
\begin{equation}\label{usubj}
\|U_{r_k}\|_{H^1(B_{1/2})} \leq C, \quad \forall k \leq m.
\end{equation} Since $U$ is disconnected in $B_{r_{m+1}}$, we can use the induction hypothesis and conclude that
$$\|U_{r_m}\|_{C^{0,1}(B_{1/4})} \leq C(1+ \|U_{r_m}\|_{L^2(B_{1/2})}) \leq C.$$ Then \eqref{usubj} holds for all $k \ge m$ and we reached the conclusion.
\qed

\smallskip

Having established the continuity results above, we can obtain the following compactness property of minimizers.

\begin{prop}[Compactness of minimizers] If $U_m$ is a sequence of minimizers in $B_1$ and $U_m \to \bar U$ uniformly on compact sets, then $\bar U$ is a minimizer and 
$$ J_N(U_m,B_r) \to J_N(\bar U, B_r) \quad \quad \forall r \in (0,1).$$
.\end{prop}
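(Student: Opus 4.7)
The plan is to first use Theorem \ref{nonC} to obtain equi-Lipschitz bounds for $\{U_m\}$ on compact subsets of $B_1$; this gives, in particular, that $\bar U$ is Lipschitz, and since uniform convergence implies $L^2_{\mathrm{loc}}$ convergence, the lower semicontinuity lemma at the start of this section yields
$$J_N(\bar U, B_\rho) \;\le\; \liminf_{m \to \infty} J_N(U_m, B_\rho) \qquad \forall \rho \in (0,1).$$

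Next, to show $\bar U$ is itself a minimizer, I fix $\rho \in (0,1)$ and an admissible competitor $V \in \mathcal A$ on $B_\rho$ with $V = \bar U$ on $\partial B_\rho$. For any $r \in (\rho, 1)$ I extend $V$ to $B_r$ by declaring $V = \bar U$ on $B_r \setminus B_\rho$, and pick a Lipschitz cutoff $\eta$ with $\eta \equiv 1$ on $B_\rho$ and $\eta \equiv 0$ on $\partial B_r$. Set
$$V_m := \eta\, V + (1-\eta)\, U_m.$$
Since $V$ and $U_m$ both lie in $\mathcal A$, their convex combination is ordered componentwise, hence $V_m \in \mathcal A$; moreover $V_m = U_m$ on $\partial B_r$, so $V_m$ is a valid competitor for $U_m$ on $B_r$. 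Split $B_r = B_\rho \cup (B_r \setminus B_\rho)$: on $B_\rho$ we have $V_m \equiv V$, so the energy contribution is exactly $J_N(V, B_\rho)$; on the annulus, the equi-Lipschitz bound together with $\|\bar U - U_m\|_{L^\infty} \to 0$ controls $\int_{B_r \setminus B_\rho}|\nabla V_m|^2 \le C|B_r \setminus B_\rho| + o_m(1)$, while the universal bound $W \le N-1$ gives the potential contribution $\le (N-1)|B_r \setminus B_\rho|$. Minimality of $U_m$ then yields
$$J_N(U_m, B_r) \;\le\; J_N(V_m, B_r) \;\le\; J_N(V, B_\rho) + C|B_r \setminus B_\rho| + o_m(1).$$
Taking $\liminf_m$ on the left using lower semicontinuity, and then letting $r \downarrow \rho$, gives $J_N(\bar U, B_\rho) \le J_N(V, B_\rho)$, the desired minimality.

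For the energy convergence statement, I repeat the construction with $V$ replaced by $\bar U$ itself (which trivially matches $\bar U$ on $\partial B_\rho$). The same chain of inequalities becomes
$$J_N(U_m, B_\rho) \;\le\; J_N(U_m, B_r) \;\le\; J_N(\bar U, B_\rho) + C|B_r \setminus B_\rho| + o_m(1);$$
passing to $\limsup_m$ and then $r \downarrow \rho$ delivers $\limsup_m J_N(U_m, B_\rho) \le J_N(\bar U, B_\rho)$, which combined with the lower semicontinuity bound closes the proof for every $\rho \in (0,1)$.

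The principal subtlety is the potential term $W$, which is only lower semicontinuous, so neither weak nor uniform convergence alone lets one pass $\int W(U_m)$ to the limit. The cutoff construction sidesteps this by arranging $V_m \equiv V$ exactly on $B_\rho$, making the two potentials identical there, while on the thin annulus one invokes only the crude bound $W \le N-1$. Preserving the ordering constraint is essentially free because a convex combination of ordered vectors is ordered, and the gradient control on the annulus is delivered by the uniform Lipschitz estimate of Theorem \ref{nonC}.
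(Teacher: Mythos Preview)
Your proof is correct and follows essentially the same approach as the paper's: both use the uniform Lipschitz bound from Theorem~\ref{nonC}, interpolate via a cutoff between the competitor and $U_m$ on a thin annulus, control the annular contribution by the Lipschitz constant plus the crude bound $W\le N-1$, and close with lower semicontinuity. The only cosmetic difference is that the paper writes the annulus as $B_{r+\delta}\setminus B_r$ and sends $\delta\to 0$, whereas you write it as $B_r\setminus B_\rho$ and send $r\downarrow\rho$.
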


\begin{proof}
Since the $U_m$'s are continuous, then $\bar U$ is continuous and the $U_m$'s  are uniformly bounded on compact sets. In view of the Lipschitz continuity Theorem \ref{nonC}, we conclude that the $U_m$'s and $\bar U$ are uniformly Lipschitz on compacts.

Let $V$ be a competitor which agrees with $\bar U$ in $B_1 \setminus B_{r}$. Let $0\leq \varphi \leq 1$ be a radial cutoff function which is 1 in $B_{r}$ and vanishes outside $B_{r+\delta}$.Then 
$$ V_m:= \varphi \,  V + (1- \varphi)U_m $$
agrees with $U_m$ in $B_1 \setminus B_{r+\delta}$, and it is an admissible competitor as well. Thus, minimality implies that
\begin{equation}\label{up}J_N(U_m,B_{r+\delta}) \le J_N(V_m, B_{r+\delta}).\end{equation}
Since
$$ \nabla V_m = (V-U_m) \nabla \varphi + \varphi \nabla V + (1-\varphi) \nabla U_m,$$
$$ |\nabla U_m|, |\nabla {\bar U}| \le K , \quad \quad W(U_m), W(V_m) \le N, \quad |\nabla \varphi| \le C \delta^{-1},$$
we have that in the annulus $\mathcal A_\delta:=B_{r+\delta}\setminus B_r$,
$$ J_N(V_m, \mathcal A_\delta ) - J_N(U_m,\mathcal A_\delta) \le C(1+K^2)|\mathcal A_\delta| + C \delta^{-1} \|U_m-\bar U\|^2_{L^\infty(\mathcal A_\delta)}.$$
The right hand side can be made arbitrarily small by first choosing $\delta$ small depending on $K$ and them $m$ large. On the other hand, in view of \eqref{up} we have
$$J_N(U_m, B_r) \leq J_N(V_m, \mathcal A_\delta ) - J_N(U_m,\mathcal A_\delta) + J_N(V,B_r).$$
In conclusion
$$\limsup J_N(U_m,B_r) \le  J_N(V, B_r),$$
while the lower semicontinuity of the energy gives
$$ J_N(\bar U,B_r) \le \liminf J_N(U_m,B_r) .$$
\end{proof}

\subsection{Non-degeneracy}

\begin{lem}[Caccioppoli's estimate]\label{CE}Let $U$ minimize $J_N$ in $B_1.$ Then,
$$ \int_{B_{1/2}} (|\nabla U|^2 + W(U)) \, dx \le C \int_{B_1} |U|^2 dx,$$ for $C>0$
 universal.\end{lem}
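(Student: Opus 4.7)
The plan is a standard Caccioppoli cutoff argument, in which the key observation is that for any cutoff $0 \le \eta \le 1$, multiplication by $1-\eta$ preserves the admissibility constraint $u_1 \geq \ldots \geq u_N$. I will fix $\eta \in C_c^2(B_1)$ with $\eta \equiv 1$ on $B_{1/2}$, $0 \le \eta \le 1$, and $|\nabla \eta| + |\Delta \eta| \le C$ universal, and take as competitor
$$V := (1-\eta)\,U.$$
Then $V$ is admissible, since $V_i - V_{i+1} = (1-\eta)(u_i - u_{i+1}) \ge 0$, and agrees with $U$ near $\partial B_1$. Moreover $\{V_i > V_{i+1}\} = \{u_i > u_{i+1}\} \cap \{\eta < 1\}$, so $W(V) \le W(U)$ pointwise, with $W(V) \equiv 0$ on $B_{1/2}$.

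The crucial step will be an exact identity for $\int |\nabla V|^2$. Expanding $\nabla V_i = (1-\eta)\nabla u_i - u_i \nabla \eta$, summing in $i$, and using $2\sum_i u_i \nabla u_i = \nabla(|U|^2)$, I get
$$|\nabla V|^2 = (1-\eta)^2 |\nabla U|^2 \;-\; (1-\eta)\,\nabla \eta \cdot \nabla(|U|^2) \;+\; |\nabla \eta|^2\,|U|^2.$$
Integrating the cross term by parts, with $\nabla \cdot [(1-\eta)\nabla \eta] = (1-\eta)\Delta \eta - |\nabla \eta|^2$ and no boundary contribution thanks to the compact support of $(1-\eta)\nabla \eta$ in $B_1$, this will collapse to
$$\int_{B_1} |\nabla V|^2 \, dx = \int_{B_1} (1-\eta)^2 |\nabla U|^2 \, dx \;+\; \int_{B_1} (1-\eta)(\Delta \eta)\,|U|^2 \, dx.$$
The essential feature is that the coefficient of $|\nabla U|^2$ on the right is exactly $(1-\eta)^2$, with no extraneous $(1+\varepsilon)$ factor to absorb; that is what makes the argument clean.

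To conclude, the minimality $J_N(U,B_1) \le J_N(V,B_1)$ combined with the two points above rearranges to
$$\int_{B_1} \bigl[1-(1-\eta)^2\bigr]\,|\nabla U|^2 \, dx \;+\; \int_{B_1}\bigl[W(U)-W(V)\bigr]\, dx \;\le\; C\int_{B_1} |U|^2 \, dx.$$
Both integrands on the left are nonnegative on $B_1$ and reduce to $|\nabla U|^2$ and $W(U)$ on $B_{1/2}$, so restricting the left-hand integrals to $B_{1/2}$ yields the desired estimate. I do not anticipate a real obstacle; the only mildly delicate ingredient is the integration by parts, which is justified by $U \in H^1(B_1)$ and $\eta \in C_c^2(B_1)$.
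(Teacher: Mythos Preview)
Your proof is correct and follows essentially the same approach as the paper: the same competitor $V=(1-\eta)U$, the same expansion of $|\nabla V|^2$, the same integration by parts yielding the exact identity with coefficient $(1-\eta)^2$ on $|\nabla U|^2$, and the same conclusion. The paper's write-up is slightly more compressed, but the argument is identical.
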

\begin{proof}Let $0 \leq \phi \leq 1$ be a smooth bump function, which equals 1 in $B_{1/2}$ and $0$ outside of $B_{3/4}$.
Set, 
$$\tilde u_k:= u_k(1-\phi), \quad k=1,\ldots, N.$$ Then $\tilde U=(\tilde u_1,\ldots, \tilde u_N)$ is an admissible competitor and by minimality,
$$\int_{B_1}( |\nabla U|^2+ W(U))\; dx \leq \int_{B_1} (|\nabla \tilde U|^2+ W(\tilde U))\; dx,$$
hence 
$$\text{$W(\tilde U) \leq W(U)$ and $W(\tilde U)=0$ in $B_{1/2}$,}$$ implying that 
\begin{equation}\label{Ca}\int_{B_1}(|\nabla U|^2  + W(U)\chi_{B_{1/2}})\; dx\leq \int_{B_1}|\nabla \tilde U|^2\; dx.\end{equation} Moreover,
\begin{align*}
\int_{B_1}|\nabla \tilde U|^2\; dx & = \int_{B_1}((1-\phi)^2|\nabla U|^2 +  |U|^2 |\nabla \phi|^2 - (1-\phi)\nabla |U|^2 \cdot \nabla \phi) \, \, \; dx \\
& \leq \int_{B_1\setminus B_{1/2}}|\nabla U|^2 \; dx + \int_{B_1}|U|^2(1-\phi)\Delta \phi\; dx,
\end{align*} where in the last inequality we integrated by parts. Using this in \eqref{Ca} we get,
$$\int_{B_{1/2}} (|\nabla U|^2  + W(U)) \; dx \leq C\int_{B_1} |U|^2\;dx$$ with $C=C(n)$ as desired.
\end{proof}

A direct consequence of the Caccioppoli's estimate is that $|U|$ cannot be small and strictly positive in $B_1$.

\begin{cor}\label{CorC1}
If $\sum_{i=1}^N u_i=0$ and $B_r(x_0) \subset \{|U| >0\}$ then 
$$ \max_{B_{r}(x_0)} |U| \ge c r,$$
for some $c>0$ universal.
\end{cor}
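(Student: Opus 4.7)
The plan is to combine Caccioppoli's estimate (Lemma \ref{CE}) with a pointwise lower bound on $W(U)$ that comes for free from the zero-average constraint. The crucial structural observation is: at any $x \in \{|U|>0\}$ with $\sum_{i=1}^N u_i(x)=0$, the components $u_1(x) \ge \cdots \ge u_N(x)$ cannot all vanish (otherwise $|U(x)|=0$); since they sum to $0$ and are ordered, this forces $u_1(x) > 0 > u_N(x)$, so at least one index $i$ satisfies $u_i(x) > u_{i+1}(x)$. Consequently
$$W(U) \ge 1 \quad \text{pointwise on } \{|U|>0\}.$$

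First, I would reduce to the normalized setting $x_0=0$, $r=1$ by the natural scaling $\tilde U(y) := U(x_0+ry)/r$. A direct computation (using that $W$ is invariant under positive rescaling of its argument) shows that $\tilde U$ minimizes $J_N$ on $B_1$, still satisfies $\sum_i \tilde u_i = 0$, and has $B_1 \subset \{|\tilde U|>0\}$; the desired bound $\max_{B_r(x_0)} |U| \ge cr$ is equivalent to $\max_{B_1} |\tilde U| \ge c$ for a universal constant.

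Next, I would apply Caccioppoli's estimate to $\tilde U$:
$$\int_{B_{1/2}} (|\nabla \tilde U|^2 + W(\tilde U))\, dx \le C \int_{B_1} |\tilde U|^2\, dx.$$
The pointwise bound above gives $\int_{B_{1/2}} W(\tilde U)\, dx \ge |B_{1/2}|$, while the right-hand side is at most $C|B_1|\, \|\tilde U\|_{L^\infty(B_1)}^2$. Combining these two inequalities yields $\|\tilde U\|_{L^\infty(B_1)} \ge c$ for a universal $c>0$, which rescales back to $\max_{B_r(x_0)} |U| \ge c r$.

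There is no serious obstacle here: the argument is essentially a one-line application of Caccioppoli, with the only nontrivial content being the observation that the potential $W$ automatically provides a positive source term on the entire ball $B_r(x_0)$ once the zero-average and non-vanishing hypotheses are in force. The use of $\sum u_i=0$ is essential — without it, a constant harmonic function $(c,\ldots,c)$ would give $|U|>0$ everywhere while making $\max |U|$ independent of $r$.
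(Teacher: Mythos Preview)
Your proof is correct and essentially identical to the paper's own argument: scale to $x_0=0$, $r=1$, observe that $|U|>0$ together with $\sum u_i=0$ forces $W(U)\ge 1$ pointwise, and then read off the conclusion from Caccioppoli's estimate (Lemma~\ref{CE}). The paper gives this in one sentence; you have simply spelled out the details.
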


Indeed, by scaling we may assume that $x_0=0$, $r=1$. Then $|U|>0$ implies $W(U)\ge 1$ in $B_1$ and the conclusion follows by the estimate in Lemma \ref{CE}.

We state a stronger version of non-degeneracy for $|U|$ than in Corollary \ref{CorC1}.
\begin{lem}[Non-degeneracy]\label{NDeg}
Let $U$ be a minimizer in $B_1$. If $\sum_{i=1}^N u_i=0$ and $0 \in \p \{|U|>0\}$ then $$\max _{B_r} |U| \ge c r, \quad \quad \forall r <1.$$
\end{lem}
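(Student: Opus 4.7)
By the scaling invariance $U_r(x):=U(rx)/r$ of the problem, it is enough to prove the estimate at scale $r=1$: if $U$ is a minimizer on $B_1$ with $\sum u_i=0$ and $0\in \partial\{|U|>0\}$, then $M:=\max_{B_1}|U|\geq c$ for some universal $c>0$; rescaling back recovers the bound at every $r<1$. I will argue by contradiction, assuming $M$ is below a small universal threshold, combining a cutoff competitor with the interior non-degeneracy of Corollary~\ref{CorC1}.

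The first step is a Caccioppoli-type inequality obtained from the competitor $\tilde U := (1-\phi)U$, where $\phi$ is a standard radial cutoff equal to $1$ on $B_{1/2}$ and supported in $B_{3/4}$. Since $1-\phi\geq 0$ the ordering $u_1\geq\cdots\geq u_N$ is preserved, so $\tilde U$ is admissible; moreover $\tilde U\equiv 0$ on $B_{1/2}$, so $W(\tilde U)=0$ there and $W(\tilde U)\leq W(U)$ elsewhere. The minimality of $U$ together with the integration-by-parts computation from Lemma~\ref{CE} gives
\[\int_{B_{1/2}}\bigl(|\nabla U|^2+W(U)\bigr)\,dx\leq C\int_{B_1}|U|^2\,dx\leq CM^2.\]
Using $\sum u_i=0$, the pointwise vanishing of $W(U)$ forces all $u_i$ to coincide and hence to vanish, so $W(U)\geq \chi_{\{|U|>0\}}$ and we obtain the measure bound
\[\bigl|\{|U|>0\}\cap B_{1/2}\bigr|\leq CM^2.\]

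To derive a contradiction, I use $0\in\partial\{|U|>0\}$ to pick $y\in B_{1/8}\cap\{|U|>0\}$ and set $d := \mathrm{dist}(y,\partial\{|U|>0\})\leq |y|$. Since $B_d(y)\subset\{|U|>0\}$, Corollary~\ref{CorC1} provides $z\in\overline{B_d(y)}\subset B_{1/4}$ with $|U(z)|\geq cd$; in particular $M\geq cd$. The universal Lipschitz constant $L$ of Theorem~\ref{nonC} then gives $|U|\geq cd/2$ on $B_{cd/(2L)}(z)\subset \{|U|>0\}\cap B_{1/2}$, which combined with the measure bound produces $d^n\leq C'L^n M^2$. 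Iterating this dichotomy via a Vitali covering of $\{|U|>0\}\cap B_{1/4}$ by maximal distance-to-boundary balls, and invoking the scale invariance along rescaled minimizers $U_{r_k}$ with $r_k\to 0$, forces $M$ to exceed a universal constant, contradicting the initial assumption.

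The hard part is closing this last step. The measure bound alone does not preclude arbitrarily thin ``fingers'' of the positive set accumulating at $0$; ruling them out requires the combination of the Vitali covering (on each piece of which Corollary~\ref{CorC1} applies) with the rescaling argument to transfer the measure deficit across scales. It is precisely here that the constraint $\sum u_i=0$ is used decisively, since it guarantees $W(U)\geq 1$ wherever $|U|>0$, tightly coupling the positivity of $|U|$ to the potential cost captured by the Caccioppoli estimate.
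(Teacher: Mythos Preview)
Your first two steps are correct and natural: the Caccioppoli inequality of Lemma~\ref{CE} combined with $W(U)\ge \chi_{\{|U|>0\}}$ (valid because $\sum u_i=0$) indeed gives the measure bound $|\{|U|>0\}\cap B_{1/2}|\le CM^2$, and the Lipschitz estimate then forces each maximal ball $B_{d(y)}(y)\subset\{|U|>0\}$ near the origin to satisfy $d(y)^n\le C'M^2$. The difficulty, as you yourself flag, is the closing step, and here the argument has a genuine gap. The sentence ``Iterating this dichotomy via a Vitali covering \dots\ and invoking the scale invariance along rescaled minimizers $U_{r_k}$ \dots\ forces $M$ to exceed a universal constant'' is not a proof but an assertion that certain tools will combine to work. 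A Vitali cover of $\{|U|>0\}\cap B_{1/4}$ by maximal distance balls only reproduces the measure bound you already have; it gives no \emph{lower} bound on the measure or on $M$. Rescaling transfers the estimate $|\{|U|>0\}\cap B_{r/2}|\le C\epsilon^2 r^n$ to every scale, but this is perfectly compatible with arbitrarily thin fingers of $\{|U|>0\}$ accumulating at the origin: nothing in your argument produces a positive lower density, and without one there is no contradiction. The ``measure deficit'' you mention does not propagate to a contradiction because you never establish that the positive set must have a definite presence at each scale.

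The paper closes this gap by a completely different mechanism that does not pass through a measure bound at all. It exploits the scalar structure hidden in the problem: since $\sum u_i=0$ and the $u_i$ are ordered, one has $c_0|U|\le u_1\le |U|$, and $u_1$ is \emph{subharmonic} by Lemma~\ref{sub-super}. One then runs the classical ball-sequence construction from the one-phase problem: starting from $x_0\in\{|U|>0\}$ near $0$, one inductively finds $x_{k+1}$ with $u_1(x_{k+1})\ge(1+c_1)u_1(x_k)$ and $|x_{k+1}-x_k|$ comparable to $r_k:=d(x_k,\{|U|=0\})$. The growth step comes from the mean-value inequality for the subharmonic $u_1$ on $B_{r_k}(x_k)$, using that $u_1$ is small near the contact point $y_k\in\partial\{|U|=0\}$ (by Lipschitz) and hence must be large elsewhere on the sphere; Corollary~\ref{CorC1} is invoked only to re-anchor the lower bound $u_1(x_{k+1})\ge c_2\,r_{k+1}$. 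The geometric growth of $u_1(x_k)$ then forces $\max_{B_r}|U|\ge cr$. The subharmonicity of $u_1$ is the missing ingredient in your approach.
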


\begin{proof}
We use the previous corollary and a standard ball sequence construction for the classical one-phase problem. Notice that $\sum_{i=1}^N u_i=0$ and $u_1 \ge ..\ge u_N$ implies that 
$$c_0 |U| \le u_1 \le |U|,$$
with $c_0>0$ depening on $N$ and $n$, and recall that by Lemma \ref{sub-super} $u_1$ is a subharmonic function.

Assume that $r=1$. We construct inductively a sequence of points $x_k \in \{|U|>0\}$ such that $x_0$ is sufficiently close to the origin and

1) $u_1(x_{k+1})   \ge (1+ c_1) u_1(x_k)$

2) $$ c_2 r_k \le u_1(x_k) \le C_2 r_k, \quad \quad  |x_{k+1}-x_k| \le 4 r_k, \quad \quad r_k:=d(x_k, \{|U|=0\}).$$ 
The inequality $u_1(x_k) \le |U(x_k)| \le C_2 r_k$ is a consequence of Lemma \ref{uniLip}.

Let's assume that $x_1$,..,$x_k$ are constructed, and let $$y_k \in \p \{|U|=0\} \cap \p B_{r_k}(x_k).$$ Since $u_1 \ge 0$ is subharmonic in $B_{r_k}(x_k)$, $u_1(x_k) \ge c_2 r_k$ and, by Lemma \ref{uniLip}, $$u_1 (x) \le |U(x)| \le C_2 |x-y_k|,$$ 
it follows from the mean value inequality that there exists $z_k \in B_{r_k}(x_k)$ such that $u_1(z_k)   \ge (1+ c_1) u_1(x_k)$ for some $c_1>0$ depending on $c_2$, $C_2$ and $n$.
We pick $x_{k+1} \in B_{t_k }(z_k)$ to be the point where the maximum value of $u_1$ occurs, where $t_k$ represents the distance from $z_k$ to the set $\{|U|=0\}$. Then 
\begin{align*}
u_1(x_{k+1}) & = \max_{B_{t_k }(z_k)} u_1 \\
& \ge c_0 \, \max_{B_{t_k }(z_k)} |U| \\
& \ge c_0 \, c \,  t_k \\
& \ge c_2 \, d(x_{k+1},\{|U|=0\}),
\end{align*} 
where in the second inequality we have used Corollary \ref{CorC1}, and we chose $c_2$ small depending on $c_0$ and $c$. This proves the existence of the sequence $x_k$.

Then we have $$(1+ c_1) u_1(x_k) \le u_1(x_{k+1}) \le C_2 r_{k+1} \le C_3 r_k \le C_4 u_1(x_k),$$ 
hence $u_1(x_k)$ is a sequence that grows at a geometric rate. The conclusion of the lemma follows easily since $u_1(x_k) \sim r_k$.

\end{proof}

Next we show the weak non-degeneracy for the difference of two consecutive membranes. It is not clear whether the strong non-degeneracy holds for two consecutive membranes as well. This lemma will not be used in the remaining of the paper. 

\begin{lem}If $B_r \subset \{u_k > u_{k+1}\}$ is tangent to $\Gamma_k$ then $(u_k - u_{k+1}) (0) \ge c r$.
\end{lem}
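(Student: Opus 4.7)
The plan is to argue by contradiction: suppose that $w(0) := (u_k - u_{k+1})(0) < \eps r$ for some small $\eps > 0$ to be determined, and produce an admissible competitor that strictly decreases the energy. Since $u_k > u_{k+1}$ throughout $B_r$, Lemma \ref{induction} decouples the problem into the two minimizers $(u_1,\ldots,u_k)$ of $J_k$ and $(u_{k+1},\ldots,u_N)$ of $J_{N-k}$ on $B_r$, and Lemma \ref{sub-super} then gives that $u_k$ is superharmonic and $u_{k+1}$ is subharmonic in $B_r$, so $w \ge 0$ is superharmonic there. The mean value inequality yields $\fint_{B_{r/4}} w \le w(0) < \eps r$, and Markov's inequality converts this into the key measure estimate
\[ |\{w \le 2\eps r\} \cap B_{r/4}| \ge \tfrac12 |B_{r/4}| \ge c_1 r^n. \]

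The competitor is the symmetric shift $\tilde u_j = u_j - \rho$ for $j \le k$ and $\tilde u_j = u_j + \rho$ for $j > k$, where $\rho := \min(w/2,\ \eps r \eta)$ and $\eta$ is a cutoff with $\eta \equiv 1$ on $B_{r/4}$, $\eta \equiv 0$ off $B_{r/2}$, and $|\nabla \eta| \le C/r$. For $j \ne k$ the differences $\tilde u_j - \tilde u_{j+1} = u_j - u_{j+1}$ are preserved, while at $j = k$ one has $\tilde u_k - \tilde u_{k+1} = w - 2\rho \ge 0$ by the choice $\rho \le w/2$, so $\tilde U$ is admissible and agrees with $U$ outside $B_{r/2}$. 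The only indicator in $W$ affected is the $k$-th, which is switched off exactly on $\{w \le 2\rho\} \supseteq \{w \le 2\eps r\} \cap B_{r/4}$, so the potential saving is at least $c_1 r^n$.

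For the Dirichlet cost I would expand $|\nabla \tilde U|^2 - |\nabla U|^2$ and observe that the cross term is $2\int \nabla \rho \cdot \nabla g$ with $g := \sum_{j > k} u_j - \sum_{j \le k} u_j$. By Lemma \ref{harmonic} applied both to $U$ and, via Lemma \ref{induction}, to the minimizer $(u_1,\ldots,u_k)$ in $B_r$, both $\sum_j u_j$ and $\sum_{j \le k} u_j$ are harmonic in $B_r$, hence so is $g$; since $\rho \in H^1_0(B_{r/2})$, integration by parts kills the cross term. Thus the Dirichlet change reduces cleanly to $N \int_{B_{r/2}} |\nabla \rho|^2$.

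The hard part is showing this is $O(\eps^2 r^n)$. On $\{w \ge 2\eps r \eta\}$ one has $\rho = \eps r \eta$, contributing at most $\eps^2 r^2 \int |\nabla \eta|^2 \le C\eps^2 r^n$. On the complementary set $\rho = w/2$, so it suffices to bound $\int_{\{w < 2\eps r\} \cap B_{r/2}} |\nabla w|^2$. The trick is to notice that $\psi := (2\eps r - w)^+$ is \emph{subharmonic} (as the positive part of the subharmonic function $2\eps r - w$) and bounded by $2\eps r$, so Caccioppoli's inequality gives $\int_{B_{r/2}} |\nabla \psi|^2 \le Cr^{-2} \int_{B_r} \psi^2 \le C\eps^2 r^n$, while the chain rule identifies $|\nabla \psi|^2$ with $|\nabla w|^2 \chi_{\{w < 2\eps r\}}$. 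Combining these bounds, minimality of $U$ forces $c_1 r^n \le N C \eps^2 r^n$, which fails once $\eps$ is smaller than a universal constant, yielding the desired lower bound $w(0) \ge c r$.
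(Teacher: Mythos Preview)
Your proof is correct and takes a genuinely different route from the paper's. The paper argues indirectly: it reduces (by induction on $N$) to the case where both packets $(u_1,\dots,u_k)$ and $(u_{k+1},\dots,u_N)$ are connected in a small ball $B_\delta$, then uses the uniform Lipschitz estimate (Lemma~\ref{uniLip}) to show that each $u_i$ is within $C\delta$ of the harmonic average $\bar h$ (resp.\ $\underline h$) of its packet. If $(u_k-u_{k+1})(0)\le\delta$, Harnack for $\bar h-\underline h>0$ forces $u_1-u_N\le C\delta$ in $B_{1/2}$, and then Corollary~\ref{CorC1} gives $U\equiv 0$ in $B_{1/4}$, a contradiction.

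Your argument is instead a direct energy comparison. You exploit the superharmonicity of $w=u_k-u_{k+1}$ (via Lemmas~\ref{induction} and~\ref{sub-super}) together with the mean value inequality and Markov to locate a set of measure $\gtrsim r^n$ where $w$ is small, then build an explicit shift competitor. Two points make the computation clean: the cross term vanishes because the partial sum $\sum_{j\le k} u_j$ is harmonic in $B_r$ (Lemma~\ref{harmonic} applied to the decoupled minimizer), and the quadratic term is controlled by Caccioppoli for the subharmonic truncation $(2\eps r-w)^+$. This avoids the Lipschitz regularity theory and the connectedness induction entirely, and in fact never uses the tangency hypothesis---your argument gives the stronger statement that $B_r\subset\{u_k>u_{k+1}\}$ alone forces $(u_k-u_{k+1})(0)\ge cr$. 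The paper's approach, by contrast, leans on machinery already developed in Section~\ref{Sec3}, which makes it shorter to state once that machinery is in place.
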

\begin{proof}
We only sketch the proof. After rescaling, we can assume that $U$ minimizes $J_N$ in $B_1$, $\sum u_i =0$, and 
\begin{equation}\label{pos}u_k - u_{k+1} >0 \quad \text{in $B_1$}.\end{equation} We wish to show that
$$(u_k - u_{k+1})(0) \geq c_0,$$ for some $c_0$ universal. 

We may also assume that $\bar U=\{u_1,..,u_k\}$ and $\underline U=\{u_{k+1},..,u_N\}$ are connected in $B_{\delta}$ for some $\delta>0$ small to be chosen later (see Definition \ref{conn}). Otherwise we can reduce the problem to one in $B_\delta$ with a strictly smaller number of membranes $N$. 
Let $\bar h$, $\underline h$ denote the averages of the membranes in $\bar U$, respectively $\underline U$.

Then $\bar h$, $\underline h$ are harmonic in $B_1$ and by Lemma \ref{uniLip} we know that 
$$ \|u_i - \bar h\|_{C^{0,1}(B_{1/2})} \le C, \quad \quad i \in \{1,..,k\}.$$
Since $\bar U$ is connected in $B_\delta$ we find that
$$ \|u_i - \bar h\|_{L^\infty(B_\delta)} \le C \delta,$$
hence 
$$ u_k \le \bar h, \quad \quad \bar h(0) \le u_k(0) + C \delta.$$
Similarly we obtain
$$ u_{k+1} \ge \underline h, \quad \quad \underline h(0) \ge u_{k+1}(0) - C \delta.$$
Assume by contradiction that 
\begin{equation}\label{con15}(u_k - u_{k+1})(0) \leq \delta. \end{equation} Since $\bar h > \underline h$, by Harnack inequality we find 
$$ \bar h - \underline h \le C \delta \quad \mbox{in $B_{1/2}$.}$$
This implies that $u_1 \le u_N + C \delta$ in $B_{1/2}$ and then by Corollary \ref{CorC1} it follows that all membranes coincide in $B_{1/4}$ a contradiction.

\end{proof}


\section{Monotonicity formula}\label{Sec4}

In this section we present the Weiss type monotonicity formula associated to our energy functional $J_N$ and prove Theorem \ref{Bup}. This is a standard tool necessary for the analysis of the partial regularity of free boundaries arising in a minimization problem, see for example \cite{W}.

\begin{prop}[Weiss monotonicity formula] \label{WMF}Assume that $U$ is a critical point to $J_N$ in $B_1$ with respect to continuous deformations in $\R^{n+1}$. Then for $r<1$,
$$\Phi(r):=r^{-n} \int_{B_r} \left(|\nabla U|^2+W(U)\right)dx - r^{-n-1} \int_{\p B_r} |U|^2 d \sigma, $$
is monotone increasing, and
$$\Phi'(r) = 2 \, r^{1-n} \int_{\p B_r}(r^{-1}U- U_\nu)^2   d \sigma \, \, \, \ge 0.$$
The $\Phi$ functional is constant in $r$ if and only if $U$ is ``a cone"  i.e. homogenous of degree one, and then this constant (the energy of $U$) is given by
$$\Phi(U)=\int_{B_1} W(U) dx.$$

\end{prop}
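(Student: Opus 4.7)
The plan is to derive the monotonicity of $\Phi$ via the interplay of two families of admissible deformations: an outer multiplicative variation that produces an integrated PDE identity, and an inner (domain) variation that produces a vector-valued Pohozaev identity. Both deformations must preserve the constraint $U\in\mathcal{A}$ and, crucially, the value of the potential $W(U)$ pointwise, which is where the discontinuous nature of $W$ forces specific choices.

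First I would carry out the outer variation. For $\phi\in C_c^\infty(B_1)$, consider the multiplicative deformation $U_t:=(1+t\phi)U$. For small $|t|$ the factor $1+t\phi$ is positive everywhere, so the ordering $u_1\ge\dots\ge u_N$ is preserved componentwise and $u^t_i(x)=u^t_{i+1}(x)$ holds iff $u_i(x)=u_{i+1}(x)$; hence $W(U_t)=W(U)$ pointwise and the deformation is legitimate. The critical-point condition $\tfrac{d}{dt}\big|_{0} J_N(U_t)=0$, combined with $\nabla((1+t\phi)u_i)=(1+t\phi)\nabla u_i+tu_i\nabla\phi$, gives $\int[2\phi|\nabla U|^2+\nabla|U|^2\cdot\nabla\phi]\,dx=0$, which is the distributional identity $\sum_i u_i\Delta u_i=0$. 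Using the Lipschitz regularity of Theorem \ref{nonCI} to justify integration by parts, this yields for a.e.\ $r\in(0,1)$
$$\int_{B_r}|\nabla U|^2\,dx=\int_{\partial B_r}U\cdot U_\nu\,d\sigma. \qquad (\mathrm{I})$$

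Next I would perform the inner variation with the radial vector field $\xi(x):=\eta_\varepsilon(|x|)x$, where $\eta_\varepsilon$ smoothly approximates $\chi_{[0,r)}$, setting $U_t(x):=U(x+t\xi(x))$. Because this is a pure reparametrization of the domain, both the ordering and $W$ are preserved. Performing the change of variables $y=x+t\xi(x)$ and differentiating at $t=0$ produces the weak identity $\int[2\partial_j u_i\,\partial_k u_i\,\partial_j\xi_k-(|\nabla U|^2+W(U))\,\mathrm{div}\,\xi]\,dx=0$. Substituting $\partial_j\xi_k=\eta\delta_{jk}+\eta'|x|^{-1}x_jx_k$ and letting $\varepsilon\to 0$ yields the Pohozaev identity
$$(2-n)\!\int_{B_r}\!|\nabla U|^2 - n\!\int_{B_r}\!W(U) + r\!\int_{\partial B_r}\!(|\nabla U|^2+W(U))\,d\sigma = 2r\!\int_{\partial B_r}\!|U_\nu|^2\,d\sigma. \qquad (\mathrm{II})$$
A direct differentiation of $\Phi$ expresses $\Phi'(r)$ in terms of the interior integrals of $|\nabla U|^2+W(U)$ and the boundary integrals of $|U|^2$ and $U\cdot U_\nu$. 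Substituting (II) removes the interior $W$ term at the cost of a residual $\int_{B_r}|\nabla U|^2$, which is eliminated via (I). The volume integrals cancel and the surviving boundary terms assemble into the perfect square $\Phi'(r)=2r^{-n}\int_{\partial B_r}(U/r-U_\nu)^2\,d\sigma\ge 0$.

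For the equality case, $\Phi'\equiv 0$ forces $U_\nu(r\omega)=U(r\omega)/r$ almost everywhere, i.e.\ $\partial_r(U(r\omega)/r)=0$, so $U$ is $1$-homogeneous. Conversely, if $U$ is a cone then $U_\nu=U/r$ on each $\partial B_r$, so (I) reduces to $\int_{B_r}|\nabla U|^2=r^{-1}\int_{\partial B_r}|U|^2\,d\sigma$, which cancels the Dirichlet part of $\Phi$ against its boundary term and leaves $\Phi(U)=r^{-n}\int_{B_r}W(U)\,dx=\int_{B_1}W(U)\,dx$, by the scale-invariance $W(\lambda U)=W(U)$ for $\lambda>0$. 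The principal subtlety throughout is ensuring that the two deformations preserve $W(U)$ \emph{identically} rather than only in measure; this is precisely why the multiplicative form $(1+t\phi)U$ and the pure reparametrization $U\circ(\mathrm{id}+t\xi)$ must be used, since generic additive outer or nonradial inner variations would alter the level-set structure of $U$ and spoil $W(U_t)=W(U)$.
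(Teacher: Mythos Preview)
Your proof is correct and follows essentially the same route as the paper. Both arguments use the multiplicative (vertical) deformation $U_t=(1+t\varphi)U$ to obtain the identity $\int_{B_r}|\nabla U|^2\,dx=\int_{\partial B_r}U\cdot U_\nu\,d\sigma$, and the radial domain deformation $x\mapsto x+t\eta(|x|)x$ to derive the Pohozaev identity, then combine these with the direct derivative of the boundary term to assemble $\Phi'(r)$ into the stated perfect square; your explicit emphasis on why each deformation preserves $W(U)$ pointwise is a welcome clarification but does not alter the strategy.
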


First we specify the notion of $U$ to be critical with respect to continuous deformations in $\R^{n+1}$. Given a smooth diffeomorphism $\Phi: \R^{n+1} \to \R^{n+1}$ with compact support in $\Omega \times \R$, we deform the $N$-graphs of $U$ by the map $$X \mapsto X + t \Phi(X), \quad \quad \mbox{$t$ small,} \quad X \in \R^{n+1},$$
onto the graphs of another admissible function that we denote by $U_t$. Then we say that $U$ is critical if $$\frac{d}{dt} J_N(U_t,\Omega) \, |_{t=0} =0. $$ 
If $\Phi$ is independent of the $x_{n+1}$ variable then $U$ becomes critical for $J_N$ with respect to the standard domain deformations in $\R^n$.

We start with the following lemma, in which we compute the first variation for critical points with respect to domain deformations.

\begin{lem}[First variation]\label{FV}
If $U$ is a critical point of $J_N$ with respect to domain variations then
\begin{equation}\label{fv}
\int_{B_1} \left(-2 \, \psi^k_l U_k \cdot U_l + (|\nabla U|^2 + W(U)) div \, \Psi \right)\, \, dx =0,
\end{equation}
for any Lipschitz map $\Psi: \R^n \to \R^n$ with compact support in $B_1$. 

In particular, for a.e. $r \in (0,1)$
\begin{equation}\label{div}
\frac{d}{dr} (r^{-n} J_N(U,B_r)) = 2 r^{-n} \int_{\p B_r} |U_\nu| ^2 d \sigma -2 \, r^{-n-1} \int_{B_r}|\nabla U|^2 dx. 
\end{equation}
\end{lem}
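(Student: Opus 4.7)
The strategy is the classical inner variation (domain deformation) computation, with the extra observation that the potential $W(U)$ transforms in a particularly simple way under such deformations.

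\textbf{Setup.} Given a Lipschitz $\Psi:\R^n\to\R^n$ with compact support in $B_1$, consider the family of diffeomorphisms $\Phi_t(x)=x+t\Psi(x)$, which are invertible for $|t|$ small. Define $U_t(y):=U(\Phi_t^{-1}(y))$. Because $\Phi_t$ depends only on the base variables $x$, the ordering $u_1\ge\cdots\ge u_N$ is preserved by $U_t$, so $U_t$ is admissible, and its graphs are exactly the images of the graphs of $U$ under the deformation $(x,x_{n+1})\mapsto(\Phi_t(x),x_{n+1})$. Thus $U$ being critical forces $\tfrac{d}{dt}J_N(U_t,\R^n)|_{t=0}=0$.

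\textbf{Step 1: Change of variables.} The change of variable $y=\Phi_t(x)$ gives
\[
J_N(U_t) \;=\; \int \Bigl(|\nabla U(x)\,(I+tD\Psi(x))^{-1}|^2 + W(U(x))\Bigr)\det(I+tD\Psi(x))\,dx.
\]
The key point for the potential is that $W(U_t)(\Phi_t(x))=W(U)(x)$ pointwise, since $U_t\circ\Phi_t=U$ and $W$ depends only on the values of the components.

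\textbf{Step 2: Differentiate at $t=0$.} Using the standard identities
\[
\tfrac{d}{dt}\bigl|_{t=0}\det(I+tD\Psi)=\operatorname{div}\Psi, \qquad \tfrac{d}{dt}\bigl|_{t=0}(I+tD\Psi)^{-1}=-D\Psi,
\]
and writing $\psi^k{}_l=\partial_l\Psi^k$, we obtain
\[
0=\tfrac{d}{dt}J_N(U_t)\bigl|_{t=0}=\int_{B_1}\!\Bigl(-2\,\psi^k{}_l\,U_k\!\cdot\!U_l+(|\nabla U|^2+W(U))\operatorname{div}\Psi\Bigr)dx,
\]
which is \eqref{fv}.

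\textbf{Step 3: Radial test field.} To derive \eqref{div}, choose $\Psi(x)=\eta(|x|)x$ where $\eta$ is a smooth approximation of $\chi_{[0,r]}$. A direct computation gives
\[
\psi^k{}_l=\eta'(|x|)\tfrac{x_kx_l}{|x|}+\eta(|x|)\delta_{kl},\qquad \operatorname{div}\Psi=\eta'(|x|)|x|+n\,\eta(|x|),
\]
and, recalling $\sum_{k,l}x_kx_l\,U_k\!\cdot\!U_l=|x|^2|U_\nu|^2$,
\[
\psi^k{}_l U_k\!\cdot\!U_l=\eta'(|x|)|x||U_\nu|^2+\eta(|x|)|\nabla U|^2.
\]
Substituting into \eqref{fv} yields an integral identity with absolutely continuous dependence on $r$. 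As $\eta\to\chi_{[0,r]}$, the coarea formula turns the $\eta'$-terms into surface integrals on $\p B_r$, giving for a.e.\ $r$
\[
2r\!\int_{\p B_r}\!|U_\nu|^2 -2\!\int_{B_r}\!|\nabla U|^2 -r\!\int_{\p B_r}\!(|\nabla U|^2+W(U))+n\,J_N(U,B_r)=0.
\]
Combining this with the Lebesgue differentiation identity $\tfrac{d}{dr}J_N(U,B_r)=\int_{\p B_r}(|\nabla U|^2+W(U))\,d\sigma$ and rearranging gives precisely \eqref{div}.

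\textbf{Main obstacle.} Conceptually the computation is standard; the only care needed is in Step 1, where one must verify that domain deformations in $\R^n$ are legitimate admissible variations (the ordering constraint is preserved), and that $W(U_t)$ transforms correctly despite its discontinuous nature. The passage to the limit in Step 3 is routine once one observes the resulting identity holds for all smooth $\eta$ and thus, by approximation, for $\eta=\chi_{[0,r]}$ at every Lebesgue point of the relevant radial functions.
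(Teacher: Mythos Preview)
Your proof is correct and follows essentially the same approach as the paper: compute the inner variation via the change of variables $y=x+t\Psi(x)$, expand to first order to obtain \eqref{fv}, then specialize to the radial vector field $\Psi(x)=\eta(|x|)x$ with $\eta\to\chi_{[0,r]}$ to obtain the Pohozaev-type identity, from which \eqref{div} follows. The paper is slightly more terse in the final step (it simply states that the resulting identity ``gives \eqref{div}''), whereas you spell out the combination with $\tfrac{d}{dr}J_N(U,B_r)=\int_{\p B_r}(|\nabla U|^2+W(U))\,d\sigma$, but the arguments are otherwise identical.
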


\begin{proof}
Let $\Psi: \R^n \to \R^n$ be a a Lipschitz map with compact support in $B_1$, and we consider the domain deformation
$$x \mapsto x + \eps  \, \Psi(x) =: y.$$
Then
$$ D_x y = I + \eps \, D \Psi, \quad \quad D_y x = (D_x y)^{-1}= I - \eps D \Psi + O(\eps^2), $$
$$ dy=1+ \eps \,  div \, \Psi + O(\eps^2),$$
and
\begin{align*}
&J_N(U(y), B_1) = \int_{B_1} |\nabla_y U|^2 + W(U(y)) \, dy \\
&=\int_{B_1} \left(\nabla_x U (I - \eps \, D \Psi)(I - \eps D \Psi)^T (\nabla_x U)^T + W(U)\right) (1+ \eps \, div \, \Psi) \, \, dx + O(\eps^2)\\
& = J_N(U,B_1) + \eps \int_{B_1} \left(-2 \psi^k_l U_k \cdot U_l + (|\nabla U|^2 + W(U)) div \, \Psi \right)\, \, dx + O(\eps^2).
\end{align*}
Here $O(\eps^2)$ depends on $\|U\|^2_{H^1}$ and $\|D\Psi\|_{L ^\infty}$.

We take in \eqref{fv} $$\Psi(x)= \psi_\delta(|x|) x $$ with $\psi_\delta$ a cutoff function which is 1 in $[0,r-\delta]$ and $0$ in $[r , \infty)$, and we let $\delta \to 0$ and obtain
$$\int_{B_r} \left((n-2)|\nabla U|^2 + n W(U)\right) dx = r \int_{\p B_r} \left(|\nabla U|^2 + W (U) - 2 |U_\nu|^2\right) \, d \sigma,$$
which gives \eqref{div}.
\end{proof}

Let us consider continuous deformations of the graph of $U$ in the vertical direction
$$ (x, U) \mapsto (x, U + t \varphi(x) U)=: (x,U_t(x))$$
with $\varphi$ a smooth function with compact support. Notice that these deformations are admissible for all $t$ close to $0$, and in addition
$$ W(U)=W(U_t).$$ 
Then, if $U$ is critical for $J_N$ with respect to the family $U_t$ we find
\begin{equation}\label{div2}
0 = \int_{B_1} \nabla U \cdot \nabla (\varphi U) dx,
\end{equation}
which means that $$U \cdot \triangle U =0 $$ holds in the distribution sense. We take in \eqref{div2} $\psi(x)=\psi_\delta(|x|)$ as in the proof above and obtain that
\begin{equation}\label{div3}
\int_{B_r} |\nabla U|^2 dx = \int_{\p B_r} U \cdot U_\nu \, d\sigma \quad \quad \mbox{for a.e. $r$.}
\end{equation}

\begin{proof}[Proof of Proposition \ref{WMF}]

We compute
$$ \frac{d}{dr} \left(r^{-n-1} \int_{\p B_r} |U|^2 d \sigma \right) = 2 r^ {-n-1} \int _{\p B_r}  \left(U \cdot U_\nu - |U|^2\right) \, d \sigma,$$
which together with \eqref{div} and \eqref{div3} give the formula for $\Phi'(r)$.

If $U$ is a cone, then $\Phi(r)$ is a constant which we denote by $\Phi(U)$, and since
$$ \int_{B_1} |\nabla U|^2 dx=\int_{\p B_1} U \cdot U_\nu d \sigma = \int_{\p B_1}| U|^2 d \sigma,$$
we find
$$ \Phi(U)= \int_{B_1} W(U) dx.$$ 
\end{proof}

Having established Weiss' formula, we can now introduce the notion of blow-up cones. Let $U$ be a minimizer in $\Omega$ and assume $U(x_0)=0$. The Lipschitz continuity Theorem \ref{nonC} shows that $\Phi(r)$ is bounded below, hence it has a limit as $r \to 0$. For a sequence of $r_m \to 0$, consider the rescalings,
$$U_{r_m}(x):= \frac{U(x_0+r_mx)}{r_m}.$$ Then according to Theorem \ref{nonC}, up to extracting a subsequence, $U_m \to U_0$ uniformly on compact sets of $\R^n$. The limit $U_0$ is called a blow-up limit, and by our compactness theorem, $U_0$ is a minimizer.  It easily follows from Weiss' formula, that $U_0$ is homogeneous of degree 1.

This discussion and the non-degeneracy Lemma \ref{NDeg} gives Theorem \ref{Bup}.


\section{One dimensional minimizers}\label{Sec5}

We show that the minimizing cones to $J_N$ in dimension $n=1$ are given by $$u_1=u_2=..=u_k \ge u_{k+1}=..=u_N, \quad \quad k=1,\ldots, N-1,$$
 with $u_k=u_{k+1}$ for $x\leq 0$ and $u_k >u_{k+1}$ for $x>0$. More precisely, for $k \le N-1,$ define $U_{0,k}$ as
$$ u_i=\left(\frac 1k - \frac 1N \right) x^+ \quad \quad \mbox{for $i \le k$}, \quad u_i=-\left(\frac {1}{N-k} - \frac 1N \right) x^+ \quad \quad \mbox{for $i > k$} .$$
Notice that $$ |\nabla U_{0,k}|^2=1=W(U_{0,k}), \quad x>0,$$
and $$ \Phi(U_{0,k})=1,$$ where we are using the notation $$\Phi(U):= \int_{-1}^1 W(U) \;dx.$$
\begin{prop}\label{P1d}
The only minimizing cones $U$ in dimension $n=1$ are given by $U(x)=L(x) + U_{0,k}(x)$, with $L(x)= (a\cdot x, \ldots, a\cdot x), a \in \R^n$ (up to a reflection with respect to 0). If $\sum u_i = 0$ then $U=U_{0,k}.$
\end{prop}

Toward the proof of Proposition \ref{P1d}, we establish the next lemmas.

\begin{lem}[Inwards perturbations]\label{L1da}
Let $U$ be a minimizer to $J_N$ in an interval around $0$ in dimension $n=1$. Assume that a number of membranes $u_i$ with indeces $$i \in I=\{ j+1,..,k \},$$ 
coincide at $0$, and on a small interval $(0,\delta)$ to the right of $0$ 

a) $u_i$ are linear, 

b) the family $u_i$, $i \in I$, is strictly separated from the remaining membranes i.e. $u_{j-1}>u_j$, $u_k > u_{k+1}$. 

Then, in $(0,\delta)$ the slopes satisfy the inequality
$$ \sum_{i \in I} |\nabla (u_i - u_I)|^2 \ge W (\{u_j,..,u_k \}), \quad \quad u_I:= \frac{1}{k-j} \sum_{i\in I} u_i.$$
\end{lem}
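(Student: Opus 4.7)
My approach is to compare $U$ with an admissible competitor obtained by pushing the splitting point of the cluster a small distance $\varepsilon>0$ to the right of $0$, and then to extract the desired pointwise inequality by sending $\varepsilon\to 0^+$.

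Write $u_i(x)=c+a_i x$ on $(0,\delta)$ for $i\in I$, so that $u_I(x)=c+\bar a x$ with $\bar a:=|I|^{-1}\sum_{i\in I}a_i$. I would define $\tilde U$ to coincide with $U$ outside $(0,\delta)$, to agree with $U$ on all components not indexed by $I$, and for each $i\in I$ to be given on $(0,\delta)$ by
\begin{equation*}
\tilde u_i(x):=
\begin{cases}
u_I(x), & x\in(0,\varepsilon],\\[2pt]
u_I(\varepsilon)+\dfrac{u_i(\delta)-u_I(\varepsilon)}{\delta-\varepsilon}\,(x-\varepsilon), & x\in(\varepsilon,\delta).
\end{cases}
\end{equation*}
The internal ordering of the cluster $I$ is preserved because the slopes on $(\varepsilon,\delta)$ depend on $a_i$ in an affine, monotone way. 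The strict separation from the adjacent membranes ($u_j$ above and $u_{k+1}$ below), together with continuity, guarantees that no crossing with these occurs once $\varepsilon$ is small enough, so $\tilde U\in\mathcal{A}$.

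A direct Taylor expansion in $\varepsilon$, using $\sum_{i\in I}(a_i-\bar a)=0$, yields
\begin{equation*}
\int_0^\delta|\nabla\tilde U|^2\,dx-\int_0^\delta|\nabla U|^2\,dx=\varepsilon\sum_{i\in I}(a_i-\bar a)^2+O(\varepsilon^2).
\end{equation*}
Simultaneously, on $(0,\varepsilon)$ the collapse $\tilde u_i\equiv u_I$ eliminates all interior $I$-gaps while, by continuity, preserving the strict external gaps coming from $u_j$ and $u_{k+1}$; no $W$-terms change on $(\varepsilon,\delta)$. Thus
\begin{equation*}
\int_0^\delta W(\tilde U)\,dx-\int_0^\delta W(U)\,dx=-\varepsilon\,W(\{u_j,\ldots,u_k\})+o(\varepsilon).
\end{equation*}
Invoking minimality $J_N(\tilde U)\ge J_N(U)$, summing the two displays, dividing by $\varepsilon$, and letting $\varepsilon\to 0^+$ gives the claimed inequality.

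The main obstacle, rather than being analytic, is combinatorial bookkeeping: checking that $\tilde U$ remains admissible near the transition point $x=\varepsilon$ and near the outer endpoint $x=\delta$ (both controlled by the strict separation hypothesis plus continuity), and carefully accounting for exactly which $\chi_{\{u_i>u_{i+1}\}}$ contributions are modified by the collapse on $(0,\varepsilon)$. This is precisely where the index range on the right-hand side of the inequality is dictated.
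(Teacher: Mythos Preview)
Your argument is correct and is essentially the same as the paper's. The paper first normalizes by subtracting the linear function $u_I$ and relabeling so that $I=\{1,\dots,N\}$, and then writes the same competitor via the domain rescaling $V_\varepsilon(x)=U\!\left(\frac{x-\varepsilon}{1-\varepsilon}\right)$ on $[\varepsilon,1]$, $V_\varepsilon\equiv 0$ on $[0,\varepsilon]$; since the $u_i$ are linear this is exactly your piecewise-linear $\tilde U$ after normalization, and the first-variation computation $\frac{d}{d\varepsilon}J_N(V_\varepsilon)\big|_{\varepsilon=0}=\int(|\nabla U|^2-W(U))$ matches your Taylor expansion.
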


\begin{proof}
After subtracting a linear function and after a dilation we reduce to the case $$\delta =1, \quad u_I=0.$$ Also, after relabeling the membranes we may assume that $$I=\{1,2,.., N\},$$ 
and $U$ minimizes the energy in $[0,1]$ among all admissible competitors $V$ with the same boundary data as $U$ in $[0,1]$ and with $v_1 \le u_1$ and $v_N \ge u_N$.    

 We pick 
$$ V_\eps (x):=\left \{ 
\begin{array}{l}
U(\frac {x-\eps}{1-\eps}) \quad \mbox{if} \quad x \in [\eps,1] \\

\

\\

0  \quad \mbox{if} \quad x \in [0,\eps],

\end{array}
\right .
$$
and find that
$$ J_N(V_\eps)= \int_0^1\left( \frac{1}{1-\eps} |\nabla U|^2 + (1-\eps) W(U)\right) dx. $$
The minimality implies
$$ 0 \le \frac{d}{d \eps} J_N(V_\eps)|_{\eps=0} = \int_0^1 (|\nabla U|^2 - W(U)) dx,$$
which gives the desired conclusion $$ |\nabla U|^2 \ge W(U).$$

\end{proof}

\begin{rem}\label{R1d}
If in addition to the hypotheses of Lemma \ref{L1da} we assume that the $u_i$ with $i \in I$ coincide in $[-\delta,0]$, and the separation with respect to the remaining membranes holds in $[-\delta,\delta]$, then the inequality becomes an equality:
 \begin{equation}\label{EL1d}
 \sum_{i \in I} |\nabla (u_i - u_I)|^2 = W (\{u_j,..,u_k \}), \quad \quad u_I:= \frac{1}{k-j} \sum_{i\in I} u_i.
 \end{equation}
 Indeed, in the proof above we may replace $\eps$ by $-\eps$, and the corresponding functions are still admissible.
 
 This equality can be interpreted as the Euler-Lagrange equation at the branching points, which expresses the equipartition of energy.
\end{rem}

\begin{lem}\label{L1db}Let $U$ be a minimizer to $J_N$ in $(-\delta,\delta)$ in dimension $n=1$ with
$$u_1=u_2=..=u_k, \quad  \mbox{for some $k \ge 2$.}$$
Assume that

a) $u_1$ is linear in $[0,\delta)$ and $(-\delta,0]$, and let $a^+$, respectively $a^-$ denote its slopes;

b) $u_k > u_{k+1}$ in $(0,\delta)$.

Then

$$ 0 \le a^+-a^- \le \frac{1}{\sqrt k}.$$

\end{lem}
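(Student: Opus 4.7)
The two inequalities are handled separately.

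The lower bound $a^+\ge a^-$ is immediate from Lemma~\ref{sub-super}: $u_1$ is subharmonic in $(-\delta,\delta)$, which in dimension one is equivalent to convexity. A piecewise linear function with slopes $a^-$ on $(-\delta,0)$ and $a^+$ on $(0,\delta)$ is convex precisely when $a^-\le a^+$.

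For the upper bound $a^+-a^-\le 1/\sqrt k$, I construct a one-parameter family of admissible competitors $V^\epsilon$ by ``sliding the kink'' of the top $k$ coincident membranes outward from $0$ to $-\epsilon$, for small $\epsilon>0$. Normalizing $u_1(0)=0$, set $\tilde a:=(a^+\delta+a^-\epsilon)/(\delta+\epsilon)$; let $v^\epsilon_i:=u_i$ for $i>k$, and for $i\le k$ define $v^\epsilon_i(x)=u_1(x)$ on $(-\delta,-\epsilon]$ while $v^\epsilon_i(x)=\tilde a(x+\epsilon)-a^-\epsilon=:\bar v(x)$ on $[-\epsilon,\delta)$. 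Thus on $[-\epsilon,\delta)$ the top $k$ components collapse onto the single straight line joining $(-\epsilon,u_1(-\epsilon))$ and $(\delta,u_1(\delta))$. Admissibility of $V^\epsilon$ reduces to $\bar v\ge u_{k+1}$ on $[-\epsilon,\delta)$; this follows from the chain $\bar v\ge u_1\ge u_{k+1}$, where the first inequality is a direct linear comparison giving $\bar v-u_1=(\tilde a-a^-)(x+\epsilon)\ge 0$ on $[-\epsilon,0]$ and $\bar v-u_1=(a^+-a^-)\epsilon(\delta-x)/(\delta+\epsilon)\ge 0$ on $[0,\delta]$, both nonnegative thanks to the lower bound.

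Writing $\alpha:=a^+-a^-$, I then expand $\Delta J:=J_N(V^\epsilon)-J_N(U)$ to leading order in $\epsilon$. The Dirichlet contribution comes only from the top $k$ components; a direct computation (which is a Jensen-type excess, since $\bar v$ is the straight interpolant of the broken line $u_1$) gives
\[
\Delta D = k\bigl[(\delta+\epsilon)\tilde a^2-\epsilon(a^-)^2-\delta(a^+)^2\bigr] = -\frac{k\delta\epsilon\,\alpha^2}{\delta+\epsilon} = -k\alpha^2\,\epsilon+O(\epsilon^2).
\]
For the potential, since the top $k$ components still coincide among themselves and the lower membranes are untouched, the only indicator in $W$ that can change is $\chi_{\{u_k>u_{k+1}\}}$; it is unchanged on $(-\delta,-\epsilon)\cup(0,\delta)$, and on $(-\epsilon,0)$ it can be at most $1$ larger (it equals $1$ there since $\bar v>u_1$ strictly on this interval). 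Thus $\Delta W\le\epsilon$. Minimality of $U$ gives $\Delta J=\Delta D+\Delta W\ge 0$; dividing by $\epsilon$ and letting $\epsilon\to 0^+$ yields $k\alpha^2\le 1$, i.e., $\alpha\le 1/\sqrt k$. The main subtlety is the bookkeeping of the potential $W$: modifying only the top $k$ components leaves all indicators except $\chi_{\{u_k>u_{k+1}\}}$ unchanged, and this single indicator is altered only on the interval of length $\epsilon$ on which the modification takes place.
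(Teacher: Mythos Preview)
Your proof is correct and follows essentially the same strategy as the paper. After the paper's normalization $a^-=0$, $\delta=1$, its competitor $v_{\eps,i}=\frac{1}{1+\eps}u_i(x+\eps)\chi_{[-\eps,1]}$ for $i\le k$ is exactly your straight-line interpolant $\bar v$, so the two constructions coincide; you simply carry out the computation without normalizing first.
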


\begin{proof}
As above, after subtracting a linear function we may assume that $\delta=1$ and $a^-=0$. Then $a^+ \ge 0$ since $u_1$ is subharmonic,  according to Proposition \ref{sub-super}. 

The top $k$ membranes vanish to the left of $0$, and we can use as an admissible competitor for these functions the same one used in the proof of Lemma \ref{L1da} with $\eps$ replaced by $-\eps$. Precisely, let 
$$ v_{\eps,i}:=\frac{1}{1+\eps} u_i(x+\eps) \chi_{[-\eps,1]} \quad \mbox{if $i \le k$,}\quad \mbox{and} \quad v_{\eps,i} := u_i \quad \mbox{if $i > k$},$$
and notice that
\begin{align*}
J_N(V_\eps) - J_N(U) & = \int_{-1}^1 \left(\sum_{i=1}^k (|\nabla v_{\eps,i}|^2 -  |\nabla u_i|^2) + \chi_{\{v_{\eps,k} > u_{k+1}\} \cap [-\eps,0]} \right)dx\\
& \le  \eps - \frac{\eps}{1+\eps} \int_0^1 \sum_{i=1}^k |\nabla u_i|^2 dx, 
\end{align*}
which, as $\eps \to 0$, gives the desired inequality $$ k (a^+)^2 \le 1.$$
\end{proof}

We are now ready to provide the proof of Proposition \ref{P1d}.

\begin{proof}[Proof of Proposition $\ref{P1d}$]
Let $U$ be a nonzero minimizing cone of average 0 in dimension $n=1$ and let $$a_1^+ > a_2^+>...>a_l^+$$ be the slopes of the membranes on $[0, \infty)$ and denote by $k_1^+,..,k_l^+$ their multiplicities that is, $k^+_j$ is the number of membranes with slope $a^+_j$. Since $U$ has average $0$ we have $a_1^+  \ge 0 \ge a_l^+$. Similarly on $(-\infty,0]$ we have slopes $-a_1^-,-a_2^-,..,-a_m^-$ with $$a_1^->...>a_m^-, \quad \quad a_1^- \ge 0 \ge a_m^-,$$ with multiplicities $k_1^-,..,k_m^-$. We apply Lemma \ref{L1da} for the collection of membranes corresponding to 2 consecutive slopes $a_i$ and $a_{i+1}$ (here we drop the $+$ or $-$ superscript for simplicity of notation) and obtain 
$$ (a_i -a_{i+1})^2 \left ( k_i \frac{k_{i+1}^2}{(k_i+k_{i+1})^2} +  k_{i+1} \frac{k_{i+1}^2}{(k_i+k_{i+1})^2}\right) \ge 1,$$
or
$$ (a_i - a_{i+1})^2 \ge \frac{1}{k_i} + \frac{1}{k_{i+1}}.$$
Next assume that 
\begin{equation}\label{999}
k_1^+ \le k_1^-.
\end{equation}
 We can apply Lemma \ref{L1db} to the top $k_1^+$ membranes and obtain
$$ a_1^+ + a_1^- \le \sqrt {\frac{1}{k_1^+}}.$$ 
If $l=1$ then, $k_1^+ =N$ and therefore $k_1^-=N$ as well. This means $l=m=1$ hence $U \equiv 0$ which contradicts our hypothesis. 
In conclusion, $l \ge 2$ and then the inequalities above (with $i=1$) imply
\begin{equation}\label{1000}
 a_2^+ < - a_1^- \le 0,
 \end{equation}
and since $a_l^+ \le a_2^+$, we find
\begin{equation}\label{1001}
 a_1^- + a_l^+ <0.
 \end{equation}
We claim that 
\begin{equation}\label{1002}
k_l^+ < k_m^-.
 \end{equation}

Indeed, otherwise $k_l^+ \ge k_m^-$ and we can argue as in \eqref{1001} for the membranes in the reversed order (notice that $(u_1,..,u_N)$ is a minimizer if and only if $(-u_N,..,-u_1)$ is a minimizer). We obtain 
$$-a_1^-  -a_l^+ <0,$$
and contradict \eqref{1001}, hence the claim \eqref{1002} is proved.

If $l \ge 3$ then we may apply \eqref{1000} for the reversing order (see \eqref{1002}) and obtain $-a_2^+ < 0$, a contradiction.

In conclusion $l=2$, and \eqref{999}, \eqref{1002} imply that $m=1$ and $U=U_{0,k}$ for some $k$.

\smallskip

It remains to show that $U_{0,k}$ is a minimizer. Let $V$ be a perturbation of $U$ which coincides with $U$ outside a compact interval that contains the origin, say $[-1,1]$. Let $\rho$ be such that $u_1 >0 \in (\rho,1]$, $u_1(\rho)=0$ (and therefore $U(\rho)=0$). Then $W(V) \ge 1$ in $[\rho,1]$, hence
$$ J_N(V,[-1,1]) \ge J_N(\bar V, [-1,1]) $$   
where $\bar V$ is the harmonic replacement of $V$ in $[\rho,1]$, with $\bar V$ extended to be $0$ in $[-1,\rho]$. On the other hand, the right hand side is minimized when $\rho=0$ as can be seen from Remark \ref{R1d}.

\end{proof}

If we consider minimality in the more restrictive class of continuous deformations of the graph of $U$ in $\R \times \R$ (which do not change the topology of the junctions), then there are other cones in 1D. For example, for $N=3$,
$$ u_1=  x^+ , \quad u_2=0, \quad u_3=-x^+,$$
is a {\it minimizing} cone with a triple junction at $0$. Notice that the Euler -Lagrange equation \eqref{EL1d} is satisfied. 
 The minimizer in $H^1$ with the same boundary data in $[-1,1]$ of the example above is (see Figure \ref{fig3}) 
 $$ u_1= \lambda \left  (x -1 + \frac 1 \lambda \right )^+, \quad u_2= - \frac 12 u_1 + \mu \left(x-1 + \frac {1}{2 \mu}\right )^+, \quad u_3=-u_1-u_2, $$ 
 $$ \lambda=\frac { \sqrt 2}{\sqrt 3}, \quad \mu = \frac {1}{ \sqrt 2 }.$$

\begin{figure}[h] 
\includegraphics[width=0.35 \textwidth]{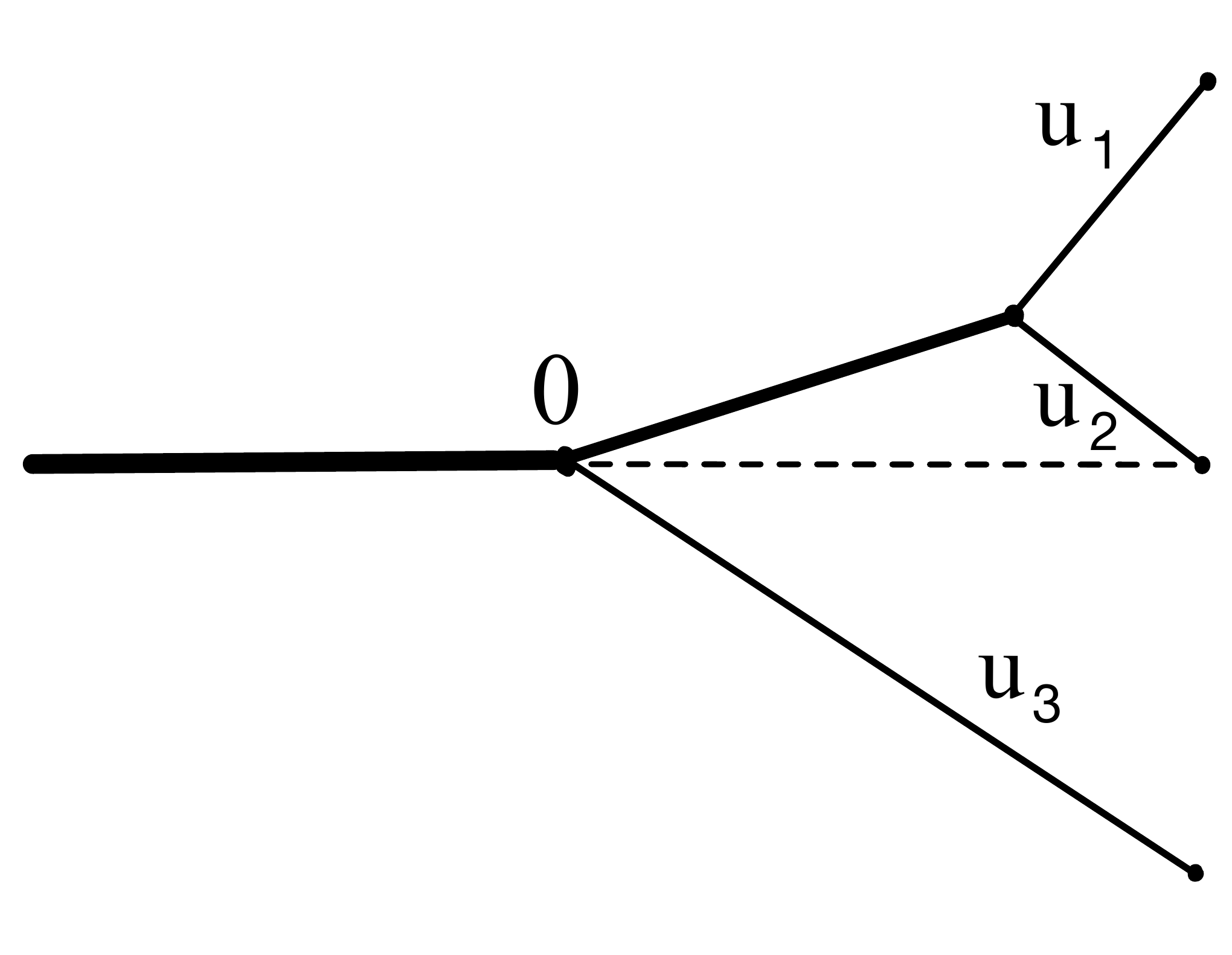}
\caption{A minimizer for $N=3$, $n=1$.} 
   \label{fig3}
\end{figure}


\section{Free boundary regularity near one-dimensional cones}\label{Sec6}

In this section we prove the partial regularity result for the free boundaries $\Gamma_i$, in dimension $n \geq 2.$ For convenience we restate Theorem \ref{prI} from the Introduction.

 \begin{thm}[Partial regularity]\label{pr}
Let $U$ be a minimizer in $B_1$. The free boundaries $\Gamma_i$'s are analytic and disjoint from one another outside a closed set $\Sigma$ of singular points of Hausdorff dimension $n-2$, and
$$\mathcal H^{n-1}(\Gamma_i \cap B_{1/2}) \le C.$$
\end{thm}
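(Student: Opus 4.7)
The proof follows a Federer-style partial regularity scheme, with the one-dimensional cone classification (Proposition~\ref{P1d}) as the base case and the Weiss monotonicity formula (Proposition~\ref{WMF}) as the driving tool. First I would define the regular set
$$Reg := \Big\{ x_0 \in \bigcup_{i=1}^{N-1} \Gamma_i :\ \text{some blow-up of } U \text{ at } x_0 \text{ is (up to rotation) of the form } U_{0,k} \Big\},$$
and set $\Sigma := \bigcup_i \Gamma_i \setminus Reg$. Blow-ups exist and are 1-homogeneous minimizing cones by Theorem~\ref{Bup}, and in a $U_{0,k}$ cone exactly one free boundary is active, so membership in $Reg$ already encodes the local uniqueness of the free boundary passing through $x_0$ and, via a standard iteration, the analyticity conclusion.

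The first substantive step is an $\epsilon$-regularity theorem at points of $Reg$. If the blow-up at $x_0$ is (a rotation of) $U_{0,k}$, then in a whole neighborhood of $x_0$ the identifications
$$u_1 \equiv \ldots \equiv u_k =: f, \qquad u_{k+1} \equiv \ldots \equiv u_N =: g$$
hold and the auxiliary free boundaries $\Gamma_j$, $j \neq k$, are absent near $x_0$. This is established by combining the closeness $U_r \to U_{0,k}$ with the factorization provided by Lemma~\ref{induction}, ruling out persistent secondary ``bubbles'' $\{u_j > u_{j+1}\}$ by their nontrivial contribution to the Weiss density $\Phi$, which in the blow-up would fail to match $\Phi(U_{0,k})$. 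With the reduction in place, setting $w := f - g \ge 0$ and $h := (kf + (N-k)g)/N$ (harmonic by Lemma~\ref{harmonic}), the functional decomposes as
$$J_N(U) = \int \left( N|\nabla h|^2 + \tfrac{k(N-k)}{N}|\nabla w|^2 + \chi_{\{w>0\}} \right) dx,$$
so that $w$ is, up to a scaling of the coefficient, a minimizer of the classical Alt--Caffarelli one-phase functional. The theory of \cite{AC}, followed by the Kinderlehrer--Nirenberg hodograph transform, then gives analyticity of $\Gamma_k$ near $x_0$.

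The next step is the bound $\dim_{\mathcal H} \Sigma \le n-2$ via Federer's classical dimension reduction, enabled by the Weiss monotonicity formula and the compactness of minimizers. If $\dim_{\mathcal H} \Sigma$ exceeded $n-2$, stratification by the dimension of the translation-invariant spine of blow-up cones would produce a singular point with a spine of dimension at least $n-1$; after quotienting by this spine the resulting blow-up would be a 1-homogeneous minimizer in one variable, hence equal to $U_{0,k}$ by Proposition~\ref{P1d}, placing the original point in $Reg$, a contradiction.

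Finally, the Ahlfors-type bound $\mathcal{H}^{n-1}(\Gamma_i \cap B_{1/2}) \le C$ reduces to a local perimeter estimate for $\{u_i > u_{i+1}\}$, which follows from energy comparison against admissible modifications that pinch two consecutive membranes together in a tubular neighborhood of $\Gamma_i$, in the spirit of the classical Alt--Caffarelli perimeter estimate for the scalar one-phase problem, using the uniform Lipschitz bound of Theorem~\ref{nonC}. I expect the main obstacle to lie in the $\epsilon$-regularity step, specifically in justifying that secondary free boundaries $\Gamma_j$ with $j\neq k$ cannot accumulate at arbitrarily small scales near a point of $Reg$: this requires a quantitative flatness iteration for the vectorial system combined with the Weiss monotonicity formula to propagate the $U_{0,k}$ structure stably to all small scales, before the scalar one-phase reduction can even be carried out.
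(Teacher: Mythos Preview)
Your proposal is correct and follows essentially the same architecture as the paper: an $\eps$-regularity theorem reducing to the scalar Alt--Caffarelli problem near one-dimensional blow-ups, followed by Federer dimension reduction with Proposition~\ref{P1d} as the base case. Your decomposition $J_N(U)=\int\big(N|\nabla h|^2+\tfrac{k(N-k)}{N}|\nabla w|^2+\chi_{\{w>0\}}\big)$ is exactly the mechanism behind the paper's reduction (the paper writes it in terms of $u_k$ under the zero-average normalization, with the equivalent coefficient $\tfrac{Nk}{N-k}$).

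The one organizational difference worth noting concerns the step you flag as the main obstacle, namely excluding secondary free boundaries $\Gamma_j$, $j\ne k$, at all small scales. The paper handles this via an intermediate lemma characterizing the $U_{0,k}$ as the \emph{least-energy} nonzero cones: every nonzero cone satisfies $\Phi(U)\ge\tfrac12|B_1|$, with equality only for rotations of $U_{0,k}$. The $\eps$-regularity hypothesis is then phrased as $\Phi(U,B_1)\le\tfrac12|B_1|+\delta$, which by Weiss monotonicity propagates automatically to all smaller scales and to all nearby free-boundary points; compactness plus the least-energy lemma then forces $L^\infty$-closeness to some $U_{0,k}$ at every scale, and non-degeneracy rules out the secondary bubbles. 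This energy-threshold formulation is a bit cleaner than arguing directly from $L^\infty$-closeness, since it gives the openness of $Reg$ and the scale-propagation for free, without a separate flatness iteration for the vectorial system. Your route would also work, but the least-energy-cone observation short-circuits precisely the difficulty you anticipated.
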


In order to obtain Theorem \ref{pr}, we first establish a series of lemmas. First we write a dimension reduction lemma for minimizers depending on fewer variables.

\begin{lem}$U(x)$ is a minimizer for $J_N$ in $B_1 \subset \R^n$ if and only if its trivial extension in one more variable is a minimizer for $J_N$ in $B_1 \times \R \subset \R^{n+1}$.
\end{lem}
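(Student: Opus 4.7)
The plan is to prove both directions using a Fubini decomposition of $J_N$ on slabs $B_1\times(-R,R)$, together with a cutoff argument for the reverse direction. The key identity to exploit is that for any admissible $V$ on $B_1\times(-R,R)$,
\begin{equation*}
J_N(V,B_1\times(-R,R)) = \int_{-R}^R \left[J_N(V(\cdot,t),B_1) + \int_{B_1}|\partial_{x_{n+1}}V|^2\,dx\right]dt,
\end{equation*}
which holds because $|\nabla V|^2=|\nabla_xV|^2+|\partial_{x_{n+1}}V|^2$ and $W$ depends only on pointwise values (so it commutes with slicing). Note also that the ordering constraint in $\mathcal A$ is pointwise, so each slice $V(\cdot,t)$ is automatically admissible for the $n$-dimensional problem.

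For the forward direction, let $\tilde U(x,x_{n+1}):=U(x)$ and take any compactly supported perturbation $V$ of $\tilde U$ in $B_1\times\R$, supported in some slab $B_1\times(-R,R)$. For a.e.\ $t$, the slice $V(\cdot,t)$ agrees with $U$ outside a compact subset of $B_1$, hence is an admissible competitor for $U$. By minimality of $U$, $J_N(V(\cdot,t),B_1)\ge J_N(U,B_1)=J_N(\tilde U(\cdot,t),B_1)$, and the extra $|\partial_{x_{n+1}}V|^2$ term is non-negative. Integrating the Fubini identity over $(-R,R)$ yields $J_N(V)\ge J_N(\tilde U)$ on the slab, which is what is required.

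For the reverse direction, I argue by contradiction. Suppose $\tilde U$ is a minimizer but some competitor $W$ for $U$ in $B_1$, agreeing with $U$ outside a compact $K\subset B_1$, satisfies $J_N(W,B_1)\le J_N(U,B_1)-\delta$ with $\delta>0$. Pick a Lipschitz cutoff $\phi:\R\to[0,1]$ with $\phi=1$ on $[-R+1,R-1]$, $\phi=0$ outside $[-R,R]$ and $|\phi'|\le 2$, and define the $(n+1)$-dimensional competitor
\begin{equation*}
\tilde W_i(x,x_{n+1}) := \phi(x_{n+1})\,w_i(x)+(1-\phi(x_{n+1}))\,u_i(x).
\end{equation*}
Since $\phi,1-\phi\ge 0$ and both $W$ and $U$ satisfy the ordering, so does $\tilde W$; moreover $\tilde W=\tilde U$ outside the compact set $K\times[-R,R]$. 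On the central slab $B_1\times(-R+1,R-1)$ we have $\tilde W=W$ so the energy contribution is $(2R-2)J_N(W,B_1)$; on the two transition slabs $B_1\times([-R,-R+1]\cup[R-1,R])$ the Dirichlet and potential terms are bounded by a constant $C$ depending only on $\|U\|_{H^1(B_1)}$, $\|W\|_{H^1(B_1)}$, and $N$, independent of $R$. Comparing with $J_N(\tilde U)=(2R+2)J_N(U,B_1)$ on the same slab $B_1\times(-R-1,R+1)$ gives
\begin{equation*}
J_N(\tilde W)-J_N(\tilde U) \le -(2R-2)\delta + C',
\end{equation*}
which is strictly negative for $R$ large, contradicting the minimality of $\tilde U$.

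The argument is almost entirely bookkeeping; the only point to take care with is to verify that the convex-combination extension $\tilde W=\phi W+(1-\phi)U$ preserves the ordering constraint and stays in $H^1$, and that the transition-slab energy is bounded by an $R$-independent constant, which is immediate from the $L^\infty$ bound $W(\tilde W)\le N-1$ and the $H^1$ bounds on $W,U$.
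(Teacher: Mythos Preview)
Your proof is correct and is precisely the standard argument the paper has in mind; the paper itself simply writes ``The proof of this lemma is standard and we omit the details.'' The Fubini slicing for the forward direction and the cutoff interpolation $\phi W+(1-\phi)U$ for the reverse direction are exactly the expected steps, and you have correctly checked that the convex combination preserves the ordering constraint in $\mathcal A$.

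One trivial bookkeeping remark: when you compare energies you switch from the slab $B_1\times(-R,R)$ (where $\tilde W$ differs from $\tilde U$) to the slightly larger slab $B_1\times(-R-1,R+1)$; since $\tilde W=\tilde U$ on the extra pieces this does not affect the difference $J_N(\tilde W)-J_N(\tilde U)$, so the final inequality $-(2R-2)\delta+C'$ is unchanged, but it would read a bit more cleanly if both energies were taken on the same slab $B_1\times(-R,R)$.
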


The proof of this lemma is standard and we omit the details. 

Next, we obtain a lower bound of the energy of non trivial cones. Recall that cones are 1-homogeneous functions.

\begin{lem}[Least energy cones]\label{L41}
Let $U$ be a nonzero cone in $\R^n$, with $\sum u_i=0$. Then
$$ \Phi(U) \ge \frac 12 |B_1|,$$
with equality if and only if $$U= U_{0,k}(x \cdot \nu),$$ for some $k <N$, and some unit direction $\nu$.
\end{lem}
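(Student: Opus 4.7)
The plan is to convert $\Phi(U)$ into a spherical integral using the cone structure, bound the integrand from below pointwise by $\chi_{\{u_1>0\}}$, and then apply the spherical Faber--Krahn (Friedland--Hayman) inequality to the non-negative subharmonic function $u_1$.

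First I would use Proposition \ref{WMF} to write $\Phi(U)=\int_{B_1}W(U)\,dx$, and exploit that $U$ is $1$-homogeneous (so that $W(U)$ is $0$-homogeneous) to pass to polar coordinates:
$$\Phi(U)=\frac{1}{n}\int_{\p B_1}W(U(\theta))\,d\sigma.$$
From $\sum u_i=0$ and $u_1\ge \cdots\ge u_N$ we get $u_1\ge 0\ge u_N$, so $\{u_1>0\}=\{u_N<0\}=\{|U|>0\}$, and on this set some consecutive pair $u_i>u_{i+1}$ must be strict, giving $W(U)\ge \chi_{\{u_1>0\}}$ pointwise.

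The main step is the claim $|\{u_1>0\}\cap \p B_1|\ge |\p B_1|/2$. Writing $u_1(x)=r\,\phi(\theta)$ with $\phi$ Lipschitz by Theorem \ref{nonC}, the subharmonicity of $u_1$ from Lemma \ref{sub-super} translates, via the spherical decomposition of the Laplacian, into the weak inequality $-\Delta_{S^{n-1}}\phi\le (n-1)\phi$ on $\p B_1$. Since $\phi$ is Lipschitz and vanishes on $\p A$ for $A:=\{\phi>0\}$, it belongs to $H^1_0(A)$; testing the above against $\phi$ yields
$$\int_A|\nabla_{S^{n-1}}\phi|^2\,d\sigma\le (n-1)\int_A\phi^2\,d\sigma,$$
so the first Dirichlet eigenvalue satisfies $\lambda_1(A)\le n-1$. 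By the spherical Faber--Krahn inequality (among sets of prescribed measure, spherical caps minimize $\lambda_1$, and the half-sphere has $\lambda_1=n-1$) we conclude $|A|\ge |\p B_1|/2$, and therefore
$$\Phi(U)\ge \frac{|A|}{n}\ge \frac{|\p B_1|}{2n}=\frac{|B_1|}{2}.$$

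For the rigidity statement, equality forces $\lambda_1(A)=n-1$ and $|A|=|\p B_1|/2$, so by Faber--Krahn rigidity $A$ is a half-sphere and $\phi$ is a positive multiple of the corresponding first eigenfunction; thus $u_1=c\,(x\cdot\nu)^+$ for some $c>0$ and unit vector $\nu$. Moreover $W(U(\theta))=1$ a.e.\ on $A$, so on $H^+=\{x\cdot\nu>0\}$ there is exactly one strict consecutive inequality $u_k>u_{k+1}$, and continuity forces the index $k$ to be constant across the connected set $H^+$. Hence the top $k$ and bottom $N-k$ membranes coincide on $H^+$, making $U$ depend only on $x\cdot\nu$; by the dimension reduction lemma the associated one-dimensional profile is a minimizing cone, and Proposition \ref{P1d} identifies it with $U_{0,k}$, giving $U=U_{0,k}(x\cdot\nu)$. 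The main obstacle will be invoking the correct form of spherical Faber--Krahn together with its rigidity statement; once that is in hand the argument reduces to bookkeeping around the cone structure.
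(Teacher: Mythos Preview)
Your argument is correct and takes a genuinely different route from the paper. The paper proceeds by induction on the dimension: if $\{|U|=0\}$ contains no ray then $W(U)\ge 1$ everywhere and $\Phi(U)\ge |B_1|$; otherwise one picks $x_0\in \p\{|U|>0\}\cap\p B_1$, uses the Weiss monotonicity formula centered at $x_0$ to compare $\Phi(U)=\Phi_{x_0}(U,\infty)$ with the energy $\Phi_{x_0}(U,0+)$ of a blow-up cone at $x_0$, observes that this blow-up cone is constant in the $x_0$ direction, and then invokes the induction hypothesis in dimension $n-1$ (with the base case being Proposition~\ref{P1d}); rigidity falls out because equality in the monotonicity formula forces $U$ to be translation invariant in the $x_0$ direction. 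By contrast, you bypass the induction entirely: you extract the single scalar piece of information $W(U)\ge \chi_{\{u_1>0\}}$ and reduce the problem to the lower bound $|\{u_1>0\}\cap\p B_1|\ge |\p B_1|/2$, which you obtain from subharmonicity of $u_1$ (Lemma~\ref{sub-super}) and the spherical Faber--Krahn (Sperner) inequality. Your approach is shorter and isolates a clean geometric obstruction, but the price is importing the spherical isoperimetric/Faber--Krahn inequality together with its equality case, which the paper never needs; the paper's argument, while inductive, is entirely self-contained within the tools already established (Proposition~\ref{WMF} and Proposition~\ref{P1d}). One small remark on your rigidity step: once you know $|A|=|\p B_1|/2$ and $\lambda_1(A)\le n-1$, Faber--Krahn gives $\lambda_1(A)\ge \lambda_1(A^*)=n-1$, so $\phi$ realizes the first eigenvalue; you then need the equality case of spherical Faber--Krahn (or equivalently of the spherical P\'olya--Szeg\H{o} inequality) to conclude that $A$ is a hemisphere and $\phi$ is a multiple of $(x\cdot\nu)^+$, which is indeed known but worth citing explicitly.
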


\begin{proof} We prove this by induction on the dimension $n$. The case $n=1$ was addressed in the previous section.

If $\{|U|=0\}$ does not contain a ray, then $W(U) \ge 1$, and $\Phi(U) \ge |B_1| > \frac 12 |B_1|$. If $|U|=0$ contains a ray, then we can choose $x_0 \in \p\{|U|>0\} \cap \p B_1$, and by the monotonicity formula obtain
$$ \Phi(U)= \lim_{r \to \infty} \Phi_{x_0}(U,r) \ge \Phi_{x_0}(U,0+),$$
with equality only if $U$ is constant in the $x_0$ direction. Here $\Phi_{x_0}(U,0+)$ represents the limit of the $\Phi$ functional centered at $x_0$ and its value coincides with the energy of a blow-up cone at $x_0$. This cone is constant in the $x_0$ direction and by the previous lemma it is the extension of an $n-1$ dimensional cone to $\R^n$. Now the conclusion follows easily from the induction hypothesis. 
\end{proof}

We state the $\eps$-regularity theorem for our problem.

\begin{prop} \label{GK}Assume that $U$ is a minimizer of $J_N$ in $B_1$ with $$0 \in \{|U|>0\}, \quad \quad \sum u_i=0,$$ 
and
$$ \Phi(U,B_1) \le \frac 12 |B_1| + \delta,$$
for some small $\delta$ universal. Then, there exists $k<N$ such that  $\Gamma_k \cap B_{1/2}$ is an analytic hypersurface and $\Gamma_i \cap B_{1/2}=\emptyset$ if $i \ne k$.
\end{prop}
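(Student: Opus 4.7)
The plan is to argue by compactness and contradiction, identifying the extremal energy cone via Lemma \ref{L41} as a rotated 1d cone $U_{0,k}(x\cdot\nu)$, reducing the problem in $B_{1/2}$ to the scalar Alt--Caffarelli one-phase problem, and invoking its classical flatness-implies-regularity theory.

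First I would suppose no such $\delta$ exists, so there is a sequence of minimizers $U_m$ in $B_1$ satisfying the hypotheses with $\Phi(U_m, B_1) \to \tfrac{1}{2}|B_1|$ for which the conclusion fails. By the Lipschitz estimate (Theorem \ref{nonC}) and the compactness proposition for minimizers, a subsequence converges uniformly on compact subsets to a minimizer $U_\infty$. The non-degeneracy Lemma \ref{NDeg} passes to the limit to yield $0 \in \p\{|U_\infty|>0\}$, so $U_\infty$ is nontrivial, while energy convergence on balls combined with lower semicontinuity pins $\Phi(U_\infty, B_1) = \tfrac{1}{2}|B_1|$. Combining this equality with the Weiss formula (Proposition \ref{WMF}) and the rigidity clause of Lemma \ref{L41} forces $U_\infty$ to be a cone of least energy, identified as $U_\infty = U_{0,k}(x \cdot \nu)$ for some $k < N$ and unit vector $\nu$.

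The second step is the \emph{collapse of extra free boundaries}: for $m$ large and $i \ne k$ one must have $u_i^m \equiv u_{i+1}^m$ in $B_{1/2}$. Since $(U_\infty)_i = (U_\infty)_{i+1}$ for $i \ne k$, each gap $g_i^m := u_i^m - u_{i+1}^m$ tends uniformly to $0$ on $B_{3/4}$. If the set $\{g_i^m > 0\}$ were nonempty in $B_{1/2}$, the weak non-degeneracy of consecutive gaps (the final lemma of Section \ref{Sec3}), combined with a ball-iteration argument modeled on the proof of Lemma \ref{NDeg} and the splitting from Lemma \ref{induction} applied to the clusters that do stay separated, forces $g_i^m$ to attain a value bounded below by a positive universal constant at some point of $B_{3/4}$, contradicting the uniform convergence. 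Hence only $\Gamma_k^m$ can enter $B_{1/2}$.

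Once only $\Gamma_k^m$ survives, setting $v^m := u_1^m = \cdots = u_k^m$ and $w^m := u_{k+1}^m = \cdots = u_N^m$ in $B_{1/2}$ and using $\sum u_i^m = 0$ gives $w^m = -\tfrac{k}{N-k} v^m$, so the system minimization reduces to a scalar Alt--Caffarelli one-phase functional
\[
\int_{B_{1/2}} \left( c_1 |\nabla v|^2 + c_2 \chi_{\{v>0\}} \right) dx,
\]
with $c_1, c_2$ depending only on $N,k$. The uniform closeness of $U_m$ to the flat cone $U_{0,k}(x\cdot\nu)$ translates into $\eps$-flatness of the free boundary of $v^m$ at the origin, so the classical flatness-implies-$C^{1,\alpha}$ theorem together with the standard Schauder/analyticity bootstrap yields that $\Gamma_k^m \cap B_{1/4}$ is analytic, contradicting the failure assumption. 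The principal obstacle is the collapse step: ruling out thin pockets where $g_i^m > 0$ with small amplitude but possibly nontrivial measure, where the weak non-degeneracy controls only balls fully contained in $\{g_i^m > 0\}$ and one must propagate this via iteration and the recursive structure from Lemma \ref{induction} into uniform collapse.
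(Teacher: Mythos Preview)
Your overall architecture—compactness to a least-energy cone, collapse of the extra free boundaries, reduction to the scalar Alt--Caffarelli problem—matches the paper's. The divergence, and the genuine gap, is in the collapse step.

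You propose to rule out $\Gamma_i^m$ for $i \ne k$ via the weak non-degeneracy of consecutive gaps (the unnamed lemma closing Section~\ref{Sec3}) together with a ball-iteration modeled on Lemma~\ref{NDeg}. This is not a reliable route. The paper itself remarks that strong non-degeneracy for consecutive membranes is \emph{unclear}, and that the weak lemma ``will not be used in the remaining of the paper''—a signal that it is not the right tool here. More concretely, the iteration in Lemma~\ref{NDeg} depends on $u_1$ being globally subharmonic, so that the mean-value inequality drives growth from one ball to the next; the gap $g_i = u_i - u_{i+1}$ enjoys no such global property (it is harmonic only where the adjacent clusters are already separated), and the splitting from Lemma~\ref{induction} requires that separation as a hypothesis, not as a conclusion. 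Your acknowledged ``thin pockets'' obstacle is precisely this: nothing in your toolkit excludes a region where $g_i^m>0$ with arbitrarily small amplitude and no interior tangent ball of uniform radius.

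The paper's mechanism is different and avoids consecutive-gap non-degeneracy entirely. Weiss monotonicity is applied to $U$ itself (not only to the limit): since $\Phi(U,B_r)\le\Phi(U,B_1)\le\tfrac12|B_1|+\delta$ for every $r\le 1$, the compactness argument that produces the approximation $|U-U_{0,k}(x\cdot\nu)|\le\eta(\delta)$ is valid at \emph{every} scale around the origin, with the same $k$ by continuity in $r$. Combined with the non-degeneracy of $|U|$ (Lemma~\ref{NDeg}, for the full vector, not for gaps), this pins $\partial\{|U|>0\}$ near the hyperplane $\{x\cdot\nu=0\}$, and the energy bound then transfers to every $x_0\in\partial\{|U|>0\}\cap B_{3/4}$, so the same multi-scale flatness holds around each such point. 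This is Lemma~\ref{L42}, and it delivers $\Gamma_i\cap B_{3/4}=\emptyset$ for $i\ne k$ directly; your reduction to the scalar problem then goes through as written. The missing idea in your argument is this second use of Weiss monotonicity—propagating the energy bound to all scales and all free boundary points on the original $U$—which replaces the problematic gap-iteration.
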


We start by proving the following result.

\begin{lem}\label{L42}
Assume that $U$ satisfies the hypotheses of Proposition $\ref{GK}$. There exists $k$ such that in $B_{3/4}$ 
$u_1=..=u_k$, $u_{k+1}=..=u_N$, i.e. $\Gamma_i \cap B_{3/4}=\emptyset$ for all $i \ne k$.
\end{lem}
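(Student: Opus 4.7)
The plan is a contradiction/compactness argument. Assume the lemma fails and pick a sequence $(U_m)$ of minimizers of $J_N$ in $B_1$ with $\sum_i u_{m,i}=0$, $0\in\{|U_m|>0\}$, $\Phi(U_m,1)\le \tfrac12|B_1|+1/m$, and such that (after passing to a subsequence) there are two fixed distinct indices $i\neq j$ with $\Gamma_i(U_m)\cap B_{3/4}\neq\emptyset$ and $\Gamma_j(U_m)\cap B_{3/4}\neq\emptyset$ for every $m$. By the uniform Lipschitz bound (Theorem~\ref{nonC}) and the compactness proposition of Section~\ref{Sec3}, $U_m\to\bar U$ locally uniformly in $B_1$, where $\bar U$ is itself a minimizer with $\sum_i\bar u_i=0$; energy convergence combined with Weiss monotonicity (Proposition~\ref{WMF}) yields $\Phi(\bar U,r)\le \tfrac12|B_1|$ for every $r\in(0,1)$.

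The second step is to identify $\bar U$ as a one-dimensional minimizing cone $U_{0,k}((x-y_0)\cdot\nu)$ for some $k$, unit $\nu$ and translation $y_0$. To avoid the degenerate possibility $\bar U\equiv 0$, I would rescale by $\lambda_m:=\max_{B_{1/2}}|U_m|$: if $\lambda_m\to 0$, the rescalings $U_m/\lambda_m$ would minimize an energy whose potential becomes dominant, forcing all components to coincide in the limit, incompatible with $\max|U_m/\lambda_m|=1$ and $\sum=0$. Hence $\bar U$ is non-trivial, so by the non-degeneracy Lemma~\ref{NDeg} applied to the $U_m$ there is a free boundary point $y_0\in\overline{B_1}$ of $\bar U$, obtained as a limit of points in $\partial\{|U_m|>0\}$. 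Theorem~\ref{Bup} produces a non-trivial blow-up cone at $y_0$, whose Weiss energy is at least $\tfrac12|B_1|$ by Lemma~\ref{L41}. Combining this lower bound from the blow-up with the sharp upper bound on $\Phi(\bar U,\cdot)$ centered at $y_0$ (obtained via a translation and scale-invariance argument from the hypothesis at the origin) and the monotonicity in Proposition~\ref{WMF}, I would conclude that $\Phi_{y_0}(\bar U,\cdot)$ is constant equal to $\tfrac12|B_1|$. The equality cases of Proposition~\ref{WMF} and Lemma~\ref{L41} then pin down $\bar U=U_{0,k}((x-y_0)\cdot\nu)$ for some $k\in\{1,\ldots,N-1\}$ and unit $\nu$.

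Finally, I would derive a contradiction from this rigid structure. Since $\bar u_l\equiv\bar u_{l+1}$ for every $l\neq k$, uniform convergence gives $\|u_{m,l}-u_{m,l+1}\|_{L^\infty(B_{3/4})}\to 0$ for each $l\neq k$. On the other hand, the spurious free boundary $\Gamma_i(U_m)\cap B_{3/4}$ with $i\neq k$ bounds a non-empty open set where $u_{m,i}>u_{m,i+1}$; a ball-sequence argument analogous to the one in the proof of Lemma~\ref{NDeg}, applied to the gap $u_{m,i}-u_{m,i+1}$ inside this open set, would produce a point in $B_{3/4}$ where the gap is bounded below by a universal positive constant, contradicting the uniform smallness. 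The same contradiction applies at $\Gamma_j(U_m)\cap B_{3/4}$.

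The main obstacle lies in Step~2: transferring the Weiss upper bound from the origin (where it is hypothesized) to the free boundary point $y_0$ of $\bar U$, which is the location where the rigidity of Lemma~\ref{L41} must be applied. The Weiss functional is scale-invariant about its center but not translation-invariant, and the hypothesis only controls $\Phi_0$. Resolving this requires carefully coordinating the scale invariance $\Phi_0(U,r)=\Phi_0(U_r,1)$, the monotonicity of $\Phi$ at various centers, and the fact that the free boundary points of $U_m$ cluster near $y_0$, to promote the upper bound to $\Phi_{y_0}(\bar U,\cdot)$. Once this is in place, the remainder is a standard rigidity argument.
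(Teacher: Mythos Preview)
Your overall compactness strategy matches the paper's, but there are two issues, one cosmetic and one substantive.

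The obstacle you flag in Step~2 is not real. The hypothesis of Proposition~\ref{GK} should be read as $0\in\partial\{|U|>0\}$ (the paper's own proof uses this, and it is how the proposition is applied later; the printed ``$0\in\{|U|>0\}$'' is a slip). With that, $|U_m(0)|=0$ for all $m$, so $|\bar U(0)|=0$, while Lemma~\ref{NDeg} gives $\max_{B_r}|U_m|\ge cr$ uniformly and hence $\bar U\not\equiv 0$ near the origin. Thus $0\in\partial\{|\bar U|>0\}$ directly, and monotonicity plus Lemma~\ref{L41} apply \emph{at the origin} with no transfer needed. Your rescaling by $\lambda_m$ and your search for a separate point $y_0$ are unnecessary.

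The genuine gap is in Step~4. You propose to run a ball-sequence argument in the style of Lemma~\ref{NDeg} for the individual gap $u_{m,i}-u_{m,i+1}$. But that argument uses that $u_1$ is subharmonic together with Corollary~\ref{CorC1}; neither ingredient is available for a generic consecutive gap. The paper is explicit about this: immediately after Lemma~\ref{NDeg} it proves only the \emph{weak} non-degeneracy for $u_k-u_{k+1}$ (inscribed ball tangent to $\Gamma_k$) and remarks that ``it is not clear whether the strong non-degeneracy holds for two consecutive membranes as well.'' Weak non-degeneracy does not yield your contradiction, since the inscribed radius in $\{u_{m,i}>u_{m,i+1}\}$ may shrink with $m$.

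The paper closes the argument by a different mechanism. Once $|U-U_{0,k}(x\cdot\nu)|\le\eta$ in $B_{7/8}$, the same estimate holds for every rescaling $U_r$ (with the same $k$, by continuity) and, after bounding $\Phi_{x_0}$ from the closeness, at every $x_0\in\partial\{|U|>0\}\cap B_{3/4}$. In the region where $u_k>u_{k+1}$ one invokes Lemma~\ref{induction} to split off the top block $(u_1,\dots,u_k)$ as a minimizer of $J_k$; its zero-averaged version has modulus $\le C\eta$ everywhere, and now the \emph{established} non-degeneracy for $|U|$ (Corollary~\ref{CorC1} and Lemma~\ref{NDeg}), applied to this sub-problem, forces it to vanish. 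This bypasses the unproved strong gap non-degeneracy that your argument requires.
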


\begin{proof} 
First we show that there exists a unit direction $\nu$ such that in $B_{7/8}$,
\begin{equation}\label{4000}
|U-U_{0,k}(x \cdot \nu)| \le \eta(\delta) \quad \quad \mbox{ with $\eta(\delta) \to 0$ as $\delta \to 0$.}
\end{equation}
This follows by compactness from Lemma \ref{L41}. Indeed, if $U_m$ is a sequence of minimizers in $B_1$ which satisfy the hypotheses with $\delta=\delta_m \to 0$, then up to subsequences, $U_m \to \bar U$ uniformly on compact sets of $B_{7/8}$ and
$$0 \in \p\{|\bar U|>0\}, \quad \quad \Phi(\bar U,B_{7/8}) = \lim \Phi(U_m,B_{7/8}) \le \frac 12 |B_1|.$$  
Lemma \ref{L41} implies that $\bar U$ is a cone of least energy, i.e. $\bar U = U_{0,k} (x \cdot \nu)$ for some $k$, and \eqref{4000} is proved.

Conclusion \eqref{4000} holds also for the rescalings $U_r(x) =r^{-1}U(xr)$ with $k$ and $\nu$ depending on $r$. However, the continuity of the $U_r$'s imply that $k$ is in fact independent of $r$, provided that we choose $\eta(\delta)$ sufficiently small.

On the other hand \eqref{4000} and the non-degeneracy of $|U|$, give that $\{|U|=0\}$ and the half-space $x \cdot \nu <0$ are close in the Hausdorff distance sense in $B_{3/4}$. The same argument as above can be applied at any point $x _0 \in \p \{|U|>0\} \cap B_{3/4}$ instead of the origin, and the conclusion easily follows.
 \end{proof}
 
 \begin{proof}[Proof of Proposition $\ref{GK}$]
 
 Let us assume that $\sum u_i =0$. Then, by Lemma \ref{L42}, $\Gamma_i \cap B_{3/4}= \emptyset$ if $i \ne k$, and $u_k$ is the minimizer for the scalar Bernoulli problem
 $$ \int_{B_{3/4}} \left(\frac{nk}{n-k}|\nabla u|^2 + \chi_{\{u>0\}}\right) dx,$$
 with sufficiently flat free boundary in $B_{3/4}$. The result follows from the classical work of Alt and Caffarelli \cite{AC}.
  \end{proof}

\begin{defn} We say that $x_0$ is a regular point of $\Gamma_k$ if, when restricting to the collections of membranes that coincide at $x_0$, the corresponding blow-up cone is one-dimensional. 

\end{defn}

\begin{proof}[Proof of Theorem \ref{pr}]

If at $x_0 \in \Gamma_k$ we have, $$u_l >u_{l+1}=...=u_m>u_{m+1},$$ 
with $l+1\le k <m$, then $$\tilde U:=(u_{l+1},...,u_m),$$ minimizes the energy locally for the energy $J_{m-l}$ involving $m-l$ membranes. After subtracting the average from $\tilde U$, we reduce to the case $\tilde U(x_0)=0$. If the blow-up cone at $x_0$ is one-dimensional, then $x_0$ is a regular point. 

Then, by Proposition \ref{GK}, $\Gamma_k$ is an analytic surface and $\Gamma_i=\emptyset$ near $x_0$, with $i \in \{l+1,...,m \}$. Furthermore, Proposition \ref{GK} implies that if $U_m \to U$ then $\Gamma_{m,k}$ converges to $\Gamma_k$ in $C^{1,\alpha}$ sense near $x_0$. 

Now the theorem follows from the standard dimension reduction argument of Federer, see \cite{F}. We omit the details.

\end{proof}

\section{2 dimensional cones for $N=3$ membranes}\label{Sec7}

In this section we study nontrivial cones $U$ which are not one-dimensional. We restrict to the case $N=3$ membranes in $n=2$ dimensions.

We introduce two cones $V_0$ and $V_s$ which turn out to be the only one-homogenous functions which minimize the energy locally at points outside the origin.

 We use polar coordinates $(r, \theta)$, and view sectors as intervals in the variable $\theta$. We also use the notation
$$ e_\theta=(\cos \theta, \sin \theta).$$

\begin{defn} The cone $V_0=(v_{0,1}, v_{0,2}, v_{0,3})$ is defined as
$$v_{0,1}(x)=
\left\{
\begin{array}{l}
\sqrt{\frac 23} \, \, x \cdot e_{-\frac \pi 6}, \quad \quad \quad \quad \quad \quad \mbox{if} \quad \theta \in [-\frac 23  \pi,  \frac 16 \pi ], \\

\

\\

\sqrt{\frac 1 6} \, \, x \cdot e_{\frac \pi 6}, \quad \quad \quad \quad \quad  \quad \mbox{if} \quad \theta \in [\frac 1 6 \pi,\frac 23 \pi],\\

\

\\

0 \quad \quad \quad \quad \quad \quad \quad \quad \quad \quad \quad  \mbox{otherwise},\\

\end{array}
\right .
$$

$$ v_{0,3}(x)= - v_{0,1}(x_1,-x_2), \quad \quad v_{0,2}=-v_{0,1}-v_{0,2}.$$

The cone $V_s=(v_{s,1},v_{s,2}, v_{s,3})$ is defined as
$$ v_{s,1}(x)= \frac{1}{\sqrt {10} }\, \max \left \{  |x_1|, 2 |x_2| \right \}, $$
$$ v_{s,3}(x)= - v_{s,1}(x_2,x_1), \quad \quad v_{s,2}=-v_{s,1}-v_{s,3}. $$
\end{defn}

\begin{figure}[ht] 
\includegraphics[width=0.45 \textwidth]{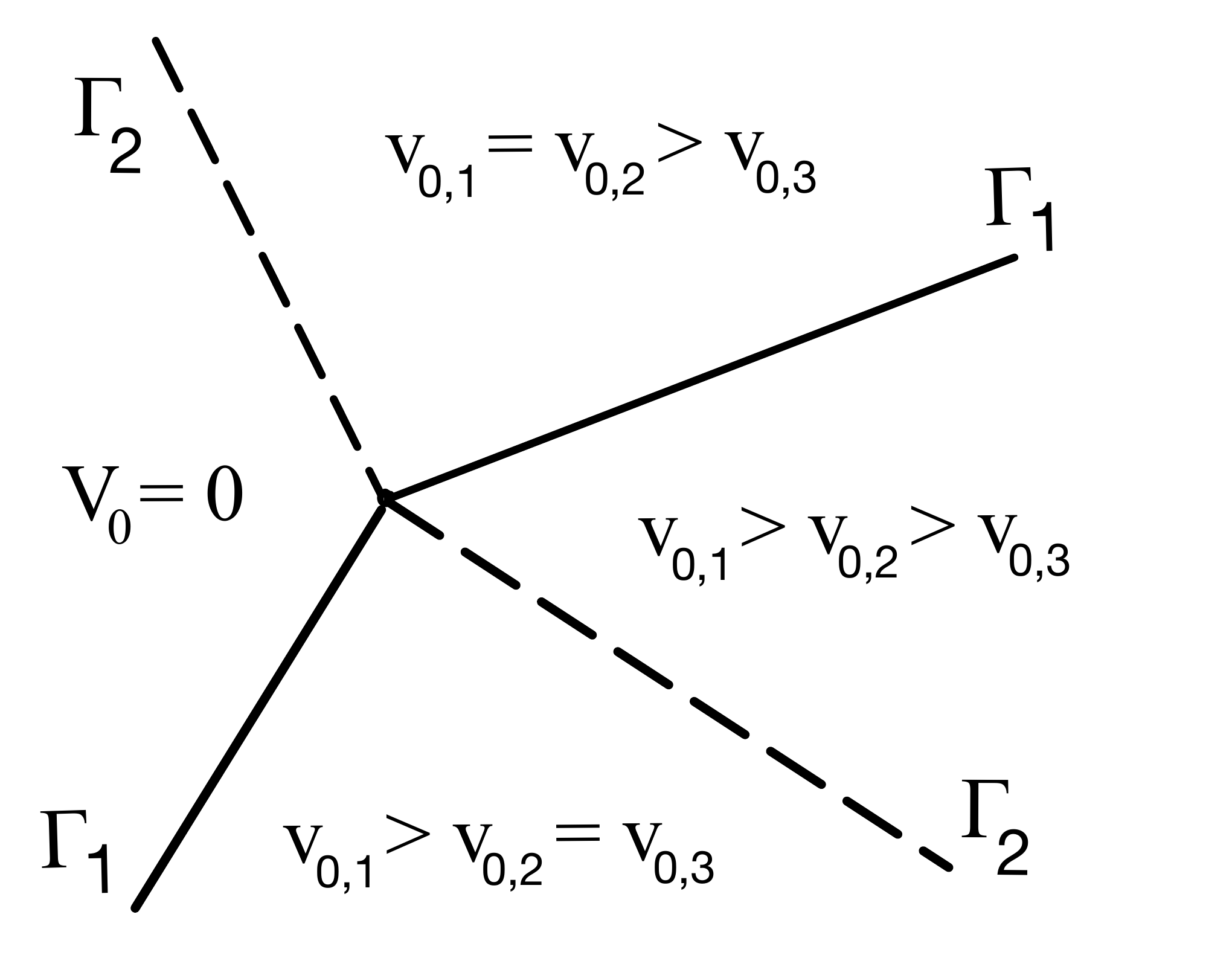}
\hfill
\includegraphics[width=0.45 \textwidth]{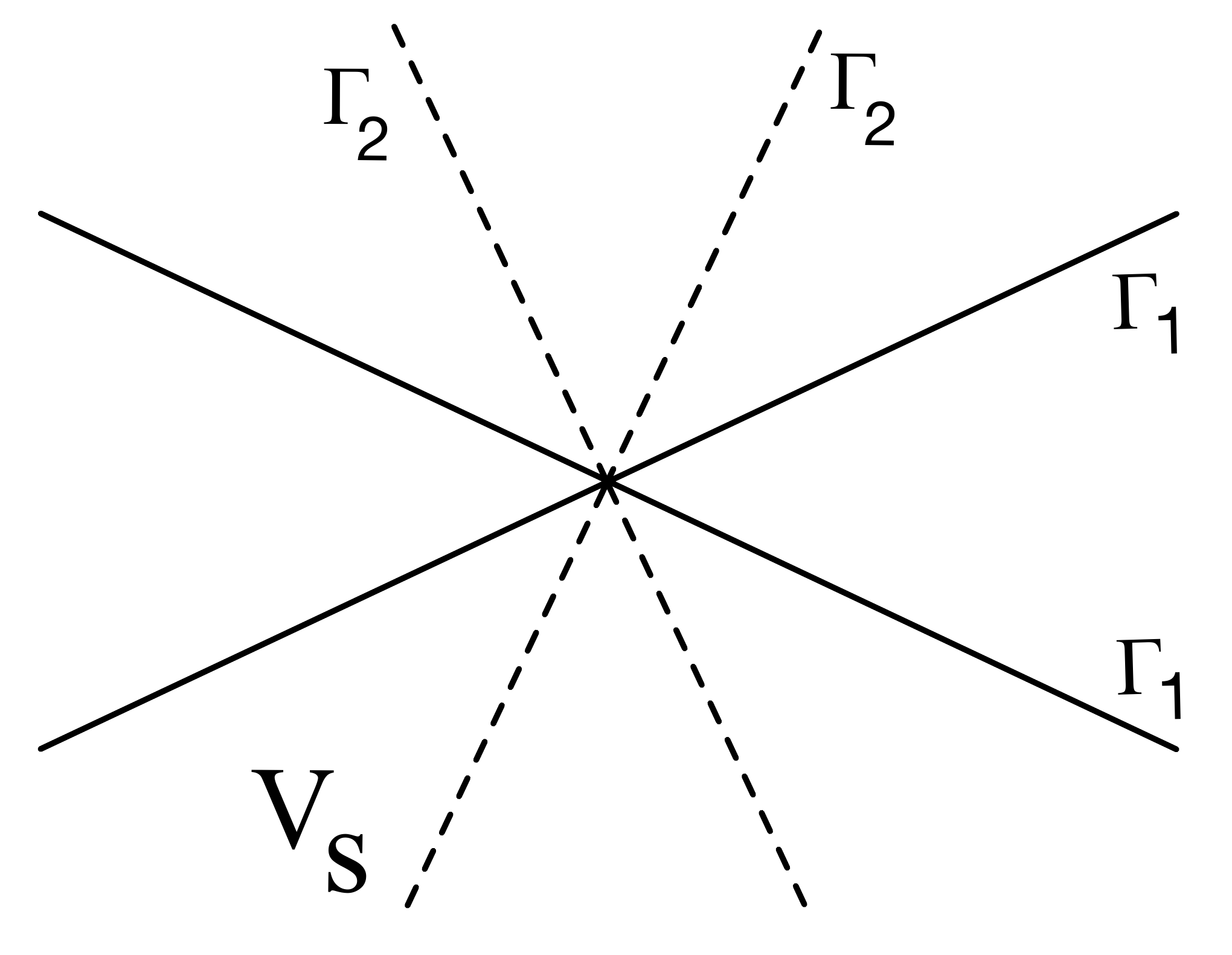}
\caption{ The free boundaries for the cones $V_0$ and $V_s$} 
   \label{fig1}
\end{figure}

\begin{prop} Let $N=3$ and $U$ be a cone with $\sum u_i=0$ which minimizes $J_N$ locally at points on the unit circle $\p B_1$, and assume that $U$ is not one-dimensional. Then, up to rotations $U=V_0$, or $U=V_s$.

\end{prop}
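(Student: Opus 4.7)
The plan is to reduce the classification to a one-dimensional problem on the unit circle. Since $U$ is a cone in $\R^2$, write $U(r,\theta)=rF(\theta)$ with $F=(f_1,f_2,f_3)\colon S^1\to\R^3$ satisfying $f_1\ge f_2\ge f_3$ and $\sum f_i=0$. At any $x_0=e_{\theta_0}\in\partial B_1$, local minimality of $U$ permits a blow-up which is invariant in the radial direction (the radial variation of $rF$ contributes only a smooth linear piece), so the remaining tangential profile is a one-dimensional minimizing cone. Applying Proposition~\ref{P1d} via Lemma~\ref{induction} to each maximal sub-collection of coinciding membranes shows that this profile is, up to a linear correction, a scaled $U_{0,k}$. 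Consequently, on each open arc of $S^1$ where the coincidence pattern of $(f_1,f_2,f_3)$ is constant, every $u_i$ is harmonic and $1$-homogeneous; equivalently $f_i(\theta)=\alpha_i\cos\theta+\beta_i\sin\theta$ on that arc.

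For $N=3$ with $\sum f_i=0$ the admissible cell types on $S^1$ are: $T$ (triple: $f_1=f_2=f_3\equiv 0$), $D_+$ (top-double: $f_1=f_2=-f_3/2>0$), $D_-$ (bottom-double: $-f_1/2=f_2=f_3<0$), and $G$ (generic: $f_1>f_2>f_3$). The only transitions between neighboring cells that are compatible with the 1D classification are $T\text{--}D_\pm$, $D_\pm\text{--}G$, and possibly an isolated triple-point transition $D_+\text{--}D_-$ at which $F$ vanishes at a single $\theta_0$. At each transition ray, continuity of $F$ and the Euler--Lagrange condition inherited from the corresponding 1D minimizer pin down the angular derivatives of the $f_i$ on each side: across $T\text{--}D_\pm$ the slopes on the double side are forced to the explicit values of the 1D cones $U_{0,1}$ or $U_{0,2}$ for $N=3$, while across $D_\pm\text{--}G$ the jump in $f_i'-f_{i+1}'$ of the separating pair is the universal constant given by the one-phase Euler--Lagrange condition applied to the corresponding two-membrane sub-system.

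Using the reflection symmetry $(u_1,u_2,u_3)\mapsto(-u_3,-u_2,-u_1)$, I may assume without loss of generality that the number of $D_+$ cells is at least that of $D_-$ cells, and I split into cases according to whether a $T$ cell is present. \emph{Case A} ($U$ contains a $T$ cell): the boundary rays of $T$ are $T\text{--}D_\pm$ transitions on which the slopes are fully determined, so the linear representation of $F$ on each adjacent $D_\pm$ cell is fixed up to the position of the transition ray. Tracing around $S^1$, each subsequent $D_\pm\text{--}G$ (and eventual $G\text{--}D_\mp\text{--}T$) transition is forced by continuity and Euler--Lagrange to occur at a uniquely determined angular separation, and the configuration closes up as $V_0$ after one loop. \emph{Case B} (no $T$ cell): then $|F|>0$ on all of $S^1$ and only $D_\pm,G$ cells appear. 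Analyzing the possible cyclic orderings under the transition constraints and ruling out short or unbalanced patterns, one finds that the only admissible arrangement is a dihedrally symmetric alternation of $D_+,G,D_-,G$ cells, and the resulting trigonometric matching system for the angular widths and amplitudes has the unique solution $V_s$.

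The main obstacle is Case B: absent a $T$ cell to anchor the configuration, the angular widths and amplitudes of the $D_\pm,G$ cells form a nonlinear matching system on $S^1$, and ruling out alternative cyclic orderings (for instance three or more $D_+$ cells, or patterns involving isolated $D_+\text{--}D_-$ triple points with a different symmetry type) requires explicit bookkeeping of slope jumps and angular widths at each transition ray.
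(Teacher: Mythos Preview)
Your outline is in the right spirit and tracks the paper's structure: reduce to a piecewise-linear profile on $S^1$, classify the cell types, and use the 1D Euler--Lagrange conditions at each transition ray. However, there is a genuine gap in Case~B, which you yourself flag as the ``main obstacle'' without resolving it.

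The missing ingredient is a concrete geometric constraint on the gradient of $u_1$ in a sector $S=\{u_1>u_2\}$. The paper writes $u_1=f\cdot x$ in $S$ and, applying the 1D Euler--Lagrange balance at the $D_\pm$--$G$ transition ray (i.e.\ at $\Gamma_2\cap\partial B_1$), derives the explicit equation
\[
\bigl|\,3f-2\sqrt{2}\,e_{\pi/2}\,\bigr|=\sqrt{2},
\]
where $e_{\pi/2}$ is the inward normal to $S$ along one boundary ray. This says $\tfrac{3}{\sqrt 2}f$ lies on the unit circle centered at $2e_{\pi/2}$, and has two immediate consequences you do not have: (i) $f$ makes the same angle with both inward normals of $S$, so $f$ \emph{bisects} $S$; and (ii) this angle is strictly less than $\pi/6$, forcing the opening of $S$ to exceed $2\pi/3$. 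Point~(ii) is what bounds the number of $\{u_1>u_2\}$ sectors by two and hence the total number of cells by four; without it your ``ruling out short or unbalanced patterns'' and ``three or more $D_+$ cells'' has no mechanism. Once you know there are exactly four sectors alternating $\{u_1>u_2\}/\{u_1=u_2\}$, the bisectors of consecutive sectors are forced to be orthogonal, and together with the circle constraint above this pins down $f$ uniquely and yields $V_s$.

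In Case~A your sketch is closer to complete but still elides two nontrivial steps the paper carries out explicitly: the Euler--Lagrange condition at the ray $\theta=\theta_0$ where $u_1=u_2$ is what fixes $\theta_1=\pi/2$ (hence $\theta_0=5\pi/6$), and the closure of the configuration back to a $T$ cell uses that $u_1\ge 0$ is subharmonic and vanishes on both boundary rays of the remaining sector, forcing $U\equiv 0$ there. Your phrase ``the configuration closes up as $V_0$ after one loop'' needs both of these facts to be justified.
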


\begin{proof} By the partial regularity Theorem \ref{pr}, the free boundaries $\Gamma_1$ and $\Gamma_2$ of $U$ consist of at most a finite number of distinct rays. Each $u_i$ is linear in the sectors determined by these rays, and on $\p B_1$, the membranes behave as in the 1 dimensional case at the points on $\Gamma_i \cap \p B_1$. More precisely, if at such a point all three membranes coincide, i.e. $U=0$, then on one side they all coincide and on the other they split with multiplicities 1 and 2, and slopes $\sqrt {\frac 23}$ and $\sqrt{ \frac 16} $ respectively. If at such a point only two membranes coincide, then on one side the two membranes coincide, while on the other they split and the slopes have a jump of size $\pm \frac{1}{\sqrt 2}$.

Let $S$ be a sector where $\{u_1>u_2\}$ which, after a rotation, corresponds to an interval $(0, \theta_0)$ in polar coordinates. Then
$$ u_1 = f \cdot x \ge 0 \quad \quad \mbox{in $S$,}  $$
for some non-zero vector $f$. Notice that $\theta_0 \le \pi$. 

We distinguish 2 cases depending on whether or not $u_1$ vanishes on $\p S$.

\

{\it Case 1:} $u_1 >0$ on $\p S \setminus \{0\}$. 

In this case we show that $f$ bisects the sector $S$, and $\theta_0> 2 \pi /3$. Moreover, $f$ bisects also the sector $\{u_2=u_3\} \cap S$.

\

From the 1 dimensional analysis, in the connected sector $\{\theta \in (0, \theta_1) \}\subset S$ where $u_1>u_2>u_3,$ we have
$$ u_2 = (f - \sqrt 2 \, e_{\frac \pi 2}) \cdot x, \quad \quad u_3 = (\sqrt 2 \, e_{\frac \pi 2} - 2 f) \cdot x.$$
Notice that $\theta_1 < \theta_0$, since otherwise $\theta_0=\pi$ and we contradict that $u_1>0$ on $\p S \setminus \{0\}$. 
The two linear functions corresponding to $u_2$ and $u_3$ in $[0, \theta_1]$ intersect on the line
$$ (3f - 2 \sqrt 2 \, e_{\frac \pi 2}) \cdot x =0,$$
hence the ray $\{\theta=\theta_1\} \subset \Gamma_2$ is obtained by intersecting this line with $S$. 

The Euler-Lagrange equation on $\Gamma_2$ implies that
\begin{equation}\label{4004}
 |3f-2 \sqrt 2 \, e_ {\frac \pi 2}|= \sqrt 2,
 \end{equation}
i.e. $ \frac{3}{\sqrt 2} f $ belongs to the unit circle centered at $2 e_{\frac \pi 2}$, which means that the angle between $f$ and $e_ {\frac \pi 2}$ is at most $\pi /6$. 

There is one more unit direction besides $e_{\frac \pi 2}$ which satisfies the equality above, and it must be the inner normal $e_{\theta_0 - \frac \pi 2}$ to $S$ on the other ray of $\p S$.  Clearly the vector $f$ makes the same angle with these 2 inner directions to $S$, hence $f$ bisects $S$. Notice that the sector $S$ is uniquely determined by $f$.

We remark that the angle between $f$ and $e_2$ cannot be $\pi/6$. Otherwise the ray $\theta=\theta_0$ has the direction of $f$ which means that the set $\{u_2=u_3\} \cap S$ consists of a single ray which bisects $S$, and then $U$ cannot be locally minimizing by the 1 dimensional analysis. 

\

{\it Case 2:} $u_1$ vanishes on the ray $\theta=0$.

In this case we show that $U$ coincides with a $\frac \pi 6$ rotation of $V_0$.

\

From the 1 dimensional analysis, 
$$ u_1=\sqrt{\frac 23} \, \,  (x \cdot e_{\frac \pi 2})^+, \quad u_2=u_3 = - \frac 12 u_1,$$
on a sector near the ray $\theta=0$. Let $\{\theta \in (0,\theta_1]\}$ be a connected sector in $S$ where $u_2=u_3$. We claim that 
$$\theta_1 < \theta_0.$$
Otherwise $u_2=u_3=- \frac 12 u_1$ throughout $S$, which means that $S$ is a half-space, i.e. $\theta_0= \pi$. Since $u_1$ is subharmonic, and vanishes near the endpoints of the interval $[\pi,2\pi]$ we conclude that it vanishes in the whole interval, i.e. the formulas above hold in $\R^2$ and $U$ is one-dimensional, a contradiction. 

We remark that $u_1>u_2>u_3$ in the remaining sector $\theta \in (\theta_1,\theta_0)$ of $S$. Then, in this sector we have 
$$ u_1=\sqrt{\frac 23} (x \cdot e_ {\frac \pi 2})^+, \quad u_2= -\frac 12 u_1 + \frac{1}{\sqrt 2} (x \cdot e)^+, \quad  u_3= -\frac 12 u_1 - \frac{1}{\sqrt 2} (x \cdot e)^+, $$
with $e:=e_{\theta_1+ \frac \pi 2}$. 

The Euler-Lagrange equation at $\theta=\theta_0$ gives 
$$ \left|\frac 32 \sqrt{\frac 23} e_{\frac \pi 2} - \frac{1}{\sqrt 2} e \right|=\sqrt 2 $$
which means that $e = e_\pi $, i.e. $\theta_1=\pi/2$. Then the ray $\theta=\theta_0$ is given by the direction of
$$  \frac{\sqrt 3}{2} e+ \frac 12 e_{\frac \pi 2},$$
along which $u_1$ and $u_2$ intersect, i.e. $\theta_0=5 \pi /6$.

The direction $\theta=\theta_0$ is also the one of the gradient of $-u_3$ in the connected sector $S':=(\theta_1,\theta_2)$ where $\{u_2>u_3\}$. 
We claim that $U$ vanishes on $\theta = \theta_2.$ Otherwise we are in case 1) for $-u_3$ in the sector $S'$, and deduce that the angle of $S'$ is $2(\theta_0-\theta_1)=2 \pi /3$ which is a contradiction. 

Thus we are in case 2) for $-u_3$ and $S'$ which means that $\theta_2=\theta_0+ \pi/2$. In the remaining sector $\theta=[\theta_2,2\pi]$, $u_1$ (and therefore $U$) must vanish since it is $0$ on the boundary and it is subharmonic in the interior. In conclusion $U$ is a $\pi /6$ rotation of $V_0$.

\

If $u_1$ and $u_3$ do not vanish on any ray, then we are in Case 1) for each sector of $U$, and then the connected sectors of $\{u_1>u_2\}$ and $\{u_1=u_2\}$ occur in a periodic pattern along the circle. Since $\theta_0 \in (\frac 23 \pi, \pi)$, we can only have four such sectors. Then the angle between the bisectors of two such consecutive sectors, $f$ for $\{u_1>u_2\}$ and $f - \frac{1}{\sqrt 2} e_ {\frac \pi 2}$ for $\{u_1=u_2\}$, must be $- \pi /2$.  This together with \eqref{4004} determines $f$ uniquely as
$$ f= \frac{\sqrt 2}{5} (e_0 + 2 e_{\frac \pi 2}),$$
and then it follows that $U$ is a rotation of $V_s$.
\end{proof}

Next we show that we can lower the energy of $V_s$ by using a compact perturbation near the origin. Our proof does not use the precise form of $V_s$ but only that it has an axis of symmetry.

\begin{prop}
$V_s$ is not a minimizer of $J_N$ in $B_1$.
\end{prop}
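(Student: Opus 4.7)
The approach is proof by contradiction: assume $V_s$ minimizes $J_N$ in $B_1$, and build a compactly supported competitor near the origin that strictly lowers the energy. Let $L := \{x_2 = 0\}$ denote the axis of symmetry, so $V_s(x_1, -x_2) = V_s(x_1, x_2)$ componentwise, and set $B_1^\pm := B_1 \cap \{\pm x_2 > 0\}$. The first move is a reflection reduction: minimality of $V_s$ on $B_1$ forces minimality of $V_s|_{B_1^+}$ on $B_1^+$ among admissible competitors agreeing with $V_s$ on $\partial B_1 \cap \{x_2 \geq 0\}$ but with \emph{free} trace on $L \cap B_1$, since any such competitor reflects evenly across $L$ to an admissible function on $B_1$ with the right Dirichlet data and twice the $B_1^+$-energy. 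So it suffices to exhibit a lower-energy $W^+$ on the half-disk.

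I would next examine the trace $v(t) := V_s(te_1, 0)$ on $L$. By $1$-homogeneity of $V_s$, together with the axis symmetry and the fact that $V_s$ is a nonzero cone with $\sum v_i = 0$, the profile $v$ is piecewise linear of zero average, and its singularity at $t = 0$ is an even, two-sided ``V'': two membranes coincide above and the third drops below, with slopes of opposite signs on the two sides of $0$. Proposition~\ref{P1d} classifies $1$D minimizers of zero average as the one-sided cones $U_{0,k}$, so $v$ is not a $1$D minimizer. A direct computation shows that on $[-r, r]$ the constant function equal to $v(\pm r)$ is admissible and has strictly smaller $1$D energy by an amount of order $r$: the potential $\int W$ stays the same, while the Dirichlet contribution $\int |v'|^2 \sim r$ is eliminated entirely.

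The heart of the argument is to lift this $1$D gap into $B_1^+$. Fix $r \ll 1$ and $\delta$ comparable to $r$, and set $W^+ := V_s$ outside the rectangle $R := \{|x_1| < r,\, 0 < x_2 < \delta\}$; inside $R$, let $W^+$ be the linear interpolation in $x_2$ between $V_s(x_1, \delta)$ at the top and the constant $v(r)$ at the bottom. A local inspection of $V_s$ near the origin shows that for $\delta$ small enough, $V_s$ already equals $v(r)$ on the vertical walls $\{x_1 = \pm r,\, 0 \leq x_2 \leq \delta\}$, so $W^+$ is $H^1$ across $\partial R$ and agrees with $V_s$ on $\partial B_1^+ \setminus L$; the ordering $W^+_1 \geq W^+_2 \geq W^+_3$ is preserved by linear interpolation. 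The \textbf{main obstacle} is then the energy bookkeeping inside $R$: the gain from killing the Dirichlet density of the V-shape scales like $r \delta$, while the cost from the $x_2$-interpolation scales like $r^3 / \delta$, so balancing yields an optimal $\delta \sim r$ and a net strictly negative energy change of order $r^2$. The role of the axis of symmetry is precisely to pin the suboptimal V-shape on $L$ via the half-disk reduction, producing the $1$D gap that drives the construction.
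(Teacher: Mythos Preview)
Your reflection reduction to the half-disk is correct, and the observation that the trace $v(t)=V_s(t,0)$ is a two-sided V-shape (hence not a 1D minimizing cone) is the right intuition. The gap is in the energy bookkeeping for the 2D competitor. You correctly identify that the Dirichlet gain scales like $r\delta$ and the interpolation cost like $r^3/\delta$, so both are $O(r^2)$ when $\delta\sim r$; but you never verify that the constant is negative, and in fact it is not. With $\delta=r/2$ (the largest $\delta$ for which $V_s$ is constant on the vertical walls), a direct computation gives
\[
\int_R|\nabla W^+|^2-\int_R|\nabla V_s|^2=\tfrac{11}{10}r^2-\tfrac{39}{40}r^2=+\tfrac18 r^2,
\]
and for smaller $\delta$ the $r^3/\delta$ cost only grows. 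Moreover the potential does \emph{not} stay the same in 2D: the linear interpolation separates $w_1^+$ from $w_2^+$ throughout the column $\{|x_1|<2\delta\}$ (since $(V_s)_1(x_1,\delta)>(V_s)_2(x_1,\delta)$ there), so $W(W^+)=2$ on a set of area $4\delta^2$, strictly larger than the $2\delta^2$ where $W(V_s)=2$. The total change is positive for every admissible $\delta\le r/2$. The underlying reason is that $V_s$ is \emph{critical} for $J_N$, and even reflection gives $\partial_{x_2}V_s=0$ on $L$; so first-order perturbations on the half-disk vanish, and your competitor is effectively probing the second variation of a nonconvex functional without controlling its sign.

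The paper's argument avoids this by a different mechanism. It applies a domain translation $x\mapsto x+\eps\varphi(|x|)e_1$ with a logarithmic cutoff; criticality kills the first-order term, and in 2D the logarithmic cutoff makes the second-order remainder $o(\eps^2)$. Then it symmetrizes across $\{x_1=0\}$ (using $V_s(-x_1,x_2)=V_s(x_1,x_2)$) to produce $U_1$ with $J_N(U_1)\le J_N(V_s)+o(\eps^2)$. Inside $B_\eps$, $U_1$ is the even reflection in $x_1$ of $V_s(\cdot-\eps e_1)$; on the $x_1$-axis one finds $u_1=u_2>0>u_3$ with $u_3$ having a genuine corner at the origin in a region where all membranes are strictly separated. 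There $u_3$ should be harmonic but is not, so the minimizer in $B_\eps$ drops the energy by a definite $c\eps^2$, contradicting the $o(\eps^2)$ bound. The key idea you are missing is that one must first move the singular junction away (at $o(\eps^2)$ cost via the log cutoff) before the reflection can create an exploitable defect.
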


\begin{proof}
Assume by contradiction that $V_s$ is a minimizer of $J_N$ in $B_1$. 

We take the domain deformation
$$x \mapsto y:=\Psi(x)=x + \eps  \varphi(|x|) e_1,$$
with $\varphi(|x|)$ a radial function with compact support in $B_1$, and
with $\|\eps D \Psi \|_{L^\infty} <1$. Then, using the Lipschitz continuity of $V_s$, we find as in the proof of Lemma \ref{FV} that
$$ U(y):=V_s(x(y)),$$
satisfies
$$ J_N(U,B_1) = J_N(V_s,B_1) + \int_{B_1} L(\eps D \Psi, V_s) + O(\eps^2 |D \Psi|^2) dx,$$
with $L$ linear in the first argument.
Since $V_s$ is minimizing, it follows that
$$\int_{B_1} L(\eps D \Psi, V_s) dx =0,$$
hence
$$J_N(U,B_1) - J_N(V_s,B_1) \le C \eps^2 \int_{B_1} |\nabla \varphi|^2 dx =o(\eps^2)$$
provided that we choose the logarithmic cutoff 
$$\varphi(r):= \frac{\max\{\log r, \log (2\eps)\}}{\log (2\eps)}.$$
Let $$U_1 := U(-|x_1|,x_2), \quad \quad U_2:=U(|x_1|, x_2)$$ and notice that $U_1$, $U_2$ have the same boundary data as $U$, hence
$$J_N(V_s,B_1) \le J_N(U_1,B_1), \quad J_N(V_s,B_1) \le J_N(U_2,B_1) $$
$$ J_N(U_1,B_1) + J_N(U_2, B_1) = 2 J_N(U,B_1) \le 2 J_N(V_s , B_1) + o(\eps^2),$$
which gives $$J_N(U_1,B_1) \le J_N (V_s,B_1) + o(\eps^2).$$
Notice that in $B_{2 \eps}$, $U$ is a translation of $V_s$ by the vector $\eps e_1$, and $U_1$ and $U_2$ are obtained by reflecting this translation with respect to the line $x_1=0$.

We claim that the minimizer $\bar U_1$ with the boundary data of $U_1$ in $B_\eps$ decreases the energy of $J_N(U_1,B_\eps)$ by $c \eps^2$, $c>0$.  
By the scaling of $\eps^{-1}$ factor, this is equivalent to prove the claim for $\eps=1$. This means that, we first translate $V_s$ by $e_1$ and take its values in $B_1 \cap \{ x_1<0\}$, and then reflect them evenly with respect to $\{x_1=0\}$. We need to show that the resulting function is not minimizing $J_N$ in $B_1$. Indeed, it suffices to look at this function on the $x_1$ axis. On this line we have $u_1=u_2 >0$, $u_3<0$ and at the origin $u_3$ is not smooth, therefore it is not a minimizer.

\end{proof}

Next we prove that $V_0$ is a minimizing cone in $\R^2$. 
\begin{prop}\label{PV0}
$V_0$ is a minimizer of $J_N$.
\end{prop}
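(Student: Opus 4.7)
My plan is a direct energy comparison: for any admissible competitor $U\in\mathcal{A}$ with $U|_{\partial B_1}=V_0|_{\partial B_1}$, I would show $J_N(U,B_1)\ge J_N(V_0,B_1)$. By Lemma \ref{harmonic}, subtracting the harmonic average $(\sum u_i)/3$ does not affect $W(U)$ and preserves admissibility, so we may assume $\sum u_i=0$. By Theorem \ref{nonC} we may take $U$ Lipschitz, and by compactness of minimizers we can further restrict to the case that $U$ itself is a minimizer with the prescribed boundary data.

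The main identity is the expansion
\[
J_N(U)-J_N(V_0) \;=\; \int_{B_1}|\nabla(U-V_0)|^2\,dx \;+\; 2\int_{B_1}\nabla V_0\cdot\nabla(U-V_0)\,dx \;+\; \int_{B_1}(W(U)-W(V_0))\,dx.
\]
Since $V_0$ is piecewise linear and hence harmonic in each of its four open sectors (left, top, right, bottom), integrating the cross term sector-by-sector (the contribution on $\partial B_1$ vanishes as $U=V_0$ there) collapses it to a sum of boundary integrals along the four rays of $\Gamma_1\cup\Gamma_2$:
\[
2\int_{B_1}\nabla V_0\cdot\nabla(U-V_0)\,dx \;=\; \sum_{\text{rays}}\int_{\text{ray}}[\partial_\nu V_0]\cdot(U-V_0)\,d\sigma,
\]
where $[\partial_\nu V_0]$ denotes the jump across the corresponding ray. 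Each jump is concentrated in two consecutive components (the pair $(u_i,u_{i+1})$ that separates across the ray), and by the one-dimensional Euler-Lagrange equipartition \eqref{EL1d} its magnitude matches precisely the jump of $W(V_0)$ across the same ray.

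The remaining step is to combine these ray integrals with the bulk potential difference $\int(W(U)-W(V_0))\,dx$ and verify that the sum is nonnegative. Along each ray the ordering constraint $u_1\ge u_2\ge u_3$ forces $U-V_0$ to carry a sign compatible with the direction of $[\partial_\nu V_0]$; coupling this with the one-dimensional minimality of $U_{0,k}$ on each radial trace (Proposition \ref{P1d}) should yield the desired inequality and hence $J_N(U)-J_N(V_0)\ge\int|\nabla(U-V_0)|^2\ge 0$.

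The hardest part is precisely this last step: justifying the combined sign of the boundary-jump and bulk-potential terms for an arbitrary competitor. I expect the most delicate point to occur near the triple junction at the origin, where the two free boundaries of $V_0$ meet and $U$ may exhibit intricate free-boundary topology at small scales. A natural contingency is an auxiliary harmonic-replacement step (mirroring the proof of Proposition \ref{P1d}) to first reduce to competitors whose components are harmonic away from their own free boundary, thereby turning the problem into a finite 1D-slicing computation on radial lines where Proposition \ref{P1d} applies directly.
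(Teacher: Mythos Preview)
Your approach is a direct calibration/energy comparison, which is entirely different from the paper's proof. The paper argues \emph{indirectly}: it constructs a minimizer $U$ on a long rectangle $[-M,M]\times[-1,1]$ whose boundary data interpolate between the one-dimensional profiles $U_{0,1}$ and $U_{0,2}$, and shows by a topological argument (together with Lemma~\ref{LBd0} and Proposition~\ref{GK}) that this minimizer must possess an interior point in $\Gamma_1\cap\Gamma_2$. Since $V_0$ is the only candidate blow-up at such a point (the other candidate $V_s$ having been excluded), $V_0$ must arise as a blow-up of an actual minimizer, hence is itself minimizing.

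Your proposal has a genuine gap at the step you yourself flag as hardest. The claim that ``the ordering constraint $u_1\ge u_2\ge u_3$ forces $U-V_0$ to carry a sign compatible with the direction of $[\partial_\nu V_0]$'' is not correct: on a ray of $\Gamma_1(V_0)$ you only know $u_1\ge u_2$, but neither $u_1-v_{0,1}$ nor $u_2-v_{0,2}$ has a prescribed sign, so the ray integrals $\int[\partial_\nu V_0]\cdot(U-V_0)\,d\sigma$ can be of either sign. Likewise the contingency of reducing to one-dimensional slices cannot close the argument: the 2D Dirichlet energy does not decompose as an integral of 1D Dirichlet energies along radial lines (tangential derivatives are present), and the restriction of $V_0$ to a generic radial line is not one of the 1D cones $U_{0,k}$, so Proposition~\ref{P1d} does not apply. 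In short, a calibration of this type would require a pointwise or slice-wise inequality between the boundary-jump and potential terms that the ordering constraint alone does not provide; the functional is genuinely nonconvex and no such field is exhibited. This is precisely why the paper resorts to the indirect existence-plus-classification strategy rather than a direct comparison.
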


First we state a result about the continuity of $U$ near the boundary.

\begin{lem}
Assume that $\Omega$ is a Lipschitz domain and the boundary data of $U$ is Lipschitz. Then $U \in C^\alpha(\overline \Omega)$, for some $\alpha>0$.
\end{lem}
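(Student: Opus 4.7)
My plan is to upgrade the interior H\"older estimate of Corollary \ref{holder} to a global one by comparing $U$ near any boundary point with its componentwise harmonic replacement and invoking the classical boundary regularity for harmonic functions on Lipschitz domains. The key point is that the exterior cone condition provided by the Lipschitz nature of $\p \Omega$, together with the fact that the boundary data $\Phi$ is Lipschitz, forces some H\"older decay for the replacement with an exponent $\beta>0$ depending on the Lipschitz constant of $\Omega$. Combined with a Caccioppoli-type Morrey bound and the interior estimate, this will characterize $U$ as a Campanato function in $\overline \Omega$, hence H\"older continuous up to the boundary.

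Concretely, I would fix $x_0 \in \p \Omega$ and $r>0$ small, set $D_r := \Omega \cap B_r(x_0)$, and let $V$ be defined componentwise as the harmonic replacement of $U$ in $D_r$. Since $u_i \ge u_{i+1}$ on $\p D_r$, the maximum principle gives $v_i \ge v_{i+1}$ in $D_r$, so $V \in \mathcal A$ is an admissible competitor. Using harmonicity of $V$ and $U - V \in H^1_0(D_r;\R^N)$, the orthogonality $\int_{D_r} \nabla V \cdot \nabla (U-V)\,dx = 0$ together with the minimality of $U$ yields
\begin{equation*}
\int_{D_r} |\nabla(U-V)|^2 \,dx = \int_{D_r}(|\nabla U|^2 - |\nabla V|^2)\,dx \le \int_{D_r}(W(V) - W(U))\,dx \le N|D_r| \le C r^n.
\end{equation*}
Next, since $\Omega$ is Lipschitz and $\Phi$ is Lipschitz, the classical boundary regularity for harmonic functions (barriers constructed from the exterior cone condition, see for instance Gilbarg--Trudinger, Ch.\ 8) gives
$|V(x)-\Phi(x_0)| \le C|x-x_0|^\beta$ in $D_r$
for some $\beta = \beta(\Omega, \|\Phi\|_{C^{0,1}}) > 0$, and a Caccioppoli estimate applied to $V - \Phi(x_0)$ then delivers the Morrey-type bound $\int_{D_\rho(x_0)} |\nabla V|^2\,dx \le C \rho^{n-2+2\beta}$ for all $\rho \le r$. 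Splitting $\nabla U = \nabla V + \nabla (U - V)$ and taking $r = \rho$ gives the desired boundary Morrey decay
$\int_{D_\rho(x_0)} |\nabla U|^2 \,dx \le C(\rho^n + \rho^{n-2+2\beta}) \le C \rho^{n-2+2\beta}$,
since $\beta < 1$.

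Combining this with the interior H\"older estimate, which provides the analogous Morrey decay $\int_{B_\rho(y)}|\nabla U|^2\,dx \le C \rho^{n-2+2\alpha}$ for balls $B_\rho(y)$ at a definite distance from $\p \Omega$, the Campanato characterization of H\"older spaces delivers $U \in C^{0,\beta}(\overline \Omega)$. The main obstacle is the boundary decay for $V$: while its data on $\p \Omega \cap B_r$ is the Lipschitz function $\Phi$, its data on the remaining portion $\p B_r \cap \Omega$ is only the trace of $U$, which is a priori merely bounded. The classical barrier argument at $x_0$ handles this by controlling the harmonic measure of $\p B_r \cap \Omega$ as seen from $D_\rho$ with $\rho \ll r$ by a power $(\rho/r)^\beta$, and it is precisely this barrier construction, with its quantitative dependence on the Lipschitz constant of $\Omega$, that dictates the value of $\beta$ (and hence the exponent $\alpha$ in the statement).
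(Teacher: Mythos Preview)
Your proposal is correct and aligns with what the paper has in mind: the paper omits the proof entirely, saying only that it ``is standard and follows as in the interior case,'' and your argument is precisely the boundary adaptation of Proposition~\ref{prop1}/Corollary~\ref{holder} (comparison with the harmonic replacement, then Morrey--Campanato). The one point you take for granted is that the trace of $U$ on $\p B_r \cap \Omega$ is bounded; this follows at once from Lemma~\ref{sub-super}, since $u_1$ is subharmonic and $u_N$ superharmonic with bounded (Lipschitz) boundary data on the Lipschitz domain $\Omega$.
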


The proof is standard and follows as in the interior case. We omit the details.

\begin{lem}\label{L7.6}
Assume that $0 \in \p \Omega$ and $\Omega$ is a conical domain near the origin. If the boundary data of $U$ is homogenous of degree one near $0$, then the rescalings $U_r(x)=U(rx)/r$ of $U$ converge on subsequences as $ r \to 0$ to a global minimizing cone (defined in the conical domain). 
\end{lem}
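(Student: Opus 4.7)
The plan is to first verify that each rescaling $U_r$ is itself a minimizer of $J_N$, then extract a uniformly convergent subsequence, and finally apply Weiss' monotonicity formula at the vertex $0$ to conclude that any subsequential limit is $1$-homogeneous.

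First I would note that for each $r>0$, $U_r$ minimizes $J_N$ on $\Omega/r$, since the Dirichlet energy transforms as $J_N(U_r,B_R)=r^{-n}J_N(U,B_{rR})$ while $W$ is invariant under global scaling of its argument, and the 1-homogeneity of $\varphi$ forces the boundary data of $U_r$ on the conical part of $\p(\Omega/r)$ to coincide with $\varphi$ itself. Next, I would establish the linear growth bound $|U(x)|\le C|x|$ near $0$ by comparing $u_1$ (subharmonic, by Lemma \ref{sub-super}) with the 1-homogeneous harmonic extension $h_1$ of $\varphi_1$ from above via the maximum principle, comparing $u_N$ with the analogous $h_N$ from below, and using the ordering of the $u_i$'s to sandwich each component between two Lipschitz 1-homogeneous functions vanishing at $0$.

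This growth estimate implies the uniform bound $\|U_r\|_{L^\infty(B_R\cap(\Omega/r))}\le CR$, which combined with the interior Lipschitz estimate of Theorem \ref{nonC} and the boundary H\"older continuity of the preceding lemma yields equicontinuity of $\{U_r\}$ on compact subsets of $\overline{\mathcal C}$, where $\mathcal C$ is the infinite cone that agrees with $\Omega$ near the origin. Arzel\`a--Ascoli then extracts a subsequence $U_{r_j}\to U_0$ uniformly on compacts, and the compactness of minimizers proved in Section \ref{Sec3} identifies $U_0$ as a minimizer of $J_N$ on $\mathcal C$ with boundary data $\varphi$.

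To show $U_0$ is a cone, I would invoke the Weiss monotonicity formula of Proposition \ref{WMF}, applied at the vertex $0$. The radial-plus-vertical deformation $(x,U)\mapsto ((1+\eps\psi(|x|))x,(1+\eps\psi(|x|))U)$ preserves the conical domain near $0$, and the 1-homogeneity of $\varphi$ ensures it fixes the boundary data, so $U$ is critical with respect to this family and the derivation of Proposition \ref{WMF} yields the monotonicity of $\Phi(U,r)$ even at this boundary vertex. The limit $L:=\lim_{r\to 0^+}\Phi(U,r)$ then exists, the scaling identity $\Phi(U_r,\rho)=\Phi(U,r\rho)$ combined with uniform convergence gives $\Phi(U_0,\rho)=L$ for every $\rho>0$, and the equality case of Proposition \ref{WMF} forces $U_0$ to be homogeneous of degree one. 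The main obstacle is verifying Weiss' monotonicity at the boundary vertex: while the interior argument uses compactly supported radial and vertical perturbations separately, here one must pair them into a single scaling deformation whose admissibility relies precisely on the 1-homogeneity of $\varphi$.
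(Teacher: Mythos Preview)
Your proposal follows the route the paper indicates (the paper gives no proof, only the remark ``this follows from the Weiss monotonicity formula which still applies in this setting''), and your identification of the main obstacle---that the radial domain deformation and the vertical deformation must be paired into the single scaling $(x,U)\mapsto((1+\eps\psi)x,(1+\eps\psi)U)$ so that $1$-homogeneity of $\varphi$ restores the boundary data---is exactly right; the computation of Proposition~\ref{WMF} then goes through to give $\Phi'(r)\ge0$ at the vertex.

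There is, however, a gap in your linear growth step. The ``$1$-homogeneous harmonic extension $h_1$ of $\varphi_1$'' need not exist: it asks for a solution of $\Delta_\theta g+(n-1)g=0$ on the spherical cap with Dirichlet data $\varphi_1$, and for the half-space cone (the only case the paper actually uses, $\Omega=B_1^+$) the value $n-1$ is precisely the first Dirichlet eigenvalue, so Fredholm obstructs solvability for generic $\varphi_1$. Even when $\varphi_1\equiv0$ and $h_1=Cx_n$ is available, the maximum-principle comparison $u_1\le Cx_n$ cannot be enforced on the spherical cap $\partial B_\delta\cap\{x_n>0\}$ near the equator, where $x_n\to0$ while $u_1$ is merely bounded. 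A clean fix for $\Omega=B_1^+$ is to compare $u_1$ instead with its harmonic replacement $v_1$ in $\Omega$: then $u_1\le v_1$ by subharmonicity, and $v_1$ is Lipschitz at $0$ because $v_1-w$ (with $w$ the half-space Poisson extension of a compactly supported Lipschitz extension of $\varphi_1$) vanishes on the flat boundary near $0$ and extends by odd reflection to a harmonic function in a full ball. Alternatively, note that in the paper's sole application (Lemma~\ref{LBd0}) the linear bound $u_1\le\frac{1}{\sqrt6}x_n$ is already part of the hypothesis, so this step is not needed there.
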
 

This follows from the Weiss monotonicity formula which still applies in this setting. We only use Lemma \ref{L7.6} when $\Omega = B_1^+ \subset \R^2$.

\begin{lem}\label{LBd0}
Assume that $N=3$, $\Omega=B_1^+=B_1 \cap \{ x_n >0\}$, $\sum u_i =0$, and $U$ vanishes on $x_n=0$. If 
$$ \max\{u_1, |u_3|\} \le \frac{1}{ \sqrt 6} \, x_n,$$
then $U \equiv 0$ in a neighborhood of the origin.
\end{lem}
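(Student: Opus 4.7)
The plan is to argue by contradiction, extract a tangent cone at the origin, classify it, and then upgrade the information back to a full neighborhood. Suppose $U$ is not identically zero in any neighborhood of $0$ in $B_1^+$; then by the Lipschitz bound of Theorem~\ref{nonC} and the trivial consequence $|U|^2\le x_n^2/2$ of the slope hypothesis together with $\sum u_i=0$, the rescalings $U_r(x):=U(rx)/r$ are uniformly bounded on compacts of $\{x_n\ge 0\}$. A subsequence converges uniformly on compact sets to a minimizer $\bar U$ on the half-space $\{x_n>0\}$, inheriting $\sum\bar u_i=0$, the Dirichlet condition $\bar U\equiv 0$ on $\{x_n=0\}$, and the slope bound $\max\{\bar u_1,|\bar u_3|\}\le x_n/\sqrt 6$. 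The Weiss monotonicity formula (Proposition~\ref{WMF}) applied to domain deformations tangent to $\{x_n=0\}$ still holds because the boundary condition annihilates the boundary contribution in the first variation computation of Lemma~\ref{FV}; therefore $\bar U$ is a 1-homogeneous cone.

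The heart of the proof is to show $\bar U\equiv 0$. By Federer dimension reduction I may assume $n=2$, so $\bar U$ is piecewise linear on a finite collection of sectors $(\theta_{k-1},\theta_k)$ of the upper half-disk. Since any linear $\bar u_i$ on a sector adjacent to $\theta=0$ or $\theta=\pi$ must vanish on $\{x_2=0\}$, it has the form $\bar u_i(r,\theta)=b_i\,r\sin\theta$ there. If $\bar U$ were nonzero, following the sectors outward from $\theta=0$ there would be a first interior ray $\theta_1\in(0,\pi)$ at which a genuine transition occurs. The 1D analysis in Proposition~\ref{P1d} (exactly in the form appearing as ``Case 2'' of the cone classification in the preceding cone analysis) forces this transition to be of $1$-vs-$2$ (or symmetrically $2$-vs-$1$) type, with the lone separated membrane having prescribed normal slope $\sqrt{2/3}$; hence just beyond $\theta_1$, the top membrane takes the linear form $\bar u_1=\sqrt{2/3}\,(x\cdot\tau_1)$ with $\tau_1=(-\sin\theta_1,\cos\theta_1)$.

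Evaluating this linear function at $(0,1)$ gives $\sqrt{2/3}\cos\theta_1$, and the hypothesis $\bar u_1\le x_2/\sqrt 6$ forces $\cos\theta_1\le 1/2$, so $\theta_1\ge\pi/3$. Now this linear expression for $\bar u_1$ is harmonic, so it extends unchanged across any subsequent interior ray at which the $\Gamma_1$ free boundary is absent; by a sector-by-sector inspection (whether the next transition is to $\bar u_1=\bar u_2$ or to $\bar u_1>\bar u_2>\bar u_3$) the function $\bar u_1$ either continues as the same linear function or collides with $\bar u_2$, and in the latter case a short computation using continuity of $\bar u_i$ and the 1D EL relation $|\nabla_\tau(\bar u_2-\bar u_3)|=\sqrt 2$ shows that the requirement $\bar u_1=0$ on $\theta=\pi$ forces $\sin\theta_1=0$, impossible. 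The same reasoning applied in reverse starting from $\theta=\pi$ handles the symmetric case, and a fully analogous argument applied to $\bar u_3$ handles the $2$-vs-$1$ starting configuration. Hence $\bar U\equiv 0$.

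The final step is to upgrade \emph{every blow-up is zero} to \emph{$U\equiv 0$ in a neighborhood of $0$}. Uniqueness of the blow-up limit gives $r^{-1}\max_{B_r^+}|U|\to 0$, and the Caccioppoli estimate Lemma~\ref{CE} (in the half-space form, using that the truncation $\tilde U=(1-\phi)U$ preserves the boundary condition on $\{x_n=0\}$) then yields the measure decay $|\{|U|>0\}\cap B_{r/2}^+|=o(r^n)$. Combining this with the interior non-degeneracy Lemma~\ref{NDeg} applied at interior points $z_m\in\{|U|>0\}$ approaching $0$, and rescaling at the scale $d_m={\rm dist}(z_m,\{|U|=0\})$, one extracts in the only remaining case a boundary blow-up $\hat U$ on some $\{\hat x_n>0\}$ still satisfying the hypotheses of the Lemma and still nonzero, which is excluded by the classification step just proved. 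I expect this last paragraph to be the main obstacle: it requires ruling out ``thin pockets'' of $\{|U|>0\}$ accumulating at $0$ along $\{x_n=0\}$, and the key point is that the slope bound is strict in comparison with the $\sqrt{2/3}$ slope of the only possible nontrivial boundary blow-up profile, so any such pocket must shrink vertically faster than horizontally, producing the required contradiction via a two-scale rescaling.
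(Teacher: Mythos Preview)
Your overall architecture (blow up at $0$ to a cone using Lemma~\ref{L7.6}, classify the cone, then upgrade via non-degeneracy) is exactly the paper's. The divergence, and the real gap, is in the classification step.

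You try to analyze the 2D cone $\bar U$ on the upper half-disk sector by sector, invoking ``Case~2'' of the 2D classification at the first interior ray $\theta_1$. But Case~2 in the paper concerns a \emph{free-boundary} ray on which $u_1$ vanishes, not the fixed domain boundary $\{x_2=0\}$; and more importantly, you have not shown that $\bar U\equiv 0$ on the initial sector $(0,\theta_1)$. Nothing prevents the membranes from already being separated there, with $\bar u_i=b_i\,r\sin\theta$ and distinct $b_i$; then the transition at $\theta_1$ need not be a triple coincidence, Proposition~\ref{P1d} does not apply in the form you want, and the $\sqrt{2/3}$ slope for $\bar u_1$ is not forced. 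The subsequent ``short computation'' tracking later sectors is likewise too sketchy to carry the argument.

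The paper's route is much shorter and bypasses all sector analysis. Since $\bar u_1\ge 0$ is subharmonic (Lemma~\ref{sub-super}), one-homogeneous, and vanishes on $\{x_n=0\}$, it must equal $c_1 x_n$. (In 2D: write $\bar u_1=r f(\theta)$; then $f''+f\ge 0$, $f\ge 0$, $f(0)=f(\pi)=0$, and setting $g=f/\sin\theta$ one finds $(g'\sin^2\theta)'\ge 0$ with $g'\sin^2\theta\to 0$ at both endpoints, so $g$ is constant.) The same applies to $-\bar u_3$, hence to $\bar u_2=-\bar u_1-\bar u_3$, so the cone is already one-dimensional. The hypothesis then gives $|c_i|\le 1/\sqrt 6$, so $\sum c_i^2\le 1/2<1$, which contradicts Lemma~\ref{L1da} unless $\bar U\equiv 0$.

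Your upgrade step is more careful than the paper (which just says ``by non-degeneracy''), but also more elaborate than needed: once every blow-up at $0$ is zero, the half-ball version of Corollary~\ref{CorC1} (same cutoff competitor; the zero Dirichlet data on $\{x_n=0\}$ is preserved) directly contradicts $0\in\partial\{|U|>0\}$, without any two-scale ``thin pocket'' argument.
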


\begin{proof}

By Lemma \ref{L7.6} and non-degeneracy, it suffices to prove the statement when $U$ is a cone. Since $u_1 \ge 0$ is subharmonic, one-homogenous, and vanishes on $x_n=0$, it must be a linear function in the $x_n$ variable. The same is true for $u_3$ and therefore also for $u_2$. By dimension reduction, it suffices to show the statement in dimension $n=1$. The growth hypothesis implies the slopes of $u_1$, $u_2$ and $u_3$ at the origin are strictly less that $1 / \sqrt 6$, and they need to vanish by Lemma \ref{L1da}.
\end{proof}

\begin{proof}[Proof of Proposition $\ref{PV0}$]

We prove that $V_0$ is a minimizing cone by constructing a minimizer $U: \Omega \to \R$ for which $\Gamma_1 \cap \Gamma_2 \cap \Omega \ne \emptyset$. By the results above, $V_0$ is the only possible blow-up profile at an intersection point, and it is therefore minimizing. 

Let $\Omega:= [-M, M] \times [-1, 1]$, for some $M$ large to be specified later, and let $U$ be a minimizer of $J_N$ with the boundary data given by
$$ \varphi(x_1) U_{0,1}(x_2) + (1-\varphi(x_1)) U_{0,2}(x_2),$$
where $U_{0,1}$, $U_{0,2}$ are the one-dimensional solutions in the case $N=3$, and $\varphi(x_1)$ is a smooth non-decreasing function which is equal to 0 when $x_1\le -1$ and equal to $1$ when $x_1 \ge 1$.
 Thus $U=0$ on $\p \Omega \cap \{x_2\le 0\}$, $U \ne 0$ on the remaining part of the boundary. Moreover, since $u_1$, $-u_3$ are subharmonic we have
 $$ \max\{u_1, - u_3\} \le \frac{1}{\sqrt 6} (1+x_2),$$
  and by Lemma \ref{LBd0} we find that the boundary points
$(-M,M) \times \{-1\} $ are interior to the coincidence set $\{|U|=0\}$.
We have
$$ J_N(U, \Omega) \le(M-1) (J_N(U_{0,1}, [-1,1]) + J_N(U_{0,2}, [-1,1]) + C,$$
and since $U_{0,2}$ is minimizing in one dimension we find $$ J_N(U, \Omega \cap \{x_1 < -1\}) \ge  (M-1)J_N(U_{0,2}, [-1,1]),$$ thus
$$ J_N(U, \Omega \cap \{x_1 > 1\}) \le  (M-1)J_N(U_{0,1}, [-1,1])  + C.$$

On the other hand for each $y \in (1,M)$ for which $$\|U(y, \cdot) - U_{0,1}(\cdot)\|_{L^\infty[-1,1]} \ge \delta,$$ we have
$$ J_N(U, \mathcal D) \ge J_N(U_{0,2}(x_2), \mathcal D) + \sigma(\delta), \quad \mathcal D:=[y-\delta,y+ \delta] \times [-1,1], $$
for some $\sigma(\delta)>0$, hence if $M= M(\delta)$ is large enough we can find a $y \in (M/4,M/2)$ such that 
$$ |U - U_{0,1}(x_2)| \le \delta  \quad \mbox{in} \quad \mathcal R:=[y-1,y+1] \times [-1,1],$$
which, by Proposition \ref{GK}, gives that the coincidence set $\{|U|=0\}$ has $\Gamma_1$ as its boundary in $\mathcal R$, while $\Gamma_2= \emptyset$. The same conclusion holds for some $y \in (-M/2,-M/4)$ with the roles of $\Gamma_1$ and $\Gamma_2$ interchanged.

Now we can conclude that $\{|U|=0\}$ has a boundary point in $\Gamma_1 \cap \Gamma_2 \cap \Omega$. Indeed, otherwise $\p \{ |U|=0\} \cap \Omega$ is a smooth curve which locally is in either $\Gamma_1$ or $\Gamma_2$ and which does not intersect the lines $x_2=1$ or $x_2=-1$ in the region $|x_1| \le M/2$. This is topologically impossible and we reached a contradiction.

Finally we show that $V_0$ is the unique minimizer with its own boundary data. If $U$ is another minimizer with the same energy and boundary data in $B_1$, then $U$ extended by $V_0$ outside $B_1$, is a minimizer in any compact subset of $\R^n$. As above, we find that there exists $x _0 \in \Gamma_1 \cap \Gamma_2$, and since the tangent cone at $x_0$ has the same energy with the cone at infinity we find that $U$ is a cone with vertex at $x_0$, i.e. $U = V_0$.
\end{proof}

\section{Regularity of cones near $V_0$}\label{Sec8}

In this section we assume that we are in $n=2$ dimensions. We establish the uniqueness of the blow-up cone at a point in $\Gamma_1 \cap \Gamma_2$ and obtain the regularity of the two free boundaries near such a point.  
\begin{thm}\label{T2d}
Assume $N=3$ and $U$ minimizes $J_N$ in $B_1$. If $V_0$ is the blow-up profile of $U$ at $0$, then $\Gamma_1$ and $\Gamma_2$ are piecewise $C^{1,\alpha}$ curves.
\end{thm}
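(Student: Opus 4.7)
The plan is to establish the uniqueness of the blow-up of $U$ at the origin and the piecewise $C^{1,\alpha}$ regularity of $\Gamma_1$ and $\Gamma_2$ simultaneously through an improvement-of-flatness iteration. I would define the excess $\mathcal E(r):=\inf_R \|U_r - R^*V_0\|_{L^\infty(B_1)}$, where the infimum runs over rotations and reflections of $\R^2$ and $U_r(x) = U(rx)/r$. By Theorem~\ref{Bup} and Proposition~\ref{PV0}, $\mathcal E(r_j) \to 0$ along a sequence; coupled with the uniqueness assertion in Proposition~\ref{PV0}, one first argues by compactness that if a quantitative one-step improvement holds, then $\mathcal E(r) \to 0$ along the full sequence, the minimizing rotation $R_r$ is Cauchy at a polynomial rate in $r$, and the blow-up is unique. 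The one-step statement I would aim for is the following: there exist universal $\eps_0, \rho \in (0,1)$ and $\alpha > 0$ such that $\mathcal E(1) \le \eps_0$ implies $\mathcal E(\rho) \le \rho^{1+\alpha}\,\mathcal E(1)$.

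To prove the one-step improvement I would linearize around $V_0$. In each open sector where $V_0$ has its three components strictly separated, the $u_i$'s are linear, hence harmonic, and the free rays of $\Gamma_1$ and $\Gamma_2$ of $U$ can be parametrized as graphs over the corresponding rays of $V_0$. Normalizing $W := (U - V_0)/\mathcal E(1)$ along a putative sequence $U_m$ violating the one-step estimate, compactness would produce a limit $W_\infty$ satisfying a linear transmission system: componentwise harmonicity in each open sector of $V_0$; Neumann-type jump conditions on each ray of $\Gamma_1 \cup \Gamma_2$ of $V_0$ coming from the equipartition identity \eqref{EL1d} in Remark~\ref{R1d}; and, in the sectors where two or three components of $V_0$ coincide, a full-dimensional coupling among the $w_i$'s that reflects the admissibility ordering and the fact that $U$ may open up or close down these overlaps. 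This is the novel transmission-type problem \eqref{a1}--\eqref{a4} referred to in the introduction.

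The main obstacle is the spectral analysis of this linearized system. Separating variables $W_\infty = r^\lambda \Phi(\theta)$ in polar coordinates reduces the problem to an angular eigenvalue problem on $S^1$ with matching conditions at the rays of $V_0$ and a full-dimensional constraint on two closed arcs, rather than merely at isolated points as in the classical one-phase linearization. The invariances of $J_N$ (rotations in $\R^2$, vertical translations by $c(1,1,1)$) produce a finite-dimensional kernel at $\lambda = 1$, which must be quotiented out by adjusting the rotation in the definition of $\mathcal E$ and the harmonic addition in \eqref{zero}. The crucial point is to show that after this quotient the next admissible homogeneity is $1+\alpha$ with $\alpha > 0$ universal; numerical solution of the resulting characteristic equation is expected to give $\alpha \sim 0.36$. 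I expect this eigenvalue computation to be the hardest technical step, because the full-dimensional coupling makes the ODE system piecewise nonstandard.

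Once the spectral gap is established, the standard contradiction-compactness scheme delivers the one-step improvement of flatness, and iteration gives $\mathcal E(r) \le C r^\alpha$ together with $|R_{\rho^{k+1}} - R_{\rho^k}| \le C \rho^{k\alpha}$. This forces the blow-up to be unique and shows that each ray of $\Gamma_1$ (resp.\ $\Gamma_2$) of $V_0$ is replaced by a $C^{1,\alpha}$ arc of $\Gamma_1$ (resp.\ $\Gamma_2$) of $U$ ending at the origin with the direction prescribed by $V_0$, meeting at the $120^\circ$ angle dictated by the geometry of $V_0$. Piecewise $C^{1,\alpha}$ regularity as stated in Theorem~\ref{T2d} then follows immediately.
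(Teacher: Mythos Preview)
Your proposal is correct and follows essentially the same route as the paper: an improvement-of-flatness iteration obtained by linearizing around $V_0$, with the key step being the spectral gap for the resulting transmission system. The only notable difference is one of parametrization: rather than working with the full vector $(U-V_0)/\eps$, the paper reduces to a pair $(w_1,w_3)$ defined on the overlapping sectors $\mathcal S_1=\{v_{0,1}>v_{0,2}\}$ and $\mathcal S_3=\{v_{0,2}>v_{0,3}\}$, exploiting $\sum u_i=0$ to eliminate $w_2$; the linearized problem then becomes the explicit system \eqref{a1}--\eqref{a4} with a clean quadratic energy $Q$, and the angular eigenvalue equation reduces to $|\sin(3t)|=2|\sin(5t)|$, which one solves by hand to find $\alpha_0\simeq 0.36$.
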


Since $V_0$ is the only possible blow-up profile for $\Gamma_1 \cap \Gamma_2$ in two dimensions, we know that $U$ is well-approximated by rotations of $V_0$ at all scales. Precisely we may assume that for each $r \in (0,1]$, there exists a rotation matrix $O_r$ such that
\begin{equation}\label{6009}
\|U - V_0(O_r x)\|_{L^\infty(B_r)} \le \eps r,
\end{equation}
for some $\eps$ sufficiently small. Our goal is to show that the value of $\eps$ improves in a $C^\alpha$ fashion with respect to $r$.

Assume that $O_1=I,$ and
set $$w_1:= \frac 1 \eps \left(u_1- \sqrt {\frac 23}\, \,  x \cdot e_{-\frac \pi 6}\right), \quad \mbox{defined in $\{u_1>u_2\}$,}   $$
$$w_3:= - \frac 1 \eps \left(\sqrt {\frac 23} \, \, x \cdot e_{\frac \pi 6} +u_3 \right), \quad \mbox{defined in $\{u_2>u_3\}$.}   $$
We claim that, as $\eps \to 0$, the graphs of the functions $(w_1,w_3)$ converge uniformly on compact sets of $$B_1\setminus \{0\} \times \R \subset \R^{2+1}$$ to the graphs of a limiting pair $(\bar w_1, \bar w_3)$ with $\bar w_1$ defined in the sector $\mathcal S_1$ given by 
$$\mathcal S_1:=\left\{ (r,\theta) \in (0,1) \times (-\frac {2 \pi}{3}, \frac \pi 6)\right\},$$
\begin{figure}[h] 
\includegraphics[width=0.3 \textwidth]{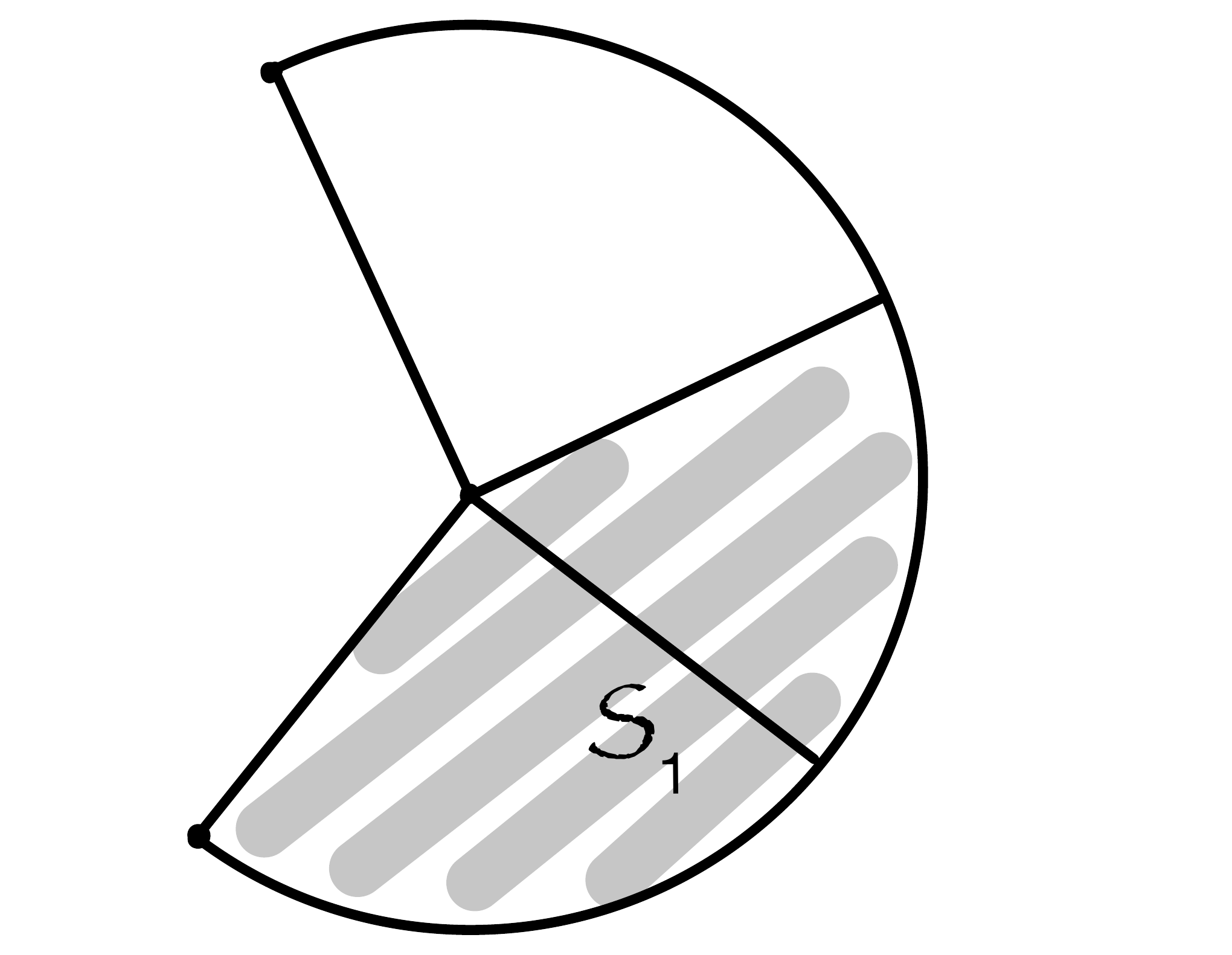}
\caption{The sector $\mathcal S_1$}
   \label{fig5}
\end{figure}
and $\bar w_1$ satisfies
\begin{align}
\label{a1}\triangle \bar w_1 & =0 \quad \quad \quad \mbox{in $\mathcal S_1$,}\\
\label{a2}\p _\nu \bar w_1 & =0 \quad \quad \quad \mbox{on} \quad \p \mathcal S_1 \cap \left\{ \theta=-\frac{2 \pi}{3}\right\},\\
\label{a3}\p _\nu \bar w_1& =\frac 12 \p_\nu \bar w_3 \quad \mbox{on} \quad \p \mathcal S_1 \cap \left\{ \theta=\frac{\pi}{6}\right\},
\end{align}
while $\bar w_3$ is defined in $\mathcal S_3$ with
$$\mathcal S_3:=\left\{ (r,\theta) \in (0,1) \times (- \frac \pi 6,\frac {2 \pi}{3})\right\},$$
and
\begin{align}
\label{a3.5}\triangle \bar w_3&=0 \quad \quad \quad \mbox{in $\mathcal S_3$,}\\
\p _\nu \bar w_3 &=0 \quad \quad \quad \mbox{on} \quad \p \mathcal S_3 \cap \left\{ \theta=\frac{2 \pi}{3}\right\},\\
\label{a4}\p _\nu \bar w_3&=\frac 12 \p_\nu \bar w_1 \quad \mbox{on} \quad \p \mathcal S_3 \cap \left\{\theta=-\frac{\pi}{6}\right\}.
\end{align}

Here the functions $\bar w_1$, $\bar w_3$ are bounded by $-1$ and $1$, and their first derivatives are continuous up to the boundary at the points 
belonging to the lateral sides of the sector. The equations imply that they are in fact smooth up to the boundary at these points.

Moreover, $\bar w_1$, $\bar w_3$ are continuous at $0$,
\begin{equation}\label{6010}
\bar w_1(0)=\bar w_3(0)=0,
\end{equation}
and the convergence of $W=(w_1,w_3)$ to $\bar W=(\bar w_1, \bar w_3)$ is uniform also in a neighborhood of $0$.

Indeed, by the results of Section \ref{Sec6}, $u_1$ solves a Bernoulli one-phase problem near the line $\theta =-\frac 23 \pi$ which is an $\eps$-perturbation of the one-dimensional solution of slope $ \sqrt{2/3}$. Now we can apply the results for the scalar one-phase problem in [D], and find that $w_1$ converges on subsequences as $\eps \to 0$ to a limiting function $\bar w_1$ which satisfies \eqref{a1}-\eqref{a2}. On the other hand 
$$u_1-u_2=2 u_1 + u_3 = \sqrt{\frac 23} \, (2 e_{-\frac \pi 6} - e_{\frac \pi 6}) \cdot x+ \eps(2w_1-w_3)$$ solves a Bernoulli one-phase problem near the line $\theta =\frac \pi 6 $ which is an $\eps$-perturbation of a one-dimensional solution. The same argument as above gives \eqref{a3}. Similarly we obtain the equations \eqref{a3.5}-\eqref{a4} for $\bar w_3$.

Finally, from \eqref{6009} we have $|O_r-O_{r/2}| \le C \eps$, hence $|O_r -I| \le C \eps |\log r|$ which means 
$$\|U - V_0(x)\|_{L^\infty(B_r)} \le C \eps r |\log r|.$$
This implies that $$|w_i| \le C |x| \log |x| \quad \mbox{if} \quad |x| \in [\eps^2 ,1],$$ hence $\bar w_i$ has a $C r |\log r|$ modulus of continuity at $0$ and satisfies \eqref{6010}.

Next we study the linear system \eqref{a1}-\eqref{a4} and notice that it appears as the Euler-Lagrange system for the quadratic functional
\begin{align*}
Q(w_1,w_2): = \frac 32 \int_{\mathcal S_1 \setminus \mathcal S_3} & |\nabla w_1|^2 dx + \frac 32 \int_{\mathcal S_3 \setminus \mathcal S_1}|\nabla  w_3|^2 dx \\
&+ \int_{\mathcal S_1 \cap \mathcal S_3} |\nabla  w_1|^2 + |\nabla w_3|^2 + |\nabla ( w_1- w_3)|^2 dx,   
\end{align*}
acting on pairs $$(w_1,w_3) \in H^1(\mathcal S_1) \times H^1(\mathcal S_3).$$
For simplicity of notation we denote $$ W=(w_1,w_3), \quad \nabla W=(\nabla w_1,\nabla w_3),$$
$$ L^2(\mathcal S):=\{ W| \quad w_i \in L^2(\mathcal S_i) \}, \quad H^1(\mathcal S):=\{ W| \quad w_i \in H^1(\mathcal S_i) \},$$
and define the inner product on $L^2 (\mathcal S)$ as
\begin{align*}
\langle W,V \rangle: = \frac 3 2 \int_{\mathcal S_1 \setminus \mathcal S_3} & w_1 v_1 dx + \frac 3 2 \int_{\mathcal S_3 \setminus \mathcal S_1} w_3 v_3 dx \\
&+  \int_{\mathcal S_1 \cap \mathcal S_3} w_1 v_1 + w_2 v_2 + ( w_1- w_2)(v_1-v_2) dx.   
\end{align*}
The norm induced by the inner product is equivalent to the standard $L^2$ norm on $\mathcal S$.
We also define $\langle \nabla W, \nabla V \rangle$ as above, by replacing the terms $w_i v_j$ by $\nabla w_i \cdot \nabla v_j$.
With this notation
$$ Q(W) = \langle \nabla W, \nabla W \rangle.$$

We consider minimizers of $Q$ which have fixed boundary data on $\p B_1$ or, in other words, we consider {\it harmonic maps} induced by $\langle \cdot, \cdot \rangle$. We establish the $C^{1,\alpha}$ regularity of minimizers of $Q$ near the origin.

\begin{prop}\label{C1a}
Assume that $W \in H^1(\mathcal S)$ is a minimizer of $Q$ among competitors with the same boundary data on $\mathcal S \cap \p B_1$. Then $W \in C^{1,\alpha_0}(\mathcal S)$ and
$$ W(x)= W(0)+ q (e_{\frac \pi 3} \cdot x, - e_{-\frac \pi 3} \cdot x ) + O(|x|^{1+\alpha_0}),$$
for some explicit $\alpha_0 \in (0,1)$.
\end{prop}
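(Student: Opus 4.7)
My plan is to reduce Proposition \ref{C1a} to a spectral analysis of the angular operator associated with the quadratic form $Q$. Since $Q$ is coercive and the Euler--Lagrange system \eqref{a1}--\eqref{a4} is linear with constant coefficients on each sector, standard elliptic regularity for harmonic functions with Neumann and transmission conditions guarantees that any minimizer $W$ of $Q$ is smooth in the interior of each sector $\mathcal{S}_i$ and up to each of their lateral boundaries away from the origin. The only possible singularity is therefore at the vertex, and its structure is controlled by the homogeneity spectrum at $0$.

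I would first look for homogeneous solutions $W=r^\lambda(\phi_1(\theta),\phi_3(\theta))$. Harmonicity of the $w_i$ gives $\phi_i(\theta)=A_i\cos(\lambda\theta)+B_i\sin(\lambda\theta)$, and the four boundary and transmission conditions become a $4\times 4$ homogeneous linear system in $(A_1,B_1,A_3,B_3)$. Next I would exploit the natural $\mathbb{Z}_2$ symmetry $(w_1,w_3)(x_1,x_2)\mapsto(-w_3,-w_1)(x_1,-x_2)$, under which the $4\times 4$ system decouples into two $2\times 2$ blocks; a short trigonometric reduction brings their characteristic equations into the form
\[
f_s(\lambda)=2\sin(5\pi\lambda/6)-\sin(\pi\lambda/2),\qquad f_a(\lambda)=2\sin(5\pi\lambda/6)+\sin(\pi\lambda/2),
\]
corresponding to the symmetric and antisymmetric sectors. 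Direct inspection then yields: $\lambda=0$ is a common root (the two-dimensional constant kernel); $\lambda=1$ is a simple root of $f_s$ only, with eigenfunction precisely the $\mathbb{Z}_2$-symmetric pair $(e_{\pi/3}\cdot x,-e_{-\pi/3}\cdot x)$; neither equation has additional roots in $(0,1)$; and the smallest root above $1$ comes from $f_a$ and lies in $(1,2)$, numerically $\lambda_0\approx 1.36$. One sets $\alpha_0:=\lambda_0-1$, giving the value asserted.

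To convert this into the expansion, I would note that the inner product $\langle\cdot,\cdot\rangle$ from the text is exactly the one under which the angular operator coming from $Q$ is symmetric; being a second-order ODE system on a pair of bounded intervals with linear boundary and coupling conditions, it has compact resolvent, so its eigenfunctions $\{\Phi_k\}$ form a complete orthonormal basis with associated exponents $\{\lambda_k\}$ as above. Expanding $W$ on concentric circles in this basis and using that each component is harmonic in $r$ forces the radial coefficients to be of the form $c_k r^{\lambda_k}$, the companion solutions $r^{-\lambda_k}$ being ruled out by $W\in H^1$ near $0$. Splitting off the $\lambda=0$ term (giving the constant $W(0)$) and the $\lambda=1$ term (giving the one-dimensional family $q(e_{\pi/3}\cdot x,-e_{-\pi/3}\cdot x)$) and converting the remaining $L^2$ tail to an $L^\infty$ bound on each dyadic annulus via standard interior elliptic estimates yields the claimed expansion with remainder $O(|x|^{1+\alpha_0})$.

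The main technical obstacle is rigorously setting up the angular spectral problem: the inner product $\langle\cdot,\cdot\rangle$ couples $\phi_1$ and $\phi_3$ on the overlap $(-\pi/6,\pi/6)$ through the cross term $(\phi_1-\phi_3)(\psi_1-\psi_3)$, so the two components do not live on a single interval in the usual Sturm--Liouville sense, and some care is needed to identify the right self-adjoint realization and to verify compactness of its resolvent. Once this framework is in place, the spectral decomposition and the identification of $\alpha_0$ reduce to the explicit trigonometric computation sketched above.
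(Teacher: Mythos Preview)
Your proposal is correct and follows essentially the same route as the paper: separation of variables in polar coordinates, explicit solution of the angular eigenvalue problem for the quadratic form $Q$, identification of the characteristic equation $\sin(\pi\lambda/2)=\pm 2\sin(5\pi\lambda/6)$ (the paper writes it as $|\sin 3t|=2|\sin 5t|$ with $t=\sqrt{\lambda}\,\pi/6$), and Fourier expansion of $W$ in the resulting orthonormal basis to read off the exponent $1+\alpha_0=\sqrt{\lambda_4}$. Your use of the $\mathbb{Z}_2$ symmetry $(w_1,w_3)(x_1,x_2)\mapsto(-w_3,-w_1)(x_1,-x_2)$ to block-diagonalize into $f_s,f_a$ is a nice organizational device not in the paper, but it leads to the same equation and the same spectrum; the paper instead solves the coupled boundary conditions directly and obtains both signs at once.
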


The precise formula of $\alpha_0$ is given by
$$ \cos (\frac \pi 3 (1+\alpha_0))= \frac {\sqrt {17}-3}{8},$$
and $\alpha_0 \simeq 0.36$.

First we show that a classical solution to the system \eqref{a1}-\eqref{a4} which remains bounded near the origin minimizes the energy $Q$.

\begin{lem}\label{L5.3}Assume that $\bar W$ is a bounded solution of \eqref{a1}-\eqref{a4}. Then it minimizes the energy $Q$ with respect to perturbations in $H_0^1(\mathcal S)$. 
\end{lem}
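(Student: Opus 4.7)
The functional $Q$ is exactly the quadratic form associated with the inner product $\langle \cdot, \cdot \rangle$, so for any $\phi \in H^1_0(\mathcal S)$ we have the exact expansion
\begin{equation*}
Q(\bar W + \phi) - Q(\bar W) = 2\langle \nabla \bar W, \nabla \phi\rangle + Q(\phi) \ge 2\langle \nabla \bar W, \nabla \phi\rangle,
\end{equation*}
since $Q(\phi) \ge 0$. Thus establishing minimality reduces to verifying the weak Euler--Lagrange identity $\langle \nabla \bar W, \nabla \phi \rangle = 0$ for every $\phi \in H^1_0(\mathcal S)$.

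I would first verify the identity for $\phi$ compactly supported in $\overline{B_1} \setminus \{0\}$. Splitting $\langle \nabla \bar W, \nabla \phi \rangle$ into integrals over $\mathcal S_1 \setminus \mathcal S_3$, $\mathcal S_3 \setminus \mathcal S_1$ and $\mathcal S_1 \cap \mathcal S_3$, and integrating by parts on each, the interior bulk terms vanish by the harmonic equations \eqref{a1} and \eqref{a3.5}, the arc contributions vanish by the trace hypothesis on $\phi$, and the integrals along the outer rays $\{\theta = \pm 2\pi/3\}$ vanish by the Neumann conditions \eqref{a2} and its $\bar w_3$ analogue. On each interior ray $\{\theta = \pm \pi/6\}$, three boundary contributions remain: one weighted by $\tfrac{3}{2}$ coming from the pure-$\bar w_i$ sector, one weighted by $2$ coming from the $|\nabla w_i|^2$ term in $\mathcal S_1 \cap \mathcal S_3$ (with opposite outward normal), and one from the cross term $-\nabla w_1 \cdot \nabla w_3$. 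A short algebraic computation shows that these collapse, for the $\phi_1$ contributions on $L_2 = \{\theta = -\pi/6\}$, to $\int (\partial_n \bar w_3 - \tfrac{1}{2} \partial_n \bar w_1)\phi_1$, which vanishes precisely by \eqref{a4}; the analogous computation on $L_3 = \{\theta = \pi/6\}$ gives \eqref{a3}.

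To remove the support restriction away from $0$ I would introduce a logarithmic cutoff $\eta_\delta$ with $\eta_\delta \equiv 0$ on $B_\delta$, $\eta_\delta \equiv 1$ outside $B_{\sqrt{\delta}}$, and $\|\nabla \eta_\delta\|_{L^2(\R^2)}^2 \le C/|\log \delta|$. By the previous step $\langle \nabla \bar W, \nabla(\phi \eta_\delta)\rangle = 0$, and the difference
\begin{equation*}
\langle \nabla \bar W, \nabla \phi \rangle - \langle \nabla \bar W, \nabla(\phi \eta_\delta)\rangle = \langle \nabla \bar W, (1-\eta_\delta) \nabla \phi \rangle + \langle \nabla \bar W, \phi \nabla \eta_\delta\rangle
\end{equation*}
tends to zero as $\delta \to 0$: the first term by dominated convergence using $\nabla \bar W \in L^2_{\mathrm{loc}}$, and the second by Cauchy--Schwarz as $\|\phi\|_{L^\infty} \|\nabla \bar W\|_{L^2(B_{\sqrt \delta})} \|\nabla \eta_\delta\|_{L^2} \to 0$. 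The case of bounded $\phi$ extends to general $\phi \in H^1_0(\mathcal S)$ by truncation and density.

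The main obstacle is ensuring local $L^2$ integrability of $\nabla \bar W$ near the origin, since $\bar W$ need not be smooth at the corner. This is handled by a Caccioppoli-type estimate applied to the bounded harmonic functions $\bar w_1$, $\bar w_3$ on their sectors: after even Schwarz reflection across the Neumann rays $\{\theta = \pm 2\pi/3\}$, each becomes a bounded harmonic function on a larger set, whose gradient energy is controlled locally by its $L^\infty$ norm. Combined with the logarithmic smallness of $\|\nabla \eta_\delta\|_{L^2}$ available in two dimensions, this propagates the Euler--Lagrange identity from test functions vanishing near the origin to all of $H^1_0(\mathcal S)$, completing the argument.
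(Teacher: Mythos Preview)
Your overall strategy is sound, and the integration-by-parts step showing $\langle\nabla\bar W,\nabla V\rangle=0$ for $V$ supported away from the origin is exactly the paper's opening move. The gap is in your justification that $\nabla\bar W\in L^2_{\mathrm{loc}}$ near the origin. Schwarz reflection across the Neumann rays handles only one side of each sector; on the coupled rays $\{\theta=\pm\pi/6\}$ the transmission conditions tie $\bar w_1$ and $\bar w_3$ together and no reflection of a single component removes them. More to the point, a bounded harmonic function on a sector need not have finite Dirichlet energy at a boundary point where no condition is prescribed: the Poisson extension of the sign function to the upper half-plane is bounded and harmonic, yet $|\nabla u|^2\sim|x|^{-2}$ is not integrable near the origin. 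So ``bounded harmonic on a larger set'' does not by itself control gradient energy at the vertex, and without $\bar W\in H^1$ your expansion $Q(\bar W+\phi)-Q(\bar W)=2\langle\nabla\bar W,\nabla\phi\rangle+Q(\phi)$ is not even well defined.

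The fix is to feed the \emph{coupled} identity you already have back into itself: test $\langle\nabla\bar W,\nabla V\rangle=0$ with $V=\eta^2\bar W$ for a radial cutoff $\eta$ supported in an annulus $B_r\setminus B_\delta$, obtain the Caccioppoli inequality $\langle\eta\nabla\bar W,\eta\nabla\bar W\rangle\le C\|\bar W\|_{L^\infty}^2\|\nabla\eta\|_{L^2}^2$, and observe that in two dimensions $\int_{B_{2\delta}\setminus B_\delta}|\nabla\eta|^2$ stays bounded as $\delta\to 0$; this gives $\bar W\in H^1_{\mathrm{loc}}$ and your cutoff argument then goes through. The paper proceeds slightly differently: rather than first establishing $\bar W\in H^1$, it introduces the $Q$-minimizer $W_0$ in $B_r$ with trace $\bar W|_{\partial B_r}$ and applies the Caccioppoli-plus-logarithmic-cutoff argument to the bounded difference $\bar W-W_0$, forcing $\bar W=W_0$. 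Same ingredients, different order; the paper in fact adopts exactly your route (system Caccioppoli on annuli first) in the higher-dimensional analogue, Lemma~\ref{L5.3n}.
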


\begin{proof}
Indeed, we notice that if $V \in C^1$ vanishes near $\p B_1$ and the origin $0$, then
\begin{align*}
 \langle \nabla \bar W, \nabla V \rangle & =  \langle \triangle \bar W, V \rangle  \\
& + \int_{\mathcal S_3 \cap \p \mathcal S_1 } - \frac 32 v_3 (\bar w_3)_\nu  + v_1 (\bar w_1)_\nu + v_3 (\bar w_3)_\nu +(v_1-v_3) (\bar w_1-\bar w_3)_\nu \, \, d  \sigma \\
& + \int_{\mathcal S_1 \cap \p \mathcal S_3 } - \frac 32 v_1 (\bar w_1)_\nu  + v_3 (\bar w_3)_\nu + v_1 (\bar w_1)_\nu +(v_3-v_1) (\bar w_3- \bar w_1)_\nu \, \, d  \sigma \\
&=0.
\end{align*}

Let $W_0$ be the minimizer of $Q$ in $\mathcal S \cap B_r$ with the same boundary data as $\bar W$ on $\mathcal S \cap \p B_r$ for some $ r \in (0,1)$. We show that $\bar W$ and $W_0$ coincide in $B_r$. 

The computation above gives
$$\langle \nabla (\bar W-W_0), \nabla V \rangle =0,$$
for any $V \in H^1(\mathcal S)$ which vanishes on $\p B_1$ and in a neighborhood of $0$.   

We choose $V= \psi ^2 (\bar W - W_0),$ with $\psi$ a radial cutoff function which is 1 outside $B_\delta$ and vanishes near $0$, and obtain the Caccioppoli inequality
\begin{equation}\label{CCI}
 \langle \psi \nabla (\bar W-W_0), \psi \nabla (\bar W-W_0) \rangle \le C \langle |\nabla \psi|(\bar W-W_0),|\nabla \psi|(\bar W-W_0) \rangle. 
 \end{equation}
Since $\bar W-W_0$ is bounded near the origin and $\|\nabla \psi\|_{L^2}$ can be made arbitrarily small, we obtain 
$$  \langle \nabla (\bar W-W_0), \nabla (\bar W-W_0)\rangle =0,$$
as $\delta \to 0$, which gives the desired conclusion.    
 \end{proof}

Theorem \ref{T2d} follows from the estimate of Proposition \ref{C1a} which applies to $\bar W$. 

\begin{proof}[Proof of Theorem $\ref{T2d}$]
Since $\bar W(0)=0$ then, for all $\rho \le 1/2$,
$$\|\bar W - q (e_{\frac \pi 3} \cdot x, - e_{-\frac \pi 3} \cdot x ) \|_{L^\infty(B_\rho)} \le C \rho ^{1+\alpha_0},$$
with $|q| \le C$, and $C$ universal.

The uniform convergence of the $W$'s to $\bar W$ and the inequality above imply 
$$\|U - V_0 (O x)\|_{L^\infty(B_\rho)} \le C \rho^{1+\alpha_0} \eps \le \eps \rho^{\alpha},$$
with $O$ the rotation of angle $\eps \sqrt{ \frac 32} \, q$, provided that $\alpha < \alpha_0$ and $\rho$ is chosen sufficiently small depending on $\alpha$.
This means that $U$ is approximated in a $C^{1,\alpha}$ fashion by rotations of $V_0$, and the desired conclusion follows by iterating this result.
\end{proof}
 
 It remains to prove Proposition \ref{C1a}.
 
 \begin{proof}[Proof of Proposition $\ref{C1a}$]
 
 We solve the Dirichlet problem for minimizers of $Q$ by the method of Fourier series. 

We investigate the eigenvalues and eigenfunctions of the corresponding $Q$ operator on the unit circle $\p B_1$. Precisely, let 
 $$\mathcal S_i':=\mathcal S_i \cap \p B_1,$$
 and notice that $$\mathcal S_1'=(-b, a), \quad \mathcal S_3'=(-a, b),$$
 where $a<b$ denote $\frac {\pi}{6}$ and $\frac{2\pi}{3}$.

 We define the corresponding spaces $L^2(\mathcal S')$, $H^1(\mathcal S')$, and the inner product
 $ \langle W,V \rangle $ as above. The eigenfunctions $\Phi_k$ and eigenvalues $\lambda_k$ are defined inductively through the 
 Rayleigh quotient formula
 $$ \lambda_{k}:=\min_{W \in (span\{\Phi_1,..,\Phi_{k-1}\})^\perp} \frac{\langle \nabla W,\nabla W \rangle}{\langle W,W \rangle},$$
 and $\Phi_k=W$ is the element of $H^1(\mathcal S')$ which has unit norm in $L^2(\mathcal S')$, where the minimum is realized.  
 Then $\{\Phi_k\}_{k=1}^\infty$ is an orthonormal basis of $L^2(\mathcal S')$. 
 
 The minimizer of the $Q$ functional with boundary data 
 $\Phi_k(\theta)$ on 
 $\p B_1$ is
 $$ r^{ \sqrt { \lambda_k }} \Phi_k(\theta).$$

 We write the Euler-Lagrange equations for an eigenfunction $\Phi_k=W$ of eigenvalue $\lambda_k=\lambda$ and obtain
  \begin{equation}\label{6000}
  w_1 '' + \lambda w_1 = 0  \quad \mbox{in} \quad (-b,-a) \cup(-a,a),
  \end{equation}
  $$ w_1'(-b)_+=0, $$
  \begin{equation}\label{6001}
  \frac 32 w_1'(-a)_- + w_3'(-a)_+ - 2w_1'(-a)_+ =0, \quad w_1'(-a)_+ - 2 w_3'(-a)_+   =0, 
  \end{equation}
together with a similar equations for $w_3$. Here $w'_1(-a)_{\pm}$ denote the left and right limits of the derivative of $w_1'$ at $-a$. 
From \eqref{6001} we find $$w_1'(-a)_-=w_1'(-a)_+=2w_3'(-a)_+,$$ which means that equation \eqref{6000} holds in the full interval $(-b,a)$.
In conclusion $$w_1(\theta) = \mu_1 \cos (\sqrt \lambda (\theta + b)), \quad w_3(\theta)= \mu_3 \cos (\sqrt \lambda (\theta-b) ),$$
and $$ w_1'(-a)= 2 w_3'(-a), \quad w_3'(a)= 2 w_1'(a).$$
This implies that
$$ \sin (\sqrt \lambda (b-a)) = \pm 2 \sin(\sqrt \lambda(b+a)) $$
or
\begin{equation}\label{6011}
 |\sin (3 t)| = 2 |\sin (5t)|, \quad t:= \frac{\sqrt{\lambda}}{6} \pi.
 \end{equation}
This equation has periodic solutions in $t$ of period $\pi$. 
In the interval $[0,\pi)$ we have 9 solutions, two in each intervals of length $\pi /5$. The first one is $t_0=0$, then $$t_1=\frac{\pi}{6}, \quad t_2 \in (\frac \pi 5,\frac \pi 3), \quad t_3 \in (\frac \pi 3,\frac {2 \pi}{ 5}) \quad \quad \mbox{ etc.} $$
The solutions can be computed explicitly since after dividing by $\sin t$ in \eqref{6011}, we end up with two quadratic equations in $4 \cos (2t)$
$$ \beta^2 + 2 \beta - 4 = \pm (\beta +2), \quad \quad \beta:= 4 \cos(2t).$$ 
The corresponding eigenvalues and eigenfunctions are
\begin{align*}
\lambda_1 &=0, \quad \quad \quad W= (1,0), \\
\lambda_2 &=0, \quad \quad \quad W=(0,1), \\
\lambda_3 & =1, \quad \quad \quad W=\left(\cos (\theta + b), - \cos (\theta - b)\right),\\
\sqrt{\lambda_4} & \in (\frac 65,2), \quad  W=\left( \cos\left( \sqrt{\lambda_4} (\theta + b)\right),  \cos \left(\sqrt {\lambda_4}(\theta - b) \right)\right),
\end{align*}
and $\sqrt{\lambda_3} \in (2, \frac{12}{5})$ etc. The eigenvalues corresponding to $t / \pi \in \mathbb Z$ have multiplicity 2, while the others are simple, and notice that $\sqrt {\lambda_k} \sim \frac 35 k $ for $k$ large.

If $\Phi(\theta)$ is the boundary data of $W$, then we decompose it in $L^2(\mathcal S')$ as 
$$ \Phi = \sum \sigma_k \Phi_k$$
with $\sum \sigma_k^2 < \infty$, and write the solution $W$ in $\mathcal S$ in polar coordinates as the series
$$W= \sum \sigma_k r^{\sqrt {\lambda_k}} \Phi_k(\theta),$$
which converges uniformly in compact intervals of $ r \in [0,1)$.

 This gives the desired conclusion with
$$1+\alpha=\sqrt {\lambda_4},$$
and $$ 4 \cos (\frac \pi 3 \sqrt {\lambda_4}) = \frac{\sqrt {17} - 3}{2}.$$
We remark that $\Phi$ is the trace of a function in $H^1(\mathcal S)$ is equivalent to 
$$Q(W)= \langle \nabla W,\nabla W \rangle = \sum \sqrt{\lambda_k} \, \, \sigma_k^2 < \infty.$$
\end{proof}

\section{Regularity for $\Gamma_1 \cap \Gamma_2$ in higher dimensions}\label{Sec9}

In this section we prove a version of Theorem \ref{T2d} in arbitrary dimensions. First we recall the following definitions. 

\begin{defn}
Let $U$ be a minimizer of $J_N$ with $N=3$. We say that $$x_0 \in Reg(\Gamma_1 \cap \Gamma_2)$$ if there exists a blow-up profile at $x_0$ which is a rotation of a two-dimensional cone $V_0$ extended trivially in the remaining variables. 

We say that $$x_0 \in Reg (\Gamma_k)$$ if there exists a blow-up cone $x_0\in \Gamma_k$ which is one-dimensional. \end{defn}

For convenience we restate Theorems \ref{TndI} and \ref{TPRI} from the Introduction which will be proved in this section.
\begin{thm}\label{Tnd}
$Reg(\Gamma_1 \cap \Gamma_2)$ is locally a $C^{1,\alpha}$-smooth manifold of codimension two. Near such an intersection point, each of the free boundaries $\Gamma_1$ and $\Gamma_2$ consists of two piecewise $C^{1,\alpha}$ hypersurfaces which intersect on $Reg(\Gamma_1 \cap \Gamma_2)$.
\end{thm}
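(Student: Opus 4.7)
The plan is to reduce the higher-dimensional setting to a product version of the two-dimensional analysis carried out in Section \ref{Sec8}. Let $x_0 \in Reg(\Gamma_1 \cap \Gamma_2)$. By definition, at least one blow-up at $x_0$ is a 2D cone $V_0$ extended trivially. First I would upgrade this to uniqueness of the blow-up profile: by Weiss monotonicity every blow-up has the same energy as $V_0$, and combining Theorem \ref{m2d} with the dimension reduction lemma, every blow-up must be of the form $V_0(O \pi \, \cdot)$ where $\pi$ is orthogonal projection onto some 2D plane and $O$ is a planar rotation. Coupled with the non-degeneracy of $|U|$ and a compactness argument, this gives the flatness hypothesis: for each $r \in (0,1]$ there is a rotation $O_r$ and a codimension-two affine subspace $L_r$ such that
\[
\|U - V_0(O_r \pi_{L_r}(x-x_0))\|_{L^\infty(B_r(x_0))} \le \eps_r\, r,
\]
with $\eps_r$ small.

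Next I would linearize exactly as in Section \ref{Sec8}. Defining the rescaled deviations $w_1, w_3$ on the corresponding product sectors $\mathcal S_1 \times \R^{n-2}$ and $\mathcal S_3 \times \R^{n-2}$, the one-phase regularity theory applied to the scalar problems satisfied by $u_1$ and $u_1-u_2$ near the flat pieces of $\Gamma_1$ and $\Gamma_2$, combined with interior harmonicity, produces a limiting pair $(\bar w_1, \bar w_3)$ that solves the same system \eqref{a1}-\eqref{a4} but with the full $n$-dimensional Laplacian replacing $\triangle$. The direct analog of Lemma \ref{L5.3}, proved by a Caccioppoli estimate about the edge $\{0\}\times\R^{n-2}$ (of codimension two, so harmonic capacity arguments still apply), shows that any bounded classical solution is a minimizer of the corresponding $n$-dimensional quadratic energy $Q_n$.

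The main step, and in my view the main obstacle, is extending Proposition \ref{C1a} to $\mathcal S \times \R^{n-2}$. I would do this by separation of variables in the angular direction: using the orthonormal basis $\{\Phi_k(\theta)\}$ of eigenfunctions with eigenvalues $\lambda_k$ from Proposition \ref{C1a}, write $\bar W(r,\theta,y)=\sum_k c_k(r,y)\, \Phi_k(\theta)$. Each coefficient satisfies, in $(r,y)\in[0,\infty)\times\R^{n-2}$, a 2D equation of the form
\[
\partial_r^2 c_k + r^{-1}\partial_r c_k - \lambda_k r^{-2} c_k + \triangle_y c_k = 0,
\]
which by a further Fourier transform in $y$ becomes a family of modified Bessel equations. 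The regular bounded solutions at $r=0$ admit an expansion $c_k(r,y)= r^{\sqrt{\lambda_k}}\psi_k(y)+\text{smoother}$ with $\psi_k$ smooth in $y$. The delicate part is ruling out logarithmic/irregular contributions from the zero eigenvalues $\lambda_1=\lambda_2=0$ and the unit eigenvalue $\lambda_3=1$; this follows from the uniform $L^\infty$ bound on $\bar W$ inherited from the compactness construction. With $\sqrt{\lambda_4}=1+\alpha_0$, one concludes
\[
\bar W(x,y)=A(y)+q(y)\bigl(e_{\pi/3}\cdot x,\,-e_{-\pi/3}\cdot x\bigr)+O(|x|^{1+\alpha_0}),
\]
uniformly in $y$, with $A$ and $q$ smooth in $y$.

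Finally, I would translate this expansion into a geometric improvement-of-flatness statement: at a smaller scale $\rho$, after updating the rotation $O_r$ by a planar rotation of size $\sim \eps\, q(y_0)$ and translating the base point along the edge by $\sim \eps\, A(y_0)$, the approximation error drops to $\eps\rho^\alpha$ for any $\alpha<\alpha_0$. Iterating this improvement exactly as in the proof of Theorem \ref{T2d}, the map assigning to each edge parameter the corresponding base point and perpendicular rotation is $C^{1,\alpha}$; hence $Reg(\Gamma_1 \cap \Gamma_2)$ is locally a $C^{1,\alpha}$ submanifold of codimension two, and each of $\Gamma_1$ and $\Gamma_2$ splits into two $C^{1,\alpha}$ hypersurfaces meeting along this edge.
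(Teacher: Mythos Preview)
Your first paragraph contains the main gap. You assert that since every blow-up at $x_0$ has energy $\Phi(V_0)$, ``combining Theorem \ref{m2d} with the dimension reduction lemma, every blow-up must be of the form $V_0(O\pi\,\cdot)$.'' This does not follow. Theorem \ref{m2d} classifies \emph{two-dimensional} cones with $0\in\Gamma_1\cap\Gamma_2$; the dimension reduction lemma only says trivial extensions of minimizers are minimizers. Neither result tells you that an $n$-dimensional minimal cone of energy $\Phi(V_0)$ must be cylindrical, and there is no a priori energy classification of higher-dimensional cones available at this stage. So you have not obtained the flatness-at-all-scales hypothesis you need to run the linearization; the argument is circular.

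The paper resolves this by reversing the logical order. One first proves the improvement-of-flatness estimate (Proposition \ref{C1alphand}) under the \emph{strong} standing assumption of $\eps$-closeness to some rotation of $V_0$ at every scale; this is exactly what the linearization step delivers. From it one deduces the rigidity statement Corollary \ref{Cco}: any minimal cone $L^\infty$-close to $V_0$ is a rotation of $V_0$. Only then, via a compactness/continuity argument on the rescalings (Lemma \ref{L6p5}), can one upgrade ``one blow-up at $x_0$ equals $V_0$'' to ``every blow-up at $x_0$ is a rotation of $V_0$,'' and hence recover flatness at all scales. To pass from the single point $x_0$ to a neighborhood and obtain the manifold structure, one must also control nearby points of $\Gamma_1\cap\Gamma_2$ whose blow-up energy might a priori fall strictly below $\Phi(V_0)$; this is handled by the $(n-3)$-dimension bound of Lemma \ref{L6p6} together with the topological argument in the proof of Proposition \ref{PLa}. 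Your outline skips this entire bootstrap.

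A secondary remark on the linear problem: your proposed route (angular eigenfunction expansion, then Bessel equations in $(r,y)$, then Fourier in $y$) differs from the paper's, which instead takes difference quotients in the $x''$-variables to show every tangential derivative $D_{x''}^\beta W$ is again a bounded $Q$-minimizer, and then freezes $x''$ to reduce to a two-dimensional inhomogeneous problem (Lemmas \ref{L6.5} and \ref{C1a2}). Your approach is plausible, but the paper's avoids the delicate issues you flag about logarithmic modes and uniform convergence of the spectral series.
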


As a consequence we obtain the partial regularity result.

\begin{thm}[Partial regularity]\label{TPR} Let $N=3$ and $U$ be a minimizer of $J_N$ in $B_1$. Then
$$ \partial \{|U|>0\}=Reg(\Gamma_1) \cup Reg(\Gamma_2)\cup Reg (\Gamma_1 \cap \Gamma_2) \cup \Sigma',$$
with $\Sigma'$ a closed singular set of Hausdorff dimension $n-3$ and
$$ \mathcal H^{n-1} (Reg(\Gamma_i)\cap B_{1/2}) \le C, \quad \quad  \mathcal H^{n-2}(Reg(\Gamma_1 \cap \Gamma_2)\cap B_{1/2}) \le C.$$

\end{thm}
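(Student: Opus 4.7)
The plan is to classify each $x_0\in\partial\{|U|>0\}$ by its blow-up profile, then use the two-dimensional cone classification as the base case for Federer's dimension reduction to bound $\Sigma'$, and read off the Hausdorff measure estimates from Theorems \ref{pr} and \ref{Tnd}. After subtracting the harmonic average of the $u_i$'s (which does not affect $\Gamma_1,\Gamma_2$), Theorem \ref{Bup} produces a one-homogeneous minimizing blow-up $\bar U$ at every $x_0$ with $0\in\partial\{|\bar U|>0\}$. By the definitions given in Section \ref{Sec9}, $x_0\in Reg(\Gamma_k)$ when some such $\bar U$ is a rotation of a $1$D cone extended trivially, and $x_0\in Reg(\Gamma_1\cap\Gamma_2)$ when some such $\bar U$ is a rotation of $V_0$ extended trivially; the remaining points form $\Sigma'$. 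Closedness of $\Sigma'$ follows from upper semicontinuity of the Weiss density $\Phi(x,0^+)$ together with the density gap that separates $\Phi(V_0)$ and $\Phi(U_{0,k})$ from the densities of any other minimizing cone in $\mathbb{R}^n$.

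\textbf{Dimension reduction for $\Sigma'$.} I would argue by Federer's dimension reduction exactly as in the proof of Theorem \ref{pr}. If $\mathcal H^s(\Sigma')>0$ for some $s>n-3$, then at an $\mathcal H^s$-density point one extracts a global minimizing cone $\bar U$ on $\mathbb{R}^n$ whose own singular set has Hausdorff dimension at least $s$. Each iteration of the translation-invariance step provided by the dimension reduction lemma of Section \ref{Sec6} drops the essential ambient dimension by one; after $n-2$ iterations one obtains a two-dimensional minimizing cone contained in $\Sigma'$. But the 2D classification --- Proposition \ref{P1dI}, Theorem \ref{m2d}, and the exclusion of $V_s$ in Section \ref{Sec7} --- forces every such cone to be either a $1$D cone or a rotation of $V_0$, both of which are regular by definition. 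Contradiction, hence $\dim_{\mathcal H}\Sigma'\le n-3$.

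\textbf{Hausdorff measure bounds.} Since $Reg(\Gamma_k)\subset\Gamma_k$, the estimate $\mathcal H^{n-1}(Reg(\Gamma_k)\cap B_{1/2})\le C$ is contained in Theorem \ref{pr}. For the codimension-two bound on $Reg(\Gamma_1\cap\Gamma_2)\cap B_{1/2}$, Theorem \ref{Tnd} supplies a local $C^{1,\alpha}$ submanifold description near every regular intersection point. I would upgrade this pointwise statement to a uniform local description by proving a universal lower bound $r_0>0$ on the radius of the $C^{1,\alpha}$ chart, after which a Vitali-type covering by balls of radius $r_0$ (whose chart restrictions carry uniformly bounded $(n-2)$-area) gives the finite measure bound. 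The uniformity is established by compactness: a sequence $x_k\in Reg(\Gamma_1\cap\Gamma_2)\cap B_{1/2}$ with shrinking chart radii would accumulate at some $x_\infty$ where upper semicontinuity of $\Phi$ forces $\Phi(x_\infty,0^+)=\Phi(V_0)$, hence by the classification the blow-up at $x_\infty$ must again be a trivial extension of $V_0$, giving $x_\infty\in Reg(\Gamma_1\cap\Gamma_2)$ with its own chart of some definite radius, which contradicts the shrinking.

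\textbf{Main obstacle.} The delicate step is precisely the uniform chart-radius bound: Theorem \ref{Tnd} is local, and producing a scale that works at every regular intersection point simultaneously requires a quantitative version of the density gap between $\Phi(V_0)$ and the Weiss densities of all other tangent cones in dimension $n$. This in turn relies on combining the exhaustive list of 2D minimizing cones from Section \ref{Sec7} with the dimension reduction of Section \ref{Sec6}, and then handling the potential accumulation of $Reg(\Gamma_1\cap\Gamma_2)$ onto $\Sigma'$ by carefully iterating the blow-up and using the upper semicontinuity of $\Phi(x,0^+)$.
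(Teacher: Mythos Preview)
Your overall architecture is right, but it rests on a claim the paper neither proves nor uses: a \emph{density gap} asserting that no minimizing cone in $\mathbb R^n$ has Weiss energy strictly between $\Phi(U_{0,k})$ and $\Phi(V_0)$, and that $V_0$ is the unique cone at its own energy level. The paper does not establish this. On the contrary, Lemma~\ref{L6p6} is formulated precisely to allow for free boundary points whose blow-up density lies in the open interval $(\Phi(U_{0,k}),\Phi(V_0))$; it shows only that the set $\mathcal A$ of such points has Hausdorff dimension at most $n-3$, not that it is empty. Without the gap your closedness argument breaks: upper semicontinuity at a limit $x_\infty$ of points in $Reg(\Gamma_1\cap\Gamma_2)$ yields only $\Phi(x_\infty,0^+)\ge\Phi(V_0)$, which by itself does not force the blow-up at $x_\infty$ to be a rotation of $V_0$. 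The same issue infects the Federer reduction: passing $\Sigma'$ to a blow-up limit requires an $\eps$-regularity statement at the $V_0$ profile (so that regular points of the limit pull back to regular points along the sequence), and you never invoke one.

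The paper supplies exactly this missing $\eps$-regularity in Proposition~\ref{PLa}, and its proof is the substantive content behind Theorem~\ref{TPR}. It does not go through a density gap. Instead it combines three ingredients: a two-dimensional topological argument on each slice $\{x''=\text{const}\}$ showing that $\Gamma_1\cap\Gamma_2$ must meet the slice near the axis; Lemma~\ref{L6p6}, which ensures that for $\mathcal H^{n-2}$-a.e.\ slice this intersection point lies outside $\mathcal A$ and hence has density at least $\Phi(V_0)$; and Lemma~\ref{L6p5} together with Proposition~\ref{C1alphand}, which convert $L^\infty$-closeness to $V_0$ plus this density lower bound into the full $C^{1,\alpha}$ picture at $\bar x$, and then at all of $\mathcal D$ by density. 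Once Proposition~\ref{PLa} is available, closedness of $\Sigma'$ and the dimension bound follow exactly as you sketch, and the uniform-chart obstacle you flag for the $\mathcal H^{n-2}$ estimate dissolves, since Proposition~\ref{PLa} is already quantitative at unit scale.
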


Theorems \ref{Tnd} and \ref{TPR} are deduced from the next proposition which will be proved later in the section. It states that a minimizer $U$ which is approximated in each ball $B_r$ with $r \in (0,1]$, by a rotation of $V_0$, must be a $C^{1,\alpha}$ deformation of $V_0$. 
\begin{prop}\label{C1alphand}
Let $U$ be a minimizer in $B_1$. Assume that for each $r \in (0,1]$ there exists a pair of orthonormal vectors $\nu^1_r,\nu^2_r$ such that
$$ \|U-V_0(\nu_r^1 \cdot x, \nu_r^2 \cdot x) \|_{L^\infty (B_r)} \le \eps r,$$
for some $\eps \le \eps_0(n)$ small universal. Then there exists $\nu_0^1, \nu_0^2$ such that  
$$ \|U-V_0(\nu_0^1 \cdot x, \nu_0^2 \cdot x) \|_{L^\infty (B_r)} \le C \eps r^{1+\alpha}.$$
\end{prop}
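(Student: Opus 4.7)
The plan is to run an improvement-of-flatness scheme modeled on Section \ref{Sec8}, extending the linearization and spectral analysis from $n=2$ to arbitrary $n$. After a rotation we may assume $\nu_1^1=e_1$, $\nu_1^2=e_2$, so that $V_0(\nu_1^1\cdot x,\nu_1^2\cdot x)$ depends only on $(x_1,x_2)$ and is trivial in $y:=(x_3,\ldots,x_n)$. Defining
$$w_1:=\eps^{-1}\bigl(u_1-\sqrt{2/3}\,x\cdot e_{-\pi/6}\bigr),\qquad w_3:=-\eps^{-1}\bigl(u_3+\sqrt{2/3}\,x\cdot e_{\pi/6}\bigr)$$
on the positivity sets of $u_1-u_2$ and $u_2-u_3$, the results of Section \ref{Sec6} together with the scalar one-phase theory applied in balls of size comparable to $\mathrm{dist}(\cdot,A)$, $A:=\{x_1=x_2=0\}$, yield uniform Lipschitz bounds for the $w_i$ away from $A$, just as in the 2D case. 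Passing to the limit $\eps\to 0$, $(w_1,w_3)$ converges on compact subsets of $B_1\setminus A$ to a pair $(\bar w_1,\bar w_3)$ defined on the cylindrical domains $(\mathcal S_i\times\R^{n-2})\cap B_1$, solving the obvious $n$-dimensional analogue of \eqref{a1}--\eqref{a4}, namely the full Laplacian in place of the 2D one and the Neumann/transmission conditions imposed on the flat codimension-one faces. The same logarithmic control $|O_r-O_1|\le C\eps|\log r|$ used after \eqref{6010} forces $\bar W$ to extend continuously to $A$ with $\bar W|_A\equiv 0$.

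Next I analyze $\bar W$ near $A$ by separating the $(x_1,x_2)$ and $y$ variables. By the $n$-dimensional analogue of Lemma \ref{L5.3}, $\bar W$ minimizes the corresponding quadratic form, hence admits an expansion in homogeneous solutions at $0$. Using translation invariance in $y$ and the 2D angular spectral decomposition of Proposition \ref{C1a}, the homogeneous solutions are built from the angular eigenfunctions $\Phi_k(\theta)$ (with $\lambda_k$ as in \eqref{6011}) multiplied by harmonic polynomials in $y$, modified by standard coupling corrections enforcing global harmonicity. The boundary constraint $\bar W|_A\equiv 0$ eliminates the two $\lambda_1=\lambda_2=0$ constant-in-$y$ modes, and the remaining contributions of homogeneity exactly $1$ are precisely the $2(n-2)$ axis-tilt modes ($\lambda_1=\lambda_2=0$ combined with a linear polynomial in $y$) together with the single in-plane rotation mode ($\lambda_3=1$ with $y$-constant). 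These $2n-3$ modes span the tangent space at $(e_1,e_2)$ to the Stiefel manifold of admissible orthonormal pairs, so they are absorbed by a new choice $(\tilde\nu^1,\tilde\nu^2)$ with $|\tilde\nu^i-e_i|\le C\eps$. After this subtraction the leading order in the expansion of $\bar W$ at $0$ is of homogeneity $\sqrt{\lambda_4}=1+\alpha_0$, with $\alpha_0$ exactly as in Proposition \ref{C1a}.

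The improvement of flatness now follows as in the proof of Theorem \ref{T2d}: for $\rho$ universally small,
$$\|\bar W-(\text{rigid-motion part})\|_{L^\infty(B_\rho)}\le C\rho^{1+\alpha_0},$$
and translating back through the linearization yields $\|U-V_0(\tilde\nu^1\cdot x,\tilde\nu^2\cdot x)\|_{L^\infty(B_\rho)}\le C\eps\rho^{1+\alpha_0}\le \eps\rho^{1+\alpha}$ as soon as $\alpha<\alpha_0$ and $\rho$ is chosen with $C\rho^{\alpha_0-\alpha}\le 1$. Iterating along dyadic scales $r_k=\rho^k$ produces a Cauchy sequence of orthonormal pairs converging to $(\nu_0^1,\nu_0^2)$ and the stated estimate $\|U-V_0(\nu_0^1\cdot x,\nu_0^2\cdot x)\|_{L^\infty(B_r)}\le C\eps r^{1+\alpha}$. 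The hard step is the middle paragraph: one must carefully enumerate the low-homogeneity modes of the $n$-dimensional linearized transmission system and verify that every homogeneity-$\le 1$ mode (after imposing $\bar W|_A=0$) is accounted for by an infinitesimal rotation of the 2-plane $\mathrm{span}(\nu_1^1,\nu_1^2)$ inside $\R^n$; a single ``genuine'' low-homogeneity mode would block the iteration and contradict the uniqueness of the blow-up cone.
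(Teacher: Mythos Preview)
Your overall framework---linearize, pass to a bounded solution $\bar W$ of the transmission system \eqref{a1}--\eqref{a4} on the cylindrical sectors, identify the homogeneity-$\le 1$ part with infinitesimal rotations of the $2$-plane, and iterate---matches the paper's. Two points deserve correction.

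First, the claim $\bar W|_A\equiv 0$ is too strong. The hypothesis of Proposition~\ref{C1alphand} concerns only balls centered at the origin, so the logarithmic argument gives $|W(x)|\le C|x|\,|\log|x||$ and hence $\bar W(0)=0$, but says nothing about other points of $A=\{x'=0\}$. Fortunately $\bar W(0)=0$ is all you need: it already kills the two constant modes $\Phi_1,\Phi_2$ in the expansion at $0$, and your enumeration of the degree-$1$ modes (the $2(n-2)$ functions $(\alpha y_j,\beta y_j)$ together with the in-plane rotation mode) is correct and does coincide with the tangent space to the Stiefel manifold.

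Second, and more substantively, the sentence ``hence admits an expansion in homogeneous solutions at $0$'' is precisely the $C^{1,\alpha}$ regularity of $\bar W$ in $n$ dimensions, and it does not follow from minimizing $Q$ alone nor from the classification of low-homogeneity modes. In two dimensions this expansion came from the explicit Fourier series in $\theta$; in $n$ dimensions there is no ready-made spectral decomposition of $Q$ on $\mathcal S\cap\partial B_1\subset S^{n-1}$. The paper avoids this entirely: it exploits translation invariance in $x''$ to bound $D_{x''}^\beta W$ in $L^\infty$ (Lemma~\ref{L6.5}), then restricts to the slice $\{x''=0\}$ where $\Delta_{x'}W=-\Delta_{x''}W$ is a bounded right-hand side, and finally runs a Campanato iteration (Lemma~\ref{C1a2}) comparing with two-dimensional $Q$-minimizers. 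This yields Proposition~\ref{C1an} directly, with $\alpha$ coming only from the two-dimensional spectrum already computed. Your last paragraph locates the ``hard step'' in the enumeration of low modes, but that enumeration is in fact routine (differentiate in $y_j$ to reduce to degree $0$, then use the two-dimensional result); the genuine work is proving the remainder estimate $\|\bar W-(\text{degree}\le 1\ \text{part})\|_{L^\infty(B_\rho)}\le C\rho^{1+\alpha_0}$, and your outline does not supply an argument for it.
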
 

A direct consequence of this result is that there are no other cones that are sufficiently close to the set of rotations of $V_0$.

\begin{cor}\label{Cco}
Assume that $U$ is a minimal cone and $\|U-V_0\|_{L^\infty (B_1)} \le \eps_0$. Then $U$ is a rotation of $V_0$.
\end{cor}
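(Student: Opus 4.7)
The plan is to apply Proposition \ref{C1alphand} directly to $U$ and exploit the $1$-homogeneity of $U$ twice: first to verify the multi-scale hypothesis of the proposition from the single-scale closeness assumption, and then to convert the resulting $C^{1,\alpha}$-approximation into an exact identification.

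First I would observe that since $U$ and $V_0$ are both $1$-homogeneous of degree one, so is their difference. Consequently, for every $r \in (0,1]$,
$$\|U-V_0\|_{L^\infty(B_r)} = r\,\|U-V_0\|_{L^\infty(B_1)} \le r\eps_0.$$
Choosing $\eps_0$ no larger than the threshold $\eps_0(n)$ of Proposition \ref{C1alphand}, this is precisely the hypothesis of that proposition with the trivial ($r$-independent) choice $\nu_r^1=e_1$, $\nu_r^2=e_2$ for every $r$.

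The proposition then produces an orthonormal pair $\nu_0^1,\nu_0^2$ such that, writing $\tilde V(x):=V_0(\nu_0^1\cdot x,\nu_0^2\cdot x)$, one has
$$\|U-\tilde V\|_{L^\infty(B_r)} \le C\eps_0\, r^{1+\alpha} \quad \text{for all } r \in (0,1].$$
Now $\tilde V$ is itself $1$-homogeneous (being a rotation of $V_0$), and hence so is $U-\tilde V$. Therefore the left-hand side equals $r\,\|U-\tilde V\|_{L^\infty(B_1)}$; dividing by $r$ and letting $r \to 0$ forces $\|U-\tilde V\|_{L^\infty(B_1)} = 0$, so $U \equiv \tilde V$ by homogeneity, i.e. $U$ is a rotation of $V_0$.

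I do not anticipate any genuine obstacle; the entire substance lies in Proposition \ref{C1alphand}. The only minor point to record is that the limiting pair $(\nu_0^1,\nu_0^2)$ extracted from the proposition should itself be orthonormal --- so that $\tilde V$ is indeed a rotation of $V_0$ rather than a general affine reparameterization --- which is implicit in the statement and follows from the orthonormality of the approximating pairs $(\nu_r^1,\nu_r^2)$ at every scale.
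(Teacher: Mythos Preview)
Your argument is correct and is exactly the ``direct consequence'' the paper alludes to without writing out: use $1$-homogeneity to feed the single-scale hypothesis into Proposition~\ref{C1alphand} at every scale, then use $1$-homogeneity again to upgrade the $O(r^{1+\alpha})$ conclusion to exact equality. The paper gives no further details, so your write-up is precisely the intended proof.
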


We show that the hypotheses of Proposition \ref{C1alphand} can be relaxed to require that $U$ is approximated by $V_0$ only in $B_1$ and the the energy of the blow-up cones at the origin does not go below the energy of $V_0$. 

\begin{lem} \label{L6p5} Let $U$ be a minimizer in $B_1$ with $0 \in \p \{|U|>0\}$. Assume that $$\|U-V_0\|_{L^\infty (B_1)} \le \eps, \quad \mbox{and} \quad \Phi_U(0+) \ge \Phi(V_0).$$ Then the only possible blow-up cones for $U$ at $0$ are rotations of $V_0$.
\end{lem}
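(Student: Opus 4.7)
The strategy is to pin down the energy of any blow-up cone of $U$ at $0$ to equal exactly $\Phi(V_0)$, and then invoke the classification of minimizing cones with this energy. By Theorem \ref{Bup} every blow-up $\bar U$ is a one-homogeneous minimizing cone with $0 \in \p\{|\bar U|>0\}$, and by the Weiss monotonicity formula (Proposition \ref{WMF}) its energy equals $\Phi_U(0+)$. The hypothesis supplies the lower bound $\Phi(\bar U) \ge \Phi(V_0)$. For the matching upper bound I would combine monotonicity, $\Phi_U(0+)\le \Phi_U(1)$, with the $L^\infty$ closeness of $U$ to $V_0$ and the compactness of minimizers (which renders $J_N(\cdot,B_r)$ and $\int_{\p B_r}|\cdot|^2\,d\sigma$ continuous under uniform convergence of minimizers) to obtain $\Phi_U(1) \le \Phi_{V_0}(1) + \omega(\eps) = \Phi(V_0) + \omega(\eps)$, with $\omega(\eps)\to 0$ as $\eps \to 0$.

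Next I would argue by contradiction: assuming the lemma fails for every universal threshold, extract $\eps_m \to 0$, minimizers $U_m$ satisfying the hypotheses, and blow-up cones $\bar U_m$ of $U_m$ at $0$ none of which is a rotation of $V_0$. The uniform Lipschitz bound (Theorem \ref{nonC}) and Arzel\`a-Ascoli produce a subsequence $\bar U_m \to \bar U_\infty$ uniformly on compact sets. The compactness of minimizers makes $\bar U_\infty$ a minimizing cone, non-degeneracy (Lemma \ref{NDeg}) passes to the limit giving $0 \in \p\{|\bar U_\infty|>0\}$, and the energy identity of the previous step forces $\Phi(\bar U_\infty) = \Phi(V_0)$. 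After possibly subtracting a common linear function (the average $\tfrac 1N \sum \bar u_{\infty,i}$ is harmonic and one-homogeneous, hence linear), we may assume the normalization \eqref{Res} holds for $\bar U_\infty$.

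The main obstacle is the classification of $\bar U_\infty$. The strict inequality $\Phi(V_0) > \tfrac 12 |B_1|$ from Lemma \ref{L41} rules out $\bar U_\infty$ being one-dimensional. In dimension $n=2$, the classification of Section \ref{Sec7} (identifying $V_0$ and $V_s$ as the only non-1D candidates and ruling out $V_s$) forces $\bar U_\infty$ to be a rotation of $V_0$. In higher dimensions I would run a Federer-type dimension reduction: at any $y \in \p\{|\bar U_\infty|>0\}\cap \p B_1$, the tangent cone is constant in the direction $y/|y|$ and hence effectively $(n-1)$-dimensional with energy at most $\Phi(V_0)$ by monotonicity; the inductive hypothesis, together with Proposition \ref{GK} at those $y$ whose tangent is one-dimensional, identifies $\bar U_\infty$ as a rotation of the trivial extension of $V_0$. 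Once $\bar U_\infty$ is so identified, uniform convergence $\bar U_m \to \bar U_\infty$ places each $\bar U_m$ within $\eps_0$ of an appropriate rotation of $V_0$ for $m$ large, and Corollary \ref{Cco} then forces $\bar U_m$ itself to be a rotation of $V_0$, the desired contradiction.
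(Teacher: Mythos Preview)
Your argument has the right scaffolding — squeeze the Weiss energy to $\Phi(V_0)$, argue by contradiction, and finish with Corollary~\ref{Cco} — but the central step, where you pass to a limit of the blow-up cones $\bar U_m$ and then classify $\bar U_\infty$, has a genuine gap in dimension $n\ge 3$.

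The problem is that the $\bar U_m$ are not a priori close to the family $\mathcal V_0$ of rotations of $V_0$; all you know is that $\Phi(\bar U_m)\to\Phi(V_0)$. So to identify $\bar U_\infty$ you are forced to prove a rigidity statement: \emph{every} minimizing cone with $0\in\p\{|\cdot|>0\}$ and energy exactly $\Phi(V_0)$ is a rotation of the trivial extension of $V_0$. Your dimension-reduction sketch does not close this. If every tangent cone of $\bar U_\infty$ on $\p B_1$ has energy strictly below $\Phi(V_0)$, Proposition~\ref{GK} only tells you the free boundaries are smooth hypersurfaces there; it does not force the cone to be two-dimensional. That rigidity is not established anywhere in the paper (and trying to invoke Theorem~\ref{TPR} would be circular, since that theorem relies on the present lemma). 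In dimension $n=2$ your argument is fine, because Section~\ref{Sec7} gives a complete classification; the difficulty is precisely the higher-dimensional case for which this lemma is needed.

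The paper avoids this entirely by not passing to the limit along blow-up cones. Instead it selects, for each $U_n$, an \emph{intermediate} scale $r_n$ at which the rescaling $\tilde U_n(x)=r_n^{-1}U_n(r_nx)$ is at distance exactly $\eps_0$ from $\mathcal V_0$ in $L^\infty(B_1)$. Such a scale exists by continuity: at $r=1$ the distance is $\eps_n<\eps_0$, while if the distance stayed below $\eps_0$ for all $r$ then Proposition~\ref{C1alphand} would force the blow-up of $U_n$ at $0$ to be a rotation of $V_0$. A subsequential limit $\bar U$ of the $\tilde U_n$ is a minimizer which (i) is a cone, since the two-sided bound $0\le\Phi_{U_n}(r)-\Phi(V_0)\le C\eps_n$ forces $\Phi_{\bar U}$ to be constant, and (ii) lies at distance exactly $\eps_0$ from $\mathcal V_0$. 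Now Corollary~\ref{Cco} applies directly to $\bar U$ and produces the contradiction — no classification beyond ``cones $\eps_0$-close to $V_0$'' is needed.
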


\begin{proof}
First we notice that the hypotheses imply that $$\Phi_U(\frac 12) \le \Phi(V_0) + C \eps,$$ and then
\begin{equation}\label{6020}
0 \le \Phi_U(r)-\Phi(V_0) \le C \eps, \quad \forall r \in (0, \frac 12).
\end{equation}
Assume by contradiction that the conclusion does not hold for a sequence $U_n$, and $\eps=\eps_n \to 0$. Then, by Proposition \ref{C1alphand}, we can find appropriate dilations $\tilde U_n:= r_n^{-1} U_n(r_n x)$ such that 
$$ dist (\tilde U_n, \mathcal V_0) = \eps_0,$$
where $\mathcal V_0$ represents the collections of cones obtained by rotations of $V_0$ and the distance between $\tilde U_n$ and the elements of $\mathcal V_0$ is measured in $L^\infty(B_1)$. As $n \to \infty$ we may extract a convergent subsequence in $L^\infty(B_1)$ of the $\tilde U_n$'s to a limiting function $\bar U$. Then $\bar U$ must be a cone, since its $\Phi_{\bar U}$ energy is a constant in the radial variable by \eqref{6020}. The distance from $\bar U$ to $\mathcal V_0$ is $\eps_0$, and we contradict Corollary \ref{Cco}.

\end{proof}

Next we use the dimension reduction argument to show that the set of free boundary points $\p \{|U|>0\}$ whose tangent cones have energy strictly between the one dimensional solutions $U_{0,k}$ and the two-dimensional solution $V_0$, has Hausdorff dimension $n-3$.
\begin{lem}\label{L6p6}
The set $$ \mathcal A :=\{x \in \p \{|U|>0\}\cap B_1| \quad \Phi_{U,x}(0+) \in (\Phi(U_{0,k}), \Phi(V_0)) \},$$
has Hausdorff dimension $n-3$.
\end{lem}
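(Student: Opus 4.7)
My plan is to apply a standard Federer dimension reduction argument, using as the main input the classification of 2D minimizing cones from Section \ref{Sec7}. The key observation is that $\Phi(U_{0,k})$ and $\Phi(V_0)$ are the only two non-zero energy levels attained by minimizing cones in dimension $2$: indeed, by Proposition \ref{P1d} extended trivially to $\R^2$, the 1D cones have energy $\frac12 |B_1|$; by Lemma \ref{L41} this is the least nonzero energy; the classification of non one-dimensional 2D cones produces only $V_0$ and $V_s$, and the latter is not minimizing. Therefore the minimizing cones in $\R^2$ have energy in the discrete set $\{0, \Phi(U_{0,k}), \Phi(V_0)\}$, with no intermediate values.

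Define the invariance dimension of a minimizing cone $\bar U$ in $\R^n$ as the dimension of the largest linear subspace $L \subset \R^n$ such that $\bar U(x+v)=\bar U(x)$ for every $v\in L$, and set
\[
d^\ast := \max\{\,d \ : \ \exists \ \text{minimizing cone } \bar U \text{ in } \R^n \text{ with } \Phi(\bar U)\in(\Phi(U_{0,k}),\Phi(V_0)) \text{ of invariance } d\,\}.
\]
First I would argue that $d^\ast \le n-3$. Suppose, toward contradiction, $d^\ast \ge n-2$, so there is a minimizing cone $\bar U$ of intermediate energy invariant along an $(n-2)$-dimensional subspace $L$. After a rotation we may assume $L=\R^{n-2}\times\{0\}$, and write $\bar U(x',y) = \bar U_0(y)$ with $y\in\R^2$. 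By the dimension reduction lemma at the start of Section \ref{Sec6}, $\bar U_0$ is a minimizer of $J_3$ in $\R^2$, and a direct computation of $\Phi$ using its scaling invariance gives $\Phi(\bar U_0)=\Phi(\bar U)\in(\Phi(U_{0,k}),\Phi(V_0))$. This contradicts the trichotomy above, so $d^\ast \le n-3$.

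Next I would run the Federer reduction itself. For any $x_0\in \mathcal A$, Theorem \ref{Bup} provides a blow-up cone $\bar U$, and by Proposition \ref{WMF} its energy equals $\Phi_{U,x_0}(0+)\in(\Phi(U_{0,k}),\Phi(V_0))$. In particular the bad set $\mathcal A_{\bar U}$ of $\bar U$ is nonempty (it contains $0$) and, since $\bar U$ is $1$-homogeneous, it is invariant under positive dilations. The standard Federer projection/blow-up iteration then says: if $\mathcal H^s(\mathcal A)>0$ for some $s>d^\ast$, one can successively blow up at generic points to produce a minimizing cone whose bad set has positive $\mathcal H^s$ measure and which is additionally translation-invariant along a subspace of dimension $\lfloor s\rfloor$; picking $s=d^\ast+\varepsilon$ and noting that translation invariance of the bad set forces (by another blow-up) translation invariance of the cone itself along a subspace of dimension $>d^\ast$, we contradict the definition of $d^\ast$. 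Hence $\dim_H \mathcal A\le d^\ast\le n-3$.

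The main obstacle I expect is the bookkeeping of the Federer reduction: one must verify that blow-up limits preserve membership in the open interval $(\Phi(U_{0,k}),\Phi(V_0))$ in the relevant generic sense, and that the translation invariance extracted from the reduction actually transfers from the bad set to the cone. Both are classical in this setting once one knows (i) compactness and $J_N$-energy convergence of minimizers from Section \ref{Sec3}, (ii) the Weiss formula of Section \ref{Sec4}, and (iii) the 2D classification above; the verification is essentially identical to Federer's original dimension-reduction argument for minimal cones, so I would cite it rather than redo the combinatorics. The crucial geometric content — the absence of intermediate-energy cones in dimension $2$ — is fully provided by the results of Section \ref{Sec7}.
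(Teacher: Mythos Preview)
Your proposal is correct and takes essentially the same approach as the paper: both run the standard Federer dimension reduction, using as the key geometric input that the 2D cone classification of Section~\ref{Sec7} rules out minimizing cones of intermediate energy in $\R^2$. The paper's packaging differs only cosmetically---it fixes a margin $\delta$ below $\Phi(V_0)$ (so the upper bound survives limits uniformly) and proves an explicit covering inequality by induction on $n$---whereas you phrase the same reduction via the maximal invariance dimension $d^\ast$; both are standard formulations of Federer's argument.
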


Here $\Phi_{U,x}(r)$ denotes the Weiss energy of $U$ in a ball of radius $r$ centered at $x$, and $\Phi_{U,x}(0+)$ its limit as $r \to 0$. The continuity of $\Phi_{U,x}(r)$ with respect to $x$, and $r$ fixed, shows that
$$ \Phi_{U,x}(r) < \Phi(V_0),$$
for all $x \in \p \{ |U|>0\}$ near a point in $\mathcal A$ and $r$ sufficiently small. Thus, it suffices to prove the following lemma.
\begin{lem}
Fix $\delta>0$, and assume that $U$ is a minimizer in $B_3$ and
$$ \Phi_{U,x}(r) \le \Phi(V_0) - \delta, \quad \quad \forall x \in \p \{|U|>0\} \cap \overline B_{1}, \quad \forall r \le 1.$$
Then $$\mathcal H^{n-3+\delta} (\mathcal A \cap \overline B_1) =0.$$ 
\end{lem}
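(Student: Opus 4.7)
The argument is a standard Federer dimension-reduction, stratifying $\mathcal{A}$ by the spine dimension of tangent cones and using the classification results of Sections~\ref{Sec5} and \ref{Sec7} to rule out large spines. The quantitative gap $\Phi(V_0) - \delta$ is used only to keep the iteration inside a precompact family of cones.

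First I would verify two standard preliminaries. The map $x \mapsto \Phi_{U,x}(0+)$ is upper semicontinuous on $\partial\{|U|>0\} \cap \overline{B}_1$: by Proposition~\ref{WMF}, $\Phi_{U,x}(0+) = \inf_{r>0}\Phi_{U,x}(r)$ and for each fixed $r$ the function $x \mapsto \Phi_{U,x}(r)$ is continuous, so $\mathcal{A}$ is a Borel set. By Theorem~\ref{Bup}, at every $x \in \partial\{|U|>0\} \cap \overline{B}_1$ there exists a tangent cone $\bar U_x$, a one-homogeneous minimizer with $\Phi(\bar U_x) = \Phi_{U,x}(0+)$.

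The crucial input is the spine bound. For a cone $\bar U$, let
$$
L(\bar U) := \{v \in \mathbb{R}^n : \bar U(\cdot + v) = \bar U(\cdot)\},
$$
a linear subspace. By the trivial-extension lemma of Section~\ref{Sec6}, $\bar U$ is the extension in $L(\bar U)$ of a minimizing cone $\bar U^\flat$ on $L(\bar U)^\perp$, with $\Phi(\bar U) = \Phi(\bar U^\flat)$. I claim that whenever $\Phi(\bar U) \in (\Phi(U_{0,k}), \Phi(V_0))$ one has $\dim L(\bar U) \leq n-3$. Otherwise $\dim L(\bar U)^\perp \leq 2$, and $\bar U^\flat$ is a minimizing cone in dimension $\leq 2$. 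In dimension one, Proposition~\ref{P1d} gives $\bar U^\flat$ equal to (a rotation of) some $U_{0,k}$, of energy $\Phi(U_{0,k})$. In dimension two, the classification of Section~\ref{Sec7} together with the non-minimality of $V_s$ forces $\bar U^\flat$ to be either a trivial extension of a one-dimensional $U_{0,k}$ (energy $\Phi(U_{0,k})$) or a rotation of $V_0$ (energy $\Phi(V_0)$). Either outcome contradicts $\Phi(\bar U) \in (\Phi(U_{0,k}), \Phi(V_0))$, proving the claim.

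With the spine bound established, I would invoke Federer's dimension reduction exactly as in the proof of Theorem~\ref{pr}. Stratify
$$
\mathcal{A} \cap \overline{B}_1 = \bigcup_{j=0}^{n-3} \mathcal{A}^j, \qquad \mathcal{A}^j := \{x \in \mathcal{A} \cap \overline{B}_1 : \dim L(\bar U_x) = j\},
$$
with $\mathcal{A}^j = \emptyset$ for $j \geq n-2$ by the spine bound. The classical density-point/iteration argument then yields $\dim_{\mathcal{H}}\mathcal{A}^j \leq j \leq n-3$, via the following inductive mechanism: at a density point of $\mathcal{A}^j$ one blows up and obtains a tangent cone $\bar U$; the rescaled singular sets converge to a subset of $\partial\{|\bar U|>0\}$ inheriting a $(n-3+\delta)$-density bound, and at a nonzero point $y$ of this set a further blow-up produces a cone invariant in the direction of $y$, enlarging the spine. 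The hypothesis $\Phi_{U,x}(r) \leq \Phi(V_0) - \delta$ keeps all cones encountered uniformly away from the $V_0$-energy, so the iterative compactness is uniform and the iteration does not degenerate. This yields the required $\mathcal{H}^{n-3+\delta}(\mathcal{A} \cap \overline{B}_1) = 0$.

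\textbf{Main obstacle.} The genuinely new step is the spine bound, which relies on ensuring the classification covers \emph{every} minimizing cone in dimension $\leq 2$, not merely those with $0 \in \Gamma_1 \cap \Gamma_2$; the case of a 2D cone with $0$ belonging only to a single $\Gamma_k$ is handled by a further blow-up that reduces it to a one-dimensional extension and hence to energy $\Phi(U_{0,k})$. The Federer iteration itself is by now routine (cf.\ \cite{F}) and need not be reproduced in detail.
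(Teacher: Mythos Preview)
Your approach is correct and is the same Federer dimension-reduction scheme the paper uses; the difference is only in packaging. The paper organizes the argument as a direct induction on the ambient dimension (base case $n=3$, then alternating between ``holds for cones in $\R^{n+1}$'' and ``holds for minimizers in $\R^n$'' via covering estimates of the form $\sum r_i^{n-3+\delta}\le \tfrac12 r^{n-3+\delta}$), whereas you stratify $\mathcal A$ by the spine dimension of tangent cones and bound each stratum. Both routes rest on the identical geometric input: any minimizing cone in dimension $\le 2$ with $\sum u_i=0$ and $0\in\partial\{|U|>0\}$ has energy exactly $\Phi(U_{0,k})$ or $\Phi(V_0)$.

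Two small cleanups. First, your stratum $\mathcal A^j:=\{x:\dim L(\bar U_x)=j\}$ presupposes a unique tangent cone; you should instead use the standard Federer stratum $\{x:\text{no tangent cone at }x\text{ has spine of dimension}>j\}$, which is well-defined and still gives $\dim_{\mathcal H}\le j$. Second, your ``main obstacle'' is not actually an obstacle: the classification proposition in Section~\ref{Sec7} is stated for \emph{any} cone that is not one-dimensional (the hypothesis there is only local minimality on $\partial B_1$, not $0\in\Gamma_1\cap\Gamma_2$), so it already covers every 2D minimizing cone. The case you worry about, a 2D cone with $0$ in only one $\Gamma_k$, does not need a further blow-up: by homogeneity two membranes coincide globally, the problem collapses to the scalar one-phase problem, and in $\R^2$ the only such cones are one-dimensional with energy $\Phi(U_{0,k})$.
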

We remark that $\mathcal A \cap \overline B_1$ is a closed set by the regularity result in Proposition \ref{GK}. 

\begin{proof} The proof follows the standard dimension reduction argument of Federer \cite{F}.
Notice that in dimension $n=3$ the set $\mathcal A \cap \overline B_{1}$ is discrete by Proposition \ref{GK}, and the statement is obvious.

We prove the statement by induction on the dimension $n \ge 3$, in two steps. We only sketch the main ideas and leave the details to the interested reader.

{\it Step 1:} Assume the result holds in dimension $n$, then it holds for any $n+1$-dimensional cone $U$ with $\Phi(U) \le \Phi(V_0) - \delta$.

{\it Step 2:} Assume the result holds for cones in dimension $n$, then it holds for a minimizer in dimension $n$.

For Step 1, assume $U$ is a cone in $\R^{n+1}$. We take its restriction to a ball $B_r(x_0)$ with $x_0 \in \mathcal A \cap \p B_1$ and then normalize it to the unit ball after a translation and dilation. The resulting function is uniformly-well approximated as $r \to 0$ by a minimizer that is constant in the $x_0$-direction, for which the induction hypothesis holds. By compactness, this means that, there exists $r_0(n)>0$ small such that if $r \le r_0$ 
then $\mathcal A \cap \p B_1 \cap B_r(x_0)$ can be covered by a finite collection of balls of radii $r_i$ and centers on $\mathcal A \cap \p B_1$ with
\begin{equation}\label{n-3d}
\sum r_i^{n-3 + \delta} \le \frac 12 r^{n-3+\delta}.
\end{equation}
 Step 1 follows by iterating this result a number of times. 
 
 Step 2 is a consequence of the fact that around each point in $\mathcal A$, $U$ is well-approximated by cones at all small scales, and the conclusion holds for these cones by Step 1.
Precisely, by compactness, for each $x_0 \in \mathcal A \cap \overline B_1$ there exists $\delta(x_0)>0$ such that if $r \le \delta(x_0)$ the set $\mathcal A \cap B_r (x_0)$ can be covered by a finite collection of balls of radii $r_i$ that satisfy inequality \eqref{n-3d}. 

Let $\mathcal A_k$ denote the set of points $x_0$ in $\mathcal A$ with the property that $\delta(x_0) \ge \frac 1 k$, and notice that $\mathcal A \subset \cup \mathcal A_k$. On the other hand $\mathcal H^{n-3+\delta} (\mathcal A_k \cap \overline B_1) =0$ since, as in Step 1, we can iterate \eqref{n-3d} for $\mathcal A_k$. Thus, the desired conclusion holds for $\mathcal A$ as well. 
\end{proof}

In view of Lemma \ref{L6p5} and Lemma \ref{L6p6} we obtain a stronger version of Proposition \ref{C1alphand} in which the only hypothesis is that $U$ is approximated by $V_0$ in $B_1$. 
\begin{prop}\label{PLa}
Assume that $U$ is a minimizer in $B_1$, $0 \in \p \{|U|>0\}$, and $$\|U-V_0(x_1,x_2)\|_{L^\infty(B_1)} \le \eps_0,$$
for some $\eps_0$ small universal. Then there exist $\nu_0^1, \nu_0^2$ such that  
$$ \|U-V_0(\nu_0^1 \cdot x, \nu_0^2 \cdot x) \|_{L^\infty (B_r)} \le r^{1+\alpha}.$$
In particular, $\Gamma_1 \cap \Gamma_2 \cap B_{1/2}$ consists only of regular intersection points.
\end{prop}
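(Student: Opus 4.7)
The strategy is to verify the hypothesis of Proposition \ref{C1alphand} at the origin and then apply it; the assertion about regular intersection points in $B_{1/2}$ follows by running the same argument at every such point.

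The main step is to show that every blow-up cone of $U$ at $0$ is a rotation of $V_0$. By Weiss monotonicity (Proposition \ref{WMF}) and the closeness $\|U-V_0\|_{L^\infty(B_1)}\le \eps_0$, I get
$$\Phi_U(0+)\le \Phi_U(r)\le \Phi_U(1)\le \Phi(V_0)+\omega(\eps_0), \quad \omega(\eps_0)\to 0.$$
I would then establish the matching lower bound $\Phi_U(0+)\ge \Phi(V_0)-\omega(\eps_0)$ by contradiction: failure along a sequence with $\eps_0\to 0$, combined with the intermediate value theorem applied to the continuous monotone function $r\mapsto \Phi_U(r)$, produces a scale $r_*$ at which the rescaling $U(r_*x)/r_*$ has Weiss energy strictly between $\Phi(U_{0,k})$ and $\Phi(V_0)$; compactness (Theorem \ref{nonC} together with the Weiss formula) then extracts a limiting minimizing cone of intermediate energy, which is ruled out in dimension $n=2$ by the classification of cones in Section \ref{Sec7}, and in higher dimensions by Lemma \ref{L6p6} combined with Federer dimension reduction. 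With the two-sided energy bound secured, Lemma \ref{L6p5} gives that every blow-up of $U$ at $0$ is a rotation of $V_0$. A compactness argument invoking Corollary \ref{Cco} then upgrades this to the linear all-scale control
$$\|U-V_0(\nu^1_r\cdot x,\nu^2_r\cdot x)\|_{L^\infty(B_r)}\le \eta(\eps_0)\,r, \quad \eta(\eps_0)\to 0, \quad r\in(0,1],$$
so that, choosing $\eps_0$ small enough that $\eta(\eps_0)\le \eps_0(n)$, Proposition \ref{C1alphand} delivers the desired $r^{1+\alpha}$ rate.

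For the second assertion, the closeness to $V_0$ in $B_1$ forces $\Gamma_1\cap\Gamma_2\cap B_{1/2}$ of $U$ to lie in a small neighborhood of the codimension-two subspace $\{x_1=x_2=0\}$, which is the intersection set of the extended cone $V_0$; at each such $x_0$, a suitable rescaling centered at $x_0$ produces a new minimizer close to $V_0$ at scale one, to which the first part of the proposition applies to yield that $x_0$ is a regular intersection point. The main obstacle is the energy lower bound from the previous paragraph: ruling out minimizing cones of intermediate energy in arbitrary dimension requires the combination of Lemma \ref{L6p6} with the Federer-style blow-down argument, delicate but standard within this framework.
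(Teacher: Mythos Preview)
Your argument has a genuine gap at the energy lower bound. You claim that ``a limiting minimizing cone of intermediate energy'' is ``ruled out \ldots\ in higher dimensions by Lemma~\ref{L6p6} combined with Federer dimension reduction,'' but Lemma~\ref{L6p6} does \emph{not} assert non-existence of minimizing cones with $\Phi \in (\Phi(U_{0,k}),\Phi(V_0))$; it only says that for a given minimizer the set $\mathcal A$ of free boundary points whose blow-up energy lies in that interval has Hausdorff dimension $\le n-3$. A cone $\bar U$ with intermediate energy is perfectly consistent with this: its vertex lies in $\mathcal A$, and $\{0\}$ has dimension $0\le n-3$. Dimension reduction on such a $\bar U$ merely produces, after $n-2$ steps, a one-dimensional cone of energy $\Phi(U_{0,k})$ --- no contradiction. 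Moreover, even the approximate bound $\Phi_U(0+)\ge \Phi(V_0)-\omega(\eps_0)$ you target is insufficient for Lemma~\ref{L6p5}, which requires the exact inequality $\Phi_U(0+)\ge \Phi(V_0)$.

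The paper circumvents this entirely by a slicing argument you do not use. For each $x$ in the codimension-two subspace $\mathcal D=\{x'=0\}\cap B_{1/2}$, a two-dimensional topological argument in the slice through $x$ locates an actual intersection point $\bar x\in\Gamma_1\cap\Gamma_2$ within distance $C\eps_0$ of $x$. Since $\dim\mathcal A\le n-3$, the projection of $\mathcal A$ onto $\mathcal D$ is $\mathcal H^{n-2}$-null, so for a.e.\ $x\in\mathcal D$ the point $\bar x$ avoids $\mathcal A$. At such $\bar x$ the blow-up cannot be one-dimensional (since $\bar x\in\Gamma_1\cap\Gamma_2$), so $\Phi_{U,\bar x}(0+)\ge\Phi(V_0)$ holds \emph{exactly}, and Lemma~\ref{L6p5} followed by Proposition~\ref{C1alphand} applies at $\bar x$. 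Density of these good $\bar x$ near $0$ then carries the $C^{1,\alpha}$ estimate back to the origin. The point is that Lemma~\ref{L6p6} is used as a measure-theoretic statement about where bad points can lie, not as a classification of cones.
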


\begin{proof}
For each $$x \in \mathcal D:=\{x_1=x_2=0\} \cap B_{1/2},$$ we look at the two dimensional plane generated by the first two coordinates of $x$. By topological considerations in this plane, the set $\Gamma_1 \cap \Gamma_2 $ contains at least one point $\bar x$ in the disk of radius $C \eps_0$ around the origin. 

Indeed, our hypothesis and Proposition \ref{GK} imply that in the two-dimensional annulus $ C \eps_0 \le r \le 2 C \eps_0$ the open sets $\{u_1 >u_2\}$ and $\{u_2 > u_3 \}$ are $C^1$ perturbations of the sectors $\mathcal S_1$ and $\mathcal S_3$ defined in Section \ref{Sec8}. By two-dimensional topology, the boundaries of these two open sets must intersect in the disk of radius $C \eps_0$.

Since $dim(\mathcal A) \le n-3$, we find that
$$ \bar x \notin \mathcal A \quad \mbox{ for $\mathcal H^{n-2}$ a.e. $x \in \mathcal D$,} $$
 hence $\Phi_{U,\bar x}(0+) \ge \Phi(V_0)$ and we can apply Proposition \ref{C1alphand} at $\bar x$. 

The conclusion follows since the set of such $x$'s is dense in $\mathcal D$.

\end{proof}

Theorem \ref{Tnd} follows easily from Proposition \ref{PLa} and we omit the details.

Regarding Theorem \ref{TPR}, we notice that
$$\Sigma':= \p \{|U| >0\} \setminus \left(Reg(\Gamma_1) \cup Reg(\Gamma_2) \cup Reg(\Gamma_1 \cap \Gamma_2) \right) ,$$
is a closed set according to Proposition \ref{PLa} and Theorem \ref{pr}. The dimension reduction argument as in Lemma \ref{L6p6} implies that $dim (\Sigma') \le n-3$, and rest of Theorem \ref{TPR} follows by standard techniques.

It remains to prove Proposition \ref{C1alphand}. The considerations at the beginning the previous section remain valid, and they reduce the proof of Proposition \ref{C1alphand} to the validity of $C^{1,\alpha}$ estimates for bounded solutions of the elliptic system \eqref{a1}-\eqref{a4}.

The rest of the section is devoted to establish Proposition \ref{C1a} in arbitrary dimensions, see Proposition \ref{C1an} below.

We introduce some notation. We denote by
$$x=(x',x''), \quad x'=(x_1,x_2), \quad x''=(x_3,..,x_n),$$
$$(r,\theta) \quad \mbox{the polar coordinates for $x'$},$$
$$ \mathcal S_1 := \{\theta \in (-4a, a)\} \cap B_1, \quad \mathcal S_3 := \{\theta \in (-a, 4a)\} \cap B_1, \quad a:= \frac \pi 6,$$
and recall the definitions of $L^2 (\mathcal S)$, $H^1(\mathcal S)$, $\langle W,V \rangle$, $\langle \nabla W, \nabla V \rangle$, and $Q$ from the previous section.
We establish the $C^{1,\alpha}$ regularity of minimizers of $Q$.

\begin{prop}\label{C1an}
Assume that $W \in H^1(\mathcal S)$ is a minimizer of $Q$ among functions with the same trace on $\p B_1$. 
Then $W \in C^{1,\alpha}(\mathcal S)$ and
$$ W(x)= W(0)+  \left((qe_{\frac \pi 3},\nu_1'') \cdot x, (- qe_{-\frac \pi 3},\nu_3'') \cdot x \right) + O(|x|^{1+\alpha}),$$
for some $\alpha \in (0,1)$.
\end{prop}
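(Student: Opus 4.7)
The plan is to extend the spectral scheme of Proposition \ref{C1a} to account for the transverse variables $x''$, organized in three main steps.

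First, I would verify that Lemma \ref{L5.3} carries over to $n$ dimensions with no change: any bounded classical solution of \eqref{a1}--\eqref{a4} on the $n$-dimensional sectors $\mathcal S_1,\mathcal S_3$ minimizes $Q$ against compactly supported perturbations in $H^1_0(\mathcal S)$. The integration by parts identity there used only the structure of the transmission on $\p \mathcal S_1 \cap \mathcal S_3$ and $\p \mathcal S_3 \cap \mathcal S_1$, and the Caccioppoli inequality \eqref{CCI} used only boundedness of $W$ near the origin; both arguments go through verbatim. Hence it suffices to prove the decay for the minimizer of the Dirichlet problem for $Q$ with prescribed trace on $\p B_1$.

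Second, I would set up a spherical-harmonic-type decomposition. Writing $x = r\omega$ with $\omega \in \p B_1 \subset \R^n$ and $\mathcal S_i' := \mathcal S_i \cap \p B_1$, define eigenvalues $\mu_k$ and an orthonormal basis $\{\Phi_k\}$ of $L^2(\mathcal S')$ via the Rayleigh quotient
$$ \mu_k := \min_{\Phi \perp \Phi_1,\ldots,\Phi_{k-1}} \frac{\langle \nabla_\tau \Phi, \nabla_\tau \Phi \rangle_{\mathcal S'}}{\langle \Phi,\Phi \rangle_{\mathcal S'}},$$
with $\nabla_\tau$ the tangential spherical gradient. The $Q$-harmonic extension of $\Phi_k$ into $\mathcal S$ is $r^{\gamma_k} \Phi_k(\omega)$, where $\gamma_k$ is the positive root of $\gamma(\gamma + n - 2) = \mu_k$, and the minimizer expands as
$$ W = \sum_k \sigma_k \, r^{\gamma_k} \Phi_k(\omega),$$
converging uniformly on compact subsets of $B_1$.

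Third, I would classify the modes with $\gamma_k \le 1$ and show that the next one has $\gamma_k \ge 1 + \alpha$. Since the transmission conditions \eqref{a3}--\eqref{a4} couple only the $\p_{x_1},\p_{x_2}$ components, the operator decouples cleanly under separation of variables into the purely $(r,\theta)$ 2D problem studied in Section \ref{Sec8} and a transverse part in $\omega''$. The low modes are then: (a) the two constant modes $(1,0)$ and $(0,1)$ with $\gamma = 0$; (b) the 2D rotation mode with eigenfunction $(\cos(\theta+b), -\cos(\theta-b))$ from Proposition \ref{C1a}, giving $\gamma=1$ and accounting for the coupled coefficient $q$ in the directions $e_{\pi/3}$, $-e_{-\pi/3}$; (c) the $2(n-2)$ linear-in-$x''$ modes $\phi_1 = \nu_1'' \cdot \omega''$, $\phi_3 = 0$ and symmetric, which trivially satisfy the transmission and produce the free transverse parts $\nu_1'',\nu_3''$; all other modes have $\gamma_k \ge 1 + \alpha_0$.

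The main obstacle will be the careful verification of this spectral gap, namely that no mixed mode of intermediate exponent $\gamma_k \in (1, 1+\alpha_0)$ exists. Here one must check that a spherical eigenfunction carrying simultaneous $\theta$-oscillation and nontrivial transverse $\omega''$-dependence corresponds to a 2D-sector eigenvalue problem with an effective zeroth-order term coming from the $(n-2)$-dimensional spherical eigenvalue, and that this additive term only increases the resulting $\gamma$ relative to the pure 2D case, so the next exponent after $\gamma = 1$ is still $\sqrt{\lambda_4} = 1 + \alpha_0$. Granting this, truncating the series \emph{after} the constant and linear modes, applying Bessel's inequality and elliptic boundary regularity gives
$$ \|W - L\|_{L^\infty(B_\rho)} \le C \rho^{1+\alpha}, \qquad L(x) = ((qe_{\pi/3}, \nu_1''),\,(-qe_{-\pi/3},\nu_3''))\cdot x,$$
for any $\alpha < \alpha_0$, completing the proof as in the final iteration argument at the end of the proof of Theorem \ref{T2d}.
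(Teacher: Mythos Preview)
Your approach is genuinely different from the paper's, and the third step has a real gap.

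The paper does \emph{not} attempt a spherical spectral decomposition in $n$ dimensions. Instead it exploits the Cartesian product structure $\R^n=\R^2_{x'}\times\R^{n-2}_{x''}$: since $Q$ is invariant under $x''$-translations, discrete differences and hence all derivatives $D_{x''}^\beta W$ are again $Q$-minimizers, and iterating Caccioppoli's inequality gives $\|D_{x''}^\beta W\|_{L^\infty}\le C(\beta)\|W\|_{L^2}$ (Lemma~\ref{L6.5}). Then for each fixed $x''$ the slice $W(\cdot,x'')$ minimizes a two-dimensional energy with bounded inhomogeneity $-\Delta_{x''}W$, and a Campanato--Schauder estimate (Lemma~\ref{C1a2}) against the homogeneous 2D result of Proposition~\ref{C1a} yields the expansion in $x'$; combining with the bounds on $D_{x''}W$, $D^2_{x''}W$ gives the full statement. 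No higher-dimensional spectral gap is ever invoked. (A minor point: your Step~1 is not quite ``verbatim'' --- in $n\ge 3$ the singular set is the codimension-two edge $\{x'=0\}$, and one must first show $\bar W\in H^1$ near it before the minimizer $W_0$ exists; this is Lemma~\ref{L5.3n}.)

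The substantive gap in your plan is the assertion that the spherical eigenproblem on $\mathcal S'=\mathcal S\cap\partial B_1$ ``decouples cleanly'' into the 2D sector problem and a transverse part in $\omega''$. The cross-section $\mathcal S_1'$ is \emph{not} a metric product of an arc in $\theta$ with a sphere in $\omega''$; the spherical Laplace--Beltrami operator does not separate in $(\theta,\omega'')$. What does separate is the full Laplacian on $\R^n$ in the variables $(|x'|,\theta,x'')$, producing the exact product solutions $|x'|^{\sqrt{\lambda_k}}\Theta_k(\theta)\,H_m(x'')$ of homogeneity $\sqrt{\lambda_k}+m$. Your spectral-gap claim is then equivalent to the statement that these products exhaust all bounded homogeneous $Q$-harmonic functions. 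That statement is true, but proving it requires a further separation $x''=t\,\xi$ and a connection analysis of the resulting singular ODE in the latitude variable (regular singular points at both $|x'|=0$ and $|x''|=0$, with indicial roots $\pm\sqrt{\lambda_k}$ and $l,\,-(l+n-4)$ respectively), showing that the two regularity conditions are compatible only when $\gamma=\sqrt{\lambda_k}+l$. Your sentence ``this additive term only increases the resulting $\gamma$'' does not establish this and in particular does not rule out mixed modes of intermediate degree. The paper's translation-invariance route sidesteps this classification entirely, at the modest cost of the extra Schauder lemma~\ref{C1a2}.
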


As in the previous section, Proposition \ref{C1alphand} follows from Proposition \ref{C1an} provided we show that bounded solutions of the system \eqref{a1}-\eqref{a4} do minimize the energy, as in Lemma \ref{L5.3} for $n=2$.

\begin{lem}\label{L5.3n}Assume that $\bar W \in L^\infty$ solves the system \eqref{a1}-\eqref{a4} in the classical sense in the domain
$$\overline{\mathcal S} \setminus (\{x'=0\} \cup \p B_1).$$ Then it minimizes the energy $Q$ with respect to perturbations in $H^1(\mathcal S)$ which vanish near $\p B_1$. 
\end{lem}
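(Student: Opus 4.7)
The plan is to mimic the proof of Lemma \ref{L5.3} in the 2D case, exploiting the fact that the singular set $\{x'=0\}$ still has zero $H^1$-capacity in every dimension $n \ge 2$.

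First I would establish the integration-by-parts identity $\langle \nabla \bar W, \nabla V \rangle = 0$ for every $V \in C^1(\overline{\mathcal S})$ supported away from $\p B_1$ and $\{x'=0\}$. Split $\mathcal S$ into $\mathcal S_1 \setminus \mathcal S_3$, $\mathcal S_1 \cap \mathcal S_3$, and $\mathcal S_3 \setminus \mathcal S_1$; integrate by parts on each piece; use $\triangle \bar w_i = 0$ to annihilate the volume terms; and invoke \eqref{a2}--\eqref{a4} to cancel the boundary integrals on the hyperplanes $\{\theta = \pm a, \pm 4a\}$. The cancellation is pointwise in $x''$, so the 2D algebra carries over without change.

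Next, for $r < 1$, let $W_0$ be the unique $Q$-minimizer in $\mathcal S \cap B_r$ agreeing with $\bar W$ on $\mathcal S \cap \p B_r$; coercivity and strict convexity of $Q$ give existence and uniqueness, while Moser iteration applied to the coupled elliptic system --- each $w_i$ is harmonic in its own sector with linear coupling on the lateral interfaces --- yields an $L^\infty$ bound on $W_0$. Both $\bar W$ and $W_0$ satisfy the identity from the previous step, so $\langle \nabla(\bar W - W_0), \nabla V \rangle = 0$ for every test function $V \in H^1_0(\mathcal S \cap B_r)$ whose support avoids $\{x'=0\}$. I would then test with $V = \psi_\delta^2(\bar W - W_0)$, where $\psi_\delta(x) = \eta_\delta(|x'|)$ depends only on the transverse radius, with
$$\eta_\delta(\rho) = \min\Bigl\{1,\, \max\Bigl\{0,\, \tfrac{\log(\rho/\delta^2)}{\log(1/\delta)}\Bigr\}\Bigr\}.$$
Cauchy--Schwarz gives the Caccioppoli-type estimate
$$\langle \psi_\delta \nabla(\bar W - W_0), \psi_\delta \nabla(\bar W - W_0) \rangle \le C \|\bar W - W_0\|_\infty^2 \int_{B_1} |\nabla \psi_\delta|^2\, dx,$$
and a direct polar computation in the $x'$-plane produces $\int_{B_1}|\nabla \psi_\delta|^2\, dx \le C/|\log \delta|$, with the $(n-2)$-dimensional $x''$-integration contributing only a bounded volume factor. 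Sending $\delta \to 0$ forces $\nabla(\bar W - W_0) \equiv 0$, so $\bar W \equiv W_0$ in $\mathcal S \cap B_r$ by the matching trace. Since any admissible perturbation $V$ that vanishes near $\p B_1$ is supported in some $\mathcal S \cap B_r$, the minimality of $W_0$ on that ball implies $Q(\bar W + V) \ge Q(\bar W)$.

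The main obstacle beyond the planar case is ensuring that the cutoff construction still exhausts $\mathcal S$ in the $H^1$ sense when the singular set has positive dimension. This is resolved by choosing $\psi_\delta$ to depend only on $|x'|$: its Dirichlet energy decouples into the 2D integral over the annulus $\{\delta^2 \le |x'| \le \delta\}$ times a bounded volume factor in $x''$, reproducing the logarithmic decay familiar from 2D and reflecting the zero $H^1$-capacity of a codimension-two subspace. A secondary but necessary point is the $L^\infty$ bound on $W_0$; truncation alone does not work because of the coupling term $|\nabla(w_1 - w_3)|^2$ in $Q$, so the De Giorgi--Moser approach for elliptic systems is the cleanest substitute.
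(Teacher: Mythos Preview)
Your overall strategy matches the paper's: establish $\langle \nabla \bar W, \nabla V\rangle = 0$ for test functions vanishing near $\{x'=0\}\cup \p B_1$, compare with a minimizer $W_0$, and kill the difference via a Caccioppoli inequality with a logarithmic cutoff $\psi_\delta(|x'|)$. The capacity argument you describe is exactly the right one.

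There is, however, a logical gap in the order of your steps. You introduce $W_0$ as the $Q$-minimizer in $\mathcal S\cap B_r$ with the same trace as $\bar W$ on $\p B_r$, but the hypothesis only gives $\bar W\in L^\infty$ with classical regularity away from $\{x'=0\}$; nothing so far tells you $\nabla\bar W\in L^2$ near the edge, so $\bar W$ need not belong to $H^1(\mathcal S\cap B_r)$ and the Dirichlet problem for $W_0$ is not yet well-posed. The paper addresses precisely this point: it first runs the cutoff Caccioppoli argument on $\bar W$ itself (testing with $V=\varphi^2\bar W$, $\varphi=\psi(x')\eta(x)$) to deduce $\bar W\in H^1(\mathcal S\cap B_r)$ for every $r<1$, and only then defines $W_0$ and repeats the argument for $\bar W-W_0$. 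Your logarithmic cutoff works for this preliminary step too; you just need to insert it before, not after, you invoke existence of $W_0$.

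A minor remark: your concern about needing an $L^\infty$ bound on $W_0$ (so that $\bar W-W_0$ is bounded in the Caccioppoli step) is legitimate, and the paper handles it separately via a mean-value inequality for $Q$-minimizers rather than Moser iteration; either route is fine.
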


\begin{proof} The proof is essentially the same as the one of Lemma \ref{L5.3}. However, in order to justify the existence of a minimizer $W_0$ with the same boundary data as $\bar W$ on $\p B_r$ we need to show first that $\bar W \in H^1(\mathcal S \cap B_r)$. 

Notice that for any $V \in C^1$ that vanishes near $\{x'=0\} \cup \p B_1$, we have
$$ \langle \nabla \bar W, \nabla V \rangle =0.$$
Then the Caccioppoli inequality
$$\langle \varphi \nabla \bar W, \varphi \nabla \bar W \rangle \le C \langle |\nabla \varphi| \bar W, |\nabla \varphi| \bar W \rangle,$$
holds if $\varphi$ vanishes near $\{x'=0\} \cup \p B_1$. We choose
$$\varphi(x)= \psi(x') \eta(x),$$
with $\psi$ a radial cutoff function which vanishes near the origin and $\psi=1$ when $|x'| \ge \delta$, and $\eta$ a cutoff function which vanishes near $\p B_1$. Since $\bar W \in L^\infty$, it follows that we can take $\psi \equiv 1$ in the limit as $\delta \to 0$, i.e. 
$$\bar W \in H^1 (\mathcal S \cap B_r) \quad \mbox{ for any $r<1$.}$$
We define $W_0$ to be the minimizer of $Q$ in $\mathcal S \cap B_r$ with the same boundary data as $\bar W$ on $\p B_r$. Now the proof of Lemma \ref{L5.3} applies, by taking $\psi=\psi(x')$ depending only on the variable $x'$.
\end{proof}

There are several ways to prove Proposition \ref{C1an}. Here we take advantage of the product structure of the problem and reduce it back to the two-dimensional case. The system is invariant under translations in the $x''$ variable, and then we can estimate higher order derivatives $D_{x''}^\beta W$ through successive iterations. Then 
$$ \triangle W =0 \quad \Longrightarrow \quad \triangle_{x'} W = - \triangle_{x''} W,$$
and the right hand side is well behaved. 

We start with some preliminary estimates.

\begin{lem}\label{Ave} Let $W$ be a minimizer of $Q$ in $H^1(\mathcal S)$. Then,

a) (Caccioppoli inequality) If $\varphi \in C_0^1(B_1)$ then
$$\langle \varphi \nabla W, \varphi \nabla W \rangle \le C \langle |\nabla \varphi| W, |\nabla \varphi| W \rangle,$$

b) $W$ is smooth up to the boundary of $S$ away from $\p B_1\cup \{x'=0\}$, and the Euler-Lagrange equations \eqref{a1}-\eqref{a4} are satisfied in the classical sense,

c) $W \in L^\infty(\mathcal S \cap B_{1/2})$.

\end{lem}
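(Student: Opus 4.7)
All three claims follow from standard techniques once the jump in the coefficients of $Q$ at the interior lines $\theta = \pm a$ is handled; this is the main technical point, since one must verify that the weak Euler--Lagrange system coming from a piecewise constant quadratic energy is equivalent to the cleaner coupled system \eqref{a1}--\eqref{a4}, in which each $\bar w_i$ is genuinely harmonic on its full sector. Once this reduction is in place, the remaining steps are routine linear elliptic theory.

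For (a), I would test minimality against the competitor $W - \varphi^2 W$, which is admissible since $\varphi \in C_0^1(B_1)$ keeps the trace on $\p B_1$ unchanged. The first variation at $W$ yields $\langle \nabla W, \nabla(\varphi^2 W) \rangle = 0$, and using $\nabla (\varphi^2 w_i) = 2 \varphi w_i \nabla \varphi + \varphi^2 \nabla w_i$ together with the bilinearity of $\langle \cdot, \cdot \rangle$ gives
$$\langle \varphi \nabla W, \varphi \nabla W \rangle = -2 \, \langle \nabla W, \varphi W \nabla \varphi \rangle.$$
Since $\langle \cdot, \cdot \rangle$ is a positive-definite inner product (equivalent to the standard $L^2$ inner product on $\mathcal S$), Cauchy--Schwarz followed by Young's inequality allows one to absorb a copy of $\langle \varphi \nabla W, \varphi \nabla W \rangle$ and produces (a).

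For (b), I would read off the weak Euler--Lagrange equations directly. Varying $w_1$ alone shows that the field $X := 3 \nabla w_1$ on $\mathcal S_1 \setminus \mathcal S_3$ and $X := 4 \nabla w_1 - 2 \nabla w_3$ on $\mathcal S_1 \cap \mathcal S_3$ is weakly divergence-free on $\mathcal S_1$. Interior variations on each piece give $\triangle \bar w_1 = 0$; a variation whose support meets the lateral boundary $\theta = a$ of $\mathcal S_1$ yields \eqref{a3}; a variation crossing the interior line $\theta = -a$ produces the transmission condition $3 \, \p_\theta \bar w_1|_{-a^-} = (4 \, \p_\theta \bar w_1 - 2 \, \p_\theta \bar w_3)|_{-a^+}$, which combined with \eqref{a4} (obtained from the analogous analysis of $w_3$ at its boundary $\theta = -a$) forces $\p_\theta \bar w_1$ to be continuous across $\theta = -a$. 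Thus $\bar w_1$ is weakly harmonic throughout $\mathcal S_1$, and similarly $\bar w_3$ on $\mathcal S_3$. Away from $\p B_1 \cup \{x' = 0\}$, each $\bar w_i$ then solves a Laplace equation on a smooth sector with either a homogeneous Neumann condition or a Neumann-type condition prescribed by the trace of the other function (itself harmonic in a neighborhood of the coupling line). A standard bootstrap of interior and boundary Schauder estimates for the Laplacian promotes the $H^1$ solution to $C^\infty$ up to these lateral sides, so \eqref{a1}--\eqref{a4} hold classically.

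For (c), I would combine the Caccioppoli inequality (a) with the equivalence of $\langle \cdot, \cdot \rangle$ with the standard $L^2$ inner product on $\mathcal S$. This yields a genuine reverse-Poincar\'e inequality for each component $w_i$ on the sub-domains where the coefficients are constant, and Moser iteration then gives the local bound $\|W\|_{L^\infty(\mathcal S \cap B_{1/2})} \le C \|W\|_{L^2(\mathcal S \cap B_1)}$, which is (c).
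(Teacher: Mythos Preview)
Your treatment of (a) is correct and standard; the paper says exactly this and skips the details. Your approach to (b) is also correct but differs from the paper's: you derive the transmission conditions directly by varying each component and then algebraically eliminate the jump in $\partial_\theta w_1$ across $\theta=-a$ using the boundary condition for $w_3$. The paper instead observes that near $x_0\in\mathcal S_1\cap\partial\mathcal S_3$ the energy density rewrites exactly as
\[
\tfrac32|\nabla w_1|^2 \;+\; \tfrac12|\nabla(2w_3-w_1)|^2\chi_{\mathcal S_3},
\]
so that $w_1$ is genuinely harmonic across the interface and $2w_3-w_1$ extends by even reflection across $\partial\mathcal S_3$. Both arguments are equivalent; the paper's decoupling is slicker, while yours makes the transmission mechanism more transparent.

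Part (c), however, has a gap. Moser iteration is not a routine tool for coupled systems: the first variation identity $\langle\nabla W,\nabla V\rangle=0$ contains the cross term $\nabla(w_1-w_3)\cdot\nabla(v_1-v_3)$ on $\mathcal S_1\cap\mathcal S_3$, and when you plug in a nonlinear test function such as $V=\varphi^2(|w_1|^{p-2}w_1,|w_3|^{p-2}w_3)$ this term has no sign. Working ``on the sub-domains where the coefficients are constant'' does not help, because the whole difficulty of (c) is precisely the uniform bound as one approaches the codimension-two edge $\{x'=0\}$, where every ball meets all three regions and the coupling is unavoidable. Away from the edge the $L^\infty$ bound already follows from your part (b).

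The paper's argument for (c) avoids iteration entirely. Testing with $\varphi W$, $\varphi\ge0$, and letting $\varphi\to\chi_{B_r}$ gives $\langle W_\nu,W\rangle_{\mathcal S\cap\partial B_r}\ge0$, which says that the weighted spherical average $\langle W(rx),W(rx)\rangle_{\mathcal S\cap\partial B_1}$ is monotone in $r$; hence $\fint_{\mathcal S\cap\partial B_r}|W|^2$ stays bounded as $r\to0$. Combined with the interior/boundary estimates from (b), rescaled to $\mathcal S\cap B_r$, this yields $|W|\le C$ on the slice $\{x''=0\}$, and translation along the edge finishes the proof. This monotonicity step is the substantive content of (c), and your proposal does not supply a replacement for it.
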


\begin{proof}
Part a) is standard and we skip the details. 

For part b) we remark that in a ball $B_\delta(x_0)$ near a point $x_0 \in \mathcal S_1 \cap \p \mathcal S_3$ the energy can be written as
$$ \frac 3 2 \int_{B_\delta(x_0)} |\nabla w_1|^2 dx + \frac 1 2 \int_{B_\delta(x_0) \cap S_3}| \nabla (2 w_2- w_1)|^2 dx.$$
This shows that $w_1$ is harmonic near $x_0$, and $2 w_2 - w_1$ can be extended harmonically in the whole $B_\delta(x_0)$ by the even reflection across $\p \mathcal S_3$. Hence $W$ and its derivatives can be bounded in $\mathcal S \cap B_{1/2}$  away from the codimension two edge $\{x'=0\}$ in terms of the $L^2(\mathcal S)$ norm of $W$.

For part c) we first show that 
\begin{equation}\label{6012}
\fint _{ \mathcal S \cap \p B_r} |W|^2 dx,
\end{equation}
remains bounded for all $r$ small. For this we prove a mean value inequality with respect to the $L^2(\mathcal S)$ norm: 
\begin{equation}\label{6013}
\langle W(rx), W(rx) \rangle_{\mathcal S \cap \p B_1},
\end{equation}
is monotone increasing in $r$, 
where $\langle \cdot ,\cdot \rangle_{S \cap \p B_r}$ denotes the inner product induced on the sphere $\p B_r$.

Let $\varphi \in C_0^1(B_1)$, $\varphi \ge 0$, and then
$$ \langle \nabla W, \nabla (\varphi W) \rangle =0,$$or
$$ 0 \le  \langle \nabla W, \varphi \nabla W \rangle \le \langle - \nabla \varphi \cdot \nabla W, W \rangle.$$
We take $\varphi \to \chi_{B_r}$ and obtain
$$ 0 \le   \langle W_\nu , W \rangle_{\mathcal S \cap \p B_r}.$$
This means that the derivative in $r$ of the expression in \eqref{6013} is nonnegative, and the claim \eqref{6012} is proved.

From \eqref{6012} and part b) applied in $\mathcal S \cap B_r$, we deduce that $|W|$ is bounded in $B_{1/2} \cap \mathcal S \cap \{x''=0 \}$ by a multiple of its $L^2(\mathcal S)$ norm. The conclusion follows by translating the origin at the other points in $\{x'=0\} \cap B_{1/2}$.  
\end{proof}

\begin{lem}\label{L6.5} Assume that $W \in H^1(\mathcal S)$ is a minimizer of $Q$. Then 
$$\|D_{x''}^\beta W\|_{L^\infty(\mathcal S \cap B_{1/2})} \le C(\beta) \|W\|_{L^2(\mathcal S)},$$
and, for each fixed $x''$, the function $W(\cdot, x'')$ minimizes the two-dimensional energy  
$$\langle \nabla W , \nabla W\rangle_{x'} + \int_{\mathcal S_1} f_1 w_1 dx' + \int_{\mathcal S_3} f_3 w_3 dx',$$ 
for some bounded functions $f_1$, $f_3$.
\end{lem}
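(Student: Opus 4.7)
The strategy is to exploit translation invariance in the $x''$-directions: the domain $\mathcal S$, the sectors $\mathcal S_1, \mathcal S_3$, and the inner product $\langle\cdot,\cdot\rangle$ depend only on $x'$, so both the energy $Q$ and its Euler--Lagrange system \eqref{a1}--\eqref{a4} are invariant under translations in $x''$, and the interior Caccioppoli inequality of Lemma \ref{Ave}(a) applies to any $x''$-translate of $W$.

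For the first claim I would use the standard difference-quotient method. Given a unit $e'' \in \R^{n-2}$ and small $h$, the difference quotient $W_h(x) := h^{-1}(W(x + h e'') - W(x))$ solves the linear system \eqref{a1}--\eqref{a4} on the overlap of the two domains by Lemma \ref{Ave}(b), so the Caccioppoli estimate of Lemma \ref{Ave}(a) applied to $W_h$ with a cutoff supported slightly inside $B_1$ gives a bound on $\langle \nabla W_h, \nabla W_h\rangle$ controlled by $\|W\|_{L^2(\mathcal S)}$. Letting $h \to 0$ yields $\partial_{x''} W \in H^1_{loc}(\mathcal S \cap B_1)$ with the same control. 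Iterating in every $x''$-direction and to every order gives $L^2$-bounds on $D^\beta_{x''} W$ in any slightly smaller ball. Since each such derivative is itself a finite-energy solution of the same system, Lemma \ref{Ave}(c) upgrades the $L^2$-bound to the interior $L^\infty$-bound
\begin{equation*}
\|D^\beta_{x''} W\|_{L^\infty(\mathcal S \cap B_{1/2})} \le C(\beta) \|W\|_{L^2(\mathcal S)}.
\end{equation*}

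For the second claim, observe that the transmission conditions \eqref{a2}--\eqref{a4} are prescribed on the flat interfaces of the form $\{\theta = \pm a\} \times \R^{n-2}$, whose unit normals lie entirely in the $x'$-plane; these conditions therefore decouple from the $x''$-variable and are preserved when $x''$ is frozen. Splitting $\Delta = \Delta_{x'} + \Delta_{x''}$ and using the full harmonicity $\Delta w_i = 0$ in the interior of each sector, the slice $W(\cdot,x'')$ satisfies the two-dimensional transmission conditions together with
\begin{equation*}
\Delta_{x'} w_i(\cdot, x'') \; = \; -\Delta_{x''} w_i(\cdot, x'') \; =: \; f_i(\cdot, x''), \qquad i \in \{1,3\},
\end{equation*}
and by the first part $f_i \in L^\infty$. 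This is exactly the Euler--Lagrange system of the strictly convex functional $\langle \nabla V, \nabla V\rangle_{x'} + \int_{\mathcal S_1} f_1 v_1\,dx' + \int_{\mathcal S_3} f_3 v_3\,dx'$, and by the same Caccioppoli/uniqueness argument as in Lemma \ref{L5.3n} (the linear perturbation is harmless for convexity) any bounded classical solution is the unique minimizer with its own boundary data on $\p B_1 \cap \{x''= \text{const}\}$.

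The main obstacle is the careful bookkeeping required to check that the difference-quotient method and the decoupling of the transmission conditions both survive intact near the codimension-two edge $\{x' = 0\}$ where $W$ need not be smooth. Since the Caccioppoli inequality of Lemma \ref{Ave}(a) is already stated for arbitrary cutoffs $\varphi \in C_0^1(B_1)$, and the edge and the interfaces are invariant under $x''$-translation, each $x''$-difference quotient inherits a Caccioppoli bound without loss at the edge; this uniformity is the point that should be spelled out.
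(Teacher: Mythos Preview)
Your proposal is correct and follows essentially the same route as the paper: translation invariance in $x''$, difference quotients plus iterated Caccioppoli to get $D_{x''}^\beta W$ in $H^1$, then Lemma~\ref{Ave}(c) for the $L^\infty$ upgrade, and finally the splitting $\Delta_{x'} W = -\Delta_{x''} W$ together with the argument of Lemma~\ref{L5.3}/\ref{L5.3n} for the two-dimensional minimality. The only cosmetic difference is that the paper observes directly that the discrete differences are themselves minimizers of $Q$ (the functional is quadratic and translation-invariant), rather than passing through the PDE formulation via Lemma~\ref{Ave}(b); your remark about uniformity of Caccioppoli near the edge $\{x'=0\}$ is a useful clarification that the paper leaves implicit.
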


\begin{proof}  The discrete differences of $W$ in the $x''$-directions are minimizers of $Q$. By iterating Caccioppoli inequality we obtain that $D_{x''}^\beta W$ minimizes $Q$ and
$$ \|D_{x''}^\beta W\|_{H^1(\mathcal S \cap B_{1/2})} \le C(\beta) \|W\|_{L^2(\mathcal S)}.$$ 
The $L^\infty$ bound follows from Lemma \ref{Ave}, part c). This means that $W(x',0)$ satisfies in the two-dimensions 
$$\triangle _{x'} W \in L^\infty,$$
and the boundary conditions in \eqref{a1}-\eqref{a4} hold in the classical sense on $\p \mathcal S \setminus \{0\}$.
The conclusion follows as in Lemma \ref{L5.3} since $W \in L^\infty$.
\end{proof}

The results of the previous section imply the $C^{1,\alpha}$ estimates in this inhomogeneous setting.

\begin{lem}\label{C1a2}
Assume that $W \in H^1(\mathcal S)$ is a minimizer of the 2 dimensional energy
$$\langle \nabla W , \nabla W\rangle + \int_{\mathcal S_1} f_1 w_1 dx + \int_{\mathcal S_3} f_3 w_3 dx,$$ 
for some bounded functions $f_i$.
Then $W \in C^{1,\alpha}(\mathcal S)$ and
$$ |W(x)- W(0)+ q (e_{\frac \pi 3} \cdot x, - e_{-\frac \pi 3} \cdot x ) | \le C(1+ \|f_1\|_{L^\infty} + \|f_3\|_{L^\infty}) \|W\|_{L^2} \, \, |x|^{1+\alpha},$$ 
in $ \mathcal S \cap B_{1/2},$ for some universal constant $C$. 
\end{lem}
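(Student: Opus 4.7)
The plan is to reduce to the homogeneous case (Proposition \ref{C1a}) via an iterative dyadic compactness scheme. Since the expected homogeneous rate is $r^{1+\alpha_0}$ with $\alpha_0 < 1$, while the bounded inhomogeneities $f_i$ contribute only $O(r^2)$ after rescaling, the source term is perturbative at small scales and can be handled by passing to the limit.

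The Euler--Lagrange equations for the inhomogeneous functional are precisely the system \eqref{a1}--\eqref{a4} with $-\triangle w_i$ replaced by a constant multiple of $f_i$ in each sector. The proof of Lemma \ref{Ave} goes through unchanged since the $L^\infty$ source is harmless: $W$ is bounded in $\mathcal{S}\cap B_{1/2}$ and smooth up to the lateral boundary away from the corner $\{x'=0\}$, with the transmission and Neumann conditions holding classically. After normalization we may take $\|W\|_{L^2}+\|f_1\|_\infty+\|f_3\|_\infty\le 1$. A direct computation using $\cos(\pi/2)=0$ and $\cos(\pi/3)=1/2$ shows that every affine function of the form $A(x)=c+q\bigl(e_{\pi/3}\!\cdot\!x,\,-e_{-\pi/3}\!\cdot\!x\bigr)$ is itself a classical solution of the homogeneous system, so subtracting such an $A$ from $W$ preserves the inhomogeneity.

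The heart of the argument is the one-step improvement: there exist universal $\rho\in(0,1/4)$ and $\delta_0>0$ such that, if $\|W\|_{L^\infty}\le 1$, $W(0)=0$ and $\|f_i\|_\infty\le\delta_0$, then some affine $A$ of the above form satisfies $\|W-A\|_{L^\infty(B_\rho)}\le\rho^{1+\alpha}$ for a fixed $\alpha<\alpha_0$. I would prove this by contradiction and compactness: a failing sequence $W^n$ with $\|f_i^n\|_\infty\to 0$ converges locally uniformly, via the a priori bounds above, to a bounded classical solution $W^\infty$ of the homogeneous system. Lemma \ref{L5.3} promotes $W^\infty$ to a minimizer of $Q$, and Proposition \ref{C1a} furnishes its $C^{1,\alpha_0}$ expansion at $0$; choosing $\rho$ small (depending on $\alpha<\alpha_0$) yields the contradiction.

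Iterating the one-step improvement on the rescalings $\tilde W(x)=\rho^{-(1+\alpha)}\bigl(W(\rho x)-A(\rho x)\bigr)$, which solve the same type of system with source $\rho^{1-\alpha}f_i$ (so $\le\delta_0$ once $\rho$ is small), produces a telescoping sequence of affine corrections at scales $\rho^k$ whose differences decay geometrically; summing gives the claimed expansion, and the factor $C(1+\|f_1\|_\infty+\|f_3\|_\infty)\|W\|_{L^2}$ appears as the price of the initial normalization. The main obstacle is the compactness step --- verifying that the limit $W^\infty$ inherits the boundary and transmission conditions in the classical sense and that Lemma \ref{L5.3} applies to it, so that Proposition \ref{C1a} is actually available. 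Once the a priori estimates of Lemma \ref{Ave} are extended to accommodate bounded sources (a routine modification), the rest of the argument is the standard perturbative iteration.
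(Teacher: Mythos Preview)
Your proposal is correct and follows the same Campanato iteration as the paper: subtract admissible affine approximations, rescale so that the source scales as $\rho^{1-\alpha}$, and reduce each step to Proposition \ref{C1a}. The only tactical difference is in how the one-step improvement is obtained: you argue by compactness and contradiction (passing to a homogeneous limit and invoking Lemma \ref{L5.3}), whereas the paper does it quantitatively at each scale by comparing $\tilde W$ with the $Q$-minimizer $W_0$ sharing its boundary data on $\partial B_{1/2}$ and estimating $\langle \nabla(\tilde W-W_0),\nabla(\tilde W-W_0)\rangle$ directly from the Euler--Lagrange identity to get $\|\tilde W-W_0\|_{L^2}\le C\delta^2$. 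Both are standard and equivalent; the paper's version is slightly more self-contained since it avoids the limiting step you flag as the main obstacle. One minor correction: the affine class must allow two independent constants $(d_1,d_3)$ rather than a single $c$, corresponding to the two zero eigenvalues $(1,0)$ and $(0,1)$ in the spectrum.
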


Now we can prove the $C^{1,\alpha}$ estimate in arbitrary dimensions.  

\begin{proof}[Proof of Proposition $\ref{C1an}$]
By Lemma \ref{L6.5} and Lemma \ref{C1a2} (applied to $W$ and $D_{x''} W$) we find that
$$W(x',0)=W(0) +  q (e_{\frac \pi 3} \cdot x', - e_{-\frac \pi 3} \cdot x ') + O(|x'|^{1+\alpha}),$$
$$ D_{x''}W(x',0)= (\nu_1'',\nu_3'') + O(|x'|), \quad \quad |D^2_{x''} W| \le C,$$
which gives the desired conclusion.
\end{proof}

Finally we prove the Schauder estimates in the two-dimensional inhomogeneous setting.

\begin{proof}[Proof of Lemma \ref{C1a2}]

The proof is standard and uses Campanato iterations. We sketch some of the details.

Assume that $\|f_i\|_{L^\infty} \le \delta$, $\|W\|_{H^1} \le \delta$. It suffices to show inductively in $k$ that for each $r=\rho^k$ there exist {\it linear} functions at $0$ of the type
$$ L_r=\left (d_1 + q e_{\frac \pi 3} \cdot x, d_3 - q e_{-\frac \pi 3} \cdot x\right),$$
depending on $r$ which approximate $W$ in $B_r$ such that
$$\left(\fint_{\mathcal S \cap B_r} |W-L_r|^2 dx \right)^{1/2}\le r^{1+\alpha}.$$ 
Indeed, the rescaled function
$$\tilde W(x) = \frac {1}{ r^{1+\alpha}} (W-L_r)(rx),$$
satisfies
$$\fint_{S \cap B_1} |\tilde W|^2 dx \le 1,$$
and $\tilde W$ is a minimizer for a functional as above with $$\|\tilde f_i\|_{L^\infty} \le \delta r^{1-\alpha} \le \delta.$$ 
The Caccioppoli inequality for $\tilde W$ gives
$$ \|\tilde W\|_{H^1(\mathcal S \cap B_{1/2})} \le C. $$ 
Let $W_0$ be the minimizer of the $Q$ functional with the same boundary data as $\tilde W$ on $\mathcal S \cap \p B_{1/2}$, and notice that   $$\| W_0\|_{H^1(\mathcal S \cap B_{1/2})} \le C .$$Then in $\mathcal S \cap B_{1/2}$ we have
\begin{align*}
\langle  \nabla(\tilde W-W_0),\nabla(\tilde W - W_0) \rangle& =\langle \nabla \tilde W, \nabla (\tilde W - W_0) \rangle- \langle \nabla \tilde W_0,\nabla (\tilde W - W_0) \rangle \\
& = \frac 12 \int_{\mathcal S_1} \tilde f_1 (\tilde w_1 - w_{0,1}) dx + \frac 12 \int_{\mathcal S_3} \tilde f_3 (\tilde w_3 - w_{0,3}) dx\\
& \le C \delta \|\tilde W-W_0\|_{L^2}^{\frac 1 2}.
\end{align*}
By Poincar\'e inequality it follows that
$$\|\tilde W - W_0\|_{L^2(\mathcal S \cap B_{1/2})} \le C' \delta^2.$$
The $C^{1,\alpha_0}$ regularity of $W_0$ (see Proposition \ref{C1a}) gives
$$\left(\fint_{\mathcal S \cap B_\rho} |W_0-L_0|^2 dx \right)^{1/2}\le C \rho ^{1+\alpha_0},$$
for some {\it linear} function $L_0$ at $0$. 
The last two inequalities imply the inductive result for $W$ with $r=\rho^{k+1}$ by first choosing $\rho$ sufficiently small depending on $\alpha< \alpha_0$, and then $\delta$ small depending on $\rho$.
\end{proof}


\begin{thebibliography}{9999} 


\bibitem[AC]{AC} H.W. Alt, L.A. Caffarelli,
\emph{Existence and regularity for a minimum problem with free
boundary}, J. Reine Angew. Math {\bf 325} (1981),105--144.

\bibitem[ASUW]{ASUW}J. Andersson, H. Shahgholian, N. Uraltseva and G. Weiss, {\it Equilibrium points of a singular cooperative system with free boundary,} Adv. Math. 280 (2015), 743--771.


\bibitem[CL]{CL} L. A. Caffarelli and F.H. Lin. {\it Singularly perturbed elliptic systems and multi-valued harmonic functions with free boundaries,} J. Amer. Math. Soc., 21(3):847--862, 2008.

\bibitem[CS]{CS} L.A. Caffarelli, S. Salsa, 
{\it A geometric approach to free boundary problems},
Graduate Studies in Mathematics, 68. American Mathematical Society, Providence, RI, 2005. x+270 pp. ISBN: 0-8218-3784-2 


\bibitem[CSY]{CSY}L. A. Caffarelli, H. Shahgholian, K. Yeressian, {\it A minimization problem with free boundary related to a cooperative system,} Duke Math. J. 167(10), 1825--1882, 2018.


\bibitem[CV]{CV}M. Chipot, G. Vergara-Caffarelli, {\it The N-membrane problem}, Appl. Math. Optim. 13 (1985), no. 3, 231-249.

\bibitem[D]{D} D. De Silva,  \emph{Free boundary regularity for a problem with right hand side,} Interfaces and free boundaries \textbf{13} (2011), 223--238.

\bibitem[DJ]{DJ} D. De Silva, D. Jerison, {\it A Singular Energy Minimizing Free Boundary},  J. Reine Angew. Math., 635 (2009), 1--22.

\bibitem[DT]{DT} D. De Silva, G. Tortone, {\it Improvement of flatness for vector valued free boundary problems,} Mathematics in Engineering
2020, Vol. 2, Issue 4: 598--613.

\bibitem[F]{F} H. Federer, {\it The singular set of area minimizing rectifiable currents with codimension one and of area minimizing flat chains modulo two with arbitrary codimension,} Bull. Amer. Math. Soc. 76 (1970), 767--771.

\bibitem[JS]{JS}D. Jerison, O. Savin, {\it
Some remarks on stability of cones for the one-phase free boundary problem},
Geom. Funct. Anal. 25 (2015), no. 4, 1240--1257. 

\bibitem[KL1]{KL1}D. Kriventsov, F.H. Lin, {\it Regularity for Shape Optimizers: The Nondegenerate Case,} Comm. Pure Appl. Math. 71, 8 (2018), 1535--1596.


\bibitem[KL2]{KL2} D. Kriventsov, F. H. Lin, {\it  Regularity for Shape Optimizers: The Degenerate Case,} Comm. Pure Appl. Math. 72, 8 (2019),1678--1721.

\bibitem[MTV1]{MTV1} D. Mazzoleni, S. Terracini, B. Velichkov, {\it Regularity of the optimal sets for some spectral functionals}, Geom. Funct.
Anal. 27 (2) (2017), 373--426.
\bibitem[MTV2]{MTV2}D. Mazzoleni, S. Terracini and B. Velichkov, {\it Regularity of the free boundary for the vectorial Bernoulli problem}, Anal. PDE 13 (2020), no. 3, 741--764. 

\bibitem[SY1]{SY1}O. Savin, H. Yu, {\it  On the multiple membranes problem,} J. Funct. Anal. 277 (2019), no. 6, 1581--1602.

\bibitem[SY2]{SY2} O. Savin, H. Yu, {\it Free boundary regularity in the triple membrane problem}, Ars Inveniendi Analytica, 2021.

\bibitem[SY3]{SY3} O. Savin, H. Yu, {\it Free boundary regularity in the multiple membrane problem in the plane}, arXiv:2109.10182

\bibitem[TT] {TT}H. Tavares, S. Terracini, {\it Regularity of the nodal set of segregated critical configurations under a weak reflection law,} Calc. Var. Partial Differential Equations, 45(3-4):273--317, 2012.

\bibitem[T]{T} J. Taylor,{\it The structure of singularities in soap-bubble and soap-film-like minimal surfaces,}
Ann. Math., 103 (1976), 489--539.

\bibitem[V]{V} B. Velichkov, {\it Regularity of the One-phase Free Boundaries.} Lecture Notes of the Unione Matematica Italiana, https://library.oapen.org/handle/20.500.12657/61877

\bibitem[W]{W} G. Weiss, 
{\it Partial regularity for a minimum problem with free boundary}, Journal Geom.
Anal. {\bf 9} (1999), 317--326.
\end{thebibliography}
\end{document}